\documentclass[11pt]{article}

\usepackage{amssymb,amsmath,amsthm}
\usepackage{mathrsfs}
\usepackage{marginnote}
\usepackage{graphicx}

\topmargin=0cm
\oddsidemargin0mm
\textheight23.5cm
\textwidth16cm
\headsep0mm
\headheight0mm
\parskip 2pt

\numberwithin{equation}{section}
\newtheorem{thm}{Theorem}[section]
\newtheorem{df}[thm]{Definition}
\newtheorem{prop}[thm]{Proposition}
\newtheorem{lem}[thm]{Lemma}
\newtheorem{cor}[thm]{Corollary}
\theoremstyle{definition}
\newtheorem{rem}[thm]{Remark}

\let\oldproofname=\proofname
\renewcommand{\proofname}{\rm\bf{\oldproofname}}

\newcommand{\N}{\mathbb{N}}
\newcommand{\Z}{\mathbb{Z}}

\newcommand{\R}{\mathbb{R}}
\newcommand{\C}{\mathbb{C}}
\newcommand{\T}{\mathbb{T}}

\newcommand{\cA}{\mathcal{A}}
\newcommand{\cB}{\mathcal{B}}
\newcommand{\cC}{\mathcal{C}}
\newcommand{\cD}{\mathcal{D}}
\newcommand{\cE}{\mathcal{E}}
\newcommand{\cF}{\mathcal{F}}
\newcommand{\cG}{\mathcal{G}}
\newcommand{\cH}{\mathcal{H}}
\newcommand{\cI}{\mathcal{I}}
\newcommand{\cJ}{\mathcal{J}}
\newcommand{\cK}{\mathcal{K}}
\newcommand{\cL}{\mathcal{L}}

\newcommand{\cN}{\mathcal{N}}
\newcommand{\cO}{\mathcal{O}}
\newcommand{\cR}{\mathcal{R}}

\newcommand{\WW}{\mathscr{W}}
\newcommand{\WWA}{\mathscr{W}^\mathscr{\omega}}
\newcommand{\LL}{\mathscr{L}}
\newcommand{\QQ}{\mathscr{Q}}

\renewcommand{\Re}{\mathop{\mathrm{Re}}}
\renewcommand{\Im}{\mathop{\mathrm{Im}}}
\newcommand{\dd}{\,{\rm d}}
\newcommand{\D}{{\rm d}}
\renewcommand{\div}{\mathop{\mathrm{div}}\nolimits}
\newcommand{\curl}{\mathop{\mathrm{curl}}}

\newcommand{\sgn}{{\mathrm{sgn}}}
\newcommand{\per}{{\mathrm{per}}}
\newcommand{\loc}{\mathrm{loc}}

\newcommand{\QED}{\mbox{}\hfill$\Box$}
\renewcommand{\:}{\thinspace :}
\newcommand{\ggamma}{\gamma_\star}
\newcommand{\bb}{\overline{b}}  
\newcommand{\rb}{\overline{r}}  
\newcommand{\Rp}{\R_+}
\newcommand{\Rpb}{\overline{\R}_+}
\newcommand{\eps}{\varepsilon}
\newcommand{\bbD}{\mathbb{D}}
\newcommand{\cP}{\mathcal{P}}
\newcommand{\cQ}{\mathcal{Q}}

\newdimen\texpscorrection
\texpscorrection=0truecm  
\newdimen\figcenter
\def\figurewithtex #1 #2 #3 #4 #5\cr{\null
  {\goodbreak\figcenter=\hsize\relax
  \advance\figcenter by -#4truecm
  \divide\figcenter by 2
  \begin{figure}[hbt]
  \vskip #3truecm\noindent\hskip\figcenter
  \includegraphics{#1}{\hskip\texpscorrection\input #2 }
  \vskip 0.8truecm{\baselineskip=0.8\baselineskip
  \noindent \vbox{\noindent {\footnotesize #5}}\par}
  \end{figure}}}
\def\point#1 #2 #3 {\rlap{\kern #1 truecm
\raise #2 truecm \hbox{#3}}}

\begin{document}

\title{Spectral stability of inviscid columnar vortices}

\author{
{\bf Thierry Gallay}\\
Institut Fourier\\
Universit\'e Grenoble Alpes, CNRS\\
100 rue des Maths\\
38610 Gi\`eres, France\\
{\small\tt Thierry.Gallay@univ-grenoble-alpes.fr}
\and
{\bf Didier Smets}\\
Laboratoire Jacques-Louis Lions\\
Sorbonne Universit\'e\\
4, place Jussieu\\
75005 Paris, France\\
{\small\tt Didier.Smets@sorbonne-universite.fr}}

\date{June 6, 2019}
\maketitle

\begin{center}
{\it Dedicated to the memory of Louis N. Howard}
\end{center}

\begin{abstract}
Columnar vortices are stationary solutions of the three-dimensional
Euler equations with axial symmetry, where the velocity field only
depends on the distance to the axis and has no component in the 
axial direction. Stability of such flows was first investigated by Lord 
Kelvin in 1880, but despite a long history the only analytical results 
available so far provide necessary conditions for instability under 
either planar or axisymmetric perturbations. The purpose of this paper 
is to show that columnar vortices are spectrally stable with respect to 
three-dimensional perturbations with no particular symmetry. Our
result applies to a large family of velocity profiles, including the 
most common models in atmospheric flows and engineering 
applications. The proof is based on a homotopy argument, which 
allows us to concentrate in the spectral analysis of the linearized operator
to a small neighborhood of the imaginary axis, where unstable 
eigenvalues can be excluded using integral identities and a careful 
study of the so-called critical layers. 
\end{abstract}

\section{Introduction}\label{sec1}

\noindent
An interesting open question in hydrodynamic stability theory is
whether the balance between the centrifugal force and the pressure 
gradient in axisymmetric vortex flows may lead to an instability even 
if the vorticity profile is monotone and the velocity field has no axial 
component. For incompressible perfect fluids, partial answers have 
been obtained under additional symmetry assumptions. For instance,  
in the restricted framework of two-dimensional flows, radially symmetric
vortices are known to be stable if the vorticity distribution is a monotone
function of the distance to the vortex center \cite{Ra1,MP}, but even in 
that idealized situation no sharp stability criterion seems to be available. 
In the three-dimensional case, the simplest vortex-like equilibria are 
{\em columnar vortices}, namely axisymmetric flows with no vertical 
velocity and no dependence upon the vertical coordinate. In such 
flows, all streamlines are horizontal circles centered on the vertical
symmetry axis. According to a celebrated result of Rayleigh \cite{Ra2}, 
columnar vortices are stable with respect to axisymmetric perturbations if 
the square of the velocity circulation along the streamlines is a
nondecreasing function of the distance to the symmetry axis, and that
condition is actually sharp \cite{Sy}.

A natural question arises from these centennial results: {\em When the
  vorticity profile is monotone and Rayleigh's condition is satisfied,
  are columnar vortices stable against three-dimensional perturbations
  with no particular symmetry\thinspace?} Although instabilities have
never been observed experimentally or numerically for such vortices in
the absence of axial flow, we could not find in the literature even a
plausible formal argument supporting the affirmative answer, see
Section~\ref{subsec13} below for a short historical discussion.  In
the present paper, we give a rigorous proof of spectral stability for
a large family of inviscid columnar vortices without imposing any
symmetry assumption on the class of allowed perturbations. We thus
provide an answer to an important question that dates back to the
pioneering work of Lord Kelvin \cite{Ke}, who was the first to
investigate the three-dimensional stability of vortex columns.

Before stating our results, we first describe the precise framework.
We start from the incompressible Euler equation in the whole space $\R^3$\:
\begin{equation}\label{eq:Euler3d}
  \partial_t u + (u\cdot\nabla)u \,=\, -\nabla p\,, \qquad
  \div u \,=\, 0\,,
\end{equation}
where $u = u(x,t) \in \R^3$ denotes the fluid velocity and
$p = p(x,t) \in \R$ the internal pressure. We mainly consider the
vorticity $\omega(x,t) = \curl u(x,t)$, which describes the local
rotation of the fluid particles. Since we are interested in the
stability of axially symmetric flows, it is convenient to use
cylindrical coordinates $(r,\theta,z)$ defined by $x_1 = r\cos\theta$,
$x_2 = r\sin\theta$, and $x_3 = z$. The velocity and vorticity fields
are then decomposed as follows\:
\begin{align*}
  u \,&=\, u_r(r,\theta,z,t) e_r + u_\theta(r,\theta,z,t) e_\theta 
  + u_z(r,\theta,z,t) e_z\,, \\
  \omega \,&=\, \omega_r(r,\theta,z,t) e_r + \omega_\theta(r,\theta,z,t) 
  e_\theta + \omega_z(r,\theta,z,t) e_z\,,
\end{align*}
where $e_r$, $e_\theta$, $e_z$ are unit vectors in the radial, azimuthal, 
and vertical directions, respectively. In these coordinates, the vorticity 
equation $\partial_t \omega + (u\cdot\nabla)\omega -(\omega\cdot\nabla)u = 0$ 
becomes
\begin{equation}\label{eq:vortcyl}
  \begin{split}
  \partial_t \omega_r + (u\cdot\nabla)\omega_r - (\omega\cdot\nabla)u_r 
  \,&=\, 0\,, \\
  \partial_t \omega_\theta + (u\cdot\nabla)\omega_\theta - (\omega\cdot
  \nabla)u_\theta \,&=\, \textstyle{\frac{1}{r}}\bigl(u_r \omega_\theta 
  - u_\theta\omega_r\bigr)\,, \\
  \partial_t \omega_z + (u\cdot\nabla)\omega_z - (\omega\cdot\nabla)u_z 
  \,&=\, 0\,,
  \end{split}
\end{equation}
where $u \cdot \nabla = u_r \partial_r + \frac1r u_\theta \partial_\theta 
+ u_z \partial_z$ and $\omega \cdot \nabla = \omega_r \partial_r + 
\frac1r \omega_\theta \partial_\theta + \omega_z \partial_z$. The velocity 
field satisfies the incompressibility condition
\begin{equation}\label{eq:div_cyl}
  \frac1r \partial_r (ru_r) + \frac1r \partial_\theta u_\theta
  + \partial_z u_z \,=\, 0\,,
\end{equation}
and can be expressed in terms of the vorticity by solving the 
linear elliptic system
\begin{equation}\label{eq:curl_cyl}
  \frac1r \partial_\theta u_z - \partial_z u_\theta \,=\, \omega_r\,, \qquad
  \partial_z u_r - \partial_r u_z \,=\, \omega_\theta\,, \qquad
  \frac1r \partial_r (r u_\theta) - \frac1r \partial_\theta u_r 
  \,=\, \omega_z\,.
\end{equation}

\subsection{Columnar vortices}\label{subsec11}

Columnar vortices are stationary solutions of 
\eqref{eq:Euler3d}--\eqref{eq:curl_cyl} of the particular form
\begin{equation}\label{eq:column}
  u \,=\, V(r) \,e_\theta\,, \qquad \omega \,=\, W(r) \,e_z\,,
  \qquad p \,=\, P(r)\,,
\end{equation}
where $V$ is the velocity profile and $W$ the vorticity distribution. 
The pressure $P$ inside the vortex is determined, up to an irrelevant 
additive constant, by the centrifugal balance $rP'(r) = V(r)^2$. 
Instead of $V$, we prefer using the angular velocity $\Omega(r) = 
V(r)/r$, which has the same physical dimension as the vorticity $W$.
As a consequence of \eqref{eq:curl_cyl}, we have
\begin{equation}\label{eq:velovort}
  W(r) \,=\, \frac1r \partial_r \bigl(rV(r)\bigr) \,=\, r\Omega'(r) 
  + 2\Omega(r)\,.
\end{equation}
Here are typical examples that are often considered in the literature\:

\smallskip\noindent{\bf 1.} {\em The Rankine vortex\:} 
\begin{equation}\label{eq:Rankine}
  \Omega(r) \,=\, \begin{cases} 1 & \hbox{if} \quad r \le 1\,, 
  \\ r^{-2} & \hbox{if} \quad r \ge 1\,, \end{cases} \qquad
  W(r) \,=\, \begin{cases} 2 & \hbox{if} \quad r < 1\,, 
  \\ 0 & \hbox{if} \quad r > 1\,. \end{cases} \qquad
\end{equation}
As is clear from \eqref{eq:Rankine}, the flow of Rankine's vortex 
corresponds to a rigid rotation for $r < 1$ and an irrotational 
motion for $r > 1$. Although non-physical because of the singularity 
at $r = 1$, this flow is relatively easy to analyze mathematically
due to the very simple form of the vorticity distribution $W$, which is 
a piecewise constant function. The dynamical stability of Rankine's
vortex was first investigated by L.~Kelvin as early as 1880
\cite{Ke}. 

\smallskip\noindent{\bf 2.} {\em The Kaufmann-Scully vortex\:} 
\begin{equation}\label{eq:KSvortex}
  \Omega(r) \,=\, \frac{1}{1+r^2}\,, \qquad
  W(r) \,=\, \frac{2}{(1+r^2)^2}\,, \qquad r > 0\,.
\end{equation}
This smooth vortex is characterized by a relatively slow decay of the 
vorticity distribution as $r \to \infty$. It has also a very simple 
analytical form, and is often used as a model for vortices that appear 
in atmospheric flows or in laboratory experiments, see e.g. 
\cite[Section 3.3.4]{AKO}. 

\smallskip\noindent{\bf 3.} {\em The Lamb-Oseen vortex\:} 
\begin{equation}\label{eq:LOvortex}
  \Omega(r) \,=\, \frac{1}{r^2}\Bigl(1 - e^{-r^2}\Bigr)\,,
  \qquad W(r) \,=\, 2\,e^{-r^2}\,, \qquad r > 0\,.
\end{equation}
Among all solutions of the form \eqref{eq:column}, the Lamb-Oseen
vortex plays a distinguished role in connection with the long-time
asymptotics of viscous planar flows. Indeed, if viscosity is taken
into account, it is known that all localized distributions of
vorticity evolve toward a Gaussian vorticity profile as $t\to +\infty$, 
see \cite{GW}. In particular, the Lamb-Oseen vortex is the only one in 
the above family which corresponds to a self-similar solution of the 
Navier-Stokes equations. 

\begin{rem}\label{rem:scaling}
Throughout this paper, it is understood that all independent and  dependent 
variables in the Euler equations \eqref{eq:Euler3d} are {\em dimensionless}.
Examples \eqref{eq:Rankine}--\eqref{eq:LOvortex} are normalized so that
the vortex core has a diameter of size $\cO(1)$, but that choice can 
be modified by a simple rescaling. Also, we assume without loss of generality
that all vortices are normalized so that $\Omega(0) = 1$, which implies 
$W(0) = 2$. 
\end{rem}

To study the dynamical stability of the columnar vortex \eqref{eq:column}, 
we look for solutions of \eqref{eq:vortcyl}, \eqref{eq:div_cyl} 
of the form 
\[
  u(r,\theta,z,t) \,=\, r \Omega(r) \,e_\theta + \tilde u(r,\theta,z,t)\,, 
  \qquad \omega(r,\theta,z,t) \,=\, W(r) \,e_z + \tilde \omega(r,\theta,z,t)\,,
\]
where $\Omega = V/r$ is the angular velocity of the vortex and 
$W$ the vorticity distribution given by \eqref{eq:velovort}. 
Inserting this Ansatz into \eqref{eq:vortcyl}, neglecting the quadratic 
terms in $\tilde u$ and $\tilde \omega$, and finally dropping the tildes for
notational simplicity, we arrive at the linearized evolution equations
\begin{align}\nonumber
  \partial_t \omega_r + \Omega(r) \partial_\theta \omega_r \,&=\, 
  W(r) \partial_z u_r\,, \\ \label{eq:omlin}
  \partial_t \omega_\theta + \Omega(r) \partial_\theta \omega_\theta 
  \,&=\, W(r) \partial_z u_\theta + r\Omega'(r)\omega_r\,, \\ \nonumber 
  \partial_t \omega_z + \Omega(r) \partial_\theta \omega_z \,&=\, 
  W(r) \partial_z u_z - W'(r) u_r\,,
\end{align}
which are the starting point of our analysis. Of course, the linear 
relations \eqref{eq:div_cyl}, \eqref{eq:curl_cyl} still hold for the 
perturbed velocity and vorticity. 

It is a classical observation that equations \eqref{eq:omlin} can be
considered as a self-contained evolution system for the vorticity
$\omega$, provided the velocity $u$ is expressed in terms of $\omega$
by solving the linear elliptic system \eqref{eq:div_cyl}, 
\eqref{eq:curl_cyl}. Once this is done, we can rewrite \eqref{eq:omlin} 
in the compact form
\begin{equation}\label{eq:omlin2}
  \partial_t \omega \,=\, L\omega\,,
\end{equation}
where $L$ is a vector-valued, nonlocal, first order differential 
operator. Our purpose is to study the spectral properties of 
that operator, and to show that $L$ has no spectrum outside the 
imaginary axis under general assumptions on the angular velocity 
$\Omega$ or the vorticity distribution $W$. 
 
Another fundamental remark is that system
\eqref{eq:vortcyl}--\eqref{eq:curl_cyl} is invariant under rotations
about the vertical axis, and under translations along that axis. Using
a Fourier series expansion with respect to the angular variable
$\theta$ and a Fourier transform in the vertical variable
$z$, we are led to consider velocities and vorticities of the
following particular form
\begin{equation}\label{eq:uomFour}
  u(r,\theta,z,t) \,=\, u_{m,k}(r,t)\,e^{im\theta}\,e^{ikz}\,,
  \qquad 
  \omega(r,\theta,z,t) \,=\, \omega_{m,k}(r,t)\,e^{im\theta}\,e^{ikz}\,,
\end{equation}
where $m \in \Z$ is the angular Fourier mode and $k \in \R$ is 
the vertical wave number. Here $u,\omega$ are complex-valued 
functions, but we impose that $\overline{u_{m,k}} = u_{-m,-k}$ and
$\overline{\omega_{m,k}} = \omega_{-m,-k}$ so as to obtain real 
functions after summing over all possible values of $m,k$. 
Dropping the subscripts $m,k$ for notational simplicity, we see 
that the perturbation equations \eqref{eq:omlin} translate into
\begin{align}\nonumber
 (\partial_t + im\Omega(r))\,\omega_r \,&=\, W(r) iku_r\,, \\ 
 \label{eq:vortFour}   
 (\partial_t + im\Omega(r))\,\omega_\theta \,&=\, W(r) iku_\theta
 + r\Omega'(r) \omega_r\,, \\ \nonumber
 (\partial_t + im\Omega(r))\,\omega_z \,&=\, W(r) iku_z - W'(r)u_r\,.
\end{align}
In addition, the following relations hold\:
\begin{equation}\label{eq:vort_cyl}
  \begin{array}{l} \omega_r \,=\, \frac{im}r u_z - ik u_\theta\,, \\[1mm] 
  \omega_\theta \,=\, ik u_r - \partial_r u_z\,, \\[1mm]
  \omega_z \,=\, \frac1r \partial_r (r u_\theta) - \frac{im}r u_r\,,
  \end{array} \qquad \hbox{and}\qquad 
  \frac1r\partial_r(ru_r) + \frac{im}r u_\theta + 
  ik u_z \,=\, 0\,. 
\end{equation}
As before, we can rewrite \eqref{eq:vortFour} in the compact form
\begin{equation}\label{eq:omlinFour}
  \partial_t \omega \,=\, L_{m,k}\omega\,,
\end{equation}
assuming that the velocity $u = u_{m,k}$ in \eqref{eq:vortFour} is
expressed in terms of the vorticity $\omega = \omega_{m,k}$ by solving
the linear relations \eqref{eq:vort_cyl} with appropriate boundary
conditions. The main properties of the Biot-Savart map
$\omega_{m,k} \mapsto u_{m,k}$ obtained in this way will be recalled
in Section~\ref{subsec61}.  Being an integral operator acting on
functions of the sole variable $r$, the generator $L_{m,k}$ in
\eqref{eq:omlinFour} is of course easier to study than the
original three-dimensional differential operator $L$ in
\eqref{eq:omlin2}.

\subsection{Statement of the results}\label{subsec12}

To state our results in a precise way, we first specify our hypotheses
on the unperturbed columnar vortex. We find it convenient to formulate 
these assumptions at the level of the {\em vorticity} profile $W$. Note
that, in view of \eqref{eq:velovort}, the angular velocity $\Omega$ can 
be expressed in terms of $W$ by the formula 
\begin{equation}\label{eq:Omrep}  
  \Omega(r) \,=\, \frac{1}{r^2}\int_0^r W(s) s\dd s\,, \qquad r > 0\,,
\end{equation}
and the derivative of $\Omega$ is in turn given by
\begin{equation}\label{eq:Omrep2}  
  \Omega'(r) \,=\, \frac{W(r)-2\Omega(r)}{r} \,=\, \frac{1}{r^3}
  \int_0^r W'(s)s^2\dd s\,, \qquad r > 0\,.
\end{equation}
In what follows, we denote $\Rp = (0,\infty)$ and $\Rpb=[0,\infty)$. 

\smallskip\noindent{\bf Assumption H1:} {\em The vorticity profile 
$W : \Rpb \to \Rp$ is a $\cC^1$ function satisfying $W'(0) = 0$,   
$W'(r) < 0$ for all $r > 0$, and the total circulation
\begin{equation}\label{eq:Winteg}
  2\pi \Gamma \,=\, 2\pi \int_0^\infty W(r) r\dd r
\end{equation}
of the columnar vortex is finite.} 

Under assumption H1 the angular velocity profile $\Omega \in \cC^1(\Rpb) 
\cap \cC^2(\Rp)$ given by \eqref{eq:Omrep} is positive and satisfies 
$\Omega(0) = W(0)/2$, $\Omega'(0) = 0$,  $\Omega'(r) < 0$ for all $r > 0$, 
and $\Omega(r) \sim \Gamma/r^2$ as $r \to \infty$. In particular, the 
{\em Rayleigh function} $\Phi : [0,\infty) \to \R$ defined by  
\begin{equation}\label{eq:Phidef}
  \Phi(r) \,=\, 2 \Omega(r) W(r)\,, \qquad r \ge 0\,,
\end{equation}
is positive everywhere. As a matter of fact, in our framework assumption H1 
corresponds exactly to the combination of Rayleigh's condition \cite{Ra2}
and of the two-dimensional stability criterion \cite{Ra1,MP}. We supplement 
it with the following:    

\smallskip\noindent{\bf Assumption H2:} {\em The $\cC^1$ function 
$J : \Rp \to \Rp$ defined by
\begin{equation}\label{eq:Jdef}
  J(r) \,=\, \frac{\Phi(r)}{\Omega'(r)^2}\,, \qquad  r > 0\,,
\end{equation}
satisfies $J'(r) < 0$ for all $r > 0$ and $rJ'(r)\to 0$ as $r\to \infty$.} 

This second assumption is more technical in nature, and certainly more
difficult to justify. We first observe that it is satisfied for the
Kaufmann-Scully vortex \eqref{eq:KSvortex}, because $J(r) = 1 + 1/r^2$
in that case, and a direct calculation that can be found in
Section~\ref{subsec67} below reveals that assumption H2 also holds for
the Lamb-Oseen vortex \eqref{eq:LOvortex}. A quantity corresponding to
\eqref{eq:Jdef} appears in the work of G.I. Taylor \cite{Tay} on the
stability of stratified shear flows; in that context it is called the
{\it local Richardson number} (see e.g. \cite[Chapter 6]{DR}). Its
relevance for stability was confirmed by Miles \cite{Mil} and Howard
\cite{How}. The ideas of Howard were translated into the columnar
vortex framework by Howard and Gupta \cite{HG}, where the quantity
\eqref{eq:Jdef} is also shown to play an important role in the
stability analysis for perturbations with nonzero angular Fourier mode
$m$ and nonzero vertical wave number $k$. Indeed, it is proved in
\cite{HG} that the linear operator $L_{m,k}$ in \eqref{eq:omlinFour}
has no unstable eigenvalue if
\begin{equation}\label{eq:HGmk}
	\frac{k^2}{m^2}\,J(r) \ge\, \frac14\,,
  \qquad \hbox{for all } r > 0\,,
\end{equation}
see also Proposition~\ref{prop:HG} below. Note that, in the case of
the Lamb-Oseen vortex, inequality \eqref{eq:HGmk} is always violated
for large $r > 0$ because $J(r) \to 0$ as $r \to \infty$, whereas
\eqref{eq:HGmk} holds for the Kaufmann-Scully vortex if and only if
$m^2 \le 4k^2$.  Although Howard and Gupta's result alone is not
sufficient, it plays a crucial role in our stability analysis in
Section~\ref{sec4}, where we have to distinguish two spatial regions
according to whether the local Richardson number $(k^2/m^2)J(r)$ is
greater or smaller than $1/4$. It turns out to be important for our
approach that inequality \eqref{eq:HGmk} either holds for all
$r \ge 0$, or is satisfied if and only if $r \le r_*$ for some
$r_* > 0$. The only way to enforce that property for all possible
values of $m$ and $k$ is to assume that the function $J$ in
\eqref{eq:Jdef} is decreasing. However, there is no evidence that
assumption H2 is more than a technical limitation, and we hope that
this question will be clarified in the future.

\begin{rem}\label{rem:limits}
Although this is not immediately obvious, assumption H2 implies 
the existence of a nonnegative number $\ell_\infty \ge 0$ such that
\begin{equation}\label{eq:limsdef}   
  \lim_{r \to \infty}r^4 W(r) \,=\, \ell_\infty\,, \qquad 
  \lim_{r \to \infty}r^5 W'(r) \,=\, -4\ell_\infty\,,
\end{equation}
see Section~\ref{subsec64} below. 
\end{rem}

Next, we specify the function space in which we study the linearized
operator $L_{m,k}$ defined in \eqref{eq:vortFour}, \eqref{eq:omlinFour}.
Since we used a Fourier decomposition to reduce our analysis to 
functions of the form \eqref{eq:uomFour}, it is natural to work 
in $L^2$-based function spaces. Given $m \in \Z$ and $k \in \R$, 
we thus define the enstrophy space
\begin{equation}\label{eq:Xmkdef}
  X_{m,k} \,=\, \Bigl\{\omega \in L^2(\R_+,r\dd r)^3 \,\Big|\,
  \frac1r\partial_r(r\omega_r) + \frac{im}r \omega_\theta + 
  ik \omega_z = 0\Bigr\}\,,
\end{equation}
equipped with the norm
\[
  \|\omega\|_{L^2}^2 \,=\, \int_0^\infty |\omega(r)|^2 \,r\dd r
  \,, \qquad \hbox{where}\quad |\omega|^2 \,=\, |\omega_r|^2 
  + |\omega_\theta|^2 + |\omega_z|^2\,.
\]
It is not difficult to verify that the generator $L_{m,k}$ of the 
linearized evolution equation \eqref{eq:omlinFour} defines 
a bounded linear operator in the space $X_{m,k}$ if $k \neq 0$, 
see Proposition~\ref{prop:AB} below. With this observation in 
mind, we can formulate our first main result\:

\begin{thm}\label{thm:main1} 
Consider a columnar vortex whose vorticity profile $W$ satisfies 
assumptions H1, H2 above. Given $m \in \Z$ and $k \neq 0$, let 
$L_{m,k}$ be the generator of the linearized evolution 
\eqref{eq:omlinFour}. Then the spectrum of $L_{m,k}$ in the enstrophy 
space $X_{m,k}$ satisfies
\begin{equation}\label{eq:spectrum}
  \sigma(L_{m,k}) \,\subset\, i \R\,. 
\end{equation}
\end{thm}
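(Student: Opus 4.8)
The plan is to exclude eigenvalues of $L_{m,k}$ in the open right and left half-planes via a homotopy (continuation) argument, as announced in the abstract. The strategy rests on two facts that must first be isolated: (i) since $L_{m,k}$ is a bounded operator on $X_{m,k}$, its spectrum is a compact subset of $\C$, and the unstable part $\sigma(L_{m,k})\cap\{\Re\lambda>0\}$ consists (if nonempty) of isolated eigenvalues of finite multiplicity — this should follow from the fact that the nonlocal perturbation coming from the Biot--Savart operator is compact relative to the transport part $-im\Omega(r)$, whose spectrum is purely imaginary; (ii) the unstable eigenvalues, as functions of a homotopy parameter, can neither escape to infinity (uniform a priori bound on $|\lambda|$) nor cross the imaginary axis abruptly (a resolvent bound near $i\R$ away from the continuous spectrum). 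Granting (i)--(ii), one deforms $W$ (or the pair $(\Omega,W)$) along a path of profiles, all satisfying H1--H2, from the given vortex to a reference vortex for which spectral stability is already known — for instance a profile to which Howard--Gupta's criterion \eqref{eq:HGmk} applies, or for which a direct energy identity closes. If along the whole path no eigenvalue touches $i\R$, then the number of unstable eigenvalues (counted with multiplicity) is constant along the homotopy, hence zero, giving \eqref{eq:spectrum}.

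Concretely, I would first convert the eigenvalue problem $L_{m,k}\omega=\lambda\omega$ with $\Re\lambda\neq 0$ into a scalar ODE. Writing $\lambda=-im c$ (so that $c$ plays the role of a complex phase speed with $\Im c\neq 0$), the relations \eqref{eq:vortFour}--\eqref{eq:vort_cyl} let one eliminate all components in favor of a single quantity — the natural choice being the radial velocity $u_r$, or the pressure — yielding a second-order ODE of generalized Rayleigh type on $\Rp$ with coefficients built from $\Omega$, $W=r\Omega'+2\Omega$, $\Phi=2\Omega W$ and the wave numbers $m,k$. Because $\Im c\neq 0$ the coefficient $\Omega(r)-c$ never vanishes, so the ODE is regular on all of $\Rp$ and one has no critical layer; the eigenfunction is then characterized by the correct behavior at $r=0$ (regularity) and at $r=\infty$ (decay/integrability forced by membership in $X_{m,k}$). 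Multiplying the ODE by the conjugate eigenfunction and integrating, and then taking real and imaginary parts, should produce two integral identities: the imaginary part typically forces a weighted integral of $|u_r|^2$ against a factor involving $\Phi$ and $|\Omega-c|^{-2}$ to vanish, which under H1 (Rayleigh's condition, $\Phi>0$) is impossible unless $u_r\equiv 0$; feeding this back kills the remaining components. This is the classical Rayleigh/Howard route, and it is precisely here that H1 does the work — so the genuinely unstable eigenvalues with $\Re\lambda\neq 0$ are ruled out directly, without any homotopy, provided the boundary terms in the integration by parts vanish.

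The main obstacle, and where assumption H2 and the homotopy must enter, is not the interior identity but the boundary behavior at $r=\infty$ together with the uniformity needed to run the continuation argument. The operator $L_{m,k}$ has continuous spectrum filling a segment of $i\R$ (the range of $-im\Omega$, i.e. the interval $i m[0,\Omega(0)]$ roughly), and unstable eigenvalues could in principle emerge from this continuous spectrum at the endpoints as the profile is deformed; controlling this requires precise asymptotics of solutions of the Rayleigh-type ODE as $r\to\infty$, which is exactly what the decay rates $r^4W(r)\to\ell_\infty$, $r^5W'(r)\to-4\ell_\infty$ of Remark~\ref{rem:limits} (a consequence of H2) are designed to furnish, and what the monotonicity $J'<0$ guarantees uniformly along the homotopy. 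So the hard part is a delicate limiting-absorption/resolvent estimate near $i\R$: showing that for $\lambda$ in a small neighborhood of the imaginary axis, minus the continuous spectrum, the resolvent $(\lambda-L_{m,k})^{-1}$ stays bounded uniformly along the family of profiles, so that no eigenvalue can sneak across. Once the ODE analysis of the critical layers and the endpoint asymptotics are in place, the homotopy bookkeeping (compactness of the unstable spectrum, continuity of eigenvalues, constancy of the count) is formal, and connecting to a reference profile covered by Proposition~\ref{prop:HG} finishes the proof.
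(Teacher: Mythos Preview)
Your overall architecture (homotopy to a reference profile covered by Howard--Gupta, plus continuity of isolated eigenvalues) matches the paper's. But there is a genuine gap in your understanding of where the difficulty lies, and it is precisely the step you dismiss as easy.

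You claim that multiplying the eigenvalue ODE by $\bar u_r$, integrating, and taking the imaginary part forces a weighted integral of $\Phi|u_r|^2$ to vanish, which under $\Phi>0$ gives $u_r\equiv 0$ directly, ``without any homotopy''. This is wrong. Writing $s=m(a-ib)$, the imaginary part of the basic identity (the paper's \eqref{eq:HG0im}) contains a factor $(b-\Omega(r))$ in front of the $\Phi$-term, plus a second term involving $\partial_r(W/(m^2+k^2r^2))$. Neither term has a fixed sign when $0<b<1$: the first changes sign at the critical radius $\Omega^{-1}(b)$, and the second has the opposite sign of the first near that point. So the identity gives no contradiction in the strip $0<b<1$, which is exactly where the essential spectrum sits and where any unstable eigenvalue would have to live (the cases $b\le 0$ and $b\ge 1$ \emph{are} excluded by Howard-type identities, but only after a further substitution $u_r=\gamma_\star v$; see the paper's Proposition~\ref{prop:HG}). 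Even the refined identity with weight $\gamma_\star^{1/2}$ (the Howard--Gupta trick) only closes when $J(r)\ge m^2/(4k^2)$ for all $r$, which fails for the Lamb--Oseen vortex and for generic profiles in $\WW$.

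Consequently the homotopy is not bookkeeping: it is the mechanism that converts a local obstruction into a global one. The hard part is \emph{not} a limiting-absorption resolvent bound, as you suggest, but a detailed \emph{critical-layer analysis} at the bifurcation point where a hypothetical eigenvalue hits the essential spectrum. The paper splits into cases according to the position of the limiting critical radius $\bar r=\Omega^{-1}(\bar b)$ relative to the threshold $r_*=J^{-1}(m^2/4k^2)$ (this is where the monotonicity of $J$ in H2 is indispensable, since it guarantees $r_*$ is a single point). When $\bar r<r_*$ the Howard identity plus Fatou's lemma gives a divergent integral; when $\bar r>r_*$ one needs a maximum-principle argument with explicit upper solutions $(\bar b-\Omega)^{d_\pm}$ and a phase-incompatibility between the Frobenius solutions on either side of $\bar r$; when $\bar r=\infty$ one needs sharp Bessel-function asymptotics for the critical layer at infinity. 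None of this is captured by a uniform resolvent estimate, and none of it follows from H1 alone.
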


\begin{rem}\label{rem:morespec}
The proof actually shows that, under the normalization
condition $W(0) = 2$, $\sigma(L_{m,k})$ consists of essential spectrum 
filling the closed interval 
$\{-imb\,|\, 0 \le b \le 1\} \subset i\R$, and of a countable family
of simple, purely imaginary eigenvalues that accumulate only at
$-im \in i\R$. These eigenvalues are well studied in the physical
literature (a brief account is given in Section \ref{subsec13} below),
and the corresponding eigenfunctions are referred to as {\em Kelvin
vibration modes}. The main contribution of the present paper is to
show that the operator $L_{m,k}$ has no eigenvalue outside the
imaginary axis, if the vorticity profile $W$ satisfies assumptions 
H1, H2. It is interesting to note that this result remains valid for 
the Rankine vortex \eqref{eq:Rankine} which does not satisfy our 
hypotheses, see Section~\ref{subsec62} below. 
\end{rem}

\begin{rem}\label{rem:2Dexcluded}
The particular case $k = 0$, which corresponds to two-dimensional 
perturbations, is excluded in Theorem~\ref{thm:main1} because 
the function space $X_{m,k}$ is not appropriate in that situation. 
This is essentially due to the fact that the two-dimensional 
Biot-Savart law is ill-defined for vorticities in the enstrophy 
space. The problem can be eliminated by introducing
a radial weight that ensures a faster decay of $\omega(r)$ as 
$r \to \infty$, or alternatively by working in the energy space
as mentioned in Remark~\ref{rem:nonper} below. However, since the 
two-dimensional stability of radially symmetric vortices is already 
well documented, we chose to ignore these technical issues and to 
concentrate here on the genuinely three-dimensional case $k \neq 0$, 
which was essentially unexplored until now. 
\end{rem}

According to Theorem~\ref{thm:main1}, for any $s \in \C$ with
$\Re(s) \neq 0$, the resolvent operator $(s-L_{m,k})^{-1}$ is well
defined and bounded in the space $X_{m,k}$ if $m \in \Z$ and
$k \neq 0$. Actually, one can prove that the resolvent is uniformly
bounded for all $m \in \Z$ and for all nonzero $k$ in the
one-dimensional lattice $\Z k_0$, where $k_0 > 0$ is
arbitrary. Returning to the full linearized evolution
\eqref{eq:omlin2}, this proves spectral stability of the 
generator $L$ in the space
\begin{equation}\label{eq:L2per}
  \dot L^2_{\sigma,\per,h} \,=\, \Bigl\{\omega \in L^2(\R^2 \times 
  \T_h)^3 \,\Big|\, \div \omega = 0\,,~\int_0^h \omega(x_1,x_2,x_3) 
  \dd x_3 = 0\,\Bigr\}\,,
\end{equation}
where $\T_h = \R/(\Z h)$ and $h = 2\pi/k_0$ is the vertical period. 
We can thus state our second main result\:

\begin{thm}\label{thm:main2}
Under the assumptions of Theorem~\ref{thm:main1}, let $L$ denote 
the full linearized operator in \eqref{eq:omlin2}. Then, for any 
$h > 0$, the spectrum of $L$ in the space $\dot L^2_{\sigma,\per,h}$ 
satisfies
\begin{equation}\label{eq:spectrum2}
  \sigma(L) \,=\, i \R\,. 
\end{equation}
\end{thm}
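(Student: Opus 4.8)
The plan is to deduce Theorem~\ref{thm:main2} from Theorem~\ref{thm:main1} by exploiting the rotational and translational symmetries of the columnar vortex \eqref{eq:column}. Since the linearized operator $L$ in \eqref{eq:omlin2} commutes with rotations about the vertical axis and with vertical translations, it is diagonalized by the Fourier expansion in the angular variable $\theta$ and in the vertical variable $z$. On $\R^2\times\T_h$ with $h=2\pi/k_0$, the admissible vertical wave numbers are $k\in k_0\Z$, the zero-mean condition in the definition \eqref{eq:L2per} of $\dot L^2_{\sigma,\per,h}$ discards the mode $k=0$, and the constraint $\div\omega=0$ becomes, on the $(m,k)$ Fourier mode, precisely the constraint defining the enstrophy space \eqref{eq:Xmkdef}. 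After complexification we therefore obtain an orthogonal Hilbert-space decomposition
\[
  \dot L^2_{\sigma,\per,h} \;\cong\; \bigoplus_{m\in\Z}\ \bigoplus_{k\in k_0\Z\setminus\{0\}} X_{m,k}\,,
\]
with respect to which $L$ acts as the block-diagonal operator $\bigoplus_{m,k} L_{m,k}$, with $L_{m,k}$ the generator from \eqref{eq:omlinFour}. It follows that $s\in\C$ belongs to the resolvent set of $L$ if and only if $s\in\rho(L_{m,k})$ for every admissible pair $(m,k)$ and $\sup_{m,k}\|(s-L_{m,k})^{-1}\|<\infty$.

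Granting this reduction, the inclusion $\sigma(L)\subset i\R$ will follow from Theorem~\ref{thm:main1}, which gives $s\in\rho(L_{m,k})$ for all $(m,k)$ whenever $\Re(s)\neq0$, combined with a resolvent bound that is uniform in $(m,k)$. To obtain the latter I would split $L_{m,k}=A_{m,k}+B_{m,k}$ into the skew-adjoint multiplication operator $A_{m,k}\omega=-im\Omega\,\omega$ and the nonlocal vortex-stretching remainder $B_{m,k}$; using the mapping properties of the Biot-Savart operator (essentially Proposition~\ref{prop:AB}) one checks that $C_0:=\sup_{m,k}\|B_{m,k}\|<\infty$, whence a Neumann series yields $\|(s-L_{m,k})^{-1}\|\le(|\Re s|-C_0)^{-1}$ as soon as $|\Re s|>C_0$, uniformly in $(m,k)$. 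The region $0<|\Re s|\le C_0$ is the delicate one: there one must revisit the proof of Theorem~\ref{thm:main1} and verify that the constants in the integral identities and in the analysis of the critical layers depend on $m$ and $k$ only through scale-invariant quantities such as the local Richardson number $(k^2/m^2)J(r)$, so that they remain bounded as $|m|+|k|\to\infty$. I expect this uniformity check to be the main obstacle, though no genuinely new idea beyond those developed for Theorem~\ref{thm:main1} should be required.

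For the reverse inclusion $\sigma(L)\supset i\R$, fix $\tau\in\R$ and set $s=i\tau$. By Remark~\ref{rem:morespec}, under the normalization $W(0)=2$ the essential spectrum of $L_{m,k}$ equals the segment $\{-imb : 0\le b\le1\}$ for every $k\neq0$. Taking $k=k_0$ and choosing a nonzero integer $m$ whose sign is opposite to that of $\tau$ and whose modulus is at least $|\tau|$ (any $m\neq0$ if $\tau=0$), we get $i\tau=-imb$ with $b=|\tau|/|m|\in[0,1]$, so $i\tau\in\sigma(L_{m,k_0})\subset\sigma(L)$. Since $\tau$ was arbitrary, $i\R\subset\sigma(L)$, and together with the previous step this yields $\sigma(L)=i\R$.
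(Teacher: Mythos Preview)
Your Fourier decomposition and the argument for $i\R\subset\sigma(L)$ are correct. The gap is in the uniform resolvent bound. First a technical point: multiplication by $-im\Omega(r)$ does not preserve the divergence constraint defining $X_{m,k}$ (computing the divergence of $-im\Omega\,\omega$ leaves the residue $-im\Omega'\omega_r$), so your skew-adjoint $A_{m,k}$ is not an operator on that space. The paper's $A_m$ in \eqref{eq:Adef} carries the extra term $r\Omega'\omega_r\,e_\theta$ precisely to fix this, but is then no longer skew-adjoint, and your Neumann-series bound $(|\Re s|-C_0)^{-1}$ is not immediate.

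More importantly, your plan for the range $0<|\Re s|\le C_0$---to revisit the proof of Theorem~\ref{thm:main1} and check that its constants depend only on $k^2/m^2$---cannot work as stated. That proof is a non-quantitative homotopy/contradiction argument: the reference profile \eqref{eq:W1def} depends on $m$ and $k$ separately, the critical-layer analysis of Section~\ref{sec4} treats one limiting eigenvalue at a time, and no resolvent estimate is ever produced along the way; there is nothing there to make uniform. The paper instead obtains uniformity in Section~\ref{sec5} by a separate contradiction argument (Proposition~\ref{prop:periodic}): assuming $\|(s-L_{m_n,k_n})^{-1}\|\to\infty$ with $\|\omega^{(n)}\|=1$ and $f^{(n)}\to0$, one first shows that $(m_n)$ is bounded using only $\|W/(s+im_n\Omega)\|_{L^\infty}\to0$ together with the Biot-Savart bound $\|k_n u^{(n)}\|\le C$, and then that $(k_n)$ is bounded via an \emph{inhomogeneous} version of the Howard identity (Lemma~\ref{lem:res}), namely the substitution $u_r=\gamma^{1/2}v$ applied to the resolvent equation \eqref{eq:res2}, which yields $\|u_r\|\le C(m,s)|k|^{-1}(\|u_r\|+\|f\|)$. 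Once both sequences are bounded one extracts constant subsequences and contradicts Theorem~\ref{thm:main1} directly. So the missing ingredient is not a uniformization of Section~\ref{sec4} but this inhomogeneous Howard estimate for the resolvent equation.
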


\begin{rem}\label{rem:nonper}
The reason for restricting ourselves to functions with zero average in
the vertical direction was explained in Remark~\ref{rem:2Dexcluded}.
The same technical limitation prevents us from considering
perturbations in the enstrophy space $L^2_\sigma(\R^3)$, without
assuming periodicity in the vertical direction, because in that case
all values of the vertical wave number $k \in \R$ have to be taken 
into account. In a subsequent work \cite{GaSm2}, we use 
Theorem~\ref{thm:main1} to obtain the equivalent of Theorem \ref{thm:main2}
for the Euler equation in velocity formulation. There we consider 
perturbations in the energy space, and we also obtain semigroup 
estimates for the linearized operator at a columnar vortex. 
\end{rem}

In the proof of Theorems~\ref{thm:main1} and \ref{thm:main2}, 
we find it convenient to normalize our velocity and vorticity 
profiles so that $\Omega(0) = 1$ and $W(0) = 2$. This leads to the 
following definition\:

\begin{df}\label{def:Hclass}
We denote by $\WW$ the class of all vorticity profiles $W : \Rpb 
\to \Rp$ satisfying the assumptions H1, H2 above, as well as the 
normalizing condition $W(0) = 2$. 
\end{df}

It is worth emphasizing here that assumption H2 involves the  
function $J$ defined in \eqref{eq:Jdef}, which depends nonlinearly on 
the vorticity profile $W$. As a consequence, our family of admissible
profiles is not a vector space, and the class $\WW$ introduced 
in Definition~\ref{def:Hclass} is not even a convex set. However, 
we shall prove in Section~\ref{subsec64} that any profile $W \in \WW$ 
is entirely determined by the auxiliary function
\begin{equation}\label{eq:Qdef0}
  Q(r) \,=\, \frac{1}{\sqrt{1+J(r)}}\,, \qquad r > 0\,,
\end{equation}
and that the class $\WW$ can be described by simple linear constraints 
at the level of the function $Q$. This makes it possible to perform
continuous interpolation and approximation within the class $\WW$, and
such tools will play a crucial role in the proof of Theorem~\ref{thm:main1}.

\begin{rem}\label{rem:topo}
If we equip the class $\WW$ with the topology of $\cC^1_b(\Rpb)$, the
Banach space of all bounded continuously differentiable functions on
$\Rpb$ with bounded derivative, it is easily verified that the linear
operator operator $L_{m,k} \in \cL(X_{m,k})$ depends continuously on
the vorticity profile $W \in \WW$, see Lemma~\ref{lem:contiW2L}
below. In particular, isolated eigenvalues of $L_{m,k}$ outside the
imaginary axis (if they are any) vary continuously when $W$ is
perturbed in that topology. This implies that the conclusion
\eqref{eq:spectrum} of Theorem~\ref{thm:main1} remains valid for any
vorticity profile that belongs to the closure of the class $\WW$ in
$\cC^1_b(\Rpb)$. This larger class contains vorticities $W$ that are
not strictly decreasing functions of the radius $r$, and may even be
compactly supported.
\end{rem}

\subsection{Previous results and perspectives}\label{subsec13}

The first historical contribution regarding the stability of columnar
vortices in incompressible fluids is of course the seminal work
\cite{Ke} by Kelvin. In that study, the focus is put on neutral modes,
namely eigenmodes of the linearized Euler equation that correspond to
purely imaginary eigenvalues; these were later termed ``Kelvin
vibration modes". As Kelvin expresses it: ``{\it The problem thus
  solved is the finding of the periodic disturbance in the motion of
  rotating liquid} [...]". The computations in \cite{Ke} are performed
in situations where the underlying axisymmetric flow has piecewise
constant vorticity; this exactly corresponds to what was called the
Rankine vortex in Section \ref{subsec11} above. However, Kelvin waves
are observed to play an important role in the dynamics of the Euler
equation for a much wider variety of profiles, and were actively
studied in the literature since then (in most cases numerically, or
using asymptotic expansions combined with physical arguments).  In the
case of the Lamb-Oseen vortex, important contributions were made in
particular by Le Diz\`es and Lacaze \cite{LL} and Fabre, Sipp and
Jacquin \cite{FSJ}, both in the inviscid case and in the vanishing
viscosity limit. Unlike Kelvin (who had no computer account!), the
authors of \cite{LL,FSJ} also consider the possibility of eigenvalues
off the imaginary axis. One of the conclusions of \cite{FSJ} based on
their numerical findings is that ``[...]{\it no amplified modes were
found, a result which demonstrates the stability of the Lamb-Oseen
vortex.''}

In a different direction, Rayleigh \cite{Ra1,Ra2} initiated the study
of necessary conditions for columnar vortex instability\footnote{Or
  equivalently sufficient conditions for their stability; 
  in the present work stability is only understood in the
  spectral sense, meaning the absence of eigenvalues with positive
  real part.}. Although it may certainly be found physically
convincing, the original argument \cite{Ra2} leading to Rayleigh's
criterion cannot be easily transposed into rigorous mathematical
terms. Instead, the approach followed by Howard and Gupta \cite{HG},
which we consider one of the most interesting and important
contributions so far, is both rigorous and elementary.  This
remarkable work contains most importantly a non-conclusive but
enlightening section called ``Remarks on the non-axisymmetric case'',
in which the partial stability criterion \eqref{eq:HGmk} can be found.
The authors write: ``{\it The overall conclusion of this consideration
  of the non-axisymmetric case is thus essentially negative: the
  methods used to derive the Richardson number and semicircle results
  in the axisymmetric case reproduce the known results of Rayleigh for
  two-dimensional perturbations and pure axial flow, but seem to give
  very little more. In fact the present situation with regard to
  non-axisymmetric perturbations seems to be very unsatisfactory from
  a theoretical point of view.}"

Attempts have been made to derive necessary conditions for instability
extending Rayleigh's criterion to non-axisymmetric perturbations. One
such criterion was proposed by Billant and Gallaire \cite{BG},
following earlier work by Leibovich and Stewartson \cite{LS}, and
applies in a given Fourier sector. It is relatively simple to state
but requires a number of a posteriori checks which could be more
difficult to perform. As the authors mention, in all the situations
they tested the most unstable modes were always the axisymmetric ones
(this is reminiscent of Squire's theorem in the context of viscous
shear flows), and therefore, in practice, Rayleigh's criterion appears
to be sufficient to detect potential instabilities. Yet, a priori
estimates on the possible growth in a given Fourier sector are certainly
interesting per se.

Spectral stability of course does not imply stability of the flow for
a Hamiltonian system such as Eq.~\eqref{eq:Euler3d}. In a celebrated
paper \cite{Arn1,Arn2}, Arnold derived a nonlinear stability criterion
for stationary solutions of the Euler equations, which are viewed as
critical points of the kinetic energy functional over the manifold of
isovortical vector fields, and he treated in detail the case of 2D
flows. His approach was subsequently extended by Szeri and Holmes
\cite{SzHo} and applied to axisymmetric perturbations of columnar
vortices. A few years later, Rouchon \cite{Rou} proved that the conditions 
in Arnold's criterion are never satisfied if one considers genuinely 3D 
perturbations of nontrivial stationary flows. An intermediate step between 
spectral and nonlinear stability is linear stability, which consists in 
controling the growth of the semigroup generated by the linearized operator in 
Theorem~\ref{thm:main2}. Preliminary results in that 
direction can be found in the subsequent work \cite{GaSm2}.

We close this section mentioning that a number of interesting
phenomena are known to arise, as far as instabilities are concerned,
when the base flow possesses an additional axial component. Some of
the works already quoted, and many others, do consider that situation
as well. Since we did not investigate it at all in this work, we
keep that discussion for another occasion.

\subsection{Organization of the paper}\label{subsec14}

Our strategy to prove Theorems~\ref{thm:main1} and \ref{thm:main2} can
be explained as follows. In a first step, we show in
Section~\ref{sec2} that the essential spectrum of the operator
$L_{m,k}$ is purely imaginary. The rest of the spectrum consists of
isolated eigenvalues with finite multiplicity, and the corresponding
eigenfunctions are solutions of a second order differential equation
involving a complex potential that depends on $m$, $k$, and the
spectral parameter $s$.  The eigenvalue equation is difficult to study
in general, but using techniques that date back to Rayleigh
\cite{Ra1,Ra2} it is easy to verify that it has no nontrivial solution
with $\Re(s) \neq 0$ when the perturbations are either axisymmetric
($m = 0$) or two-dimensional ($k = 0$). In Section~\ref{sec3}, we
establish a few preliminary results in the case were $m \neq 0$ and
$k \neq 0$. In particular, we derive useful identities satisfied by
any nontrivial eigenfunction, and we recover the stability criterion
\eqref{eq:HGmk} of Howard and Gupta. The core of the proof of
Theorem~\ref{thm:main1} is Section~\ref{sec4}.  We construct a
suitable homotopy between the vorticity profile $W \in \WW$ and a
reference profile for which stability in the corresponding Fourier
sector $X_{m,k}$ is known by Howard and Gupta's criterion.  By a
continuity argument, this strategy allows us to reduce the problem to
proving the absence of unstable eigenvalues {\em arbitrarily close to
  the imaginary axis}, for a one-parameter family of profiles in the
class $\WW$. A delicate combination of integral identities and
comparison arguments relying on assumption H2 are then used to perform
such a ``critical layer analysis'' and hence to preclude the existence
of unstable eigenvalues in the large. Finally, in Section~\ref{sec5},
we prove uniform resolvent estimates for the linear operator $L_{m,k}$
outside the imaginary axis, which imply that the full linearization
$L$ has indeed no spectrum in that region when acting on the space
$\dot L^2_{\sigma,\per,h}$ for any $h > 0$. This is precisely the
conclusion of Theorem~\ref{thm:main2}. The last section is an appendix
were several auxiliary results are established. In particular, we give
useful estimates for the Biot-Savart law in the Fourier sector indexed
by $m,k$, we prove the stability of Rankine's vortex
\eqref{eq:Rankine} which is not covered by Theorem~\ref{thm:main1},
and we explain how to perform continuous interpolation and
approximation in the nonlinear class $\WW$.

\medskip\noindent{\bf Acknowledgements.}
The authors were partially supported by grants ANR-13-BS01-0003-01 (Th.G.)
and ANR-14-CE25-0009-01 (D.S.) from the ``Agence Nationale de la Recherche''.
They benefited from discussions with S. Le Diz\`es, in particular 
during the meeting ``Vortex et solitons pour les fluides classiques et
quantiques" (CIRM, Marseille, 2012) where this work was initiated,  
and also from insightful remarks from an anonymous referee. 

\section{Formulation of the spectral problem}
\label{sec2}

Let $W$ be a vorticity profile in the class $\WW$, and let $\Omega$ be
the corresponding angular velocity defined by \eqref{eq:Omrep}.  For a
fixed value of the angular Fourier mode $m \in \Z$ and of the vertical
wave number $k \in \R$, we consider the linear operator $L_{m,k}$
introduced in \eqref{eq:omlinFour}. In view of \eqref{eq:vortFour}, we
have the natural decomposition
\begin{equation}\label{eq:Lmkdef}
  L_{m,k} \,=\, A_m + B_{m,k}\,, 
\end{equation}
where $A_m$ is the multiplication operator defined by 
\begin{equation}\label{eq:Adef}
  A_m \omega \,=\, -im \Omega(r)\omega + r\Omega'(r)\omega_r 
  \,e_\theta\,,
\end{equation}
and $B_{m,k}$ is the following nonlocal perturbation\:
\begin{equation}\label{eq:Bdef}
  B_{m,k} \omega \,=\, ik W(r)u - W'(r)u_r \,e_z\,.
\end{equation}
Here $u = (u_r,u_\theta,u_z)$ denotes the velocity obtained from 
the vorticity $\omega = (\omega_r,\omega_\theta,\omega_z)$ by solving 
the linear PDE system \eqref{eq:vort_cyl} with appropriate 
boundary conditions. We refer the reader to Section~\ref{subsec61} 
below for a discussion of the map $\omega \mapsto u$, which 
we call the Biot-Savart law in the Fourier subspace indexed by 
$m$ and $k$. Our main goal in this paper is to study the spectral 
properties of the operator $L_{m,k}$ acting on the  enstrophy 
space $X_{m,k}$ defined by \eqref{eq:Xmkdef}. 

The following simple result is the starting point of our analysis. 

\begin{prop}\label{prop:AB} Fix $m \in \Z$ and $k \in \R\setminus\{0\}$.
\\[1mm]
1) The linear operator $A_m$ defined by \eqref{eq:Adef} is bounded 
in $X_{m,k}$ with spectrum given by
\begin{equation}\label{eq:spAm}
  \sigma(A_m) \,=\, \Bigl\{z \in \C\,\Big|\, z = -imb 
  \hbox{ for some } b \in [0,1]\Bigr\}\,.
\end{equation}
This spectrum is purely continuous if $m \neq 0$, and reduces to
a single eigenvalue if $m = 0$. \\[2mm] 
2) The linear operator $B_{m,k}$ defined by \eqref{eq:Bdef} is compact in 
$X_{m,k}$. 
\end{prop}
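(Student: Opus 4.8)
The plan is to prove the two assertions separately; both reduce to explicit computations, the only external input being the mapping properties of the Biot--Savart law established in Section~\ref{subsec61}.

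\medskip\noindent\emph{The operator $A_m$: boundedness and invariance of $X_{m,k}$.}
In the moving frame $(e_r,e_\theta,e_z)$, the operator $A_m$ in \eqref{eq:Adef} is multiplication by the matrix-valued function
\[
  M(r) \,=\, \begin{pmatrix} -im\Omega(r) & 0 & 0 \\ r\Omega'(r) & -im\Omega(r) & 0 \\ 0 & 0 & -im\Omega(r)\end{pmatrix}\,, \qquad r > 0\,.
\]
Under assumption H1 one has $0 < \Omega(r) \le 1$ and $r\Omega'(r) = W(r) - 2\Omega(r)$ by \eqref{eq:Omrep2}, so the entries of $M$ are bounded continuous functions on $\Rpb$ and multiplication by $M$ is bounded on $L^2(\Rp,r\dd r)^3$. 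First I would check that the specific term $r\Omega'(r)\omega_r\,e_\theta$ in \eqref{eq:Adef} is precisely what keeps the divergence constraint defining $X_{m,k}$ in \eqref{eq:Xmkdef} invariant: a short computation gives, for $\cC^1$ scalars $f$, $g$,
\[
  \tfrac1r\partial_r\bigl(r f\omega_r\bigr) + \tfrac{im}{r}\bigl(f\omega_\theta + g\omega_r\bigr) + ik\,f\omega_z \,=\, f\Bigl(\tfrac1r\partial_r(r\omega_r) + \tfrac{im}{r}\omega_\theta + ik\omega_z\Bigr) + \bigl(f' + \tfrac{im}{r}g\bigr)\omega_r\,,
\]
which vanishes when $\omega \in X_{m,k}$ and $rf' + im\,g = 0$, an identity satisfied by $f = -im\Omega$, $g = r\Omega'$. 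Hence $A_m$ maps $X_{m,k}$ into itself and is bounded there.

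\medskip\noindent\emph{Spectrum of $A_m$.}
Being a multiplication operator, $A_m - s$ is boundedly invertible on $L^2(\Rp,r\dd r)^3$ as soon as $(M(r)-s)^{-1}$ is bounded uniformly in $r$. Writing $\mu(r) = -im\Omega(r) - s$, the inverse of the lower triangular matrix $M(r)-s$ is lower triangular with diagonal entries $\mu(r)^{-1}$ and subdiagonal entry $-r\Omega'(r)\mu(r)^{-2}$; since $r\Omega'$ is bounded, this stays uniformly bounded exactly when $s$ is at positive distance from $\{-im\Omega(r):r>0\}$. As $\Omega$ is continuous, strictly decreasing, $\Omega(0)=1$ and $\Omega(r)\to 0$ as $r\to\infty$, that set is $\{-imb:0<b\le1\}$ and its closure is the set in \eqref{eq:spAm}; conversely every $s=-imb$ with $b\in[0,1]$ is a limit of such values, hence lies in $\sigma(A_m)$. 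One verifies with the same identity $rf'+im\,g=0$, now with $f=\mu^{-1}$, that $(M(r)-s)^{-1}$ also preserves $X_{m,k}$, so the above does describe the resolvent of $A_m$ on $X_{m,k}$. Finally, if $m\neq 0$, an eigenfunction of $A_m$ for $s=-imb$ would be supported on the level set $\{\Omega=b\}$, which has measure zero since $\Omega'<0$, hence would vanish; the adjoint, which is multiplication by $M(r)^{*}$, is treated identically, so $A_m$ has empty point and residual spectrum and $\sigma(A_m)$ is purely continuous. If $m=0$, then $M(r)$ is nilpotent, $\sigma(A_0)=\{0\}$, and the kernel of $A_0$ — the fields $\omega=\omega_\theta\,e_\theta$ with $\omega_\theta\in L^2(\Rp,r\dd r)$, which do lie in $X_{0,k}$ since $k\neq0$ — is nontrivial, so $0$ is an eigenvalue.

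\medskip\noindent\emph{Compactness of $B_{m,k}$.}
Since $L_{m,k}=A_m+B_{m,k}$ by \eqref{eq:Lmkdef} and both $L_{m,k}$ and $A_m$ leave $X_{m,k}$ invariant, so does $B_{m,k}$, and it suffices to show $B_{m,k}$ is compact as a map from $X_{m,k}$ to $L^2(\Rp,r\dd r)^3$. By the regularity estimates for the Biot--Savart law recalled in Section~\ref{subsec61} (this is where $k\neq0$ is used), the map $\omega\mapsto u$ associated with \eqref{eq:vort_cyl} is bounded from $X_{m,k}$ into $H^1(\Rp,r\dd r)^3$. On the other hand, by assumption H1 together with Remark~\ref{rem:limits}, the coefficients $W$, $W'$ in \eqref{eq:Bdef} are bounded and tend to $0$ as $r\to\infty$ (in fact like $\cO(r^{-4})$ and $\cO(r^{-5})$). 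Hence, given a bounded sequence $(\omega_n)$ in $X_{m,k}$, the velocities $(u_n)$ are bounded in $H^1$, so by the Rellich--Kondrachov theorem a subsequence converges strongly in $L^2$ on every bounded interval $\{r\le R\}$, while the tail of $\|B_{m,k}\omega_n\|_{L^2}$ over $\{r\ge R\}$ is at most $(|k|+1)\bigl(\|W\|_{L^\infty(r\ge R)}+\|W'\|_{L^\infty(r\ge R)}\bigr)\sup_n\|u_n\|_{H^1}$, which tends to $0$ uniformly in $n$ as $R\to\infty$. A diagonal extraction then produces a subsequence of $(B_{m,k}\omega_n)$ converging in $L^2$, which proves compactness.

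\medskip
The only point demanding genuine work is the $H^1$ bound for the Biot--Savart map, uniformly up to $r=0$ and $r=\infty$, which is the content of Section~\ref{subsec61}; granting it, the compactness of $B_{m,k}$ follows from the standard Rellich-plus-decay mechanism above, and everything concerning $A_m$ is an explicit computation with a lower triangular $3\times3$ matrix.
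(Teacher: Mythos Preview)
Your approach is essentially the paper's: you invert the lower--triangular multiplication system for $A_m$ explicitly and characterize its spectrum by the range of $\Omega$, and for $B_{m,k}$ you combine the $H^1$ Biot--Savart bound (Proposition~\ref{prop:estBS}) with the decay of $W,W'$ at infinity; the paper phrases the last step via Fr\'echet--Kolmogorov, you via Rellich on bounded intervals plus a uniform tail estimate, which is the same mechanism. Your explicit checks that $A_m$ and its resolvent preserve the divergence constraint in $X_{m,k}$ are a welcome detail the paper leaves implicit.

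One technical caveat: in ruling out residual spectrum you write ``the adjoint, which is multiplication by $M(r)^*$, is treated identically''. This is not quite right on the subspace $X_{m,k}$: since $A_m$ does not leave $X_{m,k}^\perp$ invariant, the Hilbert-space adjoint of $A_m|_{X_{m,k}}$ is $P_{X_{m,k}}M(\cdot)^*$ rather than plain multiplication, and its kernel consists of those $g\in X_{m,k}$ with $(M^*-\bar s)g\in X_{m,k}^\perp$, which is not killed by the level-set argument alone. The paper itself only asserts density of the range (``it is easy to verify\dots''). A clean direct fix is to note that divergence-free fields supported away from the single radius $\Omega^{-1}(b)$ are dense in $X_{m,k}$, and for such $f$ the explicit formulas $\omega_r=(s+im\Omega)^{-1}f_r$, etc., land in $X_{m,k}$ by your own invariance computation. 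This is a small patch; the rest of your argument stands.
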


\begin{proof}
Given $s \in \C$ and $f = (f_r,f_\theta,f_z) \in X_{m,k}$, the resolvent 
equation $(s-A_m)\omega = f$ is equivalent to the linear system
\begin{equation}\label{eq:Amres}
  (s + im\Omega(r))\omega_r \,=\, f_r\,, \quad
  (s + im\Omega(r))\omega_\theta \,=\, f_\theta + r\Omega'(r)\omega_r\,, \quad
  (s + im\Omega(r))\omega_z \,=\, f_z\,.
\end{equation}
As $W \in \WW$, we know that $\Omega : [0,\infty) \to \Rp$ is strictly
decreasing with $\Omega(0) = 1$ and $\Omega(r) \to 0$ as $r \to \infty$. 
Thus, if $s \neq -imb$ for all $b \in[0,1]$, 
 the quantity $|s + im\Omega(r)|$
is bounded away from zero, and it follows that system \eqref{eq:Amres}
has a unique solution $\omega \in X_{m,k}$ satisfying $\|\omega\|_{L^2} 
\le C(s)\|f\|_{L^2}$. On the other hand, if $m \neq 0$ and $s = -imb$ for 
some $b \in [0,1]$, it is easy to verify that the operator $s-A_m$ is
one-to-one but not onto (its range is dense but strictly contained in
$X_{m,k}$), so that $s$ belongs to the continuous spectrum of
$A_m$. Finally, if $m = 0$, it is clear that $s = 0$ is an eigenvalue
of $A_m$, with infinite multiplicity. This proves the first part.

We next consider the operator $B_{m,k}$. If $\omega \in X_{m,k}$ and 
$\|\omega\|_{L^2} \le 1$, Proposition~\ref{prop:estBS} shows that the 
associated velocity field $u$ satisfies $\|\partial_r u\|_{L^2} + 
\|k u\|_{L^2} \le C$ for some universal constant $C > 0$. This 
gives a uniform bound on $u$ in $H^1(\Rp,r\dd r)$ since we assume 
that $k \neq 0$. By the Fr\'echet-Kolmogorov theorem, we deduce 
that the map $\omega \mapsto B_{m,k}\omega = ik W(r)u - W'(r)u_r \,e_z$ 
is compact in $X_{m,k}$, because the functions $W$ and $W'$ are
bounded and converge to zero as $r \to \infty$.
\end{proof}

Proposition~\ref{prop:AB} shows in particular that, for any $m \in \Z$
and any $k \in \R\setminus\{0\}$, the linearization $L_{m,k} = 
A_m + B_{m,k}$ defines a bounded operator in the space $X_{m,k}$. 
Moreover, as $B_{m,k}$ is compact, the {\em essential spectrum} of 
$L_{m,k}$ is the same as the (essential) spectrum of $A_m$, namely 
the closed interval $I_m = \{-imb\,|\, 0 \le b \le 1\} \subset i\R$, see 
\cite[Theorem~I.4.1]{EE}. Note that, in the present case, the various 
definitions of the essential spectrum listed in \cite[Section~I.4]{EE} 
all coincide. This implies that the spectrum of $L_{m,k}$ outside the
interval $I_m$ entirely consists of isolated eigenvalues with finite 
multiplicities, which can accumulate only on the essential spectrum. 
The proof of Theorem~\ref{thm:main1} is thus reduced to showing that 
all isolated eigenvalues of $L_{m,k}$ actually lie on the imaginary axis.

\begin{rem}\label{rem:symmetries}
As the functions $\Omega$, $W$ are real-valued, it is not difficult to
verify, using the definitions \eqref{eq:Adef}, \eqref{eq:Bdef} and the
relations \eqref{eq:vort_cyl} between $u$ and $\omega$, that the
spectrum of $L_{m,k}$ in $X_{m,k}$ has the following symmetries\:
\begin{equation}\label{eq:symetriespectre}
  \sigma(L_{m,k}) \,=\, \sigma(L_{m,-k}) \,=\, -\sigma(L_{-m,k})\,,
  \qquad\hbox{and}\quad \sigma(L_{m,k}) \,=\, -\overline{\sigma(L_{m,k})}\,.
\end{equation}
The corresponding mappings between eigenspaces are also easy to 
establish. In particular, the last relation in \eqref{eq:symetriespectre}
means that the spectrum of $\sigma(L_{m,k})$ is symmetric with respect
to the imaginary axis, a property that will be used later on. 
\end{rem}

As a first step in the proof of Theorem~\ref{thm:main1}, we derive 
an equation for the eigenfunctions of the operator $L_{m,k}$ 
corresponding to eigenvalues outside the essential spectrum. In what 
follows, we thus assume that $s \in \C$ is an isolated eigenvalue of
$L_{m,k}$ with eigenfunction $\omega = (\omega_r,\omega_\theta,\omega_z) 
\in X_{m,k}$, and we denote by $u = (u_r,u_\theta,u_z)$ the velocity field
associated with $\omega$ via the Biot-Savart law, see 
Section~\ref{subsec61}. As in \cite{DR}, we define
\begin{equation}\label{eq:gammadef}
  \gamma(r) \,=\, s + im\Omega(r)\,, \qquad r > 0\,.
\end{equation}
Since $s$ does not belong to the essential spectrum of $L_{m,k}$
by assumption, it follows from Proposition~\ref{prop:AB} that
$\gamma(r) \neq 0$ for all $r > 0$. 

In view of \eqref{eq:vortFour}, the eigenvalue equation reads
\begin{align}\nonumber
  \gamma(r) \omega_r \,&=\, ik W(r) u_r\,, \\ \label{eq:eigeq}
  \gamma(r) \omega_\theta \,&=\, ik W(r)u_\theta + r\Omega'(r) 
    \omega_r\,, \\ \nonumber
  \gamma(r) \omega_z \,&=\, ik W(r) u_z - W'(r) u_r \,,
\end{align}
where $r\Omega'(r) = W(r) - 2\Omega(r)$ by \eqref{eq:velovort}. 
If we express the vorticity $\omega$ in terms of $u$ using the 
relations \eqref{eq:vort_cyl}, we obtain the equivalent system
\begin{align}\label{eq:sys1}
  ikW(r)u_r + ik \gamma(r)u_\theta - \frac{im\gamma(r)}{r}u_z 
    \,&=\, 0\,, \\ \label{eq:sys2}
  ik\gamma(r)u_r -2ik\Omega(r)u_\theta -\partial_r(\gamma(r)u_z) 
    \,&=\, 0\,,\\ \label{eq:sys3}
  \Bigl(W'(r) - \frac{im\gamma(r)}{r}\Bigr)u_r + \gamma(r)
  \frac{1}r \partial_r (ru_\theta) - ik W(r) u_z \,&=\, 0\,.
\end{align}
Assuming for the moment that $k \neq 0$, it is straightforward 
to verify that the relations \eqref{eq:sys1}--\eqref{eq:sys3} 
together imply the incompressibility condition
\begin{equation}\label{eq:divu2}
  \frac1r\partial_r(ru_r) + \frac{im}r u_\theta + ik u_z \,=\, 0\,.
\end{equation}

To reduce system \eqref{eq:sys1}--\eqref{eq:divu2} to a single 
equation, we first express the azimuthal velocity $u_\theta$
in terms of $u_r, u_z$ using \eqref{eq:sys1}, and replace it
into \eqref{eq:sys2}, \eqref{eq:divu2} to obtain the $2 \times 2$ 
system
\begin{align}\label{eq:sys4}
  \Bigl(\partial_r^* - \frac{imW(r)}{r\gamma(r)}\Bigr)u_r + ik 
  \Bigl(1 + \frac{m^2}{k^2 r^2}\Bigr)u_z \,&=\, 0\,, \\ \label{eq:sys5}
  \Bigl(\partial_r + \frac{imW(r)}{r\gamma(r)}\Bigr)u_z - ik 
  \Bigl(1 + \frac{\Phi(r)}{\gamma(r)^2}\Bigr)u_r \,&=\, 0\,,
\end{align}
where $\Phi = 2\Omega W$ is the Rayleigh function and $\partial_r^* = 
\partial_r + \frac1r$. Next, observing that the coefficient of 
$u_z$ in \eqref{eq:sys4} does note vanish, we can divide 
\eqref{eq:sys4} by that coefficient and apply the differential
operator $\partial_r + \frac{imW}{r\gamma}$ to obtain, with the 
help of \eqref{eq:sys5}, the following second-order differential 
equation for the radial velocity\:
\begin{equation}\label{eq:sys6}
  \Bigl(\partial_r + \frac{imW(r)}{r\gamma(r)}\Bigr) \frac{r^2}{m^2 
  + k^2 r^2}\Bigl(\partial_r^* - \frac{imW(r)}{r\gamma(r)}
  \Bigr)\,u_r  \,=\, \Bigl(1 + \frac{\Phi(r)}{\gamma(r)^2}\Bigr)u_r\,.
\end{equation}
If we expand the product in the left-hand side, we find after 
straightforward calculations 
\begin{equation}\label{eq:main}
   -\partial_r \biggl(\frac{r^2 \partial_r^* u_r}{m^2 + k^2 r^2}\biggr) 
   + \biggl\{1 + \frac{1}{\gamma(r)^2}\frac{k^2 r^2 
  \Phi(r)}{m^2 + k^2r^2} + \frac{imr}{\gamma(r)}\partial_r
  \Bigl(\frac{W(r)}{m^2+k^2r^2}\Bigr)\biggr\}u_r \,=\, 0\,,
\end{equation}
see also \cite[Eq.~(15.26)]{DR}. This is the desired eigenvalue
equation, which will be our main concern in the rest of this paper. It
is formulated in terms of the radial velocity $u_r$, which satisfies
$u_r \in H^1(\Rp,r\dd r)$ according to Proposition~\ref{prop:estBS}.
In fact, we also have $u_r \in H^2_\loc(\Rp)$ in view of the 
divergence-free condition \eqref{eq:divu2}. 

\begin{rem}\label{rem:00}
In the case where $k = 0$, a much simpler calculation shows
that the eigenvalue equation is still given by \eqref{eq:main} if 
$m \neq 0$, although the derivation above is not correct. 
If $k = m = 0$, equation \eqref{eq:main} is of course meaningless, 
but in that case it is obvious that system \eqref{eq:eigeq} 
has no nontrivial solution for $s \neq 0$. 
\end{rem}

Summarizing the arguments developed so far, the proof of
Theorem~\ref{thm:main1} can be reduced to showing that, for all
$m \in \Z$ and all $k \in \R\setminus\{0\}$, the eigenvalue equation
\eqref{eq:main} has no nontrivial solution $u_r \in H^1(\R_+,r\dd r)$
if the spectral parameter $s \in \C$ satisfies $\Re(s) \neq 0$. This
is a difficult task in general, which we postpone to Sections~\ref{sec3}
and \ref{sec4}. For the time being, we just mention two important 
particular cases which are relatively easy to handle.  

\subsection{The axisymmetric case}\label{subsec21}

In the axisymmetric case $m = 0$, Proposition
\ref{prop:AB} asserts that the essential spectrum of 
$L_{0,k}$ is reduced to zero, and therefore away from the origin there
may only exist eigenvalues with finite multiplicity. The spectral function
\eqref{eq:gammadef} is constant in that case, and the stability equation
\eqref{eq:main} reduces to
\begin{equation}\label{eq:main_axi}
  -\partial_r \partial_r^* u_r + k^2 \Bigl(1 + \frac{\Phi(r)}{s^2}
  \Bigr)u_r \,=\, 0\,.
\end{equation}
The following classical result dates back to the work of 
L. Rayleigh \cite{Ra2}, and is reproduced here for the 
reader's convenience. 

\begin{prop}\label{prop:axi} Assume that the Rayleigh function $\Phi$ 
is nonnegative. Then the eigenvalue equation \eqref{eq:main_axi} 
has no nontrivial solution $u_r \in H^1(\R_+,r\dd r)$ if $\Re(s) \neq 0$. 
\end{prop}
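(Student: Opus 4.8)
The plan is to reproduce Rayleigh's classical integral identity. First I would rewrite the principal term of \eqref{eq:main_axi} using $\partial_r^* u_r = \frac1r\partial_r(r u_r)$, multiply the equation by $r\overline{u_r}$, integrate over a truncated interval $(1/n,n)$, and integrate by parts once in the leading term. This yields
\begin{equation*}
  \int_{1/n}^{n} |\partial_r^* u_r|^2\,r\dd r + k^2\int_{1/n}^{n}|u_r|^2\,r\dd r
  + \frac{k^2}{s^2}\int_{1/n}^{n} \Phi(r)\,|u_r|^2\,r\dd r
  \,=\, \Bigl[\,r\,(\partial_r^* u_r)\,\overline{u_r}\,\Bigr]_{1/n}^{n}\,.
\end{equation*}
The contribution at $r=1/n$ vanishes as $n\to\infty$ because the two fundamental solutions of \eqref{eq:main_axi} behave like $r$ and $r^{-1}$ near the origin, the singular one being excluded by the hypothesis $u_r\in H^1(\Rp,r\dd r)$, so that $u_r(r)=O(r)$ and $\partial_r^* u_r(r)=O(1)$ as $r\to 0$. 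The contribution at $r=n$ can be made to vanish along a suitable subsequence $n\to\infty$, since $\int_0^\infty(r|\partial_r^* u_r|^2+r|u_r|^2)\dd r<\infty$; here one uses $u_r\in H^1(\Rp,r\dd r)$ together with the elementary identity $\int_0^\infty|\partial_r^* u_r|^2 r\dd r=\int_0^\infty|\partial_r u_r|^2 r\dd r+\int_0^\infty|u_r|^2 r^{-1}\dd r$, whose cross term integrates to zero thanks to the boundary behaviour of $u_r$ at both ends. Passing to the limit along such a subsequence (all integrals converging absolutely since $\Phi$ is bounded), I obtain
\begin{equation*}
  \int_{0}^{\infty} |\partial_r^* u_r|^2\,r\dd r + k^2\int_{0}^{\infty}|u_r|^2\,r\dd r
  + \frac{k^2}{s^2}\int_{0}^{\infty} \Phi(r)\,|u_r|^2\,r\dd r \,=\, 0\,.
\end{equation*}

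From this identity I would read off the sign of $s^2$. Set $A=\int_0^\infty|\partial_r^* u_r|^2 r\dd r+k^2\int_0^\infty|u_r|^2 r\dd r$ and $B=k^2\int_0^\infty\Phi(r)|u_r|^2 r\dd r$. If $u_r$ is a nontrivial solution then $A>0$, since $A\ge\int_0^\infty|u_r|^2 r^{-1}\dd r$ by the identity quoted above and the latter integral is positive unless $u_r\equiv0$. On the other hand $B\ge 0$, precisely because $\Phi$ is assumed nonnegative. The identity then reads $s^2=-B/A\le 0$, so $s$ is purely imaginary, contradicting $\Re(s)\neq 0$. Hence \eqref{eq:main_axi} admits no nontrivial solution $u_r\in H^1(\Rp,r\dd r)$ when $\Re(s)\neq 0$.

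The only genuinely technical point in the argument is the justification that the boundary terms in the integration by parts vanish, and even that is routine given the membership $u_r\in H^1(\Rp,r\dd r)\cap H^2_\loc(\Rp)$ recorded after \eqref{eq:main}; the heart of the matter — the a priori negativity of $s^2$ — is immediate once $\Phi\ge 0$, and is exactly Rayleigh's observation.
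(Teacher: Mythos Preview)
Your proof is correct and follows essentially the same approach as the paper: multiply \eqref{eq:main_axi} by $r\bar u_r$, integrate by parts to obtain the identity \eqref{eq:axirel}, and read off the sign of $s^2$. The paper is terser about the boundary terms (which you justify carefully via the Frobenius behavior $u_r\sim r$ at the origin and a subsequence argument at infinity), and it phrases the conclusion as ``imaginary part forces $\Im(s^2)=0$, then real part excludes $s\in\R$'' rather than your direct $s^2=-B/A\le 0$; these are equivalent, and your version has the mild advantage of using only $\Phi\ge 0$ rather than the strict positivity $\Phi(0)>0$ that the paper invokes.
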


\begin{proof}
According to Remark~\ref{rem:00}, we can suppose that $k \neq 0$. 
Assume that $u_r \in H^1(\R_+,r\dd r)$ is a nontrivial solution of 
\eqref{eq:main_axi} for some $s \in \C\setminus\{0\}$. Multiplying 
both sides of \eqref{eq:main_axi} by $r \bar u_r$ and 
integrating the resulting expression over $\R_+$, we obtain the 
useful relation
\begin{equation}\label{eq:axirel}
  \int_0^\infty \Bigl\{|\partial_r^* u_r|^2 + k^2 \Bigl(1 + 
  \frac{\Phi(r)}{s^2}\Bigr)|u_r|^2 \Bigr\}r \dd r \,=\, 0\,.  
\end{equation}
By assumption we have $\int_0^\infty \Phi |u_r|^2 r\dd r > 0$, because
$u_r$ is a nontrivial solution of \eqref{eq:main_axi} and $\Phi$ is a
nonnegative function with $\Phi(0) > 0$.  Thus taking the imaginary
part of \eqref{eq:axirel} we deduce that $\Im(s^2) = 0$, hence
$s \in \R$ or $s \in i\R$. The first possibility is excluded 
by taking the real part of \eqref{eq:axirel}, hence we conclude
that $s \in i\R$. 
\end{proof}

\begin{rem}\label{rem:Synge}
Actually it was observed by Synge \cite{Sy} that the Rayleigh 
stability criterion $\Phi \ge 0$ is not only sufficient, but also 
necessary in the axisymmetric case. Indeed, we know that 
$\Phi(0) = W(0)^2 > 0$, and for localized vortices we always
have $\Phi(r) \to 0$ as $r \to \infty$. Now, assume that 
$\Phi(\bar r) < 0$ for some $\bar r > 0$, and consider the 
Schr\"odinger equation
\begin{equation}\label{eq:semic}
  -s^2 \partial_r \partial_r^* u_r + k^2 \Bigl(s^2 + \Phi(r)
  \Bigr)u_r \,=\, E u_r, \qquad r > 0\,,
\end{equation}
in the semiclassical limit where $0 < s \ll 1$. As the potential term
$s^2 + \Phi(r)$ takes negative values near $r = \bar r$, it is well
known that the operator in \eqref{eq:semic} has negative eigenvalues
$E$ if $s > 0$ is sufficiently small, see e.g. \cite{Si,HS}. In fact,
the number of negative eigenvalues increases unboundedly as $s \to 0$,
and this implies by continuity that Eq.~\eqref{eq:semic} with $E = 0$,
or equivalently Eq.~\eqref{eq:main_axi}, has a nontrivial solution
$u_r \in H^1(\R_+,r\dd r)$ for a sequence of values of $s > 0$ that
converges to zero. 
\end{rem}

 We also note that the equivalent of Synge's observation, 
but used for $s \in i\R$ instead of $s\in \R$, implies in contrast 
that, when the Rayleigh function is nonnegative,
the linearized operator $L_{0,k}$ {\it does} possess nonzero eigenvalues 
on the imaginary axis, which correspond to Kelvin modes.

\subsection{The two-dimensional case}\label{subsec22}

Although it is not included in Theorem~\ref{thm:main1},
the two-dimensional case $k = 0$ is worth mentioning too.  
When $m \neq 0$, the eigenvalue equation \eqref{eq:main} 
reduces to 
\begin{equation}\label{eq:main2D}
  -\partial_r (r^2 \partial_r^* u_r) + \Bigl(m^2 + \frac{imr W'(r)}{
  \gamma(r)}\Bigr)\,u_r \,=\, 0\,.
\end{equation}
A well-known sufficient condition for stability is that the vorticity
profile $W$ be a monotone function, see e.g. \cite{MP}, but 
unlike in the axisymmetric case no sharp criterion has been 
established so far. Again, for the reader's convenience, we reproduce 
here the easy argument showing spectral stability if $W'$ has
a constant sign.  

\begin{prop}\label{prop:2D} Assume that the vorticity profile $W$ 
is monotone. Then the eigenvalue equation 
\eqref{eq:main2D} has no nontrivial solution $u_r \in H^1(\R_+,r\dd r)$ 
if $\Re(s) \neq 0$. 
\end{prop}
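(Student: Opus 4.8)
The plan is to adapt the classical Rayleigh-type energy identity, already exploited in the proof of Proposition~\ref{prop:axi}, to the vorticity formulation \eqref{eq:main2D}. We may assume $m \neq 0$, since \eqref{eq:main2D} is precisely the reduction of \eqref{eq:main} in the regime $k = 0$, $m \neq 0$ (the remaining case $k = m = 0$ was disposed of in Remark~\ref{rem:00}). Because $\Re(s) \neq 0$, the spectral function $\gamma(r) = s + im\Omega(r)$ satisfies $|\gamma(r)| \ge |\Re(s)| > 0$ for all $r > 0$, so \eqref{eq:main2D} has locally smooth coefficients; in particular the candidate eigenfunction $u_r$ lies in $H^1(\Rp, r\dd r) \cap H^2_{\loc}(\Rp)$, as already noted after \eqref{eq:main}.

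First I would rewrite the equation in divergence form. Since $r^2 \partial_r^* u_r = r\,\partial_r(ru_r)$, the substitution $\phi = r u_r$ turns \eqref{eq:main2D} into
\begin{equation*}
  -\partial_r\bigl(r\phi'\bigr) + \frac{1}{r}\Bigl(m^2 + \frac{imr W'(r)}{\gamma(r)}\Bigr)\phi \,=\, 0\,, \qquad r > 0\,.
\end{equation*}
Next I would multiply this identity by $\bar\phi$, integrate over $\Rp$, and integrate by parts in the leading term. Granting that the boundary contribution $[\,r\phi'\bar\phi\,]_0^\infty$ vanishes -- the delicate point, see below -- this yields
\begin{equation*}
  \int_0^\infty \Bigl(r|\phi'|^2 + \frac{m^2}{r}|\phi|^2 + \frac{imW'(r)}{\gamma(r)}|\phi|^2\Bigr)\dd r \,=\, 0\,.
\end{equation*}
Writing $1/\gamma = \bar\gamma/|\gamma|^2$ with $\bar\gamma = \bar s - im\Omega$, one checks that $\Im\bigl(imW'/\gamma\bigr) = m\Re(s)\,W'/|\gamma|^2$, while the first two terms in the integrand are real. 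Taking imaginary parts therefore gives $m\Re(s)\int_0^\infty W'(r)|\phi(r)|^2/|\gamma(r)|^2\dd r = 0$, and since $m \neq 0$ and $\Re(s) \neq 0$ we conclude that
\begin{equation*}
  \int_0^\infty \frac{W'(r)}{|\gamma(r)|^2}\,|\phi(r)|^2\dd r \,=\, 0\,.
\end{equation*}

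The conclusion now comes from monotonicity. Since $W'$ has a constant sign on $\Rp$ and $|\gamma|^{-2} > 0$, the integrand above has a constant sign, so $\phi$ -- hence $u_r$ -- vanishes identically on the open set $\{r > 0 : W'(r) \neq 0\}$. If $W$ is strictly monotone this set is all of $\Rp$ and we are done; in the general monotone case the same conclusion follows by supplementing this with a continuation argument for the second-order equation \eqref{eq:main2D}, whose coefficients are locally smooth because $\gamma$ never vanishes (and the fully degenerate case $W' \equiv 0$ is immediate, since then $\phi$ solves the Euler equation $r^2\phi'' + r\phi' - m^2\phi = 0$ and no nontrivial combination of $r^{\pm|m|}$ makes $u_r = \phi/r$ belong to $H^1(\Rp, r\dd r)$). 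I expect the only genuinely technical point to be the vanishing of the boundary terms: at $r = \infty$ this rests on the decay encoded in $u_r \in H^1(\Rp, r\dd r)$, while at $r = 0$ it uses that $u_r$, being the radial component of a regular divergence-free velocity field of angular mode $m \neq 0$, satisfies $u_r(r) = O(r^{|m|-1})$, so that $r\phi'\bar\phi = O(r^{2|m|}) \to 0$; see Section~\ref{subsec61} for the relevant properties of the Biot--Savart law. Everything else is a routine computation of the same nature as in the proof of Proposition~\ref{prop:axi}.
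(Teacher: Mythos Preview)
Your proof is correct and follows essentially the same approach as the paper: multiply \eqref{eq:main2D} by $r\bar u_r$ (equivalently, your $\bar\phi$), integrate, and take the imaginary part to obtain the Rayleigh identity $m\Re(s)\int_0^\infty W'|\gamma|^{-2}|u_r|^2 r^2\dd r = 0$, then use monotonicity of $W$ together with the second-order ODE structure to conclude. The substitution $\phi = ru_r$ is purely cosmetic, and your more explicit handling of the boundary terms and of the degenerate case $W'\equiv 0$ only spells out details that the paper leaves implicit.
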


\begin{proof}
  Assume that $u_r \in H^1(\R_+,r\dd r)$ is a nontrivial solution of
  \eqref{eq:main2D} for some $s \in \C$ with $\Re(s) \neq
  0$.
  Multiplying both members of \eqref{eq:main2D} by $r\bar u_r$ and
  integrating over $\R_+$, we obtain the relation
\begin{equation}\label{eq:juillet}
  \int_0^\infty \biggl\{|\partial_r(ru_r)|^2 + \Bigl(m^2  + 
  \frac{imrW'(r)}{\gamma(r)}\Bigr) |u_r|^2\biggr\}r \dd r \,=\, 0\,.
\end{equation}
In particular, taking the imaginary part and using  
\eqref{eq:gammadef}, we find
\[
  m\Re(s) \int_0^\infty \frac{W'(r)}{|\gamma(r)|^2} |u_r|^2 r^2 \dd r 
  \,=\, 0\,,
\]
and since $W$ is monotone we conclude that $u_r$ is supported in the
set where $W'$ vanishes.  This is clearly impossible if $W$ is not
identically constant, because $u_r$ is a nontrivial solution of the
second order ODE \eqref{eq:main2D}. But if $W$ is a constant, equation
\eqref{eq:juillet} immediately gives the desired contradiction.
\end{proof}

\section{The eigenvalue equation for $m \neq 0$ and 
$k \neq 0$}\label{sec3}

In this section we begin our study of the eigenvalue equation 
\eqref{eq:main} in the general case where $m \neq 0$ and $k \neq 0$. 
In view of the symmetries \eqref{eq:symetriespectre}, we can assume 
without loss of generality that $m \ge 1$ and $k > 0$. We write 
the spectral parameter as $s = m(a-ib)$, where $a,b \in \R$, and 
we decompose 
\begin{equation}\label{eq:gamma1def}
  \gamma(r) \,=\, s + im\Omega(r) \,=\, im\ggamma(r)\,, 
  \qquad \hbox{where}\quad \ggamma(r) \,=\, \Omega(r) - b -ia\,.
\end{equation}
According to Proposition~\ref{prop:AB}, the essential spectrum 
of the operator $L_{m,k}$ is the set of all $s = m(a-ib)$ 
such that $a = 0$ and $b \in [0,1]$. Outside that set, the 
function $\ggamma$ is bounded away from zero for all $r > 0$ and 
the eigenvalue equation \eqref{eq:main} becomes
\begin{equation}\label{eq:main2}
   -\partial_r\bigl(\cA(r)\partial_r^* u_r\bigr) + \cB(r) u_r \,=\, 0\,,
\end{equation}
where  $\partial_r^* = \partial_r + \frac1r$ and
\begin{equation}\label{eq:ABdef}
  \cA(r) \,=\, \frac{r^2}{m^2 + k^2 r^2}\,, \qquad 
  \cB(r) \,=\, 1 - \frac{k^2}{m^2}\,\frac{\cA(r)\Phi(r)}{\ggamma(r)^2}
  + \frac{r}{\ggamma(r)}\partial_r\biggl(\frac{W(r)}{m^2+k^2 r^2}
  \biggr)\,.
\end{equation}

\subsection{Asymptotic behavior at the origin and at infinity}
\label{subsec31}

Our first goal is to determine the asymptotic behavior of the
solutions of the complex ODE \eqref{eq:main2} as $r \to 0$ and
$r \to \infty$, assuming that $a \neq 0$ or $b \notin [0,1]$.  We
start with the behavior at the origin. If $u_r$ is a solution of
\eqref{eq:main2}, we set
\[
  u_r(r) \,=\, \frac{1}{r}\,v\Bigl(\log\frac{1}{r}\Bigr)\,,
  \qquad r > 0\,,
\]
or equivalently $v(x) = e^{-x} u_r(e^{-x})$ for $x=\log(1/r) \in \R$. 
The new function $v : \R \to \R$ satisfies the equation
\begin{equation}\label{eq:veq0}
  v''(x) + 2 k^2 \cA(e^{-x}) v'(x) - \cC(x) v(x) \,=\, 0\,, \qquad
  \hbox{where}\quad \cC(x) \,=\, e^{-2x}\,\frac{\cB(e^{-x})}{
  \cA(e^{-x})}\,.
\end{equation}
In view of \eqref{eq:ABdef} we have $\cA(e^{-x}) = \cO(e^{-2x})$ and
$\cC(x) = m^2 + \cO(e^{-2x}) + \cO(e^{-x}|W'(e^{-x})|)$ as
$x \to +\infty$. Thus applying e.g. \cite[Theorem~3.8.1]{CoLe}, we
deduce that equation \eqref{eq:veq0} as a unique solution $v$ such
that $e^{mx}v(x) \to 1$ as $x \to + \infty$. Returning to the original
variables, we conclude that equation \eqref{eq:main2} has a unique
solution $u_r$ such that $r^{1-m} u_r(r) \to 1$ as $r \to 0$. This
solution $u_r$ and its first derivative $u_r'$ depend continuously on
the various parameters in \eqref{eq:main2}, including the vorticity
profile $W \in \mathcal{C}^1_b(\Rpb)$ and the spectral parameter
$s = m(a-ib) \in \C$, uniformly in $r$ on any bounded interval of the
form $(0,R)$. Any linearly independent solution of \eqref{eq:main2}
blows up like $r^{-1-m}$ as $r \to 0$, and is therefore not square
integrable near the origin.

We next study the behavior at infinity. If $u_r$ is a solution 
of \eqref{eq:main2}, we define $w(r) = r^{1/2} u_r(r)$ and obtain
for $w$ the equation
\begin{equation}\label{eq:weq0}
  w''(r) + \frac{\cA'(r)}{\cA(r)}w'(r) - \cD(r) w(r) \,=\, 0\,,
  \quad \hbox{where}\quad \cD(r) \,=\, \frac{\cB(r)}{\cA(r)}
  + \frac{3}{4r^2} - \frac{1}{2r}\frac{\cA'(r)}{\cA(r)}\,.
\end{equation}
We have $\cA'(r)/\cA(r) = \cO(r^{-3})$ and
$\cD(r) = k^2 + \cO(r^{-2})$ as $r \to \infty$, because
Remark~\ref{rem:limits} implies that $W(r) = \cO(r^{-4})$,
$W'(r) = \cO(r^{-5})$, and $\Phi(r) = \cO(r^{-6})$ in that
limit. Invoking again \cite[Theorem~3.8.1]{CoLe}, we deduce that
\eqref{eq:weq0} has a unique solution $w$ such that
$e^{kr} w(r) \to 1$ as $r \to \infty$, hence \eqref{eq:main2} has a
unique solution satisfying $r^{1/2} e^{kr} u_r(r) \to 1$ as
$r \to \infty$.  This solution and its first derivative depend
continuously on the parameters in \eqref{eq:main2}, uniformly on the
interval $(R,\infty)$ for any $R > 0$. Any linearly independent
solution of \eqref{eq:main2} grows like $r^{-1/2}e^{kr}$ as
$r \to \infty$, and is therefore not square integrable.

Summarizing, we have shown: 

\begin{lem}\label{lem:simple}
If $m \neq 0$ and $k \neq 0$, any eigenvalue of the linear operator 
$L_{m,k} \in \cL(X_{m,k})$ outside the essential spectrum \eqref{eq:spAm}
is necessarily simple. Moreover, if $u_r$ is the radial velocity profile 
of the corresponding eigenfunction, there exist $\alpha, \beta \in \C$ 
such that
\[
  \lim_{r \to 0} r^{1-|m|} u_r(r) \,=\, \alpha\,, \qquad
  \hbox{and}\quad 
  \lim_{r \to \infty} r^{1/2} e^{|k|r} u_r(r) \,=\, \beta\,.
\]
\end{lem}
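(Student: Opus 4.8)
The plan is to assemble the statement from the two endpoint analyses carried out above, which already contain all the analytic content. By the symmetries \eqref{eq:symetriespectre} recalled at the start of this section we may assume $m\ge1$ and $k>0$; then \eqref{eq:main2} is a regular linear second-order ODE on $\Rp$, since its leading coefficient $\cA(r)=r^2/(m^2+k^2r^2)$ is smooth and nonvanishing for $r>0$, so its solution space has complex dimension two. Moreover, the chain of manipulations leading from \eqref{eq:eigeq} to \eqref{eq:main} is reversible: given a solution $u_r$ of \eqref{eq:main2}, one recovers $u_z$ by dividing \eqref{eq:sys4} by the nonvanishing coefficient of $u_z$, then $u_\theta$ from \eqref{eq:sys1}, and finally $\omega$ from \eqref{eq:vort_cyl}; in particular, for $s$ outside the essential spectrum \eqref{eq:spAm} the map $\omega\mapsto u_r$ from the eigenspace of $L_{m,k}$ at $s$ into the solution space of \eqref{eq:main2} is injective, and its image consists of solutions lying in $H^1(\Rp,r\dd r)\subset L^2(\Rp,r\dd r)$ (Proposition~\ref{prop:estBS}).

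Next I would identify the square-integrable solutions of \eqref{eq:main2} near each endpoint. Near $r=0$, the substitution $u_r(r)=r^{-1}v(\log(1/r))$ turns \eqref{eq:main2} into \eqref{eq:veq0}, and \cite[Theorem~3.8.1]{CoLe} applies because $2k^2\cA(e^{-x})=\cO(e^{-2x})$ and $\cC(x)-m^2=\cO(e^{-2x})+\cO(e^{-x}|W'(e^{-x})|)$ are integrable at $+\infty$ — the last bound using only $W\in\cC^1_b(\Rpb)$, so that $e^{-x}|W'(e^{-x})|\le\|W'\|_\infty e^{-x}$. Hence the solutions of \eqref{eq:main2} square integrable near the origin form exactly the line $\C\,u_r^0$, where $r^{1-m}u_r^0(r)\to1$; any solution not proportional to $u_r^0$ grows like $r^{-1-m}$, which is not in $L^2(\Rp,r\dd r)$ near $0$ because $\int_0 r^{-1-2m}\dd r=+\infty$ for $m\ge1$. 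Symmetrically, near $r=\infty$ the substitution $w(r)=r^{1/2}u_r(r)$ turns \eqref{eq:main2} into \eqref{eq:weq0}, and \cite[Theorem~3.8.1]{CoLe} applies because $\cA'(r)/\cA(r)=\cO(r^{-3})$ and $\cD(r)-k^2=\cO(r^{-2})$ are integrable at $+\infty$, using the algebraic decay $W=\cO(r^{-4})$, $W'=\cO(r^{-5})$, $\Phi=\cO(r^{-6})$ from Remark~\ref{rem:limits}; so the solutions square integrable near infinity form exactly the line $\C\,u_r^\infty$, where $r^{1/2}e^{kr}u_r^\infty(r)\to1$, any solution not proportional to $u_r^\infty$ growing like $r^{-1/2}e^{kr}$ and hence not in $L^2(\Rp,r\dd r)$ near $\infty$.

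To conclude, a nonzero $H^1(\Rp,r\dd r)$-solution of \eqref{eq:main2} must simultaneously be a multiple of $u_r^0$ and of $u_r^\infty$, so the eigenspace of $L_{m,k}$ at $s$ has dimension at most one; when $s$ is an eigenvalue this space is nonzero, hence of dimension exactly one, which proves simplicity. For that eigenfunction the radial profile $u_r$ equals a nonzero multiple of $u_r^0$ and of $u_r^\infty$, so the limits $\alpha=\lim_{r\to0}r^{1-m}u_r(r)$ and $\beta=\lim_{r\to\infty}r^{1/2}e^{kr}u_r(r)$ exist and are nonzero; reinstating the reduction to $m\ge1$, $k>0$ through \eqref{eq:symetriespectre} replaces $m,k$ by $|m|,|k|$ and gives the statement as written.

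As the heavy lifting was already done in the asymptotic discussion above, I do not expect a genuine obstacle here: the only points demanding a little care are the verification that the perturbation terms in \eqref{eq:veq0} and \eqref{eq:weq0} are $L^1$ on the relevant half-lines — near $0$ this is precisely where $W\in\cC^1_b(\Rpb)$ enters, and near $\infty$ where Remark~\ref{rem:limits} enters — together with the elementary observation that the dominant solutions genuinely fail to be square integrable, which is immediate from their explicit growth rates.
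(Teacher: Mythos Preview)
Your proposal is correct and follows essentially the same approach as the paper: the asymptotic analysis of Section~\ref{subsec31} via the substitutions $u_r(r)=r^{-1}v(\log(1/r))$ near the origin and $w(r)=r^{1/2}u_r(r)$ near infinity, each reduced to \cite[Theorem~3.8.1]{CoLe}, is exactly what the paper does, and the lemma is stated in the paper simply as a summary of that discussion. Your write-up is slightly more explicit than the paper's in two places---you spell out the injectivity of $\omega\mapsto u_r$ via the reversibility of \eqref{eq:sys1}--\eqref{eq:sys5}, and you check that the non-$L^2$ solutions genuinely fail square integrability---but these are the natural connecting steps the paper leaves to the reader.
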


\subsection{Eigenvalues on the imaginary axis: Kelvin waves}
\label{subsec32}

In a second step, we consider the eigenvalues of the linearized
operator $L_{m,k}$ on the imaginary axis. The corresponding
eigenfunctions describe ``vibration modes'' of the columnar vortex,
and were first studied by Kelvin \cite{Ke} in the particular case of
Rankine's vortex.  Strictly speaking, this subsection is not part of
the proof of Theorem~\ref{thm:main1}, but in view of the physical
relevance of the Kelvin waves it is worth mentioning a few results
that can be rigorously established.

In what follows, we thus assume that $a = 0$ and $b \notin (0,1)$, so
that $\ggamma(r) \neq 0$ for all $r > 0$. In that case equation
\eqref{eq:main2} has real coefficients, and its solutions can be
studied using standard ODE techniques. For simplicity we suppose here
that the vorticity profile $W \in \WW$ is the restriction to $\Rp$ 
of a smooth even function on $\R$ satisfying $W''(0) < 0$, as it 
is the case for the Kaufmann-Scully vortex \eqref{eq:KSvortex} or 
the Lamb-Oseen vortex \eqref{eq:LOvortex}. We consider separately 
the regimes where $b \ge 1$ and $b \le 0$.

\begin{lem}\label{lem:b>1}
For any $m \neq 0$ and $k \neq 0$, the set of all $b > 1$ such that 
Eq.~\eqref{eq:main2} with $a = 0$ has a nontrivial solution in 
$H^1(\Rp,r \dd r)$ is a countable family which accumulates only
at $1$. Moreover, Eq.~\eqref{eq:main2} has no nontrivial 
solution in $H^1(\Rp,r \dd r)$ if $a = 0$ and $b = 1$. 
\end{lem}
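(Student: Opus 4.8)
The plan is to establish the two assertions by rather different means: the countability statement for $b > 1$ follows quickly from the spectral picture of Section~\ref{sec2}, while the threshold $b = 1$ requires a precise description of the solutions of \eqref{eq:main2} near $r = 0$, which is where the real work lies. When $a = 0$ and $b > 1$ the function $\ggamma(r) = \Omega(r) - b$ is real and bounded away from zero on $\Rpb$, so by the reduction of Section~\ref{sec2} a nontrivial $H^1(\Rp, r\dd r)$ solution of \eqref{eq:main2} exists if and only if $s = -imb$ is an eigenvalue of $L_{m,k}$ in $X_{m,k}$. For $b > 1$ one has $-imb \notin I_m$, so, as recalled after Proposition~\ref{prop:AB}, such points are isolated eigenvalues of finite multiplicity that can accumulate only on the essential spectrum $I_m$; since they lie on the ray $\{-imb : b > 1\}$, which meets $I_m$ only at the endpoint $-im$, and since $\sigma(L_{m,k})$ is bounded ($L_{m,k} \in \cL(X_{m,k})$), the set of admissible $b$ is a discrete subset of $(1,\infty)$, bounded above, hence at most countable with $1$ as its only possible accumulation point. (Boundedness above can also be deduced from the identity $\int_0^\infty \cA|\partial_r^* u_r|^2 r\dd r + \int_0^\infty \cB |u_r|^2 r\dd r = 0$, obtained by multiplying \eqref{eq:main2} by $r\bar u_r$ and integrating by parts --- the boundary terms vanishing by Lemma~\ref{lem:simple} --- together with the fact that $\cA > 0$ and $\cB(r,b) \to 1$ uniformly in $r$ as $b \to +\infty$.)

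For the threshold $b = 1$ one has $\ggamma(r) = \Omega(r) - 1 < 0$ for $r > 0$ but $\ggamma(0) = 0$; since $W$ is smooth and even with $W''(0) < 0$, formula \eqref{eq:Omrep} gives $\Omega(r) - 1 = c_0 r^2 + O(r^4)$ with $c_0 = \tfrac18 W''(0) < 0$. Inserting this into \eqref{eq:ABdef}, using $\Phi(0) = W(0)^2 = 4$ and $\cA(r) = \tfrac{r^2}{m^2}(1 + O(r^2))$, and observing that the $\ggamma^{-1}$-term in $\cB$ contributes only at order $O(1)$ because $W(r)/(m^2 + k^2 r^2)$ is an even function of $r$, one finds $\cB(r) = -C r^{-2} + O(1)$ as $r \to 0$, for a positive constant $C = 4k^2/(m^4 c_0^2)$. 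The substitution $v = r u_r$ turns the leading part of \eqref{eq:main2} into $v'' + \tfrac1r v' + \bigl(Cm^2 r^{-4} + O(r^{-2})\bigr)v = 0$, and the inversion $x = 1/r$ transforms this into Bessel's equation of order zero of large argument $\sqrt{C}\,m\,x$, perturbed by terms that are integrable as $x \to +\infty$. The same asymptotic-integration argument as in Section~\ref{subsec31} (now in the oscillatory regime) then shows that, as $r \to 0$, every nontrivial solution of \eqref{eq:main2} at $b = 1$ satisfies
\[
  u_r(r) \,=\, \frac{1}{\sqrt r}\Bigl(A_+\,e^{i\sqrt{C}\,m/r} + A_-\,e^{-i\sqrt{C}\,m/r}\Bigr)\bigl(1 + o(1)\bigr)\,, \qquad (A_+,A_-) \neq (0,0)\,,
\]
together with the corresponding expansion for $u_r'$, whose leading term is of order $r^{-5/2}$ with a non-degenerate oscillation. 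Consequently $u_r' \notin L^2\bigl((0,1), r\dd r\bigr)$ --- the mean square of the oscillating factor over each period being $|A_+|^2 + |A_-|^2 > 0$ --- so no nontrivial solution belongs to $H^1(\Rp, r\dd r)$, which is the second assertion.

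The main obstacle is entirely the case $b = 1$: the naive energy identity is useless there, since both $\int_0^1 \cA|\partial_r^* u_r|^2 r\dd r$ and $\int_0^1 \cB|u_r|^2 r\dd r$ diverge at the origin, so one genuinely needs the oscillatory asymptotics above. The two technical checks are that the relevant error control function, after reduction to normal form, is $O(r^2)$ near $0$ and hence integrable (so the method of Section~\ref{subsec31} carries over), and that $(A_+, A_-) \neq (0,0)$ for every nontrivial solution, which holds because the map from the Bessel data to $(A_+, A_-)$ is a linear isomorphism. The hypothesis $W''(0) < 0$ enters in an essential way: it fixes the coefficient of the $r^{-2}$-singularity of $\cB$ to be a negative constant, and it is precisely this that forces the infinite oscillation of the solutions at the origin and thereby rules them out of the energy space.
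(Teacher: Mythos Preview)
Your treatment of the endpoint $b=1$ is essentially the paper's: both arguments reduce \eqref{eq:main2} near the origin to an oscillatory equation via the inversion $x=1/r$, obtain two independent solutions behaving like $r^{-1/2}e^{\pm i\Theta/r}$, and observe that no nontrivial combination lies in $H^1(\Rp,r\dd r)$. Your computation of the constant $\sqrt{C}\,m$ agrees with the paper's $\Theta = 2k/(m\rho)$, and the argument is sound.

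The gap is in the first assertion. Your spectral argument shows only that the set of admissible $b>1$ is \emph{discrete and bounded}, hence at most countable with $1$ as the sole \emph{possible} accumulation point. But the lemma asserts more: the family \emph{is} countable (infinite) and \emph{does} accumulate at~$1$ --- this is precisely the existence of the Kelvin modes announced in Remark~\ref{rem:morespec}. The paper proves this existence by a semiclassical argument: writing $b=1+h^2$ and performing the change of variables $u_r = r^m\cA^{-1/2}v$, $w(s)=v(hs)$, one is led to a Schr\"odinger operator $\LL_h$ whose potential has a strictly negative principal part $-\tfrac{4k^2}{m^2}(1+\rho s^2)^{-2}$ and whose essential spectrum begins at $k^2h^4>0$. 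Standard semiclassical results then guarantee that $\LL_h$ has $\cO(h^{-1})$ negative eigenvalues, and as $h\to 0$ these must cross zero infinitely often, producing a sequence $b_n=1+h_n^2\to 1$ of genuine eigenvalues. Nothing in your argument supplies such a mechanism; the abstract perturbation theory you invoke gives no lower bound on the number of eigenvalues, and indeed cannot distinguish between an infinite family and an empty set.
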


\begin{proof}
When $b > 1$, we apply to Eq.~\eqref{eq:main2} the change of variables 
$u_r = r^m \cA(r)^{-1/2} v$, where $\cA(r)$ is as in \eqref{eq:ABdef}. 
A direct calculation shows that the new function $v$ satisfies
\begin{equation}\label{eq:veq1}
  -\partial_r^2 v - \frac{2m{+}1}{r}\,\partial_r v + 
  \Bigl(k^2 + \cF(r) + \cG(r)\Bigr)v \,=\, 0\,, \qquad r > 0\,,
\end{equation}
where 
\[
  \cF(r) \,=\, \frac{k^2 \cA(r)}{r^2}\Bigl(-2+3k^2 \cA(r)\Bigr)\,,  \qquad
  \cG(r) \,=\, - \frac{k^2}{m^2}\,\frac{\Phi(r)}{\ggamma(r)^2}
  + \frac{r}{\cA(r)\ggamma(r)}\partial_r\biggl(\frac{W(r)}{m^2+k^2 r^2}
  \biggr)\,.
\]
We assume that $b = 1 + h^2$ for some small $h > 0$, and we expand
\[
  -\ggamma(r) \,=\, 1 + h^2 - \Omega(r) \,=\, h^2 + \rho r^2 + 
  \cO(r^4)\,, \quad \hbox{as } r \to 0\,,
\]
where $\rho = -\Omega''(0)/2 = -W''(0)/8 > 0$. If $r = hs$, 
it is straightforward to verify that
\[
  h^4\Bigl(k^2 + \cF(hs) + \cG(hs)\Bigr) \,=\, -\frac{4k^2}{m^2}\,
  \frac{1}{(1+\rho s^2)^2} + \cO(h^2)\,, \quad \hbox{as } h \to 0\,,
\]
uniformly for all $s > 0$. Thus the new function $w$ defined 
by setting $w(s) = v(hs)$ satisfies the semi-classical 
Schr\"odinger equation
\begin{equation}\label{eq:weq1}
  \LL_h w \,:=\, 
  -h^2\Bigl(\partial_s^2 w + \frac{2m{+}1}{s}\,\partial_s w\Bigr) 
  -\frac{4k^2}{m^2}\,\frac{w}{(1+\rho s^2)^2} + U(s,h)w \,=\, 0\,,
\end{equation}
for all $s > 0$, where $U(s,h) = \cO(h^2)$ as $h \to 0$, uniformly in
$s$. Since the principal part of the potential term in \eqref{eq:weq1}
is negative, standard results in semiclassical analysis \cite{Si,HS}
show that the operator $\LL_h$ has negative eigenvalues if $h$ is
sufficiently small, and that the number of these bound states is
$\cO(h^{-1})$ as $h \to 0$. Moreover, as $\cF(r) + \cG(r) \to 0$ as
$r \to \infty$, the bottom of the essential spectrum of $\LL_h$ is
$k^2 h^4 > 0$ for any $h > 0$. These two observations together imply
that $\LL_h$ has a zero eigenvalue for a countable sequence
$h_n \to 0$, and returning to the original variables we conclude that
Eq.~\eqref{eq:main2} with $a = 0$ has a nontrivial solution in
$H^1(\Rp,r \dd r)$ for a sequence $b_n = 1 + h_n^2 \to 1$.

When $b = 1$, namely $h = 0$, the leading term in the function 
$\cB(r)/\cA(r)$ satisfies
\[
  \frac{k^2}{m^2}\,\frac{\Phi(r)}{\ggamma(r)^2} \,=\, 
  \frac{\Theta^2}{r^4}\,\bigl(1+\cO(r^2)\bigr) \quad
  \hbox{as } r \to 0\,, \qquad \hbox{where}\quad
  \Theta^2 \,=\, \frac{4k^2}{m^2\rho^2}\,.
\]
To investigate the behavior of the solutions of \eqref{eq:main2} near 
$r = 0$ in that case, it is useful make the change of variables 
$u_r(r) = r^{-1/2}U(1/r)$. Setting $x = 1/r$, this leads to an equation 
of the form
\begin{equation}\label{eq:Ueq1}
  U''(x) + \tilde \cC(1/x)U'(x) + \tilde \cD(1/x)U(x) \,=\, 0\,,
  \qquad x > 0\,,
\end{equation}
where $\tilde \cC(r) = \cO(r^3)$ and $\tilde \cD(r) = \Theta^2 
+ \cO(r^2)$ as $r \to 0$. Using \cite[Theorem~3.8.1]{CoLe}, 
we deduce that Eq.~\eqref{eq:Ueq1} has two linearly independent 
solutions satisfying $U_\pm(x) \,=\, e^{\pm i\Theta x}\bigl(1+\cO(1/x)\bigr)$ 
as $x \to +\infty$. If we now return to the original variables, we 
conclude that Eq.~\eqref{eq:main2} has two linearly independent solutions
$\phi_\pm$ such that
\begin{equation}\label{eq:devsingsing}
  \phi_\pm(r) \,=\, \frac{1}{\sqrt{r}}\,e^{\pm i\Theta/r}
  \Bigl(1 + \cO(r)\Bigr)\,, \qquad \hbox{as } r \to 0\,.
\end{equation}
As is easily verified, no nontrivial linear combination of $\phi_+$ 
and $\phi_-$ can belong to $H^1(\Rp,r\dd r)$, which means that 
Eq.~\eqref{eq:main2} has no nontrivial solution if $a = 0$ and
$b = 1$. 
\end{proof}

The situation is completely different when $b \le 0$. 

\begin{lem}\label{lem:b<0}
For any $m \neq 0$ and $k \neq 0$, the set of all $b \le 0$ such 
that Eq.~\eqref{eq:main2} with $a = 0$ has a nontrivial solution 
in $H^1(\Rp,r \dd r)$ is finite. Moreover\:\\[1mm]
1) This set is nonempty for a finite number of values of $m$ 
only;\\[1mm]
2) For both the Kaufmann-Scully vortex \eqref{eq:KSvortex} and 
the Lamb-Oseen vortex \eqref{eq:LOvortex}, Eq.~\eqref{eq:main2} has\\
\null\hspace{12pt} no nontrivial solution when $a = 0$ and $b \le 0$ 
if $|m| \ge 2$. 
\end{lem}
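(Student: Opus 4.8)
The plan is to analyze the eigenvalue equation \eqref{eq:main2} with $a=0$ and $b \le 0$, where $\ggamma(r) = \Omega(r) - b$ is real, positive, and bounded below by $-b \ge 0$ and above by $\Omega(0) - b = 1-b$. First I would reduce everything to an oscillation/Sturm theory problem: after the substitution $w = r^{1/2}u_r$ used in \eqref{eq:weq0}, equation \eqref{eq:main2} becomes a real Schr\"odinger-type equation $w'' + (\cA'/\cA)w' = \cD(r)w$, and by Lemma~\ref{lem:simple} a nontrivial $H^1$ solution must decay like $r^{1/2-|m|}$ at $0$ and like $r^{-1/2}e^{-|k|r}$ at $\infty$. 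Such a solution is a genuine $L^2$ eigenfunction, so a standard Sturm-Liouville counting argument applies: the number of admissible values of $b \le 0$ equals the number of negative eigenvalues of a fixed Schr\"odinger operator (obtained by freezing $b=0$ and treating $-b$ as the eigenvalue parameter), which is finite because the relevant potential is bounded and decays at infinity. This gives the main finiteness claim.

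Next I would prove part 1), that the set of admissible $b \le 0$ is nonempty only for finitely many $m$. Here the key observation is a lower bound on the potential $\cD(r)$, or equivalently a coercivity estimate on the quadratic form associated with \eqref{eq:main2}. Multiplying \eqref{eq:main2} by $r\bar u_r$ and integrating, using that $\ggamma(r) \ge \Omega(0) - b \cdot(\text{something})$... more carefully: with $a=0$, $b\le 0$, one has $\ggamma(r) = \Omega(r)-b \ge \Omega(r) > 0$, and the problematic term $\frac{r}{\ggamma}\partial_r(W/(m^2+k^2r^2))$ together with $-\frac{k^2}{m^2}\frac{\cA\Phi}{\ggamma^2}$ can be controlled. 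The point is that for $|m|$ large the term $m^2 \cA(r) = \frac{m^2 r^2}{m^2+k^2r^2} \to 1$ uniformly, while the destabilizing terms involving $W$, $W'$, $\Phi$ are $\cO(1/m^2)$ relative to the stabilizing part, since $\Phi = 2\Omega W$ is bounded and $\Omega' < 0$ keeps $\ggamma$ away from zero uniformly in $r$ when $b \le 0$. So the quadratic form is positive definite for $|m|$ large, forcing $u_r \equiv 0$; only finitely many $m$ can evade this.

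For part 2), the two explicit profiles, I would exploit that $\Omega$ is known in closed form. For Kaufmann-Scully, $\Omega(r) = 1/(1+r^2)$ so $\ggamma(r) = 1/(1+r^2) - b$, and $\Phi(r) = \frac{4}{(1+r^2)^3}$, $W'(r) = \frac{-4r}{(1+r^2)^3}$; for Lamb-Oseen the analogous expressions are available. In both cases, when $|m| \ge 2$ one wants to show the quadratic form $\int_0^\infty \{ \cA|\partial_r^* u_r|^2 + \cB|u_r|^2\}r\dd r$ (real, since $a=0$, $b\le0$) has no nontrivial $H^1$ zero. The cleanest route is a pointwise lower bound $\cB(r) \ge 0$ for $|m| \ge 2$, $b \le 0$ — this is a one-variable inequality that can be checked by direct estimation, using $m^2 \ge 4$, $k^2 > 0$, and the monotonicity of $\Omega$ — combined with the fact that $\cA > 0$; then \eqref{eq:main2} tested against $r\bar u_r$ gives immediately $u_r \equiv 0$. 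If $\cB$ is not everywhere nonnegative one instead needs a Hardy-type inequality to absorb the negative part of $\cB$ into the gradient term $\int \cA |\partial_r^* u_r|^2 r\dd r$, using the behavior $r^{1-|m|}$ at the origin which improves with $|m|$.

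The main obstacle I anticipate is part 2): verifying the pointwise (or Hardy-compensated) positivity of $\cB(r)$ for the explicit profiles when $|m| \ge 2$ and $b \le 0$. The term $\frac{r}{\ggamma}\partial_r(W/(m^2+k^2r^2))$ is not sign-definite, and $b$ ranges over an unbounded interval $(-\infty,0]$, so one must extract uniform-in-$b$ bounds; this requires carefully tracking how $\ggamma(r)$ and $W'(r)$ interact, and the estimate is genuinely tighter for $|m|=2$ than for larger $m$. The finiteness claims in the first paragraph and in part 1) are, by contrast, soft consequences of spectral theory for Schr\"odinger operators with bounded decaying potentials and should go through routinely.
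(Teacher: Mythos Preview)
Your overall strategy matches the paper's: test \eqref{eq:main2} against $r\bar u_r$ to get \eqref{eq:HG0}, then show $\cB(r) \ge 0$ pointwise to force $u_r \equiv 0$, first for $|m|$ large (part~1) and then for the explicit profiles with $|m| \ge 2$ (part~2). For part~2 the paper indeed verifies the sharper pointwise bound $\cB(r) \ge 1 - 4/m^2$ by direct calculation (Section~\ref{subsec67}), so no Hardy inequality is needed.

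There is, however, one real gap. Your finiteness argument --- ``freeze $b=0$ and treat $-b$ as the eigenvalue parameter'' --- does not work as stated, because $b$ enters $\cB$ \emph{nonlinearly} through $\ggamma(r) = \Omega(r) - b$ in the denominators; the problem is not a linear pencil in $b$. What the paper uses instead is the observation that, for $a=0$ and $b \le 0$, the coefficient $\cB(r)$ is a \emph{monotone increasing} function of $|b|$ (and of $|m|$): since $\ggamma = \Omega + |b|$, both negative terms in \eqref{eq:ABdef} shrink as $|b|$ grows. This monotonicity does three jobs at once. First, combined with the estimate $1 - \cB(r) \le C\,m^{-2}(1+|b|)^{-1}$, it yields $\cB \ge 0$ once $|b|$ exceeds some $b_0(m)$, confining the admissible $b$ to a compact interval. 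Second, on that interval Sturm--Liouville oscillation theory (for a potential depending monotonically on a parameter) gives finiteness of the admissible $b$. Third --- and this addresses exactly the obstacle you anticipate --- it removes the need for uniformity over the unbounded range $b \in (-\infty,0]$ in part~2: since $\cB$ increases with $|b|$, it suffices to verify $\cB(r) \ge 0$ at the single worst value $b=0$, which is a one-variable inequality in $r$ alone.
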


\begin{proof}
If $a = 0$ and $b \le 0$, then $\ggamma(r) = \Omega(r) - b 
= \Omega(r) + |b| > 0$. In this region, it is easy to verify that 
the coefficient $\cB(r) < 1$ defined in \eqref{eq:ABdef} is an 
increasing function of both parameters $|m|$ and $|b|$. Moreover, 
using the bounds on $\Omega$, $W$, and $\Phi$ which follow from 
assumptions H1, H2, see Remark~\ref{rem:limits}, we obtain the following 
estimate\:
\[
  \sup_{r > 0} \,\Bigl(1 - \cB(r)\Bigr) \,\le\, \frac{C}{m^2}
  \,\sup_{r > 0}\biggl(\frac{\Phi(r)}{(\Omega(r) + |b|)^2} + \frac{
  W(r) + r|W'(r)|}{\Omega(r) + |b|}\biggr) \,\le\, \frac{C}{m^2}\,
  \frac{1}{1+|b|}\,,
\]
where the constant $C$ depends only on the vorticity profile. As a 
consequence, we see that $\cB(r) \ge 0$ when $|m|$ or $|b|$ is large
enough, and this implies that Eq.~\eqref{eq:main2} has no nontrivial
solution, see \eqref{eq:HG0} below. It follows that the linearized 
operator $L_{m,k}$ can have eigenvalues $s = m(a-ib)$ with $a = 0$ and 
$b \le 0$ only for a finite number of values of $m \in \Z$, and 
using Sturm-Liouville theory we also conclude that, for any $m \in \Z$, 
there exist only finitely many eigenvalues with $a = 0$ and $b \le 0$. 
Interestingly enough, for both the Kaufmann-Scully vortex \eqref{eq:KSvortex} 
and the Lamb-Oseen vortex \eqref{eq:LOvortex}, an explicit calculation 
which is reproduced in Section~\ref{subsec67} shows that $\cB(r) 
\ge 1 - 4/m^2$, so that there are no eigenvalues in this region when 
$|m| \ge 2$.
\end{proof}

As a final comment, we mention that, when $m = \pm 1$, there are 
always eigenvalues with $a = 0$ and $b \le 0$. Indeed, due to 
translation invariance, the operator $L_{m,0}$ has a zero eigenvalue
with eigenfunction
\[
  u \,=\, -im\Omega\,e_r + (W{-}\Omega)\,e_\theta\,, \qquad
  \omega \,=\, W'\,e_z\,.
\]
That eigenvalue bifurcates out of the essential spectrum as the 
parameter $k$ varies, so that $L_{m,k}$ has at least one eigenvalue 
$s = -imb$ with $b < 0$ if $|m| = 1$ and $|k|$ is small enough.

\subsection{Eigenvalues outside the imaginary axis: 
Howard identities}\label{subsec33}

For our next step in the study of the eigenvalue equation
\eqref{eq:main2}, we use a classical method originally due to Rayleigh
\cite{Ra1} to show that the linearized operator $L_{m,k}$ has no
spectrum in large regions of the complex plane, which are depicted in
Fig.~1. The idea is to derive integral identities satisfied by the
hypothetical eigenfunctions, which eventually lead to a contradiction.

Assume thus that the eigenvalue equation \eqref{eq:main2} has a
nontrivial solution $u_r \in H^1(\R_+,r\dd r)$, for some
$s = m(a-ib) \in \C$, where $a \neq 0$. Multiplying both sides of
\eqref{eq:main2} by $r\bar u_r$ and integrating over $\R_+$, we easily
obtain, using the results of Section~\ref{subsec31}\:
\begin{equation}\label{eq:HG0} 
  \int_0^\infty \Bigl(\cA(r)|\partial_r^* u_r|^2 + \cB(r) |u_r|^2 
  \Bigr)r \dd r \,=\, 0\,. 
\end{equation}
Note that the function $\cB$ is complex-valued if $a \neq 0$, 
so that \eqref{eq:HG0} gives two integral relations for the 
radial velocity $u_r$. For instance, taking the imaginary part of 
\eqref{eq:HG0} and using the expression \eqref{eq:ABdef} of 
$\cB$, we obtain the identity
\begin{equation}\label{eq:HG0im}
  a\int_0^\infty \biggl\{\frac{2(b-\Omega(r))}{(a^2 + (\Omega{-}
  b)^2)^2}\,\frac{k^2}{m^2}\,\cA(r)\Phi(r) + \frac{r}{a^2 + 
  (\Omega{-}b)^2}\partial_r \Bigl(\frac{W(r)}{m^2+k^2r^2}
  \Bigr)\biggr\}|u_r|^2 r\dd r  \,=\, 0\,.
\end{equation}
This relation is identically satisfied if $a = 0$, but gives useful
information if $a \neq 0$. For instance, if $b \le 0$, then 
$b - \Omega(r) < 0$ for all $r > 0$, and assumption H1 
implies that 
\[
   \Phi(r) \,>\, 0 \quad \hbox{and} \quad \partial_r \Bigl(\frac{ 
  W(r)}{m^2+k^2r^2}\Bigr) \,<\, 0\,, \qquad \hbox{for all }
  r > 0\,.
\]
Thus the integrand in \eqref{eq:HG0im} is nonpositive and not
identically zero, hence equality \eqref{eq:HG0im} cannot hold. We
conclude that the operator $L_{m,k}$ has no eigenvalue $s = m(a-ib)$
with $a \neq 0$ and $b \le 0$, see Fig.~1. Unfortunately, we do not
know how to use the relation \eqref{eq:HG0} to preclude the existence
of eigenvalues of $L_{m,k}$ in other regions of the complex plane.

The following approach, due to Howard \cite{How, HG}, 
provides other identities similar to \eqref{eq:HG0}, which give
further information on the possible eigenvalues. Define
$u_r = q(r) v_r$, where $q$ is a (real or complex valued)
weight function satisfying $q(r) \neq 0$ for all $r > 0$. 
Then $v_r$ is a solution to
\begin{equation}\label{eq:main_vr}
  - \partial_r \Bigl(q(r)^2 \cA(r) \partial_r^* v_r\Bigr) + 
  \cE(r) v_r \,=\, 0\,,
\end{equation}
where
\[
  \cE(r) \,=\, q(r)^2 \cB(r) - q(r)q'(r)\Bigl(\cA'(r) - 
  \frac{\cA(r)}{r}\Bigr) -q(r)q''(r)\cA(r)\,.
\]
Multiplying both sides of \eqref{eq:main_vr} by $r\bar v_r$ and 
integrating over $\R_+$, we deduce
\begin{equation}\label{eq:weighted}
  \int_0^\infty \Bigl(q(r)^2 \cA(r) |\partial_r^* v_r|^2 +  
  \cE(r) |v_r|^2\Bigr) r\dd r \,=\, 0\,.
\end{equation}
If $q$ is real-valued, then $q^2 |v_r|^2 = |u_r|^2$ and taking 
the imaginary part of \eqref{eq:weighted} we recover \eqref{eq:HG0im}, 
but the real part gives new information. If $q$ is complex, 
both the real and the imaginary parts of \eqref{eq:weighted} 
provide new information. 

Following \cite{HG}, we now consider in more detail some interesting 
particular cases of \eqref{eq:weighted}.

\medskip\noindent{\bf Choice 1\:} $q(r) = \ggamma(r)$. We then have
\begin{align*}
  \cE(r) \,&=\, \ggamma(r)^2 - \frac{k^2}{m^2}\,\cA(r)\Phi(r) 
  + r\ggamma(r)\partial_r\Bigl(\frac{W(r)}{m^2+k^2r^2}\Bigr) \\
  & \quad -\ggamma(r)\ggamma'(r)\Bigl(\cA'(r) - \frac{\cA(r)}{r}\Bigr)
  -\ggamma(r)\ggamma''(r)\cA(r)\,.
\end{align*}
Since $r\ggamma'(r) = r\Omega'(r) = W(r) - 2\Omega(r)$, we 
observe that
\begin{equation}\label{eq:curious}
  \ggamma'(r)\Bigl(\cA'(r) - \frac{\cA(r)}{r}\Bigr) + 
  \ggamma''(r)\cA(r) \,=\,  r\partial_r\Bigl(\frac{\ggamma'(r)
  \cA(r)}{r}\Bigr) \,=\, r\partial_r\Bigl(\frac{W(r) -2\Omega(r)}{
  m^2+k^2 r^2}\Bigr)\,,
\end{equation}
and we deduce the following simpler expression of $\cE(r)$\:
\[
  \cE(r) \,=\, \ggamma(r)^2 - \frac{k^2}{m^2}\,\cA(r)\Phi(r)
  + 2r\ggamma(r)\partial_r\Bigl(\frac{\Omega(r)}{m^2+k^2r^2}\Bigr)\,.  
\]
In particular, taking the imaginary part of \eqref{eq:weighted}, we 
obtain the identity
\begin{equation}\label{eq:HG1}
  2a \int_0^\infty \biggl\{(b-\Omega(r))\Bigl(\cA(r)|\partial_r^* v_r|^2 +  
  |v_r|^2\Bigr)  - r\partial_r\Bigl(\frac{\Omega(r)}{m^2+k^2r^2}\Bigr)
  |v_r|^2 \biggr\}r\dd r \,=\, 0\,.
\end{equation}
If we now assume that $b \ge 1$, so that $b - \Omega(r) > 0$ 
for all $r > 0$, we see that all terms in the integrand of 
\eqref{eq:HG1} are nonnegative, which leads to a contradiction 
if $a \neq 0$. We conclude that the linear operator $L_{m,k}$ has 
no eigenvalue $s = m(a-ib)$ if $a \neq 0$ and $b \ge 1$, see Fig.~1. 

\figurewithtex 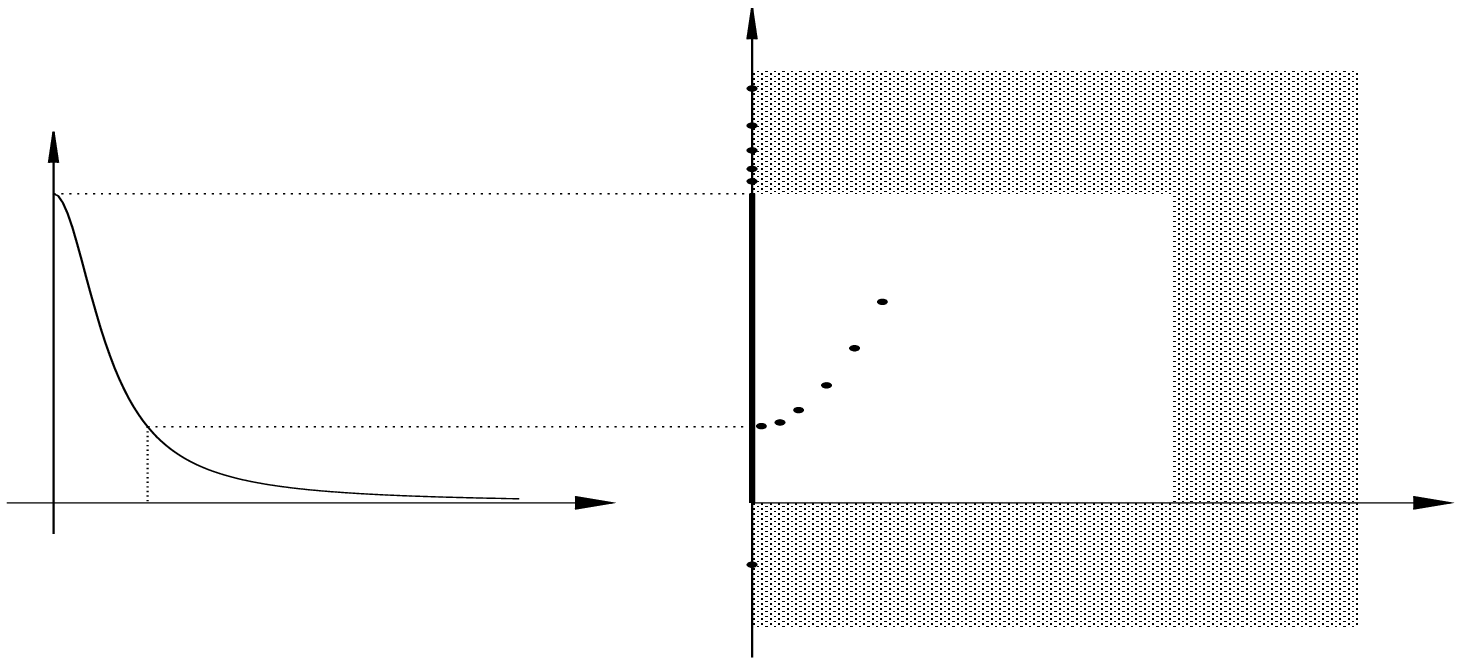 fig1.tex 6.50 15.00 {\bf Fig.~1:} (right)
The information obtained so far on the spectrum of the linearized
operator \eqref{eq:Lmkdef}, using the parametrization $s=m(a-ib)$.  
Kelvin modes are located on the imaginary
axis $a = 0$, and accumulate only at the upper edge of the essential
spectrum, which fills the segment $a = 0$, $b \in [0,1]$. The rest of
the spectrum, if any, consists of isolated eigenvalues which can
accumulate only on the essential spectrum, and are contained in 
 a region of the form $|a| \le M$, $b \in [0,1]$ 
according to Proposition~\ref{prop:HG}. (left) The angular velocity 
profile $\Omega$ and the critical radius $\rb$ associated with a 
spectral point $a = 0$, $b = \bb$ inside the essential spectrum. \cr

\medskip\noindent{\bf Choice 2\:} $q(r) = \ggamma(r)^{1/2}$. 
Proceeding as above, we find
\begin{align*}
  \cE(r) \,&=\, \ggamma(r) - \frac{k^2}{m^2}\, \frac{\cA(r)
  \Phi(r)}{\ggamma(r)} + r\partial_r\Bigl(\frac{W(r)}{m^2+k^2r^2}\Bigr) \\
  & \quad -\frac12 \ggamma'(r)\Bigl(\cA'(r) - \frac{\cA(r)}{r}\Bigr)
  -\frac12 \ggamma''(r)\cA(r) + \frac14 \frac{\ggamma'(r)^2}{\ggamma(r)}
  \,\cA(r)\,.
\end{align*}
Using again \eqref{eq:curious}, we deduce that
\[
  \cE(r) \,=\, \ggamma(r) - \frac{k^2}{m^2}\,\frac{\cA(r) 
  \Phi(r)}{\ggamma(r)} + \frac{r}{2}\partial_r\Bigl(\frac{W(r) + 2\Omega(r)}
  {m^2+k^2r^2}\Bigr) + \frac14 \frac{\Omega'(r)^2}{\ggamma(r)} \,\cA(r)\,.
\]
In particular, taking the imaginary part of \eqref{eq:weighted}, we 
obtain the identity
\begin{equation}\label{eq:HG1/2}
  -a \int_0^\infty \biggl\{\cA(r)|\partial_r^* v_r|^2 + |v_r|^2 +
  \frac{\cA(r)}{a^2 + (\Omega-b)^2}\Bigl(\frac{k^2 \Phi(r)}{m^2} 
  - \frac{\Omega'(r)^2}{4}\Bigr)|v_r|^2\biggr\}r\dd r \,=\, 0\,.
\end{equation}
As a consequence, if we assume that 
\begin{equation}\label{eq:HGcond}
  J(r) \,\equiv\, \frac{\Phi(r)}{\Omega'(r)^2} \,\ge\, \frac{m^2}{4k^2}\,,
  \qquad \hbox{for all } r > 0\,,
\end{equation}
we see that all terms in the integrand of \eqref{eq:HG1/2} are
nonnegative, which leads to a contradiction if $a \neq 0$. We conclude
that \eqref{eq:HGcond} is a {\em sufficient condition for spectral
  stability}.  Unfortunately, condition \eqref{eq:HGcond} is never met
for the Lamb-Oseen vortex, because $J(r) \to 0$ as $r\to +\infty$ in
that case. In the case of the Kaufmann-Scully vortex, it is satisfied
only if $m ^2 \le 4k^2$.

The results obtained by Howard's approach can thus be 
summarized as follows. 

\begin{prop}\label{prop:HG} Assume that the vorticity profile $W$
satisfies assumption H1 in Section~\ref{subsec12}. Then for any 
$m \neq 0$ and $k \neq 0$ the following holds: \\[1mm]
i) The linearized operator $L_{m,k}$ has no eigenvalue $s = m(a-ib)$ 
   with $a \neq 0$ and $b(1-b) \le 0$. \\[1mm]
ii) If condition \eqref{eq:HGcond} is satisfied, then $L_{m,k}$ has no 
   eigenvalue outside the imaginary axis. 
\end{prop}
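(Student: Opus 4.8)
The plan is to assemble the three weighted integral identities derived above, each obtained from \eqref{eq:weighted} for an appropriate weight $q$ (namely $q\equiv 1$, $q=\ggamma$, and $q=\ggamma^{1/2}$), and in each regime to choose $q$ so that the imaginary part of \eqref{eq:weighted} becomes, after dividing out the nonzero prefactor $a$, an integral whose integrand has a fixed sign. Throughout I assume that $u_r\in H^1(\Rp,r\dd r)$ is a nontrivial solution of \eqref{eq:main2} for some $s=m(a-ib)$ with $a\neq 0$, and I aim for a contradiction in each of the relevant regions of the $(a,b)$-plane.

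For part (i) with $b\le 0$ I would invoke identity \eqref{eq:HG0im}, i.e.\ the imaginary part of the unweighted relation \eqref{eq:HG0}. Since $b\le 0<\Omega(r)$ for all $r>0$, the factor $b-\Omega(r)$ is strictly negative; moreover assumption H1 gives $\Phi(r)>0$ and $\partial_r\bigl(W(r)/(m^2+k^2r^2)\bigr)<0$, the numerator $W'(r)(m^2+k^2r^2)-2k^2rW(r)$ being negative because $W'<0$ and $W>0$. Hence the integrand of \eqref{eq:HG0im} is pointwise $\le 0$, and it is not identically zero: indeed $\Phi(0)=W(0)^2>0$ makes the first term strictly negative near $r=0$, where $u_r$ cannot vanish since it satisfies $u_r(r)\sim\alpha\,r^{|m|-1}$ with $\alpha\neq 0$ by Lemma~\ref{lem:simple}. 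This contradicts \eqref{eq:HG0im}. For $b\ge 1$ I would instead use Choice~1, $q=\ggamma$, and the identity \eqref{eq:HG1} that follows from the simplification \eqref{eq:curious} of $\cE$: when $b\ge 1$ we have $b-\Omega(r)>0$ for all $r>0$, and since $\Omega$ is decreasing the term $-r\partial_r\bigl(\Omega(r)/(m^2+k^2r^2)\bigr)$ is nonnegative, so every term of the integrand in \eqref{eq:HG1} is nonnegative while the integral vanishes, which is impossible when $a\neq 0$. Finally, for part (ii) I would use Choice~2, $q=\ggamma^{1/2}$ (a continuous branch of the square root being available because $\ggamma$ stays away from the origin off the essential spectrum), together with identity \eqref{eq:HG1/2}: condition \eqref{eq:HGcond} is exactly what makes the coefficient $k^2\Phi(r)/m^2-\Omega'(r)^2/4$ nonnegative for every $r>0$, so once more the integrand in \eqref{eq:HG1/2} is a sum of nonnegative terms, which forces $a=0$ and establishes part (ii).

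The only step that is not purely formal is the ``not identically zero'' claim in the $b\le 0$ case: a nonpositive integrand with vanishing integral a priori merely forces the integrand to vanish wherever $u_r\neq 0$, so one must rule out the possibility that $u_r$ is supported in a region where the bracket in \eqref{eq:HG0im} happens to vanish. This is excluded by combining the regularity of the ODE \eqref{eq:main2} on $\Rp$, which prevents a nontrivial solution from vanishing on any subinterval, with the strict positivity $\Phi(0)>0$ near the origin. In the $b\ge 1$ and part-(ii) arguments no such issue arises, since there the term $\cA(r)|\partial_r^* v_r|^2+|v_r|^2$ already integrates to a strictly positive quantity whenever $v_r\not\equiv 0$; accordingly I expect the $b\le 0$ case to be the only mild obstacle, and it is dispatched by the asymptotic analysis of Section~\ref{subsec31}.
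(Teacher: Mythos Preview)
Your proposal is correct and follows exactly the approach of the paper: the case $b\le 0$ is handled by the imaginary part \eqref{eq:HG0im} of the unweighted identity, the case $b\ge 1$ by Choice~1 ($q=\ggamma$) and identity \eqref{eq:HG1}, and part~(ii) by Choice~2 ($q=\ggamma^{1/2}$) and identity \eqref{eq:HG1/2}. Your additional care about the ``not identically zero'' issue in the $b\le 0$ case (via Lemma~\ref{lem:simple} or, equivalently, the unique continuation property of the nonsingular ODE) is a welcome clarification that the paper leaves implicit.
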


\subsection{Critical layers and embedded eigenvalues}
\label{subsec34}

We assume here that $a = 0$ and $0 < b < 1$, which means that the 
spectral parameter $s=m(a-ib)$ is contained in the essential spectrum
of the linearized operator $L_{m,k}$, and does not coincide with 
one of its endpoints. The natural extension of the eigenvalue equation
\eqref{eq:main2} to this situation\footnote{We emphasize that the
derivation of \eqref{eq:main2} from the spectral problem was performed
in Section~\ref{sec2} under the assumption that $s = m(a-ib)$ does 
not belong to the essential spectrum.} is
\begin{equation}\label{eq:main2sing}
  -\partial_r\bigl(\cA(r)\partial_r^* u_r\bigr) +\biggl[1 -
  \frac{k^2}{m^2}\,\frac{\cA(r)\Phi(r)}{(\Omega(r)-b)^2} +
  \frac{r}{(\Omega(r)-b)}\partial_r\biggl(\frac{W(r)}{m^2+k^2 r^2}
  \biggr)\biggr]\
  u_r \,=\, 0\,.
\end{equation}
Since the value of $b$ belongs to the range of the angular velocity
$\Omega$, both denominators in \eqref{eq:main2sing} vanish at exactly
one point $\rb > 0$, characterized by $\Omega(\rb)=b$, so that
equation \eqref{eq:main2sing} becomes singular at that point. In the
physical literature, singularities of the eigenvalue equation are
usually avoided by allowing the variable $r$ to take slightly complex
values, a procedure that is referred to as ``critical layer
analysis'' in this context \cite{DR}. 

To perform such an analysis, we restrict our attention in the rest of
this section to vorticity profiles $W$ which satisfy assumption H1
and, in addition, are real-analytic on $(0,\infty)$, so that the
angular velocity $\Omega$ and the Rayleigh function $\Phi$ are
analytic too. According to the usual terminology, the point $\rb$ is 
then a {\it regular singular point} of equation \eqref{eq:main2sing}, see
e.g. \cite[Chapter~4]{CoLe} or Section \ref{subsec63} below. Extending
the range of the variable $r$ to a neighbourhood of $(0,\infty)$ in
$\C$ allows us to make a connection between solutions of
\eqref{eq:main2sing} defined on the interval $(0,\rb)$ and others
defined on $(\rb,\infty)$. In a neighborhood of $\rb$, the behavior 
of the solutions of \eqref{eq:main2sing} is determined by the roots 
 $d$ of the {\it indicial equation}
\begin{equation}\label{eq:determinant}
  d(d-1) + \frac{k^2}{m^2}\,J(\rb) \,=\, 0\,.
\end{equation}
We distinguish three cases.

\smallskip\noindent
{\bf Case 1: $0<J(\rb)< m^2/(4k^2)$}. The roots of \eqref{eq:determinant} 
are real and simple\: 
\[
  d_\pm \,:=\, \frac{1}{2} \pm \Big( \frac{1}{4} - \frac{k^2}{m^2}J(\rb)
  \Big)^\frac12\,. 
\]
In particular, we have $0 < d_- < 1/2 < d_+ < 1$. The Frobenius method 
\cite[Section~4.8]{CoLe} can be used to construct two real-valued 
analytic functions $V_\pm$ on $(0,\infty)$ such that $V_\pm(\rb)=1$ and 
such that the functions $\phi_\pm$ defined by
\begin{equation}\label{eq:devsing1}  
  \phi_\pm(r) \,=\, |b-\Omega(r)|^{d_\pm} \,e^{i\frac{\pi}{2}(1-{\sgn(b-\Omega(r)))
  d_\pm}} \,V_\pm(r)\,, \quad r > 0\,, 
\end{equation}
are independent solutions of \eqref{eq:main2sing} on both intervals
$(0,\rb)$ and $(\rb,\infty)$. Note that $\phi_\pm$ are real-valued on
$(\rb,\infty)$, but complex-valued (although with a constant phase) on 
the interval $(0,\rb)$.

\smallskip\noindent 
{\bf Case 2: $J(\rb)> m^2/(4k^2)$}. The roots of \eqref{eq:determinant} 
are complex conjugate\: 
\[
  d_\pm \,:=\, \frac{1}{2} \pm i\delta\,, \qquad \hbox{where}\quad 
  \delta \,=\, \Big(\frac{k^2}{m^2}J(\rb)-\frac14\Big)^{\frac12}\,.
\]
Similarly, the Frobenius method yields the existence of two
independent solutions $\phi_\pm$ which we write in the form
\begin{equation}\label{eq:devsing2}  
  \phi_\pm(r) \,=\, |b-\Omega(r)|^{1/2}\,e^{\pm i\delta \log |b-\Omega(r)|}
  \,e^{i\frac{\pi}{2}(1-\sgn(b-\Omega(r)))d_\pm}\, V_\pm(r)\,. 
\end{equation}

\smallskip\noindent
{\bf Case 3: $J(\rb) = m^2/(4k^2)$}. Equation \eqref{eq:determinant} 
possesses the unique root $1/2$ with multiplicity two, and two independent 
solutions of \eqref{eq:main2sing} can be constructed such that
\begin{equation}\label{eq:devsing3}\begin{split}  
  &\phi_+(r) = |b-\Omega(r)|^{\frac12} \,e^{i\frac{\pi}{4}(1-\sgn(b-\Omega(r)))}\ V_+(r), \\
  &\phi_-(r) = |b-\Omega(r)|^{\frac12} \,e^{i\frac{\pi}{4}(1-\sgn(b-\Omega(r)))}\bigl[ 
  \bigl({\scriptstyle \log|b-\Omega(r)| +  i\frac{\pi}{2}(1-\sgn(b-\Omega(r)))}
  \bigr)\ V_+(r) + V_-(r)\bigr]\,. 
\end{split}
\end{equation}

The following technical lemma emphasizes the relevance
of the singular functions $\phi_\pm$ for the approximation of
solutions of \eqref{eq:main2sing} by non-singular solutions of
\eqref{eq:main2}. In the statement, the vorticity profile $W$, the
spectral parameter $s=-imb$ and the corresponding singular radius
$\rb$ are defined as above. However, we consider a sequence
$(u_n)_{n \in \N}$ of smooth solutions of the eigenvalue equation
\eqref{eq:main2} where the spectral parameter $s$ is replaced by some
complex number $s_n$ with nonzero real part (so that $s_n$ does not
belong to the essential spectrum), and where also the vorticity
profile $W$ is replaced by some function $W_n$ that is allowed to
depend on $n$.\footnote{The reason for the latter will become clear in
  Section~\ref{sec4}.} We assume that $s_n \to s$ and $W_n \to W$ as
$n \to \infty$. In what follows, for $\rho > 0$ we denote by
$\bbD(\rb,\rho) \subset \C$ the open disc of radius $\rho$ centered at
$\rb+0i$. When no confusion is possible, we also use the same symbols
for functions of the real variable $r \in (0,\infty)$ and their
analytic extensions into (part of) the complex plane.

\begin{lem}\label{lem:approxsing}
Let $(u_n)_{n \in \N}$ be a sequence of solutions of \eqref{eq:main2} 
corresponding to a sequence of spectral parameters $s_n = m(a_n-ib_n)$ 
and of real-valued analytic profiles $W_n$. Suppose that \\[1mm]
\null\hskip 6pt i) $a_n > 0$ for all $n$, and $a_n \to 0$, $b_n \to b \in (0,1)$ 
as $n\to \infty$; \\[1mm]
\null\hskip 3pt ii) $W_n \to W$ in $\cC^1_b(\Rpb)$ as $n\to \infty$;\\[1mm]
iii) there exists $\rho>0$ such that, for all $n \in \N$, the radius of 
  analyticity of $W_n$ at $\rb$ is at least\\
  \null\hskip 18pt  equal to $\rho$, and $W_n \to W$ uniformly 
  in $\bbD(\rb,\rho)$. \\[1mm]
If $u_n(r)$ and $u_n'(r)$ have a limit as $n \to \infty$ for some $r \in 
(0,\infty) \setminus \{\rb\}$, then there exist $\alpha_\pm \in \C$ such that 
$u_n \to \alpha_+ \phi_+ + \alpha_- \phi_-$  in the 
$\mathcal{C}^1$ topology on compact subsets of $(0,\infty) \setminus 
\{\rb\}$, where $\phi_\pm$ are given by \eqref{eq:devsing1}, 
\eqref{eq:devsing2} or \eqref{eq:devsing3} depending on the roots of 
the indicial equation. 
\end{lem}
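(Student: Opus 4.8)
The plan is to combine the local Frobenius theory near the regular singular point $\rb$ with the uniform continuity of solutions away from $\rb$ that was already established in Section~\ref{subsec31}. First I would fix a compact set $K \subset (0,\infty)\setminus\{\rb\}$ and note that, by the continuous dependence results of Section~\ref{subsec31} (obtained via \cite[Theorem~3.8.1]{CoLe}), the solutions $u_n$ of \eqref{eq:main2} with parameters $s_n$ and profiles $W_n$ depend continuously in $\cC^1$ on $(s_n,W_n)$ uniformly on $K$, provided $K$ stays away from the singular radius. Since by hypothesis $u_n(r_0)$ and $u_n'(r_0)$ converge at one point $r_0 \in (0,\infty)\setminus\{\rb\}$, and since on each of the two subintervals $(0,\rb)$ and $(\rb,\infty)$ the ODE \eqref{eq:main2} with the limiting data is a regular linear second-order equation, the limit $u_n \to u$ exists in $\cC^1$ on compact subsets of $(0,\rb)$ and of $(\rb,\infty)$ separately, where $u$ solves the limiting equation \eqref{eq:main2sing} on each subinterval. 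The remaining content is to show that the two half-line solutions glue together correctly through the critical layer, i.e. that there are $\alpha_\pm \in \C$, the \emph{same} on both sides, with $u = \alpha_+\phi_+ + \alpha_-\phi_-$.

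The key mechanism for the gluing is analytic continuation through the complex plane, exploiting that $a_n > 0$. For each $n$, the coefficient $\ggamma_n(r) = \Omega_n(r) - b_n - i a_n$ is nonvanishing for real $r$, so $u_n$ extends analytically to a complex neighborhood of $(0,\infty)$; in particular it is analytic on the disc $\bbD(\rb,\rho)$ by hypothesis~(iii), since $W_n$ is analytic there with radius at least $\rho$. On $\bbD(\rb,\rho)$ the limiting equation \eqref{eq:main2sing} has $\rb$ as a regular singular point with indicial roots $d_\pm$ solving \eqref{eq:determinant}, and the Frobenius solutions $\Phi_\pm$ (the analytic continuations of the real-variable objects in \eqref{eq:devsing1}--\eqref{eq:devsing3}) form a basis on the slit disc $\bbD(\rb,\rho)\setminus(-\infty,\rb]$, say. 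The plan is to run the $\cC^1$ convergence argument not on the real axis through $\rb$ but along a small semicircular detour $r = \rb + \rho' e^{i\psi}$, $\psi \in [0,\pi]$ (with $\rho' < \rho$), in the upper half-plane: there the coefficients of \eqref{eq:main2} converge uniformly to those of \eqref{eq:main2sing} because $\ggamma_n \to \Omega - b_n - i\cdot 0$ stays bounded away from zero away from $\rb$ and, crucially, because passing above $\rb$ avoids the zero of $\Omega - b$. Hence $u_n \to u$ in $\cC^1$ along this arc, and matching the $\cC^1$ data at the two endpoints $\rb \pm \rho'$ determines a single pair $(\alpha_+,\alpha_-)$ expressing $u$ in the Frobenius basis on the whole slit disc, and therefore on $(0,\rb)$ and $(\rb,\infty)$ simultaneously. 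The sign conventions in the phases $e^{i\frac{\pi}{2}(1-\sgn(b-\Omega(r)))d_\pm}$ appearing in \eqref{eq:devsing1}--\eqref{eq:devsing3} are exactly the ones produced by this upper-half-plane continuation, which is why $a_n > 0$ (rather than $a_n < 0$) is assumed.

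I expect the main obstacle to be the uniformity of the convergence of the coefficient $\cB_n(r)$ of \eqref{eq:main2} to the singular coefficient in \eqref{eq:main2sing} \emph{near} the semicircular detour, since both blow up as one approaches $\rb$. The resolution is a quantitative comparison: on the arc $|r - \rb| = \rho'$ in the closed upper half-plane one has $|\Omega_n(r) - b_n - i a_n| \ge c\rho'$ and $|\Omega(r) - b| \ge c\rho'$ for a constant $c>0$ depending only on $\inf|\Omega'|$ near $\rb$ and on $\rho'$, uniformly in large $n$ (using $b_n \to b$, $a_n \to 0^+$, and the analytic convergence $W_n \to W$ on $\bbD(\rb,\rho)$ to control $\Omega_n \to \Omega$ and $\Omega_n' \to \Omega'$ there). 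Then $|\cB_n - \cB| \le C(\rho')\,(|a_n| + \|W_n - W\|)$ on the arc, which tends to $0$; combined with analogous bounds on $\cA_n - \cA$ and Gronwall's inequality for the linear ODE along the arc (parametrized by $\psi$), this yields the desired $\cC^1$ convergence of $u_n$ on the arc. Once the convergence on one off-axis arc is in hand, continuity of the Frobenius fundamental matrix on the slit disc transfers the single pair $(\alpha_+,\alpha_-)$ to both real subintervals, completing the proof. A minor additional point to check is Case~3, where $\Phi_-$ carries a $\log$ term; there the same detour argument works verbatim since the fundamental matrix is still analytic (multivalued) on the slit disc and continuous up to the arc.
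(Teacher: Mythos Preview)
Your approach is correct and takes a genuinely different route from the paper's proof in Section~\ref{subsec63}. The paper constructs, for each $n$, a pair of Frobenius solutions $w_n^\pm(z)=z^{d_n^\pm}v_n^\pm(z)$ of the $n$-th equation centered at the shifted complex singular point $\rb_n=\Omega_n^{-1}(b_n+ia_n)$ (which lies in the lower half-plane), proves via the recursion relations that their Taylor coefficients converge to those of the limiting Frobenius solutions, decomposes $u_n$ in this $n$-dependent basis on the real axis, and shows that the coefficients $\alpha_n^\pm$ converge; a delicate sub-case arises when $d_n^+\neq d_n^-$ but $d_n^+-d_n^-\to 0$, forcing a rescaled basis $(w_n^+\pm w_n^-)/(d_n^+-d_n^-)^{0,1}$. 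Your semicircular-detour argument bypasses all of this: since $\rb_n$ lies in the lower half-plane, both the $n$-th and the limiting equations are uniformly regular on a fixed arc in the upper half-disc, and Gronwall along that arc gives $\cC^1$ convergence directly; the pair $(\alpha_+,\alpha_-)$ is then read off from the \emph{limit} $u$ in the limiting Frobenius basis, so you never face an ill-conditioned $n$-dependent decomposition and the degenerate case requires no separate treatment. The paper's approach, by contrast, yields finer information (coefficient-level convergence of the series near $\rb$), which is not needed for the statement as formulated. Your observation that the phases $e^{i\pi d_\pm}$ on $(0,\rb)$ are exactly those produced by continuation through the upper half-plane is the analytic counterpart of the paper's remark that $\Im(\rb_n)<0$.
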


The proof of Lemma~\ref{lem:approxsing} is postponed to
Section~\ref{subsec63} below where we also establish the main
properties of $\phi_\pm$, in particular the analyticity of $V_\pm$
across the singularity $\rb$ and the fact these functions are
real-valued. For the moment, we observe that the implicit
determination of logarithms we opted for in constructing the solutions
$\phi_\pm$ is directly related to the assumption that $a_n > 0$ in
Lemma~\ref{lem:approxsing}.  An approximation procedure valid for
negative values of $a_n$ would involve the complex conjugates of the
functions $\phi_\pm$ defined in \eqref{eq:devsing1}--\eqref{eq:devsing3}.

\begin{rem}\label{rem:noembed}
The expressions \eqref{eq:devsing1}--\eqref{eq:devsing3} show in particular 
that no nontrivial solution of \eqref{eq:main2sing} lies in $H^2(\rb-\epsilon,
\rb+\epsilon)$ if $\epsilon > 0$. As we know that the radial velocity $u_r$ 
associated with any vorticity vector $\omega \in X_{m,k}$ belongs to 
$H^1(\Rp,r\dd r) \cap H^2_\loc(\Rp)$, we deduce from the observation 
above that the linear operator $L_ {m,k}$ acting on $X_{m,k}$ has 
no nonzero eigenvalue $s$ embedded in the continuous spectrum 
\eqref{eq:spAm}. 
\end{rem}

Finally, repeating the proof of Lemma~\ref{lem:simple} for 
Eq.~\eqref{eq:main2sing}, we easily obtain

\begin{lem}\label{lem:psi0psiinf}
If $m \neq 0$, $k \neq 0$, and $0 < b < 1$, there exist a unique 
solution $\psi_0$ of \eqref{eq:main2sing} on $(0,\rb)$ and a unique 
solution $\psi_\infty$ of \eqref{eq:main2sing} on $(\rb,\infty)$ such that 
\[
  \lim_{r \to 0} r^{1-|m|} \psi_0(r) \,=\, 1, \qquad
  \hbox{and}\quad 
  \lim_{r \to \infty} r^{1/2} e^{|k|r} \psi_\infty(r) \,=\, 1\,.
\]
Moreover, both $\psi_0$ and $\psi_\infty$ are real-valued.  
\end{lem}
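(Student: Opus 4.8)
The plan is to mimic the two one-sided shooting constructions of Section~\ref{subsec31} almost verbatim, the only genuinely new point being to locate the singularity $\rb$ of Eq.~\eqref{eq:main2sing}. Since $W \in \WW$, the angular velocity $\Omega$ is continuous and strictly decreasing on $[0,\infty)$ with $\Omega(0)=1$ and $\Omega(r)\to 0$ as $r\to\infty$; because $0<b<1$ there is therefore a unique $\rb\in(0,\infty)$ with $\Omega(\rb)=b$, and $\rb$ is the only point of $(0,\infty)$ where Eq.~\eqref{eq:main2sing} becomes singular (note that $\cA(r)=r^2/(m^2+k^2r^2)\neq 0$ for $r>0$). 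Hence the open intervals $(0,\rb)$ and $(\rb,\infty)$ each carry a regular linear second order ODE, and it suffices to produce the distinguished solution near the endpoint $0$ (respectively $\infty$) and then continue it, uniquely, across the whole interval of regularity.

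First I would treat the behaviour at the origin. Near $r=0$ the quantity $\Omega(r)-b$ tends to $1-b\neq 0$, so the coefficients of \eqref{eq:main2sing} have exactly the same leading asymptotics as those of \eqref{eq:main2}; setting $u_r(r)=\frac1r v(\log\frac1r)$ one obtains an equation of the form \eqref{eq:veq0} with $2k^2\cA(e^{-x})=\cO(e^{-2x})$ and $\cC(x)=m^2+\cO(e^{-2x})+\cO(e^{-x}|W'(e^{-x})|)$ as $x\to+\infty$. Applying \cite[Theorem~3.8.1]{CoLe} gives a unique solution with $e^{|m|x}v(x)\to1$, i.e. a unique solution $u_r$ of \eqref{eq:main2sing} on a punctured neighbourhood of $0$ with $r^{1-|m|}u_r(r)\to1$; since $(0,\rb)$ contains no further singular point, this solution extends uniquely to all of $(0,\rb)$, and we call it $\psi_0$. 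Any linearly independent solution blows up like $r^{-1-|m|}$ at the origin, which accounts for uniqueness.

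For the behaviour at infinity the argument is again identical to Section~\ref{subsec31}: here $\Omega(r)-b\to-b\neq 0$, and by Remark~\ref{rem:limits} one has $W(r)=\cO(r^{-4})$, $W'(r)=\cO(r^{-5})$, $\Phi(r)=\cO(r^{-6})$, so after the substitution $w(r)=r^{1/2}u_r(r)$ one is led to an equation of the form \eqref{eq:weq0} with $\cA'(r)/\cA(r)=\cO(r^{-3})$ and $\cD(r)=k^2+\cO(r^{-2})$ as $r\to\infty$. Then \cite[Theorem~3.8.1]{CoLe} produces the unique solution with $r^{1/2}e^{|k|r}u_r(r)\to1$, which, $(\rb,\infty)$ being free of singular points, extends uniquely to $(\rb,\infty)$; this is $\psi_\infty$. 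Finally, on each of the intervals $(0,\rb)$ and $(\rb,\infty)$ the coefficients of \eqref{eq:main2sing}, namely $\cA$ and $1-\frac{k^2}{m^2}\cA(r)\Phi(r)/(\Omega(r)-b)^2+\frac{r}{\Omega(r)-b}\partial_r\bigl(W(r)/(m^2+k^2r^2)\bigr)$, are real-valued (because $b\in\R$ and $\Omega,W,\Phi$ are real), and the normalizations defining $\psi_0,\psi_\infty$ are real; hence $\overline{\psi_0}$ and $\overline{\psi_\infty}$ solve the same equations with the same normalizations, so by uniqueness $\psi_0$ and $\psi_\infty$ are real-valued.

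There is essentially no serious obstacle: the statement is a direct transcription of the constructions already done for the non-singular equation \eqref{eq:main2}. The only points requiring a word of care are (a) that $b\in(0,1)$ forces $\rb$ to lie strictly between $0$ and $\infty$, so that the distinguished solutions are defined on genuine open intervals on which the ODE is regular, and (b) that replacing $\ggamma$ by the real quantity $\Omega-b$ does not change the leading-order behaviour of the coefficients at either endpoint — the decay needed at infinity still being furnished by Remark~\ref{rem:limits}.
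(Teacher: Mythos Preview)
Your proposal is correct and follows exactly the approach the paper intends: the paper's ``proof'' consists of the single sentence ``Finally, repeating the proof of Lemma~\ref{lem:simple} for Eq.~\eqref{eq:main2sing}, we easily obtain'', and you have simply written out those details. Your added observations that the real coefficients plus real normalization force $\psi_0,\psi_\infty$ to be real-valued by uniqueness, and that $b\in(0,1)$ keeps $\rb$ away from both endpoints so the asymptotic analysis of Section~\ref{subsec31} goes through unchanged, are precisely the points one needs to check.
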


Since \eqref{eq:main2sing} is a linear equation, we infer the existence 
of constants $\alpha^0_\pm, \alpha^\infty_\pm \in \C$ such that 
\begin{equation}\label{eq:decompsi}
  \psi_0 = \alpha^0_- \phi_- + \alpha^0_+ \phi_+ \quad\hbox{on } (0,\rb)\,,
  \qquad
  \psi_\infty = \alpha^\infty_- \phi_- + \alpha^\infty_+ \phi_+ \quad\hbox{on } 
  (\rb,\infty)\,,
\end{equation}
where $\phi_\pm$ are defined in \eqref{eq:devsing1}--\eqref{eq:devsing3}. 

\section{The homotopy argument}\label{sec4}

This section is the core of the proof of Theorem~\ref{thm:main1}.  We
concentrate on the situation where the angular Fourier mode $m$ and
the vertical wave number $k$ are both nonzero, because the cases
$m = 0$ and $k = 0$ have already been treated in Sections~\ref{subsec21} 
and \ref{subsec22}, respectively. In view of the symmetry properties
\eqref{eq:symetriespectre}, we can assume without loss of generality
that $m \ge 1$ and $k > 0$.

The argument is by contradiction: given a vorticity profile $W$
satisfying assumptions H1, H2 in Section~\ref{subsec12}, we assume
that there exist an integer $m \ge 1$ and a real number $k > 0$ such
that the linearized operator $L_{m,k}$ has at least one eigenvalue
outside the imaginary axis. The strategy is then to perform a homotopy
between the vorticity profile $W_0 := W$ and a reference profile $W_1$ for
which we know {\it a priori} that the corresponding linearized
operator has no eigenvalue with nonzero real part. Since
eigenvalues outside the imaginary axis depend continuously on the
vorticity profile, in an appropriate topology, this implies in our
situation that all eigenvalues necessarily merge into the essential
spectrum as the homotopy parameter varies from zero to one. We
eventually reach a contradiction by showing that such a merger is
impossible. This is achieved by a careful asymptotic analysis of the
solutions of the complex ODE \eqref{eq:main2} in the limit where the
real part of the eigenvalue $s = m(a-ib)$ vanishes. Our approach
combines the results of Section~\ref{subsec34} on critical layers, 
the integral identities obtain by Howard's method in 
Section~\ref{subsec33}, and new ingredients which rely on the 
monotonicity assumption H2. 

Since we have to consider various vorticity profiles in the course 
the proof, the linearized operator \eqref{eq:Lmkdef} will sometimes 
be denoted by $L_{m,k}^W$ instead of $L_{m,k}$, to avoid any ambiguity. 
The following continuity property plays an essential role in our 
argument. 

\begin{lem}\label{lem:contiW2L} 
The (linear) mapping $W \mapsto L_{m,k}^W$ is continuous from $\cC^1_b(\Rpb)$
into $\cL(X_{m,k})$.
\end{lem}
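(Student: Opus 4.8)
The plan is to show that the map $W \mapsto L_{m,k}^W = A_m^W + B_{m,k}^W$ is continuous by treating the two summands separately, and for each of them tracing how the explicit coefficients depend on $W$. Recall from \eqref{eq:Adef}, \eqref{eq:Bdef} that
\[
  A_m^W \omega \,=\, -im\Omega(r)\omega + r\Omega'(r)\omega_r\,e_\theta\,,
  \qquad
  B_{m,k}^W \omega \,=\, ikW(r)u - W'(r)u_r\,e_z\,,
\]
where $\Omega$ is determined by $W$ through \eqref{eq:Omrep}, \eqref{eq:Omrep2} and $u$ is the Biot--Savart velocity of $\omega$ in the sector $(m,k)$, which does \emph{not} depend on $W$. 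So the first step is the elementary observation that only the multipliers $\Omega$, $r\Omega'$, $W$, and $W'$ depend on $W$, and the dependence of $L_{m,k}^W$ on these multipliers is linear; hence it suffices to bound the $L^\infty(\Rp)$ norms of $\Omega_{W_1}-\Omega_{W_2}$, $r(\Omega_{W_1}'-\Omega_{W_2}')$, $W_1-W_2$, and $W_1'-W_2'$ by a constant times $\|W_1-W_2\|_{\cC^1_b}$.

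The second step is to establish those four bounds. The last two are immediate from the definition of the $\cC^1_b$ norm. For the first two I would use the integral representations \eqref{eq:Omrep}, \eqref{eq:Omrep2}: writing $\delta W = W_1 - W_2$ one has $|\Omega_{W_1}(r)-\Omega_{W_2}(r)| = r^{-2}|\int_0^r \delta W(s)s\dd s| \le \tfrac12 \|\delta W\|_\infty$ uniformly in $r$, and similarly, using the first equality in \eqref{eq:Omrep2}, $r(\Omega_{W_1}'-\Omega_{W_2}')(r) = (\delta W)(r) - 2(\Omega_{W_1}-\Omega_{W_2})(r)$, which is bounded by $2\|\delta W\|_\infty$. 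No derivative of $\delta W$ is even needed here for $A_m$; the $\cC^1_b$ topology is genuinely used only through the $W'$ term in $B_{m,k}$. With these bounds in hand, $\|(A_m^{W_1}-A_m^{W_2})\omega\|_{L^2} \le C\|\delta W\|_\infty \|\omega\|_{L^2}$ since $A_m$ acts by multiplication.

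The third step handles the nonlocal part. For $B_{m,k}$ one writes $(B_{m,k}^{W_1}-B_{m,k}^{W_2})\omega = ik(W_1-W_2)u - (W_1'-W_2')u_r\,e_z$, and the point is that by Proposition~\ref{prop:estBS} (invoked as in the proof of Proposition~\ref{prop:AB}) the velocity $u$, and in particular $u_r$, is controlled in $H^1(\Rp,r\dd r)$, hence in $L^2(\Rp,r\dd r)$, by $\|\omega\|_{L^2}$ with a constant independent of $W$; the finiteness of $k$ is what makes this work. Therefore
\[
  \|(B_{m,k}^{W_1}-B_{m,k}^{W_2})\omega\|_{L^2}
  \,\le\, \bigl(|k|\,\|W_1-W_2\|_\infty + \|W_1'-W_2'\|_\infty\bigr)\,\|u\|_{L^2}
  \,\le\, C\,\|W_1-W_2\|_{\cC^1_b}\,\|\omega\|_{L^2}\,.
\]
Adding the two estimates gives $\|L_{m,k}^{W_1}-L_{m,k}^{W_2}\|_{\cL(X_{m,k})} \le C\|W_1-W_2\|_{\cC^1_b}$, which is the asserted (Lipschitz, hence continuous) dependence; linearity in $W$ is clear from the formulas.

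I do not anticipate a serious obstacle here — the statement is essentially bookkeeping on top of the uniform Biot--Savart estimate already quoted. The one point that requires a little care is making sure the Biot--Savart bound from Proposition~\ref{prop:estBS} is genuinely \emph{uniform in $W$}: but this is automatic because the Biot--Savart map in the sector $(m,k)$ is defined by the relations \eqref{eq:vort_cyl}, which involve only $m$ and $k$ and not $W$ at all, so the constant $C$ there depends only on $m,k$. The mild subtlety of whether one needs $\cC^1_b$ control or merely $\cC^0_b$ control is worth a sentence in the writeup, since it clarifies why the $\cC^1_b$ topology is the natural one for the homotopy argument of Section~\ref{sec4}.
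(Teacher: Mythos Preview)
Your proof is correct and follows essentially the same approach as the paper: identify that the coefficients of $L_{m,k}^W$ depend linearly on $\Omega$, $r\Omega'$, $W$, $W'$, bound the first two in $L^\infty$ via \eqref{eq:Omrep} and the identity $r\Omega' = W - 2\Omega$, and invoke Proposition~\ref{prop:estBS} for the $W$-independent Biot--Savart bound. The paper's version is somewhat terser, but your added remark that the Biot--Savart constant is genuinely independent of $W$ (since \eqref{eq:vort_cyl} involves only $m,k$) is a useful clarification.
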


\begin{proof}
As can be seen from definitions \eqref{eq:Lmkdef}-\eqref{eq:Bdef}, 
the linearized operator $L_{m,k}^W$ has variable coefficients depending 
(linearly) on the functions $\Omega$, $r\Omega'$, $W$, and $W'$. 
Now, we have the estimate
\[
  \|\Omega\|_{L^\infty(\Rp)} + \|r\Omega'\|_{L^\infty(\Rp)} \,\le\, 
  C\|W\|_{L^\infty(\Rp)}\,, 
\]
which follows from the representation formula \eqref{eq:Omrep} and 
the identity $r\Omega' = W-2\Omega$. Thus all coefficients of 
$L_{m,k}^W$ are $L^\infty$ functions that depend continuously on $W$ in the topology of $\cC^1_b(\Rpb)$, 
and since the Biot-Savart map $\omega \mapsto u$ is bounded in 
$X_{m,k}$ by Proposition~\ref{prop:estBS} below, we obtain the 
desired result. 
\end{proof}

\subsection{Reduction to a real analytic vorticity profile}
\label{subsec41}

We now present the contradiction argument in detail. We fix $m \ge 1$, 
$k > 0$, and we assume that there exists a vorticity profile $W_0 \in 
\WW$ such that the associated linear operator $L_{m,k}^{W_0} \in \cL(X_{m,k})$
has at least one (isolated) eigenvalue outside the imaginary axis. 
Our goal is to prove that this is impossible, which is exactly the 
conclusion of Theorem~\ref{thm:main1}. 

In a first step, we show that one can assume without loss of
generality that the profile $W_0 \in \WW$ is {\em real analytic} on
$\Rpb$. By this we mean more precisely that $W_0$ is the restriction
to $\Rpb$ of a real analytic even function defined on the whole real
line. Indeed, we know from Proposition~\ref{prop:AB} that, for any
$W \in \WW$, the spectrum of $L_{m,k}^W$ outside the imaginary axis 
consists of isolated eigenvalues with finite multiplicity,
which are in fact simple as asserted by Lemma~\ref{lem:simple}.
Invoking Lemma~\ref{lem:contiW2L} and classical perturbation 
theory \cite[IV-\S 3.5]{Ka}, we observe that these (hypothetical) 
eigenvalues depend continuously on the vorticity profile $W$ in 
the topology of $\cC^1_b(\Rpb)$. In particular, if $W$ is close 
enough to $W_0$ in that topology, we are sure that the operator 
$L_{m,k}^{W}$ has at least one eigenvalue with nonzero real part.

We next invoke a density result that will be established in 
Section~\ref{subsec64} below. 

\begin{lem}\label{lem:dense}
The subset $\WWA$ of $\WW$ consisting of vorticity profiles which are 
also real analytic on $\Rpb$ is dense in $\WW$ for the topology of 
$\cC^1_b(\Rpb)$.
\end{lem}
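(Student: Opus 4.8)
The plan is to transfer the statement to the linearizing coordinate $Q$ of \eqref{eq:Qdef0}. As announced in Section~\ref{subsec64}, I would first set up the bijection between $\WW$ and a class $\QQ$ of functions cut out by \emph{linear} conditions — schematically $Q(0)=0$, $0<Q<1$, $Q'>0$, together with a decay condition at infinity that encodes $rJ'(r)\to 0$ and the finiteness of the circulation — and show that both $W\mapsto Q$ and its inverse $Q\mapsto W$ are continuous for the $\cC^1_b$ topology, the inverse being realized by solving the first-order ODE $\Omega' = \Omega\,(r-\sqrt{r^2+4J})/J$ with $\Omega(0)=1$ and putting $W=r\Omega'+2\Omega$ (here $J=1/Q^2-1$). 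Granting this, it suffices to approximate an arbitrary $Q_0\in\QQ$ in $\cC^1_b(\Rpb)$ by real-analytic elements of $\QQ$, and to check that the reconstruction $Q\mapsto W$ turns a real-analytic $Q$ into a real-analytic, even profile $W$.

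For the approximation I would work in the variable $\tau=r^2$, so that ``real analytic on $\Rpb$'' is the same as ``analytic in $\tau$ near the origin'', hence even in $r$; one approximates the analytic-in-$\tau$ avatar of $Q$ (or, more conveniently, of $Q^2$, which is analytic in $\tau$ whenever $W$ is) rather than $Q$ as a function of $r$. The construction is the classical three-step mollification, arranged to keep every constraint intact. First, since all the conditions defining $\QQ$ except $Q(0)=0$ are open, alter $Q_0$ only for large $r$ so that its tail coincides with a simple rational function of $1/r^2$ having the same decay; this is a $\cC^1$-small, compactly supported correction. Second, convolve the resulting $\cC^1$, eventually-analytic function with a narrow Gaussian kernel in the $\tau$ variable: the output is real analytic in $\tau$, hence even analytic in $r$, it converges in $\cC^1_b$ to the unmollified function as the width shrinks (convolving the derivative too), and off a fixed compact set it agrees with the prescribed analytic tail up to a Gaussian-tail error. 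Third, restore the constraints by adding a $\cC^1$-small strictly increasing analytic term (to preserve $Q'>0$) and a $\cC^1$-small analytic adjustment (to recover $Q(0)=0$ and $0<Q<1$); openness of all the remaining conditions makes this admissible once the width is small enough. This yields real-analytic $Q_n\in\QQ$ with $Q_n\to Q_0$ in $\cC^1_b(\Rpb)$.

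To transfer back, one verifies that if $Q$ (equivalently $J$) is analytic in $\tau=r^2$ then so is the solution $\Omega$ of the reconstruction ODE: written in the $\tau$ variable, that equation has analytic coefficients in a neighbourhood of $\tau=0$ — the point $\tau=0$ being regular, not singular, because of the normalization built into $\QQ$ — and has no singularity at any finite $r>0$, so classical analytic dependence of ODE solutions applies, and $W=r\Omega'+2\Omega$ is real analytic and even. Combined with the $\cC^1_b$-continuity of $Q\mapsto W$ (proved exactly as Lemma~\ref{lem:contiW2L}, using \eqref{eq:Omrep} and $r\Omega'=W-2\Omega$), the profiles $W_n$ associated with the $Q_n$ lie in $\WWA$ and satisfy $W_n\to W_0$ in $\cC^1_b(\Rpb)$, which is the claim.

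The main obstacle is keeping \emph{all} the defining conditions of $\WW$ alive simultaneously under the approximation — in particular the strict monotonicities $W'<0$ and $J'<0$ and the precise asymptotics at infinity ($rJ'(r)\to0$ and finite circulation) — since in the variable $W$ these are coupled nonlinearly and $\WW$ is not even convex. Passing to $Q$ is exactly the device that linearizes and convexifies them, which is the heart of the matter; the residual work is the bookkeeping at the two endpoints — producing an admissible analytic tail at $r=\infty$ without breaking integrability, and handling the degeneracy of the reconstruction ODE at $r=0$ (removable after the substitution $\tau=r^2$) while preserving the even-analytic structure. A secondary technical point is that $Q_0$ is only $\cC^1$, so both the mollification and the continuity of $Q\mapsto W$ must be carried out at the level of $\cC^1$ convergence.
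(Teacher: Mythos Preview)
Your strategy is correct and matches the paper's: pass to the linearizing coordinate $Q$ of \eqref{eq:Qdef0}, approximate in the convex class $\QQ$ by real-analytic functions, and transfer back via the reconstruction formulas; the $\cC^1_b$-continuity of $Q\mapsto W$ is exactly the content of Lemma~\ref{lem:QLip}, not Lemma~\ref{lem:contiW2L}.

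The paper's implementation is markedly simpler than your three-step scheme. Instead of working in $\tau=r^2$, modifying the tail, convolving, and then restoring constraints by hand, the paper applies the heat kernel on $\Rp$ with Dirichlet condition at $r=0$ directly to $Q$ in the variable $r$, i.e.\ it extends $Q$ to an odd function on $\R$ and runs the ordinary heat semigroup. This single step yields a real-analytic odd function, hence $Q^{(\epsilon)}(0)=0$ automatically; monotonicity $(Q^{(\epsilon)})'>0$ follows because the heat flow of an increasing function stays increasing; the bounds $0<Q^{(\epsilon)}<1$ come from the maximum principle; and the endpoint conditions $rQ'(r)\to 0$ are checked by a short computation on the differentiated kernel. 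No tail surgery or constraint restoration is needed. The evenness of the reconstructed $W$ then drops out for free: $Q$ odd analytic makes the integrand in \eqref{eq:Omegarep} odd analytic (write $\sqrt{r^2+4J}=\sqrt{(r^2-4)Q^2+4}/Q$), so $\Omega$ is even analytic and hence so is $W=r\Omega'+2\Omega$. Your route via $\tau=r^2$ would also give this parity, but at the cost of extra bookkeeping --- in particular, $Q$ itself is typically $\sim r$ near the origin and thus not $\cC^1$ in $\tau$, which is why you were led to work with $Q^2$; the paper's odd extension avoids that detour entirely.
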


The proof of Lemma~\ref{lem:dense} is not straightforward because the
definition of the class $\WW$ involves the quantity $J$, introduced in
\eqref{eq:Jdef}, which depends in a nonlinear way on the vorticity
profile $W$.  Thus, given $W \in \WW$, we cannot construct an
approximation $W_\epsilon \in \WWA$ just by taking the convolution of
$W$ with a real analytic mollifier. To avoid this difficulty, we prove
in Section~\ref{subsec64} that all quantities $\Omega$, $W$, $\Phi$
are entirely determined by the auxiliary function $J$, and we even
provide explicit reconstruction formulas. Then, at the level of $J$,
we use a nonlinear approximation scheme of the form
\[
	\frac{1}{\sqrt{1+J_\epsilon}} \,=\, G_\epsilon * \frac{1}{
  \sqrt{1+J}}\,, \quad \epsilon > 0\,,
\]
where $G_\epsilon$ denotes the heat kernel on the half-line $\Rp$ with
Dirichlet boundary condition at $r = 0$. This provides an 
approximation procedure within the class $\WW$ which allows us 
to prove Lemma~\ref{lem:dense}, see Section~\ref{subsec64} for 
details.  

Taking advantage of Lemma~\ref{lem:dense} we assume from now on that
the initial vorticity profile $W_0$ in our contradiction argument is
real analytic, namely $W_0 \in \WWA$.

\subsection{Construction of the homotopy}
\label{subsec42}

In the particular example of the Kaufmann-Scully vortex
\eqref{eq:KSvortex}, the function \eqref{eq:Jdef} reduces to   
$J(r) = 1+1/r^2 \ge 1$. By a simple rescaling we deduce that, for the
vorticity profile $W_1 \in \WWA$ defined by
\begin{equation}\label{eq:W1def}
  W_1(r) \,=\, \frac{2}{(1+4k^2r^2/m^2)^2} \,=\, \frac{2 m^4}{(m^2
  +4k^2r^2)^2}\,, \qquad r \ge 0\,, 
\end{equation}
the stability condition \eqref{eq:HGcond} is satisfied, so that the
linear operator $L_{m,k}^{W_1}$ has no eigenvalue outside the
imaginary axis as a consequence of Proposition~\ref{prop:HG}. To
interpolate in the class $\WW$ between the initial profile
$W_0 \in \WWA$ and the reference profile \eqref{eq:W1def}, we use the
following result, whose proof is also postponed to
Section~\ref{subsec64}.

\begin{lem}\label{lem:homotopie} 
If $W_0, W_1 \in \WW$, there exists a Lipschitz function $\cH \: \ [0,1] \to 
\cC^1_b(\Rpb)$ such that $\cH(0)=W_0$, $\cH(1)=W_1$, and $W_t:=\cH(t) \in \WW$ 
for any $t \in [0,1]$. Moreover, if $W_0, W_1 \in \WWA$, then $W_t \in 
\WWA$ for all $t \in [0,1]$. In that case, if $W_1''(0) < 0$, then 
$W_t''(0) < 0$ for all $t \in (0,1]$. 
\end{lem}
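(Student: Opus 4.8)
The natural attempt — interpolating linearly between $W_0$ and $W_1$ — fails because the class $\WW$ is not convex, the obstruction being that assumption H2 is formulated through the function $J$, which depends nonlinearly on $W$. The plan is therefore to carry out the interpolation at the level of the auxiliary function $Q=1/\sqrt{1+J}$ introduced in \eqref{eq:Qdef0}. Granting the reconstruction result of Section~\ref{subsec64} — namely that $W\mapsto Q$ is a bijection from $\WW$ onto a subset $\QQ\subset\cC^1_b(\Rpb)$ which is cut out by linear constraints (positivity $0<Q<1$, monotonicity $Q'>0$, and the boundary behaviours $Q(0)=0$ and $rQ'(r)\to 0$), with locally Lipschitz inverse $\QQ\ni Q\mapsto W\in\cC^1_b(\Rpb)$ — I would proceed as follows. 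Given $W_0,W_1\in\WW$, set $Q_i=1/\sqrt{1+J_i}$ with $J_i$ associated to $W_i$ through \eqref{eq:Jdef}, define the affine path $Q_t=(1-t)Q_0+tQ_1$ for $t\in[0,1]$, and let $\cH(t):=W_t$ be the unique profile in $\WW$ reconstructed from $Q_t$.

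The required properties then follow quickly. Since the constraints defining $\QQ$ are linear, $\QQ$ is convex, so $Q_t\in\QQ$ and $W_t\in\WW$ for every $t\in[0,1]$; and $\cH(0)=W_0$, $\cH(1)=W_1$ by construction. The path $t\mapsto Q_t$ is affine, hence Lipschitz from $[0,1]$ into $\cC^1_b(\Rpb)$; composing it with the reconstruction map, which is Lipschitz on the compact segment $[Q_0,Q_1]\subset\QQ$, shows that $\cH\: [0,1]\to\cC^1_b(\Rpb)$ is Lipschitz.

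For the two remaining assertions, I would first record that if $W\in\WWA$ then $Q$ extends to a real-analytic \emph{odd} function on $\R$: writing $Q^2=\Omega'^2/(\Omega'^2+2\Omega W)$ and using that $\Omega,W$ are even analytic with $\Omega'(0)=0$ and $\Omega(0)W(0)\neq 0$, one sees that $Q^2=r^{2n}h(r)$ for an even analytic $h$ with $h(0)>0$ (where $2n$ is the vanishing order of $\Omega'^2$ at $0$), whence $Q=r^{n}\sqrt{h}$ is odd analytic. Consequently, when $W_0,W_1\in\WWA$ the interpolant $Q_t$ is again real-analytic and odd, and the reconstruction formulas of Section~\ref{subsec64} — an integration of an analytic integrand followed by algebraic operations that stay within the analytic category because $\Omega'\neq 0$ on $\Rp$ — return a $W_t$ that is even analytic, i.e. $W_t\in\WWA$. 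For the behaviour at the origin, one checks directly from $W=r\Omega'+2\Omega$ and $Q'(0)=-\Omega''(0)/2$ that every profile in $\WW$ satisfies $W''(0)=-8\,Q'(0)$; hence $W_1''(0)<0$ means $Q_1'(0)>0$, while $Q_0'(0)\geq 0$ always (as $Q_0$ is increasing with $Q_0(0)=0$), so $Q_t'(0)=(1-t)Q_0'(0)+tQ_1'(0)>0$ and therefore $W_t''(0)<0$ for all $t\in(0,1]$.

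The main obstacle is not the interpolation argument, which becomes essentially automatic once the change of unknown is set up, but the supporting facts deferred to Section~\ref{subsec64}: that $\QQ$ is genuinely carved out of $\cC^1_b(\Rpb)$ by linear conditions, that the reconstruction $Q\mapsto W$ lands in $\WW$ (so that all sign conditions \emph{and} the finiteness of the total circulation \eqref{eq:Winteg} are preserved), that it is locally Lipschitz into $\cC^1_b(\Rpb)$, and that it respects real-analyticity. Deriving the explicit reconstruction of $\Omega$ (equivalently, solving $J\,(\log\Omega)'^2-2r(\log\Omega)'-4=0$ for the decreasing root, $(\log\Omega)'=\bigl(r-\sqrt{r^2+4J}\bigr)/J$) and verifying these properties is where the real work lies.
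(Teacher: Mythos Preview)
Your proposal is correct and follows essentially the same route as the paper: linear interpolation at the level of $Q=1/\sqrt{1+J}$, convexity of the class $\QQ$ (Definition~\ref{def:Qclass}), Lipschitz continuity of the reconstruction $Q\mapsto W$ (Lemma~\ref{lem:QLip}), and the identity $W''(0)=-8\,Q'(0)$ for the final claim. Your identification of where the real work lies---the reconstruction formulas \eqref{eq:Omegarep}--\eqref{eq:Wprep} and the Lipschitz estimate of Lemma~\ref{lem:QLip}---is also accurate.
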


Since the class $\WW$ is not convex, the linear interpolation $H(t) = 
(1-t)W_0 + t W_1$ is not appropriate here. Instead, we use again
the fact that a vorticity profile $W \in \WW$ is entirely determined
by the auxiliary function \eqref{eq:Jdef}, and at the level of $J$ 
we define the homotopy $\cH$ by the following nonlinear interpolation 
procedure
\begin{equation}\label{eq:homotopieexplicite}
  \frac{1}{\sqrt{1+J_t}} \,=\, \frac{1-t}{\sqrt{1+J_0}} + 
  \frac{t}{\sqrt{1+J_1}}, \qquad \forall t \in [0,1]\,.
\end{equation}
If $J_0$ and $J_1$ are real analytic, so is $J_t$ for all
$t \in [0,1]$, and it follows that $W_t \in \WWA$ for all
$t \in [0,1]$. We refer to Section~\ref{subsec64} for details.

\subsection{The bifurcation point}
\label{subsec43}

For any $t \in [0,1]$, we denote by $W_t \in \WWA$ the vorticity profile 
obtained from Lemma~\ref{lem:homotopie}, where $W_0 \in \WWA$ is the 
initial vorticity defined in Section~\ref{subsec41} and $W_1$ is given by
\eqref{eq:W1def}. We also introduce the associated angular velocity
\[
  \Omega_t(r) \,=\, \frac{1}{r^2}\int_0^r W_t(s)s\dd s\,, 
  \qquad r > 0\,,
\]
and we define $\Phi_t = 2 \Omega_t W_t$ and $J_t = \Phi_t/(\Omega_t')^2$
as in \eqref{eq:Phidef}, \eqref{eq:Jdef}. We
consider the family of linear operators $L_{m,k}^{W_t}$, indexed by
the homotopy parameter $t \in [0,1]$, which is uniformly bounded in
$\cL(X_{m,k})$ by Lemma~\ref{lem:contiW2L}. For each $t \in [0,1]$, it
follows from Proposition~\ref{prop:AB} and Lemma~\ref{lem:simple} that
the spectrum of $L_{m,k}^{W_t}$ outside the imaginary axis consists of
simple isolated eigenvalues. If $s = m(a-ib)$ is such an eigenvalue,
we know from Proposition~\ref{prop:HG} that $0 < b < 1$, and by
uniform boundedness there exists a constant $M > 0$ (independent of
$t$) such that $0 < |a| \le M$.

As the homotopy parameter $t$ varies, the isolated eigenvalues of
$L_{m,k}^{W_t}$ move continuously in the complex plane, as described
e.g. in \cite[IV-\S 3.5]{Ka}, and we chose our reference profile $W_1$
so that the associated linearized operator has no eigenvalue with
nonzero real part. This implies that, when $t$ increases from zero to
one, all isolated eigenvalues of $L_{m,k}^{W_t}$ eventually merge into
the essential spectrum on the imaginary axis. In particular, we can
define the bifurcation point
\[
  t_* \,=\, \inf\Bigl\{t \in (0,1]\,\Big|\,\sigma(L_{m,k}^{W_\tau}) 
  \subset i\R \hbox{ for all } \tau \in [t,1]\Bigr\}\,. 
\]
Our assumption on $W_0$ and the continuity of the eigenvalues imply that
$t_* >0$ and $\sigma(L_{m,k}^{W_{t_*}}) \subset i\R$. Moreover, there 
exist an increasing sequence $t_n \to t_*$ and a sequence of isolated 
eigenvalues $s_n=m(a_n-ib_n)$ of $L_{m,k}^{W_{t_n}}$ such that $a_n \neq 0$,
$0 < b_n < 1$, and 
\begin{equation}\label{eq:limiteig}
  a_n + ib_n \,\to\, i\bb\,, \qquad \hbox{as} \quad n \to \infty\,,
\end{equation}
for some $\bb \in [0,1]$. In view of the second identity in 
\eqref{eq:symetriespectre}, we can assume without loss of generality 
that $a_n > 0$ for all $n \in \N$. Associated with $\bb$, we also 
introduce the critical radius
\begin{equation}\label{eq:rbar}
  \rb \,=\, \left\{\begin{array}{ll} 
   \Omega_{t_*}^{-1}(\bb) & \hbox{if } \,\bb > 0\,,\\[1mm]
   +\infty & \hbox{if } \,\bb = 0\,.\\
\end{array}\right.
\end{equation}
As $W_{t_*} \in \WWA$ by construction, we recall that $\Omega_{t_*} : 
\Rpb \to \Rp$ is real analytic, strictly decreasing on $\Rp$, and 
satisfies $\Omega_{t_*}(0) = 1$ and $\Omega_{t_*}(r) \to 0$ as $r \to 
\infty$, so that $\rb \in [0,\infty]$ is well defined, see Fig.~1. 

In the sequel, for notational simplicity, we write $W$ instead of
$W_{t_*}$ and $W_n$ instead of $W_{t_n}$. Note in particular that,
after this redefinition, the symbol $W$ no longer refers to the
vorticity profile that appears in the statement of
Theorem~\ref{thm:main1}\thinspace! Similarly, we denote
\[
  \Omega = \Omega_{t_*}\,, ~\Phi = \Phi_{t_*}\,, ~J=J_{t_*}\,, \quad \hbox{and}
  \quad \Omega_n =  \Omega_{t_n}\,, ~\Phi_n = \Phi_{t_n}\,, ~J_n=J_{t_n}\,. 
\]
Finally, we also set $L_{m,k} = L_{m,k}^W$ and $L_{m,k}^n = L_{m,k}^{W_n}$. 
We observe that $W_n \to W$ in $\cC^1_b(\Rpb)$ as $n \to \infty$, due to the 
continuity properties of the homotopy defined in Lemma~\ref{lem:homotopie}. 

As is recalled at the beginning of Section~\ref{sec3}, for each $n \in \N$ 
we may associate to the eigenvalue $s_n = m(a_n-ib_n)$ of $L_{m,k}^n$ a 
nontrivial solution $u_n \in H^1(\R_+,r\dd r) \cap H^2_\loc(\Rp)$ of the 
complex differential equation
\begin{equation}\label{eq:main2n}
  -\partial_r\bigl(\cA(r)\partial_r^* u_n\bigr) + \Bigl[ 1 -
  \frac{k^2}{m^2}\,\frac{\cA(r)\Phi_n(r)}{\gamma_n(r)^2}
  + \frac{r}{\gamma_n(r)}\partial_r\biggl(\frac{W_n(r)}{m^2+k^2 r^2}
  \biggr)\Bigr]\ u_n \,=\, 0\,,
\end{equation}
where $\gamma_n(r) = \Omega_n(r)-b_n-ia_n$. As $W_n \in \WWA$, it is
clear that $u_n$ is in fact real analytic for all $n \in \N$. According to
Lemma~\ref{lem:simple} there exist nonzero complex numbers $\alpha_n$,
$\beta_n$ such that
\begin{equation}\label{eq:asymptotesany}
  \alpha_n \,=\, \lim_{r\to 0_+} r^{-m+1} u_n(r)\,, \qquad\hbox{and}\qquad
  \beta_n \,=\,\lim_{r\to \infty} r^{1/2} \exp(kr)u_n(r)\,.
\end{equation}
In what follows, we often normalize $u_n$ so that $\beta_n = 1$ for all 
values of $n$. 

As $n \to \infty$, the ODE \eqref{eq:main2n} becomes singular at the
point $r = \rb$, because $\gamma_n(\rb) \to \Omega(\rb)-\bb = 0$ in
view of \eqref{eq:limiteig} and \eqref{eq:rbar}. As is explained in
Section~\ref{subsec32}, the nature of the critical layer near
$r = \rb$ depends upon whether the quantity $J(\rb)$ is larger or
smaller than $m^2/(4k^2)$. This motivates the following definition:
\begin{equation}\label{eq:rbstardef}
  r_* =  \left\{\begin{array}{ll}
  J^{-1}(\tfrac{m^2}{4k^2}) & \hbox{if } \inf J < \tfrac{m^2}{4k^2}\,,\\
  +\infty & \hbox{otherwise}\,.
  \end{array}\right.  
\end{equation}
Note that $J : (0,\infty) \to \Rp$ is strictly decreasing by assumption H2, 
so that $r_*$ is uniquely defined. Moreover,  
\begin{equation}\label{eq:Jmono}
   r_* \,>\, 0\,, \qquad J(r) \,>\, \tfrac{m^2}{4k^2} ~\hbox{ for } r < r_*,
   \quad \hbox{and} \quad J(r) \,<\, \tfrac{m^2}{4k^2} ~\hbox{ for } r > r_*\,.
\end{equation}
Also, since $t_*>0$ we deduce from \eqref{eq:homotopieexplicite} and 
from our choice \eqref{eq:W1def} of $W_1$ that
\begin{equation}\label{eq:Jinfinipositif}
  J(\infty) \,>\, 0\,.
\end{equation}

In the rest of the proof of Theorem~\ref{thm:main1}, to reach the
desired contradiction, we consider various cases according to whether
the critical radius $\rb$ is larger, smaller or equal to $0$, $r_*$,
or~$+\infty$. 

\subsection{The situation $0<\rb < r_*$ is excluded}
\label{subsec44}

In this case, a contradiction is obtained from identity \eqref{eq:HG1/2}, 
or rather from its equivalent for the solutions of \eqref{eq:main2n} 
where the vorticity profile $W_n$ and the spectral parameter $s_n = 
m(a_n-ib_n)$ depend on $n$. In terms of the weighted function 
\begin{equation}\label{eq:vndef}
   v_n(r) \,=\, \frac{u_n(r)}{(b_n + ia_n - \Omega_n(r))^{1/2}} \,=\, 
   \frac{-i u_n(r)}{\gamma_n(r)^{1/2}}\,, \qquad r > 0\,,
\end{equation}
the identity becomes, after dividing by $a_n \neq 0$\:
\begin{equation}\label{eq:HG1/2n}
  \int_0^\infty \biggl\{\cA(r)|\partial_r^* v_n|^2 + |v_n|^2 +
  \frac{\cA(r)\Omega_n'(r)^2}{a_n^2 + (\Omega_n(r)-b_n)^2}\Bigl(
  \frac{k^2}{m^2}J_n(r)-\frac14\Bigr)|v_n|^2\biggr\}r\dd r \,=\, 0\,.
\end{equation}

We choose here to normalize the solutions $u_n$ of \eqref{eq:main2n}
so that $\beta_n = 1$ in \eqref{eq:asymptotesany} for all $n \in \N$.
This implies, in view of the analysis in Sections~\ref{subsec31} and
\ref{subsec34} and of the definitions in Section~\ref{subsec43},
that $u_n(r) \to \psi_\infty(r)$ and $u_n'(r) \to \psi'_\infty(r)$
locally uniformly on $(\rb,\infty]$ as $n \to \infty$, where
$\psi_\infty$ is the solution of the limiting equation
\eqref{eq:main2sing} introduced in Lemma~\ref{lem:psi0psiinf}.
Moreover, for any $\eps > 0$, the sequence $(u_n)$ is uniformly
bounded in $H^1([\rb+\eps,\infty),r\dd r)$, and so is the sequence
$(v_n)$ since $|\gamma_n(r)|$ is bounded away from zero when
$r \ge \rb + \eps$.  This uniform $H^1$ bound means that the
restriction of the integral in \eqref{eq:HG1/2n} to the interval
$[\rb+\eps,\infty)$ is uniformly bounded for all $n \in \N$. As the
integral over $(0,+\infty)$ is equal to zero, we deduce that the
integral over $(0,\rb+\eps)$ is also uniformly bounded, namely
\begin{equation}\label{eq:demiHG1/2n}
  \sup_{n\in\N}\int_0^{\rb+\eps}\biggl\{\cA(r)|\partial_r^* v_n|^2 + |v_n|^2 +
  \frac{\cA(r)\Omega_n'(r)^2}{a_n^2 + (\Omega_n(r)-b_n)^2}\Bigl(\frac{
  k^2}{m^2}J_n(r)-\frac14\Bigr)|v_n|^2\biggr\}r\dd r \,< \infty\,.
\end{equation}

Now comes into play the assumption that $\rb < r_*$. If we choose
$\eps > 0$ small enough so that $\rb+\eps < r_*$, we observe that, 
due to the definition of $r_*$ in \eqref{eq:rbstardef}, the integrand 
in \eqref{eq:demiHG1/2n} is nonnegative when $n$ is sufficiently large. 
Moreover, for all $r > \rb$, we know from \eqref{eq:vndef} that
$v_n(r) \to \psi_\infty(r)(\bb - \Omega(r))^{-1/2}$ as $n \to \infty$. 
So restricting the integral to the interval $(\rb,\rb+\eps)$ and 
invoking Fatou's lemma, we deduce from \eqref{eq:demiHG1/2n} that
\begin{equation}\label{eq:impossible0}
  \int_{\rb}^{r_*} \frac{\cA(r)\Omega'(r)^2}{(\bb - \Omega(r))^3}
  \Bigl(\frac{k^2}{m^2}J(r) - \frac{1}{4}\Bigr)|\psi_\infty(r)|^2
  r\dd r \,<\, \infty\,.
\end{equation}

The inequality $\rb < r_*$ also means that the roots of the indicial
equation \eqref{eq:determinant} are complex conjugate so that,
according to what we called {Case 2} in Section~\ref{subsec34}, we
have the decomposition $\psi_\infty(r) = \alpha^\infty_- \phi_-(r) +
\alpha^\infty_+\phi_+(r)$ for $r > \rb$, where $\phi_\pm$ are given by 
\eqref{eq:devsing2}. From these expressions, it is easy to deduce that
\eqref{eq:impossible0} cannot hold if $\psi_\infty$ is replaced by
either $\phi_+$ or $\phi_-$, because the integrand is positive and
behaves like $(r-\rb)^{-2}$ in a neighborhood of $\rb$. 
In the general case where both coefficients $\alpha^\infty_\pm$ are
nonzero, there may be cancellations between the contributions of 
$\phi_+$ and $\phi_-$, but due to the logarithmic phases in the  
expressions \eqref{eq:devsing2} of $\phi_\pm$ the function $\psi_\infty$ 
cannot vanish on large sets. More precisely, given $\alpha^\infty_\pm
\in \C$, there exist $\epsilon > 0$ and $0 < \rho_0 < r_*-\rb$ such 
that, for any $\rho \in (0,\rho_0)$,
$$
  \frac{1}{\rho}{\rm meas}\left(\left\{r \in (\rb,\rb+\rho) \hbox{ s.t. } 
  |\psi_\infty(r)|^2 \ge \epsilon |r-\rb|\right\}\right) \,\ge\, \frac12\,, 
$$
and the same argument as above shows that \eqref{eq:impossible0} is 
impossible.

\subsection{The situation $\rb=0$ is excluded}
\label{subsec45}

This case is treated by the same argument as in the previous section 
up to and including inequality \eqref{eq:impossible0}. The only
difference at that level is the asymptotic behavior of the functions
$\phi_\pm(r)$ as $r \to \rb$, because $\rb = 0$ is now an irregular 
singular point of the ODE \eqref{eq:main2sing}. According to 
\eqref{eq:devsingsing}, we have the expansion
\begin{equation}\label{eq:frob00}
  \phi_\pm(r) \,=\, \frac{1}{\sqrt{r}}\,\exp\biggl( 
  \frac{\pm 16ik}{m W''(0)r}\biggr)\bigl(1+\cO(r)\bigr)\,, 
  \qquad\hbox{as } \,r \to 0_+\,.
\end{equation}
The contradiction then follows exactly as in Section~\ref{subsec44},
the integrand of \eqref{eq:impossible0} being even more singular here
since it behaves like $r^{-4}$ in a neighborhood of $0$. 

\subsection{The situation $r_*<\rb <\infty$ is excluded}
\label{subsec46}

In that case, we cannot get a contradiction from identity
\eqref{eq:HG1/2n}, because the various terms in the integrand now 
have different signs in a neighborhood of the singular point $\rb$.
Instead, our argument relies on a detailed analysis of the 
solutions of \eqref{eq:main2n} near $\rb$, and on monotonicity 
properties that follow from assumption H2. 

As in the previous section, we normalize the solutions $u_n$ of
\eqref{eq:main2n} so that $\beta_n = 1$ in \eqref{eq:asymptotesany}
for all $n \in \N$. In particular, for any $r > \rb$, we know that 
$u_n(r) \to \psi_\infty(r)$ and $u_n'(r) \to \psi_\infty'(r)$ as $n \to 
\infty$, where $\psi_\infty$ is as in Lemma~\ref{lem:psi0psiinf}. Applying 
Lemma~\ref{lem:approxsing}, whose assumptions are satisfied 
by construction of the homotopy argument, we deduce that
\begin{equation}\label{eq:convunp}
  u_n(r) \,\to \alpha^\infty_- \phi_-(r) + \alpha^\infty_+\phi_+(r)\,, 
  \quad \hbox{and} \quad
  u_n'(r) \,\to \alpha^\infty_- \phi_-'(r) + \alpha^\infty_+\phi_+'(r)\,, 
\end{equation}
for all $r \in (0,\infty) \setminus \rb$, where $\alpha^\infty_\pm 
\in \C$ and $\phi_\pm$ are the solutions of \eqref{eq:main2sing} given by 
\eqref{eq:devsing1}. Note that the roots $d_\pm$ of the indicial equation 
\eqref{eq:determinant} are now real and distinct, so that we are in 
the situation referred to as {Case 1} in Section~\ref{subsec34}.  
The convergence \eqref{eq:convunp} for some $r < \rb$ implies, in 
view of the results in Section~\ref{subsec31} concerning the solutions 
of \eqref{eq:main2} near the origin, that the normalizing constants 
$\alpha_n$ in \eqref{eq:asymptotesany} converge to some limit 
$\alpha_* \in \C$ as $n \to \infty$. We deduce that
\begin{equation}\label{eq:phipmrep}
  \alpha^\infty_-\phi_-(r) + \alpha^\infty_+ \phi_+(r) \,=\, 
  \begin{cases}
  \alpha_* \psi_0(r) & \hbox{if } \,r \in (0,\rb)\,,\\
  \psi_\infty(r) &\hbox{if } \,r \in (\rb,\infty)\,.
   \end{cases}
\end{equation}

Now, the functions $\psi_\infty$, $\phi_-$, $\phi_+$ are all
real-valued on $(\rb,\infty)$, and we know from \eqref{eq:devsing1}
that $\phi_\pm(r) \approx (r-\rb)^{d_\pm}$ as $r \to \rb_+$, where
$0 < d_- < 1/2 < d_+< 1$. These observations imply that both
coefficients $\alpha^\infty_-$ and $\alpha^\infty_+$ are necessarily
real.  On the other hand, we deduce from \eqref{eq:phipmrep} that the
complex function $\alpha^\infty_-\phi_- + \alpha^\infty_+ \phi_+$ must
have a constant phase (modulo $\pi$) on the interval $(0,\rb)$, as it
is equal to the product of the real function $\psi_0$ by the complex
constant $\alpha_*$.  This, however, is impossible if both
coefficients $\alpha^\infty_\pm$ are nonzero, because by
\eqref{eq:devsing1} the complex functions $\phi_\pm$ have different
phases when $r < \rb$ and vanish at different rates as $r \to
\rb_-$. More precisely, since $\phi_+(r)/\phi_-(r) \to 0$ as $r \to \rb_-$, 
it follows from \eqref{eq:phipmrep}, \eqref{eq:devsing1} that
\[
  \alpha^\infty_- \,=\, \alpha_* \,e^{-i\pi d_-} \lambda\,, \qquad
  \hbox{where} \quad \lambda \,=\, \lim_{r \to \rb_-} \frac{\psi_0(r)}{
  |\bb-\Omega(r)|^{d_-}} \,\in\, \R\,.
\]
If $\alpha^\infty_- \neq 0$, then $\lambda \neq 0$ and $\alpha_* =
\alpha^\infty_- \lambda^{-1} \,e^{i\pi d_-}$, so using \eqref{eq:phipmrep}
and \eqref{eq:devsing1} again we obtain
\[
  0 \,=\, \Im\bigl(\alpha^\infty_+ \,e^{-i\pi d_-} \phi_+(r)\bigr) \,=\,
  \alpha^\infty_+ \sin(\pi (d_+ {-} d_-)) V_+(r)\,, \qquad 0 < r < \rb\,.
\]
As $V_+(r) \neq 0$ for $r$ sufficiently close to $\rb$, we conclude
that $\alpha^\infty_+ \sin(\pi (d_+ {-} d_-)) = 0$, and this implies that
$\alpha^\infty_+ = 0$ since $0 < d_+ - d_- < 1$. Therefore, we must
have $\alpha^\infty_-  \alpha^\infty_+ = 0$.

In the rest of this section, using totally different arguments 
which rely on assumption H2, we show that necessarily 
$\alpha^\infty_- \alpha^\infty_+ \neq 0$ in \eqref{eq:phipmrep}, 
and this will give the desired contradiction. To that purpose, 
we introduce the auxiliary functions
\begin{equation}\label{eq:Upm}
  U_\pm(r) \,=\, (\bb-\Omega(r))^{d_\pm}\,, \qquad r > \rb\,,
\end{equation}
and we denote by $\LL = -\partial_r \cA(r)\partial_r^* + \cB(r)$ the linear
operator in \eqref{eq:main2sing}. We claim that: 

\begin{lem}[Upper solutions]\label{lem:uppersol}
There exists $\gamma>0$ such that $\LL(U_\pm) \ge \gamma U_\pm'>0$ on 
$(\rb,\infty)$. 
\end{lem}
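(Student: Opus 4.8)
The plan is to verify the inequality $\LL(U_\pm) \ge \gamma U_\pm' > 0$ by direct computation, exploiting that $U_\pm = (\bb-\Omega)^{d_\pm}$ is, by construction, an \emph{exact} solution of the model equation obtained by freezing the coefficients of \eqref{eq:main2sing} at their leading-order behavior near $\rb$. First I would record the basic derivatives: since $U_\pm' = -d_\pm\,\Omega'\,(\bb-\Omega)^{d_\pm-1}$ and $\Omega' < 0$ on $(\rb,\infty)$ while $\bb-\Omega(r) > 0$ there (because $\Omega$ is strictly decreasing and $\Omega(\rb) = \bb$), we have $U_\pm' > 0$ throughout the interval, so the rightmost inequality is immediate and it remains to establish $\LL(U_\pm) \ge \gamma U_\pm'$ for a suitable $\gamma > 0$.

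The main computation is to expand $\LL(U_\pm) = -\partial_r(\cA\,\partial_r^* U_\pm) + \cB\, U_\pm$ and isolate the terms that are singular as $r \to \rb_+$. Writing $g(r) = \bb - \Omega(r)$, one has $g(\rb) = 0$, $g'(\rb) = -\Omega'(\rb) > 0$, so $g(r) \sim -\Omega'(\rb)(r-\rb)$; hence $U_\pm \sim (r-\rb)^{d_\pm}$ and $U_\pm'' \sim d_\pm(d_\pm-1)(r-\rb)^{d_\pm-2}$. The most singular contribution to $-\partial_r(\cA\,\partial_r^* U_\pm)$ is $-\cA(\rb)\,U_\pm''$, while the most singular contribution to $\cB\,U_\pm$ comes from the term $-\frac{k^2}{m^2}\,\frac{\cA(r)\Phi(r)}{(\Omega(r)-b)^2}\,U_\pm$, which behaves like $-\cA(\rb)\frac{k^2}{m^2}\Phi(\rb)\,g(\rb)^{-2}\,U_\pm \sim -\cA(\rb)\frac{k^2}{m^2}\frac{\Phi(\rb)}{\Omega'(\rb)^2}(r-\rb)^{-2}\,U_\pm = -\cA(\rb)\frac{k^2}{m^2}J(\rb)(r-\rb)^{-2}\,U_\pm$. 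Adding these two leading pieces gives $-\cA(\rb)\bigl[d_\pm(d_\pm-1) + \frac{k^2}{m^2}J(\rb)\bigr](r-\rb)^{d_\pm-2}$, which \emph{vanishes identically} by the indicial equation \eqref{eq:determinant}. Therefore $\LL(U_\pm)$ has a better-than-$(r-\rb)^{d_\pm-2}$ behavior; the actual order of the leading surviving term will be $(r-\rb)^{d_\pm-1}$, coming from the subleading Taylor coefficients of $\cA$, $\Phi$, $\Omega$ at $\rb$ together with the $\cA/r$ and the $\frac{r}{\Omega-b}\partial_r(\cdots)$ terms of $\cB$. Since $U_\pm' \sim d_\pm\,g'(\rb)\,g'(\rb)^{d_\pm-1}(r-\rb)^{d_\pm-1}$ is also of order $(r-\rb)^{d_\pm-1}$, both $\LL(U_\pm)$ and $U_\pm'$ vanish to the same order at $\rb$, and the desired inequality near $\rb$ reduces to comparing their leading coefficients — here assumption H2, in the form $J'(\rb) < 0$ (equivalently $r_* < \rb$), should force the coefficient of $\LL(U_\pm)$ to have the right sign and magnitude relative to that of $U_\pm'$. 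Away from $\rb$, on any interval $[\rb+\eps,\infty)$, both $\LL(U_\pm)$ and $U_\pm'$ are continuous and $U_\pm' > 0$ is bounded below (using that $\Omega$ is strictly decreasing and the behavior as $r\to\infty$), so a compactness argument supplies a uniform $\gamma$ there, provided $\LL(U_\pm)$ itself stays positive; one checks this sign via the explicit coefficients and H2, possibly using that $J(r) < m^2/(4k^2)$ for $r > r_*$ so that the potential $\cB$ has a controlled sign structure in that regime.

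The main obstacle I anticipate is the \emph{sign and size bookkeeping of the subleading terms} near $r = \rb$: once the indicial cancellation removes the $(r-\rb)^{d_\pm-2}$ singularity, one must extract the $(r-\rb)^{d_\pm-1}$ coefficient of $\LL(U_\pm)$ from a sum of several contributions (the curvature of $g$, the first-order Taylor term of $\cA\Phi$, the $\cA(r)/r$ piece, and the derivative-of-$W/(m^2+k^2r^2)$ term), express it cleanly in terms of $\Omega(\rb),\Omega'(\rb),\Omega''(\rb),W(\rb),W'(\rb)$, and then recognize that the combination is controlled by $J'(\rb)$. It is precisely at this point that assumption H2 must enter decisively; the algebra is delicate because the exponents $d_\pm$ appear both in $U_\pm$ and through the indicial relation, and one has to be careful that the comparison constant $\gamma$ can be chosen uniformly and positive across the whole half-line $(\rb,\infty)$, including the behavior as $r\to\infty$ where $U_\pm \to \bb^{d_\pm}$ stabilizes and $U_\pm' \to 0$, so that the inequality there becomes a statement about $\LL(U_\pm)$ being a suitably small positive multiple of a quantity tending to zero — this tail regime may require the decay estimates from Remark~\ref{rem:limits} on $W$, $W'$, $\Phi$.
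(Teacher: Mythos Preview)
Your overall plan is in the right spirit --- indicial cancellation kills the most singular term, and H2 provides the sign --- but you are making it much harder than it needs to be, and the part you flag as ``the main obstacle'' (the subleading bookkeeping) is exactly where you miss the clean trick the paper uses.

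The paper does \emph{not} Taylor-expand near $\rb$ and then patch the rest by compactness. Instead it computes $\LL(U)$ exactly and globally, writing $\LL(U) = (b-\Omega)^{d-2}(T_1+T_2+T_3)$ with
\[
  T_1 = -\cA\,\Omega'^2\Bigl(d(d-1)+\tfrac{k^2}{m^2}J(r)\Bigr), \qquad
  T_2 = \Bigl(1-\bigl(\tfrac{\cA}{r}\bigr)'\Bigr)(b-\Omega)^2,
\]
and a third term $T_3$ proportional to $(b-\Omega)$ times an explicit combination of $W,W',\Omega'$. The crucial observation is that since $d=d_\pm$ solves the indicial equation $d(d-1)+\tfrac{k^2}{m^2}J(\rb)=0$ \emph{exactly}, one may rewrite
\[
  T_1(r) \,=\, \cA(r)\,\Omega'(r)^2\,\frac{k^2}{m^2}\bigl(J(\rb)-J(r)\bigr),
\]
which is valid for \emph{all} $r>0$, not just asymptotically near $\rb$. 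Monotonicity of $J$ (assumption H2) then gives $T_1>0$ on the whole interval $(\rb,\infty)$ at once. The terms $T_2\ge 0$ and $T_3>0$ follow from direct sign checks using H1 and $0<d<1$. This yields $\LL(U)>0$ globally without any Taylor expansion or compactness patching.

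For the quantitative lower bound $\LL(U)\ge\gamma U'$, the paper splits $(\rb,\infty)$ at a fixed large $r_0$: for $r\ge r_0$ the term $T_2$ alone dominates, giving $\LL(U)\ge\tfrac12(b-\Omega)^d\ge\gamma_0 U'$; on the compact interval $[\rb,r_0]$ one uses the exact expression for $T_1$ together with $J'(\rb)<0$ to get $T_1(r)\ge\gamma_1 d(b-\Omega)|\Omega'|$, hence $\LL(U)\ge\gamma_1 U'$ there. So the ``tail regime'' and the ``near-$\rb$ regime'' are each handled by a single dominant term rather than by balancing many subleading contributions.

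In short: replace your local expansion $d(d-1)\sim -\tfrac{k^2}{m^2}J(\rb)$ by the global identity $d(d-1)+\tfrac{k^2}{m^2}J(r)=\tfrac{k^2}{m^2}(J(r)-J(\rb))$. That single rewriting dissolves the obstacle you identified.
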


\begin{proof}
For notational simplicity we write $U$ instead of $U_\pm$, $d$ instead
of $d_\pm$, and $b$ instead of $\bb$. Computing $\LL(U)$ when $U 
= (b-\Omega)^d$, we obtain after elementary rearrangements of terms
\[
  \LL(U) \,=\, (b-\Omega)^{d-2} \bigl(T_1+T_2+T_3\bigr)\,,
\]
where
\[
  T_1 \,=\, -\cA{\Omega'}^2\Bigl(d(d-1)+\frac{k^2}{m^2}J\Bigr), \qquad
  T_2 \,=\, \biggl(1 - \Bigl(\frac{\cA}{r}\Bigr)'\biggr)(b-\Omega)^2\,,
\]
and
\[
  T_3 \,=\, \cA (b-\Omega)\biggl(d\Omega'' + d\Bigl(\frac{\cA'}{\cA}
  +\frac{1}{r}\Bigr)\Omega' - \frac{r}{\cA}\partial_r\Big(\frac{W}{m^2
  +k^2r^2}\Big)\biggr)\,.
\]
Since $d$ is a solution of the indicial equation \eqref{eq:determinant}, 
we may rewrite
\begin{equation}\label{eq:T1exp}
  T_1(r)  \,=\, \cA(r){\Omega'}^2(r)\frac{k^2}{m^2}\bigl(J(\rb)-J(r)
  \bigr)\,, 
\end{equation}
and our assumption H2 on $J$ implies that $T_1 > 0$ on $(\rb,\infty)$. 
Next, using the definition of $\cA$ and the fact that $m^2 \ge 1$, we 
observe that
\[
  1 - \Bigl(\frac{\cA}{r}\Bigr)' \,=\, 1 - \frac{m^2-k^2r^2}{(m^2+k^2r^2)^2} 
  \,\ge\, 0\,,
\]
so that $T_2 \ge 0$ on $(0,\infty)$. As for $T_3$, we expand
\[
  \frac{\cA'}{\cA}+\frac{1}{r} = \frac{3}{r} - \frac{2k^2r}{m^2+k^2r^2}\,, \qquad 
  \frac{r}{\cA}\,\partial_r\Big(\frac{W}{m^2+k^2r^2}\Big) = \frac{W'}{r} -
  \frac{2k^2 W}{m^2+k^2r^2}\,, 
\]
and we use the identities $W = r\Omega'+2\Omega$ and $W' = r\Omega''+
3\Omega'$ to derive the alternative expression
\[
  T_3 \,=\, \cA(b-\Omega)\Bigl( (d-1) \frac{W'}{r}  + \frac{2k^2}{m^2+k^2r^2}
  (W-dr\Omega')\Bigr)\,.
\]
Since $0<d<1$, and since $W > 0$, $W' < 0$ and $\Omega' < 0$ by 
assumption H1, we deduce that $T_3$ as well is positive on $(\rb,\infty)$. 
Altogether, we have shown that $\LL(U) > 0$ on $(\rb,\infty)$. 

To conclude the proof, we fix $r_0 > \rb$ large enough so that
$|(\cA/r)'| \le 1/2$ for $r \ge r_0$. In that region, we have
$\LL(U) \ge (b-\Omega)^{d-2}T_2 \ge (b-\Omega)^d/2 \ge \gamma_0 U'$ if
$\gamma_0 > 0$ is small enough. On the other hand, using 
\eqref{eq:T1exp} and the fact that $J'(\rb)<0$, $\Omega(\rb)=b$, and 
$\Omega'(\rb)<0$, we can find $\gamma_1 > 0$ small enough so that 
$$
  T_1(r) \,=\, \frac{J(\rb)-J(r)}{b-\Omega(r)}\,\cA(r){\Omega'}^2(r)
  \frac{k^2}{m^2}(b-\Omega(r)) \,\ge\, \gamma_1 d (b-\Omega(r))|\Omega'(r)|\,,
$$ 
for all $r$ in the compact interval $[\rb,r_0]$. This implies that $\LL(U) 
\ge \gamma_1 U'$ on $[\rb,r_0]$, and taking $\gamma = \min(\gamma_0,\gamma_1)$ 
we obtained the desired conclusion. 
\end{proof}

\begin{cor}\label{cor:uppersol}
The solutions $\phi_\pm(r)$ of \eqref{eq:main2sing} given by 
\eqref{eq:devsing1} are unbounded as $r \to \infty$. 
\end{cor}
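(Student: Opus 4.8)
The plan is to deduce the corollary from Lemma~\ref{lem:uppersol} by a Wronskian comparison between the solution $\phi=\phi_\pm$ of $\LL\phi=0$ and the supersolution $U=U_\pm$. We work on $(\rb,\infty)$, where (we are in Case~1, so $0<d_-<\tfrac12<d_+<1$) formula \eqref{eq:devsing1} gives $\phi_\pm(r)=(\bb-\Omega(r))^{d_\pm}V_\pm(r)=U_\pm(r)\,h_\pm(r)$, with $h_\pm:=V_\pm$ real-analytic across $\rb$ and $h_\pm(\rb)=1$. Setting $\mathcal{W}:=\phi U'-U\phi'=-U^{2}h_\pm'$, the Lagrange identity for $\LL=-\partial_r\cA\partial_r^{*}+\cB$ reads $U\LL\phi-\phi\LL U=\tfrac1r\partial_r(r\cA\,\mathcal{W})$; since $\LL\phi=0$ and $r\cA\,\mathcal{W}\to0$ as $r\to\rb_+$ (because $\mathcal{W}=-U^{2}h_\pm'$ with $U^{2}=(\bb-\Omega)^{2d_\pm}\to0$, $d_\pm>0$, while $r\cA$ stays bounded), integration gives
\[
  r\cA(r)\,\mathcal{W}(r)\;=\;-\int_{\rb}^{r}\rho\,\phi(\rho)\,\LL U(\rho)\dd\rho\,,\qquad r>\rb\,.
\]

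First I would show $\phi_\pm>0$ on $(\rb,\infty)$. This holds near $\rb_+$ since $U>0$ there and $h_\pm(\rb)=1$; if $r_0>\rb$ were the first zero of $h_\pm$, then on $(\rb,r_0)$ the integrand above is positive (by Lemma~\ref{lem:uppersol} one has $\LL U>0$, and $\phi>0$), hence $\mathcal{W}<0$, hence $h_\pm'=-\mathcal{W}/U^{2}>0$; but $h_\pm(\rb)=1$ and $h_\pm(r_0)=0$ contradicts this monotonicity. Therefore $\phi_\pm>0$ throughout $(\rb,\infty)$, so $\mathcal{W}<0$ there, $h_\pm$ is strictly increasing, and $h_\pm\ge1$, i.e. $\phi_\pm\ge U_\pm$.

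Next, fix any $r_1>\rb$ and put $c_0:=r_1\cA(r_1)U(r_1)^{2}h_\pm'(r_1)=\int_{\rb}^{r_1}\rho\,\phi\,\LL U\dd\rho>0$. Positivity of the integrand gives $\int_{\rb}^{r}\rho\,\phi\,\LL U\dd\rho\ge c_0$ for every $r\ge r_1$, so that
\[
  h_\pm'(r)\;=\;\frac{1}{r\cA(r)U(r)^{2}}\int_{\rb}^{r}\rho\,\phi\,\LL U\dd\rho\;\ge\;\frac{c_0}{r\cA(r)U(r)^{2}}\;\ge\;\frac{c_0 k^{2}}{\bb^{\,2d_\pm}}\,\frac1r\,,
\]
using $r\cA(r)=r^{3}/(m^{2}+k^{2}r^{2})\le r/k^{2}$ and $U(r)^{2}=(\bb-\Omega(r))^{2d_\pm}\le\bb^{\,2d_\pm}$ (here $\bb>0$ because $\rb<\infty$). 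Integrating from $r_1$ yields $h_\pm(r)\ge h_\pm(r_1)+(c_0 k^{2}/\bb^{\,2d_\pm})\log(r/r_1)\to\infty$. Since $U_\pm(r)\ge(\bb-\Omega(r_1))^{d_\pm}>0$ for $r\ge r_1$, we conclude $\phi_\pm(r)=h_\pm(r)U_\pm(r)\to+\infty$, which is the assertion.

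The only point that genuinely requires care is the vanishing of the boundary term $r\cA\,\mathcal{W}$ at $\rb$, which is what makes the integrated identity clean; this rests on $d_\pm>0$ and on the analyticity of $V_\pm$ at the critical radius established in Section~\ref{subsec63}. Apart from that the argument is elementary, but it is worth stressing where the unboundedness comes from: $U_\pm$ is itself bounded (it converges to $\bb^{\,d_\pm}$), so the blow-up of $\phi_\pm$ is carried entirely by the ratio $h_\pm=\phi_\pm/U_\pm$, and its logarithmic growth is exactly what the differential inequality $\LL U_\pm\ge\gamma U_\pm'>0$ of Lemma~\ref{lem:uppersol} provides, through the positive lower bound $c_0$ on the integral combined with the $1/r$ decay of $1/(r\cA U_\pm^{2})$.
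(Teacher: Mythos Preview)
Your argument is correct. Both you and the paper exploit the supersolution property $\LL(U_\pm)>0$ from Lemma~\ref{lem:uppersol} via the decomposition $\phi=UV$, but the executions differ. The paper writes the second-order equation \eqref{eq:upper2} for $V$ and argues by contradiction through the maximum principle, using $U=U_-$ for \emph{both} $\phi_+$ and $\phi_-$: for $\phi_+$ the ratio $\phi_+/U_-$ would vanish at both endpoints, forcing $V\equiv0$; for $\phi_-$ one first concludes that $V=V_-$ is nonincreasing and then derives a contradiction by computing $V'(\rb)>0$ from the equation (this last step is where the quantitative bound $\LL U\ge\gamma U'$ is actually used). You instead match indices, $\phi_\pm=U_\pm V_\pm$, integrate the Lagrange identity for the Wronskian, and obtain directly that $V_\pm$ is increasing with $V_\pm'(r)\ge c/r$, hence $\phi_\pm\to+\infty$. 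Your route is more constructive (it yields an explicit logarithmic lower bound), treats both $\phi_+$ and $\phi_-$ uniformly, and in fact only needs the positivity $\LL U_\pm>0$ rather than the sharper inequality $\LL U_\pm\ge\gamma U_\pm'$; the paper's maximum-principle argument is shorter for $\phi_+$ but requires a separate endpoint computation for $\phi_-$.
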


\begin{proof}
Assume that $\varphi$ is a solution of $\LL(\varphi)=0$ on $(\rb,\infty)$, 
which is decomposed as $\varphi = UV$ where $U$ is one of the functions 
$U_\pm$ defined in \eqref{eq:Upm}. The equation satisfied by $V$ is
\begin{equation}\label{eq:upper2}
  0 \,=\, \LL(UV) \,=\, -\cA U V'' -(2\cA U'+(\cA'+\cA/r)U)V' + 
  \LL(U)V\,, \qquad r \in (\rb,\infty)\,.
\end{equation}
We interpret the right-hand side of \eqref{eq:upper2} as the action
on the function $V$ of a second order differential operator $\LL_U$
whose coefficients depend on $U$. Since $\cA U$ is positive on
$(\rb,\infty)$ by construction, and $\LL(U)$ is positive on
$(\rb,\infty)$ by Lemma \ref{lem:uppersol}, we observe that the
maximum principle holds for the operator $\LL_U$. As a consequence, 
the function $V$ which satisfies $\LL_U(V) = 0$ cannot have a positive
maximum nor a negative minimum on the interval $(\rb,\infty)$.

We first choose $\varphi=\varphi_+$, $U=U_-$, and we claim that
$\varphi_+$ is unbounded on $(\rb,\infty)$. Indeed, in the opposite
case, the function $V(r)=\phi_+(r)/U_-(r)$ would tend to zero both as
$r\to \rb_+$ and as $r\to \infty$, so that $V \equiv 0$ by the maximum
principle, which is clearly absurd. As a second application, we take
$\varphi=\varphi_-$, $U=U_-$, and we claim again that $\varphi_-$ is
unbounded on $(\rb,\infty)$. If not, by the maximum principle the
function $V(r)=\phi_-(r)/U_-(r)$ would be nonincreasing on
$(\rb,\infty)$ with $V(r)\to 1$ as $r\to \rb_+$ and $V(r) \to 0$ as
$r\to \infty$. Note that $V$ coincides with the function $V_-$ in
\eqref{eq:devsing1} and is therefore analytic up to the singular point
$\rb$.  Thus, using equation \eqref{eq:upper2} and
Lemma~\ref{lem:uppersol}, we can compute
\[
  V'(\rb) =\lim_{r\to \rb_+} V'(r) \,=\, \lim_{r\to \rb_+} 
  \frac{\LL(U)V-\cA UV''}{2\cA U'+(\cA'+\cA/r)U} \,=\, 
  \lim_{r\to \rb_+} \frac{\LL(U)}{2\cA U'} 
  \,\ge\, \frac{\gamma}{2\cA(\rb)} \,>\, 0\,,  
\]
and this contradicts the claim that $V$ is nonincreasing. The proof is 
complete. 
\end{proof}

As $\psi_\infty$ is bounded on the interval $(\rb,\infty)$ whereas
both $\phi_+$, $\phi_-$ are unbounded by Corollary~\ref{cor:uppersol}, 
the relation $\psi_\infty = \alpha^\infty_-\phi_- + \alpha^\infty_+ \phi_+$ can
hold only if both coefficients $\alpha^\infty_-$, $\alpha^\infty_+$ are nonzero. 
This gives the desired contradiction in the case where $r_*<\rb <\infty$.

\subsection{The situation $\rb=r_*<\infty$ is excluded}
\label{subsec47}

We proceed here as in Section \ref{subsec46}, the only essential
difference being that the exponents $d_- = d_+ = 1/2$ are no longer
distinct. We are thus in the situation referred to as {Case 3} in
Section~\ref{subsec34}, where the solutions $\phi_\pm$ are given by
the expressions \eqref{eq:devsing3}.  Applying
Corollary~\ref{cor:uppersol}, we obtain as above that $\phi_+$ is
unbounded on the interval $(\rb,\infty)$, but the argument does not
apply to the second solution $\phi_-$ which contains a logarithmic
correction, see \eqref{eq:devsing3}. Nevertheless, we deduce that the
coefficient $\alpha^\infty_-$ in the representation \eqref{eq:phipmrep} 
is necessarily nonzero, and this turns out to be enough to obtain the 
desired contradiction. Indeed, due to the logarithmic term, 
 it is easy to verify that, if $\theta(r) = 
\arg(\alpha^\infty_- \phi_-(r) + \alpha^\infty_+ \phi_+(r))$, then
$\tan(\theta(r)) \sim -\pi^{-1}\log(\rb - r)$ as $r \to \rb$, 
which shows that the left-hand side of \eqref{eq:phipmrep} cannot have
a constant phase for $r \in (0,\rb)$.

\subsection{The situation $r_*<\rb=\infty$ is excluded}
\label{subsec48}

We next consider the case where $\bb = 0$ in \eqref{eq:limiteig}, so
that $\rb = + \infty$ according to \eqref{eq:rbar}. In that situation,
the ``critical layer'' occurs at very large values of $r$,
 in a region where the eigenvalue equation
\eqref{eq:main2n} is already in some asymptotic regime. Here we cannot
use the same arguments as in section~\ref{subsec46} to obtain a
contradiction, because the location of the critical layer changes as
$n$ is increased. However, it is possible to obtain an
accurate representation of the solution of \eqref{eq:main2n} that
decays to zero as $r \to \infty$, by comparing it with the explicit
solution of a model problem, \eqref{eq:weqn2} below, which can be
expressed in terms of modified Bessel functions. This approximation
turns out to be sufficient to derive a contradiction when
combined with the identity \eqref{eq:HG0im}.

Our starting point is the equation \eqref{eq:weq0} for $w_n(r) = r^{1/2} 
u_n(r)$, which reads
\begin{equation}\label{eq:weqn}
  w_n''(r) + \frac{\cA'(r)}{\cA(r)}\,w_n'(r) - \cD_n(r) w_n(r) \,=\, 0\,,
  \qquad r > 0\,,
\end{equation}
where, in view of \eqref{eq:ABdef}, 
\begin{equation}\label{eq:Dndef}
  \cD_n(r) \,=\, k^2 + \frac{m^2+\frac34}{r^2} - \frac{1}{2r}\frac{\cA'(r)}{\cA(r)}
  - \frac{k^2}{m^2} \frac{\Phi_n(r)}{\gamma_n(r)^2} + \frac{r}{\cA(r)\gamma_n(r)}
  \partial_r\biggl(\frac{W_n(r)}{m^2+k^2 r^2}\biggr)\,.
\end{equation}
We recall that $\gamma_n(r) = \Omega_n(r)-b_n-ia_n$, and we observe
that the function $r \mapsto |\gamma_n(r)|$ reaches its minimum at
$r = r_n$, where $r_n = \Omega_n^{-1}(b_n)$. As $b_n \to 0$, it is
clear that $r_n \to \infty$ as $n \to \infty$, and since
$r^2\Omega_n(r)$ converges uniformly on $\Rp$ to $r^2 \Omega(r)$ (by
the results of Section~\ref{subsec64}), we even have
$\lim_{n \to \infty} r_n^2 b_n = \lim_{r \to \infty}r^2 \Omega(r) =
\Gamma > 0$, hence
\begin{equation}\label{eq:Gamman}
  r_n^2 \,=\, \frac{\Gamma_n}{b_n} \quad \forall n \in \N\,, \qquad 
  \hbox{where}\quad \Gamma_n \,\xrightarrow[n \to \infty]{}\, \Gamma\,. 
\end{equation}
Similarly we have $\Omega_n'(r_n) = -d_n b_n^{3/2}$ for all $n \in \N$, 
where $d_n \to 2\Gamma^{-1/2}$ as $n \to \infty$.
 
Eq.~\eqref{eq:weqn} has asymptotically constant coefficients, in the
sense that $\cA'(r)/\cA(r) = \cO(r^{-3})$ and $\cD_n(r) \to k^2$ as
$r \to \infty$. However, in general, the convergence of $\cD_n(r)$
toward its limit $k^2$ is not uniform with respect to $n \in \N$,
because of the ``critical layer'' that may occur at $r = r_n$. Indeed,
if we expand the expression $\gamma_n(r)$ around that point, we obtain
to leading order
\begin{equation}\label{eq:gammaexp}
  \gamma_n(r) \,=\ \Omega_n(r)-\Omega_n(r_n)-ia_n \,\approx\, \Omega_n'(r_n)
  \Bigl(r - r_n + i c_n\Bigr)\,, 
\end{equation}
where
\begin{equation}\label{eq:cndef}
   c_n \,=\, -\frac{a_n}{\Omega_n'(r_n)} \,=\, \frac{1}{d_n}\,
  \frac{a_n}{b_n^{3/2}}\,.
\end{equation}
It follows that, for $r$ close to $r_n$,  
\[
   \frac{\Phi_n(r)}{\gamma_n(r)^2} \,\approx\, \frac{J_n(r_n)}{(r
  -r_n+ic_n)^2}\,, \qquad \hbox{where} \quad 
  J_n(r_n) \,=\, \frac{\Phi_n(r_n)}{\Omega_n'(r_n)^2}\,.  
\]
We know that $J_n(r_n) \to J(\infty) $ as $n \to \infty$, where
$0 < J(\infty) < \frac{m^2}{4k^2}$ (by \eqref{eq:Jinfinipositif} and
because $r_* < \infty$). Thus, the term involving $\gamma_n(r)^{-2}$
in \eqref{eq:Dndef} converges to zero as $r \to \infty$ uniformly in
$n \in \N$ only if $c_n \to \infty$ as $n \to \infty$, which is the
case if $a_n \gg b_n^{3/2}$. Otherwise, that term plays an important
role and has to be taken into account.

Our strategy is thus to compare for large $r$ the solutions 
of \eqref{eq:weqn} with those of the simplified equation
\begin{equation}\label{eq:weqn2}
   w_n''(r) - \biggl(k^2 - \frac{k^2}{m^2}\,\frac{J_n(r_n)}{(r-r_n+ic_n)^2}
   \biggr)w_n(r) \,=\, 0\,,
\end{equation}
which can be solved explicitly in terms of modified Bessel functions. 
In particular, the unique solution of \eqref{eq:weqn2} such that 
$w_n(r) \sim e^{-k(r-r_n)}$ as $r \to \infty$ is given by
\begin{equation}\label{eq:chidef}
  w_n(r) \,=\, \chi_n(r) \,:=\, \sqrt{\frac{2}{\pi}}\,\bigl(r-r_n+ic_n\bigr)^{1/2} 
  K_{\nu_n}\bigl(k(r-r_n+ic_n)\bigr)\,, \qquad r > 0\,,
\end{equation}
where $K_{\nu}$ is the modified Bessel function of 
the second kind \cite[Section~9.6]{AS}, and the parameter $\nu_n 
\in (0,\frac12)$ is determined by the relation
\begin{equation}\label{eq:nundef}
  \nu_n^2 \,=\, \frac14 - \frac{k^2}{m^2}\, J_n(r_n)\,.
\end{equation}

To perform a rigorous analysis, we rewrite \eqref{eq:weqn} in the 
equivalent form
\begin{equation}\label{eq:weqn3}
  w_n''(r) + \frac{\cA'(r)}{\cA(r)}\,w_n'(r) - \biggl(k^2 - \frac{k^2}{m^2}\,
  \frac{J_n(r_n)}{(r-r_n+ic_n)^2} + \cR_n(r)\biggr)w_n(r) \,=\, 0\,,
\end{equation}
where the remainder $\cR_n$ is defined by
\begin{equation}\label{eq:Rndef}
  \cR_n(r) \,=\, \cD_n(r) - k^2 + \frac{k^2}{m^2}\,\frac{J_n(r_n)}{
  (r-r_n+ic_n)^2}\,, \quad r > 0\,.
\end{equation}
The idea is now to look for a solution of \eqref{eq:weqn3} in the form
\[
  w_n(r) \,=\, f_n(r) \chi_n(r)\,, \qquad r > 0\,,
\]
where $\chi_n$ is as in \eqref{eq:chidef}, and $f_n(r) \to 1$ as 
$r \to \infty$. The equation satisfied by $f_n$ is easily found to be
\[
  \frac{\D}{\D r}\Bigl(\cA(r)\chi_n(r)^2f_n'(r)\Bigr) \,=\, 
  \Bigl(\cA(r)\chi_n(r)^2 \cR_n(r) - \cA'(r)\chi_n(r)\chi_n'(r)
  \Bigr)f_n(r)\,, \qquad r > 0\,.
\]
Integrating both sides over $(r,\infty)$, we first obtain
\[
  f_n'(r) \,=\, -\frac{1}{\cA(r)\chi_n(r)^2}\int_r^\infty
  \Bigl(\cA(s)\chi_n(s)^2 \cR_n(s) - \cA'(s)\chi_n(s)\chi_n'(s)
  \Bigr)f_n(s)\dd s\,,
\]
and a second integration, combined with an application of Fubini's theorem, 
gives the representation formula
\begin{equation}\label{eq:fndef}
  f_n(r) \,=\, 1 + \int_r^\infty \cK_n(r,s) f_n(s)\dd s\,,
\end{equation}
where the integral kernel $\cK_n(r,s)$ has the following expression\:
\begin{equation}\label{eq:KKdef}
  \cK_n(r,s) \,=\, \Bigl(\cA(s)\chi_n(s)^2 \cR_n(s) - \cA'(s)\chi_n(s)\chi_n'(s)
  \Bigr)\int_r^s \frac{1}{\cA(t)\chi_n(t)^2}\dd t\,. 
\end{equation}

We now use the following estimate, whose proof is postponed to 
Section~\ref{subsec65}\:

\begin{lem}\label{lem:kernel}
For any $\delta \in (0,1)$, there exists a constant $C > 0$ such that
\begin{equation}\label{eq:Knest}
  \sup_{r \ge \delta r_n} \int_{r}^{\infty} |\cK_n(r,s)| \dd s \,\le\, 
  C\,b_n^{1/2} \Bigl(1 + \log_+\frac{1}{b_n}\Bigr)\,,  
\end{equation}
for all sufficiently large $n \in \N$, where $\log_+(x) = \max(\log(x),0)$.    
\end{lem}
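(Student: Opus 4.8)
The plan is to estimate the three factors in the kernel $\cK_n(r,s)$ from \eqref{eq:KKdef} separately, using the known asymptotics of the modified Bessel function $K_{\nu_n}$, and then integrate. First I would record the basic behaviour of $\chi_n$. Writing $z = k(r - r_n + ic_n)$, the standard large-argument asymptotics $K_\nu(z) \sim \sqrt{\pi/(2z)}\,e^{-z}$ and the small-argument behaviour $K_\nu(z) \asymp |z|^{-\nu}$ (valid uniformly for $\nu_n \in (0,\tfrac12)$, which holds by \eqref{eq:nundef} since $r_* < \infty$ forces $J(\infty) < m^2/4k^2$) give, for $r \ge \delta r_n$, bounds of the form
\[
  |\chi_n(r)| \,\le\, C\,e^{-k(r-r_n)}\bigl(1 + |r - r_n + ic_n|^{1/2-\nu_n}|k(r-r_n+ic_n)|^{-1/2}\bigr)^{-1}\cdots
\]
— more cleanly: $|\chi_n(r)|$ decays like $e^{-k(r-r_n)}$ for $r - r_n \gtrsim 1/k$ and behaves like $|r - r_n + ic_n|^{1/2 - \nu_n}$ for $r$ near $r_n$, with a matching lower bound $|\chi_n(r)| \ge c\,|r-r_n+ic_n|^{1/2-\nu_n}e^{-k(r-r_n)_+}$ away from the zeros (there are none, since $\Re z$ may be negative but $\Im z = kc_n \ne 0$, so $K_{\nu_n}(z)$ does not vanish). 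From these one controls $\int_r^s |\cA(t)\chi_n(t)^2|^{-1}\,\D t$: on the region where both endpoints exceed $r_n + 1/k$ it is $O(e^{2k(s-r_n)})$ up to polynomial factors; the delicate part is when the interval straddles $r_n$, where the integrand is roughly $t^{-2}\cdot\cA(t)^{-1}\,|t - r_n + ic_n|^{2\nu_n - 1}$ and, since $2\nu_n - 1 \in (-1,0)$, the integral is finite and bounded by $C\,r_n^{-?}$ times a factor involving $c_n$; this is where the logarithm $\log_+(1/b_n)$ will enter, coming from $\int |t-r_n+ic_n|^{2\nu_n-1}\,\D t$ when $c_n$ is small.

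The second ingredient is the size of the remainder $\cR_n$ defined in \eqref{eq:Rndef}. By construction $\cR_n$ is $\cD_n(r) - k^2$ with the leading singular part at $r_n$ subtracted off, so near $r = r_n$ it is $O(r_n^{-1})$ times bounded functions plus the error from replacing $\Phi_n(r)/\gamma_n(r)^2$ by $J_n(r_n)/(r - r_n + ic_n)^2$; using \eqref{eq:gammaexp}, $\Omega_n'(r_n) = -d_n b_n^{3/2}$, $r_n^2 = \Gamma_n/b_n$, and the uniform $\cC^1$-convergence $W_n \to W$ together with Remark~\ref{rem:limits} (which gives $W(r) = O(r^{-4})$, $\Phi(r) = O(r^{-6})$), one gets a pointwise bound of the shape
\[
  |\cR_n(r)| \,\le\, C\Bigl(\frac{1}{r^2} + \frac{b_n^{1/2}}{|r - r_n + ic_n|}\Bigr)\,, \qquad r \ge \delta r_n\,,
\]
where the first term collects the genuinely lower-order pieces of $\cD_n$ (centrifugal $m^2/r^2$, the $\cA'/\cA$ terms, and the $\partial_r(W_n/(m^2+k^2r^2))/\gamma_n$ term, the last being $O(r^{-5}/|\gamma_n|) = O(r^{-5}b_n^{-1})$, i.e. $O(r^{-2})$ once $r \sim r_n = (\Gamma_n/b_n)^{1/2}$), and the second collects the Taylor error in the critical-layer expansion. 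Likewise $\cA'(s)\chi_n(s)\chi_n'(s)$ is bounded using $\cA'(s)/\cA(s) = O(s^{-3})$ and the Bessel bounds on $\chi_n, \chi_n'$.

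Assembling these, $|\cK_n(r,s)|$ is a product of $|\cA(s)\chi_n(s)^2|$ (or the $\chi_n\chi_n'$ term) times $|\cR_n(s)|$ times the $t$-integral, and the exponential factors $e^{-2k(s-r_n)}$ from $\chi_n(s)^2$ beat the $e^{2k(s-r_n)}$ from the $t$-integral, leaving an integrand that is $O(b_n^{1/2})$ away from $r_n$ and, near $r_n$, governed by powers of $|s - r_n + ic_n|$ with exponents in $(-1, 0)$ after combining $2\nu_n - 1$ pieces from $\chi_n(s)^2$ with the $2\nu_n - 1$ pieces from the $t$-integral and the $|s-r_n+ic_n|^{-1}$ from $\cR_n$. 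Carrying out the $s$-integration over $[r,\infty)$ and taking the sup over $r \ge \delta r_n$ then yields the claimed bound $C\,b_n^{1/2}(1 + \log_+(1/b_n))$: the $b_n^{1/2}$ prefactor comes from $|\Omega_n'(r_n)| = d_n b_n^{3/2}$ entering $\cR_n$ and from the scale $r_n^{-1} = (b_n/\Gamma_n)^{1/2}$, while the logarithm is produced when $c_n \to 0$ (equivalently $a_n \ll b_n^{3/2}$) by the integral $\int_0^{O(1)} (\sigma^2 + c_n^2)^{\nu_n - 1/2}\,\D\sigma$ or $\int (\sigma^2+c_n^2)^{-1/2}\D\sigma \sim \log(1/c_n) \le C\log_+(1/b_n)$. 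I expect the \emph{main obstacle} to be bookkeeping the straddling regime $r < r_n < s$ cleanly — one has to keep the three $|s - r_n + ic_n|$-powers and the $c_n$-dependence from the inner $t$-integral coherent, and verify that all exponents stay in the integrable range uniformly in $n$ (i.e. uniformly in $\nu_n \in (0,\tfrac12)$, with the endpoint $\nu_n \to \tfrac12$, corresponding to $\rb = r_*$, being the worst case since $2\nu_n - 1 \to 0$ and the $t$-integral degenerates logarithmically — consistent with the stated $\log_+$ term).
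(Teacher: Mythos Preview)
Your overall strategy is the paper's: bound $\chi_n$ via the $K_\nu$ asymptotics, bound $\cR_n$ by Taylor-expanding $\gamma_n$ around $r_n$, and combine. Your remainder estimate $|\cR_n| \lesssim r^{-2} + b_n^{1/2}\,|r-r_n+ic_n|^{-1}$ agrees with the paper's (there it is written as $C/(r\,|r-r_n+ic_n|)$, which is the same thing once $r \ge \delta r_n \sim b_n^{-1/2}$).

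There is, however, a concrete error in where you locate the logarithm. You claim it arises from the critical-layer integral $\int (\sigma^2+c_n^2)^{-1/2}\,\D\sigma \sim \log(1/c_n)$ and then assert $\log(1/c_n) \le C\log_+(1/b_n)$. That inequality is false: $c_n = a_n / |\Omega_n'(r_n)| \asymp a_n b_n^{-3/2}$, and nothing prevents $a_n$ from being exponentially small in $b_n^{-1/2}$ (indeed, the later analysis in Section~\ref{subsec48} shows exactly this via \eqref{eq:secondI}), so $\log(1/c_n)$ can be of order $b_n^{-1/2}$, far larger than $\log(1/b_n)$. In fact the near-$r_n$ integral is \emph{uniformly} bounded in $c_n$: after combining the factor $|\chi_n(s)|^2 \int_r^s |\chi_n|^{-2}\,\D t \lesssim |s-r_n+ic_n|^{1-2\nu_n}$ with $|\cR_n| \lesssim r_n^{-1}|s-r_n+ic_n|^{-1}$, the integrand near $r_n$ is $\lesssim r_n^{-1}|s-r_n+ic_n|^{-2\nu_n}$, and since $2\nu_n < 1$ this integrates to $r_n^{-1}/(1-2\nu_n)$, with no $c_n$-dependence. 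The logarithm actually comes from the \emph{intermediate} region $|s - r_n| \in [1, (1-\delta)r_n]$, where the integrand is $\lesssim s^{-1}|s-r_n|^{-1}$ and $\int_{\delta r_n}^{r_n-1} \frac{\D s}{s(r_n-s)} \sim r_n^{-1}\log r_n \sim b_n^{1/2}\log_+(1/b_n)$.

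A smaller point: you have the dangerous endpoint for $\nu_n$ backwards. When $\nu_n \to \tfrac12$ the exponent $2\nu_n - 1 \to 0$ and the $t$-integral $\int |t+ic_n|^{2\nu_n-1}\,\D t$ tends to the harmless $\int 1\,\D t$; it is $\nu_n \to 0$ (exponent $\to -1$) that would make the constant in the $t$-integral bound blow up. In the setting of Section~\ref{subsec48} one has $\nu_n \to (\tfrac14 - \tfrac{k^2}{m^2}J(\infty))^{1/2} > 0$, so this does not occur; the paper notes this explicitly.
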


Assuming \eqref{eq:Knest} for the moment, we easily  
deduce that the 
solution of \eqref{eq:fndef} satisfies, 
\begin{equation}\label{eq:fn}
  \sup_{r \ge \delta r_n} |f_n(r) - 1| \,\le\, C\,b_n^{1/2} \Bigl(1 + 
  \log_+\frac{1}{b_n}\Bigr) \,\xrightarrow[n \to \infty]{}\, 0\,.
\end{equation}
Also, differentiating \eqref{eq:fndef} and using similar estimates 
as in the proof of Lemma~\ref{lem:kernel}, we obtain
\begin{equation}\label{eq:fnder}
  \sup_{r \ge \delta r_n} |f'_n(r)| \,\le\, C\,b_n^{1/2} \Bigl(1 + 
  \log_+\frac{b_n}{a_n}\Bigr)\,.
\end{equation}
These estimates imply that the solution $w_n = f_n\chi_n$ of 
\eqref{eq:weqn3} is very close for large $n \in \N$ to its
approximation $\chi_n$ defined by \eqref{eq:chidef}, uniformly on the
interval $[\delta r_n, \infty)$ for any (small) $\delta > 0$. 
In particular, in view of \eqref{eq:chiest}, there exist positive 
constants $C_1, C_2$ such that
\begin{equation}\label{eq:wnbounds}
  \begin{split}
  |w_n(r)| \,&\le\, C_1\,e^{-k(r-r_n)} \quad \hbox{for} \quad r \ge r_n-1\,, \\
  |w_n(r)| \,&\ge\, C_2\,e^{-k(r-r_n)} \quad \hbox{for} \quad \delta r_n \le 
  r \le (1-\delta)r_n\,.
  \end{split}
\end{equation}

To reach the desired contradiction, we now show that these bounds
are {\em incompatible} with identity \eqref{eq:HG0im}, which has to be 
satisfied for all $n \in \N$ by the function $u_n(r) = r^{-1/2}w_n(r)$. 
In terms of $w_n$, identity \eqref{eq:HG0im} becomes
\[
  \int_0^\infty \biggl\{\frac{2\cA(r)(b_n-\Omega_n)}{(a_n^2 + (\Omega_n{-}
  b_n)^2)^2}\,\frac{k^2}{m^2}\,\Phi_n(r) + \frac{r}{a_n^2 + 
  (\Omega_n{-}b_n)^2}\partial_r \Bigl(\frac{W_n(r)}{m^2+k^2r^2}
  \Bigr)\biggr\}|w_n(r)|^2\dd r  \,=\, 0\,.
\] 
The second term in the integrand is obviously negative, because 
$W_n(r)$ is a decreasing function of $r$. It follows that $\cI_{n,1} 
+ \cI_{n,2} \le \cI_{n,3}$, where
\begin{align}\label{eq:In1def}
  \cI_{n,1} \,&=\, \int_0^{r_n-1} \frac{\cA(r)(\Omega_n(r)-b_n)}{(a_n^2 
  + (\Omega_n(r){-}b_n)^2)^2}\,\Phi_n(r) |w_n(r)|^2 \dd r\,, 
   \\[1mm] \label{eq:In2def}
  \cI_{n,2} \,&=\, \int_{r_n-1}^{r_n+1} \frac{\cA(r)(\Omega_n(r)-b_n)}{(a_n^2 
  + (\Omega_n(r){-}b_n)^2)^2}\,\Phi_n(r) |w_n(r)|^2 \dd r\,, 
  \\[1mm] \label{eq:In3def}
  \cI_{n,3} \,&=\, \int_{r_n+1}^\infty \frac{\cA(r)(b_n - \Omega_n(r))}{(a_n^2 
  + (\Omega_n(r){-}b_n)^2)^2}\,\Phi_n(r) |w_n(r)|^2 \dd r\,.
\end{align}
Since $\Omega_n(r)-b_n$ is positive for $r < r_n$ and negative for 
$r > r_n$, the quantities $\cI_{n,1}$ and $\cI_{n,3}$ are positive, 
whereas $\cI_{n,2}$ has no obvious sign. 

We first estimate $\cI_{n,3}$. As $b_n - \Omega_n(r) = \Omega_n(r_n) - 
\Omega_n(r) \ge \Omega_n(r_n) - \Omega_n(r_n{+}1) \ge C |\Omega'(r_n)|$ 
for $r \ge r_n+1$, we have
\[
  \frac{(b_n-\Omega_n(r))\Phi_n(r)}{(a_n^2 + (\Omega_n(r){-} b_n)^2)^2} \,\le\, 
  \frac{C\Phi_n(r)}{|\Omega'(r_n)|^3} \,\le\, \frac{C J_n(r)}{|\Omega'(r_n)|}
  \,\le\, C b_n^{-3/2}\,,\quad r \ge r_n+1\,,
\]
and using \eqref{eq:wnbounds}, \eqref{eq:In3def} we deduce that $\cI_{n,3} \le
C b_n^{-3/2}$. Next, we bound $\cI_{n,1}$ from below by restricting the integral 
in \eqref{eq:In1def} to the region where $\delta r_n \le r \le (1{-}\delta)r_n$,
for some small $\delta > 0$. In that region we have $\epsilon_1 b_n \le 
\Omega_n(r)-b_n \le \epsilon_2 b_n$ and $|\Omega_n'(r)| \ge \epsilon_3 b_n^{3/2}$
for some $\epsilon_1, \epsilon_2, \epsilon_3 > 0$ depending on $\delta$, 
hence 
\[
  \frac{(\Omega_n(r)-b_n)\Phi_n(r)}{(a_n^2 + (\Omega_n(r){-} b_n)^2)^2} 
  \,\ge\, \frac{\epsilon_1 b_n \Omega_n'(r)^2 J_n(r)}{(a_n^2 + (\epsilon_2 
  b_n)^2)^2} \,\ge\, \frac{Cb_n^4}{(a_n^2 + b_n^2)^2}\,, \quad
  \delta r_n \le r \le (1{-}\delta)r_n\,,
\]
and using \eqref{eq:wnbounds} we deduce that
\[
 \cI_{n,1} \,\ge\, \int_{\delta r_n}^{(1{-}\delta)r_n} \frac{\cA(r)(b_n-
 \Omega_n(r))}{(a_n^2 + (\Omega_n(r){-}b_n)^2)^2}\,\Phi_n(r) |w_n(r)|^2\dd r 
  \,\ge\,  \frac{Cb_n^4}{(a_n^2 + b_n^2)^2}\,e^{2k(1{-}\delta)r_n}\,.
\]
Finally, when $|r-r_n| \le 1$, we have $\Omega_n(r)-b_n \approx 
(r{-}r_n)\Omega_n'(r_n)$ hence
\[
  \frac{|\Omega_n(r)-b_n|\Phi_n(r)}{(a_n^2 + (\Omega_n(r){-}b_n)^2)^2}
  \,\le\, \frac{C |\Omega'(r_n)|^3 |r-r_n|}{(a_n^2 + \Omega'(r_n)^2
  (r{-}r_n)^2)^2}\,J_n(r_n)\,\le\, \frac{C}{ |\Omega'(r_n)|}\,
  \frac{|r-r_n|}{((r{-}r_n)^2 + c_n^2)^2}\,,
\]
and using \eqref{eq:wnbounds} we obtain the crude estimate
\[
  |\cI_{n,2}| \,\le\, C \int_{r_n-1}^{r_n+1} \frac{b_n^{-3/2}|r-r_n|}{((r{-}r_n)^2 
  + c_n^2)^2}\dd r \,\le\, C \int_{-\infty}^\infty \frac{b_n^{-3/2}|x|}{
  (x^2+c_n^2)^2}\dd x \,\le\, \frac{C b_n^{-3/2}}{c_n^2} \,\le\, 
  \frac{C b_n^{3/2}}{a_n^2}\,.
\]

As $\cI_{n,1} \le |\cI_{n,2}| + \cI_{n,3}$, the estimates obtained so 
far show that
\begin{equation}\label{eq:firstI}
  \frac{C_3 b_n^4}{(a_n^2 + b_n^2)^2}\,e^{2k(1{-}\delta)r_n}
  \,\le\, C_4 b_n^{-3/2} \Bigl(1 + \frac{b_n^3}{a_n^2}\Bigr)\,,
\end{equation}
for some positive constants $C_3, C_4$. If $a_n \ge b_n$ for a
sequence of integers $n$, then multiplying both sides of \eqref{eq:firstI} 
by $(a_n^2 + b_n^2)^2$ we clearly obtain an inequality that cannot be satisfied
for large $n$ if $a_n \to 0$. Thus we can assume that $a_n \le b_n$ 
for all $n \in \N$, in which case \eqref{eq:firstI} implies that 
\begin{equation}\label{eq:secondI}
  C_3 b_n^{3/2}\,e^{2k(1{-}\delta)r_n}  \,\le\, 4 C_4 \Bigl(1 + 
  \frac{b_n^3}{a_n^2}\Bigr)\,, \qquad \hbox{hence}\quad 
  a_n^2 \,\le\, C b_n^{3/2}\,e^{-2k(1{-}\delta)r_n}\,.
\end{equation}
Since $r_n = \cO(b_n^{-1/2})$, this means that $a_n$ is {\em 
exponentially small} when compared to $b_n$. With this information 
at hand, it is possible to compute explicitly the quantity $\cI_{n,2}$ 
to leading order as $n \to \infty$. Indeed, in this parameter 
regime, the main contribution to the integral \eqref{eq:In2def}
comes from an extremely small neighborhood of the critical 
point $r_n$, so that
\begin{align}\nonumber
  \cI_{n,2} \,&\approx\, -\frac{\cA(r_n)\Phi_n(r_n)}{|\Omega'(r_n)|^3}
  \int_{r_n-1}^{r_n+1} \frac{r-r_n}{(c_n^2 + (r{-}r_n)^2)^2}\,
  |w_n(r)|^2 \dd r \\ \label{eq:thirdI}
  \,&\approx\, -\frac{2\cA(r_n)J_n(r_n)}{\pi|\Omega'(r_n)|}
  \int_{r_n-1}^{r_n+1} \frac{r-r_n}{(c_n^2 + (r{-}r_n)^2)^{3/2}}\,
  |K_{\nu_n}(k(r-r_n+ic_n))|^2 \dd r\,, 
\end{align}
where in the second line we used the fact that the solution $w_n$ of
\eqref{eq:weqn} is well approximated for large $n$ by the function
$\chi_n$ in \eqref{eq:chidef} in view of \eqref{eq:fn} and
\eqref{eq:fnder}. Now, an explicit calculation which is reproduced in
Section~\ref{subsec66} shows that, for any $\nu \in (0,1/2)$ and any
$\epsilon > 0$,
\begin{equation}\label{eq:limBessel}
  \cJ_\nu \,:=\,\lim_{a \to 0_+} \int_{-\epsilon}^\epsilon \frac{-ax}{(a^2+x^2)^{3/2}}
  \,|K_\nu(x+ia)|^2\dd x \,=\, \frac{2\pi \cos(\nu\pi)}{1-4\nu^2}\,.
\end{equation}
Assuming \eqref{eq:limBessel} for the moment, we deduce from 
\eqref{eq:thirdI} that
\[
  \cI_{n,2} \,\approx\, \frac{2\cA(r_n)J_n(r_n)}{\pi|\Omega'(r_n)|}\,
  \frac{\cJ_{\nu_n}}{c_n}\,, \qquad \hbox{as } n \to \infty\,,
\]
which means in particular that $\cI_{n,2} > 0$ when $n$ is 
sufficiently large. Thus, for large $n$ we must have
\[
  C_5 \,e^{2k(1{-}\delta)r_n} \,\le\, \cI_{n,1} \,\le\, \cI_{n,1} + 
  \cI_{n,2} \le \cI_{n,3} \,\le\, C_6 r_n^3\,, 
\]
for some positive constants $C_5, C_6$, which is clearly impossible 
since $r_n \to \infty$ as $n \to \infty$. So we have reached a 
contradiction in that case too. 

\subsection{The situation $r_*=\rb=\infty$ is excluded}
\label{subsec49}

Having exhausted all possibilities for which $r_* < \infty$, we
finally consider the case where $r_* = \infty$. According to
\eqref{eq:rbstardef}, this occurs if and only if
$J(\infty) \ge m^2/(4k^2)$. Of course, we can assume that
$\rb = \infty$, because when $\rb < \infty$ a contradiction has
already been obtained in Section~\ref{subsec44} or \ref{subsec45}. If
$J(\infty) > m^2/(4k^2)$, then $J_n(r) > J_n(\infty) > m^2/(4k^2)$ for
all $r > 0$ when $n$ is sufficiently large, and in that situation we
know from Proposition~\ref{prop:HG} that the operator $L_{m,k}^n$ has
no eigenvalue outside the imaginary axis.  However, if
$J(\infty) = m^2/(4k^2)$, it is possible that
$J_n(\infty) < m^2/(4k^2)$ for all $n \in \N$, in which case we cannot
obtain a contradiction directly from Proposition~\ref{prop:HG}. In
that situation, we must have $J_n(r_n) \to m^2/(4k^2)$ as
$n \to \infty$. Two possibilities can occur\:

{\bf 1.} If $J_n(r_n) < m^2/(4k^2)$ for a sequence of integers $n$,
we can get a contradiction by following exactly the same lines as in
Section~\ref{subsec48}, the only difference being that the indices
$\nu_n$ defined by \eqref{eq:nundef} now converge to zero as
$n \to \infty$. This is harmless because, as is observed for
instance in Remark~\ref{rem:nuzero} below, all estimates we need hold
uniformly in the limit where $\nu \to 0$. We leave the details to the
reader.

{\bf 2.} If instead $J_n(r_n) \ge m^2/(4k^2)$ for all $n \in \N$, we
shall prove that the quadratic form given by the left-hand side of
\eqref{eq:HG1/2n} is positive definite for sufficiently large values
of $n$, so that \eqref{eq:HG1/2n} gives the desired contradiction. To
do that, we use the asymptotic expansions
\begin{equation}\label{eq:OJasym}
  \Omega_n(r) \,=\, \frac{\Gamma_n}{r^2} + \cO\Bigl(\frac{1}{r^4}\Bigr)\,, \quad
  \Omega_n'(r) \,=\, \frac{-2\Gamma_n}{r^3} + \cO\Bigl(\frac{1}{r^5}\Bigr)\,, \quad
  J_n'(r) \,=\, o\Bigl(\frac{1}{r}\Bigr)\,, \quad r \to \infty\,, 
\end{equation}
which are established in Section~\ref{subsec64}, and we observe that 
the integrand in \eqref{eq:HG1/2n} is nonnegative outside an interval of 
the form $[r_n,r_n+\delta_n]$, where $\delta_n \to 0$ as $n \to \infty$. 
Indeed, the integrand is clearly nonnegative for $r \le r_n$, and for 
$r \ge r_n$ we have the lower bound
\begin{equation}\label{eq:HGlow}
  \frac{\cA(r)\Omega_n'(r)^2}{a_n^2 + (b_n - \Omega_n(r))^2}\Bigl(
  \frac{k^2}{m^2}J_n(r)-\frac14\Bigr) \,\ge\, \frac{1}{m^2}\,
  \frac{\Omega_n'(r)^2}{(b_n - \Omega_n(r))^2}\Bigl(J_n(r) - J_n(r_n)\Bigr)\,.
\end{equation}
In view of \eqref{eq:OJasym}, the last member of \eqref{eq:HGlow} is
bounded from below by $-1$ if $r \ge r_n + \cO(|J'(r_n)|)$, which
proves the claim.  If we now restrict the integral in
\eqref{eq:HG1/2n} to the interval $[r_n-1,r_n+1]$, make the change of
variable $r=r_n+s$, and use the lower bound \eqref{eq:HGlow}, we
deduce (after a few obvious simplifications) that it is sufficient for
us to show that the quadratic form
\begin{equation}\label{eq:Qbizarre}
  \mathfrak{Q}_n(v) \,:=\,  \int_{-1}^1 \biggl\{ |\partial_s v|^2 + \Bigl(k^2
   - \frac{\eps_n s}{c_n^2 + s^2}\Bigr)|v|^2\biggr\}\dd s\,, \qquad k > 0\,,
\end{equation}
is positive on $H^1([-1,1],\dd s)$ for all $c_n \neq 0$ and all
sufficiently small $\eps_n > 0$. This in turn is an easy consequence
of the Sobolev embedding theorem. Indeed, decomposing $v=v(0)\chi + w$
where $\chi : [-1,1] \to [0,1]$ is smooth, even, and satisfies
$\chi(0)=1$, we first observe that
\[
  \int_{-1}^1 \frac{\eps_ns}{c_n^2+s^2}|v(0)|^2\chi^2(s)\dd s \,=\, 0\,,
\]
by symmetry. Moreover, as $w(0)=0$ by construction, we have 
$|w(s)|^2 \le C |s| \|v\|_{H^1}^2$. Combining these observations, 
we deduce that
\[
  \Bigl| \int_{-1}^1 \frac{\eps_n s}{c_n^2 + s^2}\,|v|^2 \dd s\Bigr| 
  \,\le\, C \eps_n  \|v\|_{H^1}^2\,,
\]
where the constant $C > 0$ is independent of $n$. The quadratic form
\eqref{eq:Qbizarre} is thus positive if $\eps_n$ is sufficiently
small, and we deduce that \eqref{eq:HG1/2n} cannot be satisfied. This
concludes the contradiction argument initiated in
Section~\ref{subsec41}, hence also the proof of Theorem~\ref{thm:main1}.

\section{Uniform resolvent estimates}\label{sec5} 

This section is devoted to the proof of Theorem~\ref{thm:main2}.
Given any $s \in \C$ with $\Re(s) \neq 0$, we already know from
Theorem~\ref{thm:main1} that the resolvent operator $(s-L_{m,k})^{-1}$ 
is bounded in the space $X_{m,k}$ for all $m \in \Z$ and all 
nonzero $k \in \R$. It remains to show that, for any $k_0 > 0$,
the resolvent norm $\|(s-L_{m,k})^{-1}\|$ is {\em uniformly 
bounded} for all $m \in \Z$ and all nonzero $k \in \Z k_0$.

Given $m \in \Z$, $k \neq 0$, and $\omega, f \in X_{m,k}$, the resolvent 
equation $(s - L_{m,k})\omega = f$ takes the form
\begin{align}\nonumber
  \gamma(r) \omega_r \,&=\, ik W(r) u_r + f_r\,, \\ \label{eq:res1}
  \gamma(r) \omega_\theta \,&=\, ik W(r)u_\theta + r\Omega'(r) 
    \omega_r + f_\theta\,, \\ \nonumber
  \gamma(r) \omega_z \,&=\, ik W(r) u_z - W'(r) u_r + f_z\,,
\end{align}
where $\gamma(r) = s + im\Omega(r)$. Here and in what follows, we
assume that $\Re(s) \neq 0$, which implies that $|\gamma(r)| \ge 
|\Re(s)| > 0$ for all $r > 0$. In \eqref{eq:res1}, it is understood that 
the velocity $u$ is obtained from the vorticity $\omega$ by the
Biot-Savart formula in the Fourier subspace indexed by $m$ and $k$,
see Section~\ref{subsec61} below.

Proceeding as in the derivation of the eigenvalue equation \eqref{eq:main} 
in Section~\ref{sec2}, we can transform the resolvent system~\eqref{eq:res2} 
into a single equation for the radial velocity $u_r$. After some 
calculations, we obtain the differential equation 
\begin{equation}\label{eq:res2}
  -\partial_r\bigl(\cA(r)\partial_r^* u_r\bigr) + 
  \cB(r) u_r \,=\, \cF(r)\,,
\end{equation}
where the coefficients in the left-hand side are as in \eqref{eq:ABdef}\: 
\begin{equation}\label{eq:AB2}
  \cA(r) \,=\, \frac{r^2}{m^2 + k^2 r^2}\,, \qquad  
  \cB(r) \,=\, 1 + \frac{k^2}{\gamma(r)^2}\,\cA(r)\Phi(r)
  + \frac{imr}{\gamma(r)}\partial_r\Bigl(\frac{W(r)}{m^2+k^2r^2}\Bigr)\,,
\end{equation}
and the right-hand side takes the form\:
\begin{equation}\label{eq:cFdef}
  \cF(r) \,=\, -\partial_r \Bigl(\frac{m}{k r\gamma(r)}\,\cA(r)f_r
  \Bigr) - \frac{i m^2}{k\gamma(r)^2}\,\frac{W(r)}{m^2+k^2r^2}\,f_r
  - \frac{i}{k\gamma(r)}\,f_\theta + \frac{2i\Omega(r)}{k\gamma(r)^2}
  \,f_z\,.
\end{equation}
Of course, if $f = 0$, then $\cF = 0$ and \eqref{eq:res2} reduces 
to \eqref{eq:main}. The following result will be useful to estimate
the solutions of \eqref{eq:res2} when $|k|$ is large. 

\begin{lem}\label{lem:res}
For any $m \in \Z$ and any $s \in \C$ with $\Re(s) \neq 0$, there
exists a positive constant $C = C(m,s)$ such that, for any $k \neq 0$
and any $f \in X_{m,k}$, the solution $u_r$ of \eqref{eq:res2}
satisfies
\begin{equation}\label{eq:res3}
 \|\cA^{1/2}\partial_r^* u_r\|_{L^2} + \|u_r\|_{L^2} \,\le\, 
 \frac{C(m,s)}{|k|}\,\bigl(\|u_r\|_{L^2} + \|f\|_{L^2}\bigr)\,. 
\end{equation}
\end{lem}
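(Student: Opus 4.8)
The plan is to treat the term $\mathcal{B}(r)u_r$ and the source $\mathcal{F}(r)$ as perturbations that are small when $|k|$ is large, and to extract the gain $1/|k|$ from the structure of $\cA(r) = r^2/(m^2+k^2r^2)$, which is bounded by $1/k^2$ uniformly in $r$. First I would multiply \eqref{eq:res2} by $r\bar u_r$ and integrate over $\Rp$, using the decay and boundary behavior of $u_r$ established in Section~\ref{subsec31} (and the fact that the solution of \eqref{eq:res2} relevant here is the one in $H^1(\Rp,r\D r)$) to justify the integration by parts with no boundary contribution. This yields the identity
\[
  \int_0^\infty \Bigl(\cA(r)|\partial_r^* u_r|^2 + \cB(r)|u_r|^2\Bigr)r\D r
  \,=\, \int_0^\infty \cF(r)\,\overline{u_r}\,r\D r\,.
\]
The first term on the left is exactly $\|\cA^{1/2}\partial_r^* u_r\|_{L^2}^2$, so the main job is to bound $\mathrm{Re}$ of the other two terms suitably. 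The identity $\cA(r) = r^2/(m^2+k^2 r^2)\le 1/k^2$, together with $|\gamma(r)|\ge|\Re(s)|$ and the boundedness of $W$, $W'$, $\Omega$, $\Phi$ (all consequences of assumption~H1 and Remark~\ref{rem:limits}), gives pointwise estimates of the form $|\cB(r)-1|\le C(m,s)/|k|$ and $|\cF(r)\,\overline{u_r}|\le$ (a divergence term) $+\;C(m,s)|k|^{-1}|f||u_r|$, where the constant depends only on $m$ and $s$.

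The key step is to handle the $\partial_r\!\bigl(\tfrac{m}{kr\gamma}\cA f_r\bigr)$ piece of $\cF$: I would integrate it by parts once more against $r\bar u_r$, moving the derivative onto $\overline{u_r}$, so that it becomes $\int \tfrac{m}{kr\gamma}\cA f_r\,\overline{\partial_r u_r}\,r\D r$ up to the lower-order contribution from differentiating $r$. Then $|\tfrac{m}{kr\gamma}\cA(r)| \le C(m,s)/(|k| r)\cdot r^2/(m^2+k^2r^2)$, and pairing with $\partial_r u_r = \partial_r^* u_r - u_r/r$ one sees that this term is controlled by $C(m,s)|k|^{-1}\|f\|_{L^2}\bigl(\|\cA^{1/2}\partial_r^*u_r\|_{L^2}+\|u_r\|_{L^2}\bigr)$, again using $\cA^{1/2}\le 1/|k|$ on the leftover factor. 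Collecting everything, the integral identity gives
\[
  \|\cA^{1/2}\partial_r^* u_r\|_{L^2}^2 + \|u_r\|_{L^2}^2
  \,\le\, \frac{C(m,s)}{|k|}\Bigl(\|\cA^{1/2}\partial_r^* u_r\|_{L^2}
  + \|u_r\|_{L^2}\Bigr)\bigl(\|u_r\|_{L^2} + \|f\|_{L^2}\bigr)\,,
\]
and dividing through by $\|\cA^{1/2}\partial_r^* u_r\|_{L^2}+\|u_r\|_{L^2}$ yields \eqref{eq:res3}.

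The main obstacle I anticipate is bookkeeping rather than conceptual: one must be careful that the manipulations — in particular the two integrations by parts and the use of $u_r\in H^1(\Rp,r\D r)\cap H^2_\loc(\Rp)$ — are justified, and that every constant genuinely depends only on $(m,s)$ and not on $k$. The danger point is the term $\frac{im^2}{k\gamma^2}\frac{W}{m^2+k^2r^2}f_r$ and the divergence term in $\cF$: both involve $m^2/(m^2+k^2r^2)$, which is $\le 1$ but not $\le C/k^2$ pointwise, so the $1/|k|$ gain there has to come from the explicit prefactor $1/k$ in $\cF$ and from $|\gamma|\ge|\Re s|$, not from $\cA$. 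Once one checks that each of the four pieces of $\cF$ carries at least one explicit power of $1/|k|$ after the integrations by parts, the estimate closes; I would also remark that the constant necessarily blows up as $\Re(s)\to 0$, which is harmless since Theorem~\ref{thm:main1} is used only away from the imaginary axis.
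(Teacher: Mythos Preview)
There is a genuine gap: your claimed pointwise bound $|\cB(r)-1|\le C(m,s)/|k|$ is false. Write out the Rayleigh contribution explicitly,
\[
\frac{k^2}{\gamma(r)^2}\,\cA(r)\,\Phi(r) \,=\, \frac{1}{\gamma(r)^2}\,\frac{k^2 r^2}{m^2+k^2 r^2}\,\Phi(r)\,.
\]
The factor $k^2 r^2/(m^2+k^2r^2)$ is bounded by $1$, not by anything like $C/|k|$; at $r=m/|k|$ it equals $1/2$, and there $\Phi(m/|k|)\to\Phi(0)=2\Omega(0)W(0)>0$ as $|k|\to\infty$ with $m$ fixed. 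So this term in $\cB-1$ is of order one uniformly in $k$, and your integral identity only gives
\[
\|\cA^{1/2}\partial_r^* u_r\|_{L^2}^2 + \|u_r\|_{L^2}^2 \,\le\, C(m,s)\,\|u_r\|_{L^2}^2 \,+\, (\text{terms from }\cF)\,,
\]
from which no factor of $1/|k|$ in front of $\|u_r\|_{L^2}$ can be extracted. The same obstruction appears in the last piece of the $W$-derivative term in $\cB$, since $k^2 r^2/(m^2+k^2r^2)^2$ has supremum $1/(4m^2)$ independently of $k$.

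The paper's remedy is exactly the Howard--Gupta substitution $u_r=\gamma^{1/2}v$ (Choice~2 of Section~\ref{subsec33}). After this change of unknown one multiplies by $r\bar v$ and takes the \emph{real} part: the Rayleigh term then appears on the left with the favourable sign $+\,k^2\Phi\,\cA/|\gamma|^2\ge 0$ and can simply be dropped as a lower bound. The only surviving ``bad'' potential term is $-\tfrac{m^2}{4}\Omega'^2\cA/|\gamma|^2$, and \emph{that} one genuinely is $O(m^2/(a^2 k^2))$ because $\cA\le 1/k^2$ and $\Omega'$ is bounded. This is why the substitution is essential rather than cosmetic: it converts an $O(1)$ complex potential into one whose dangerous part has a definite sign. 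Once you work with $v$, your handling of the source $\cF$ (one integration by parts on the divergence piece, pairing with $\partial_r^* v$, using the explicit $1/k$ prefactors) goes through essentially as you describe and yields \eqref{eq:res3}.
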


\begin{proof}
As in Section~\ref{subsec33}, we set $u_r(r) = \gamma(r)^{1/2}v(r)$. 
The new function $v$ satisfies the equation
\begin{equation}\label{eq:res4}
  -\partial_r\bigl(\cA(r)\gamma(r)\partial_r^* v\bigr) + 
  \cE(r) v \,=\, \gamma(r)^{1/2} \cF(r)\,,
\end{equation}
where
\begin{align}\nonumber
  \cE(r) \,&=\, \gamma(r)\cB(r) - \frac{\gamma'(r)}{2}\bigl(\cA'(r) - 
  \frac{\cA(r)}{r}\Bigr) -\frac12 \gamma''(r) \cA(r) + 
  \frac{\gamma'(r)^2\cA(r)}{4\gamma(r)} \\ \label{eq:Edefres}
  \,&=\, \gamma(r) + \frac{k^2}{\gamma(r)}\,\cA(r)\Phi(r)
  + \frac{imr}{2}\partial_r\Bigl(\frac{W(r)+2\Omega(r)}{m^2+k^2r^2}\Bigr)
  - \frac{m^2\Omega'(r)^2}{4\gamma(r)}\,\cA(r)\,.
\end{align}
We also observe that
\begin{equation}\label{eq:gammacF}
  \gamma^{1/2}\cF \,=\, -\partial_r \Bigl(\frac{m}{k r\gamma^{1/2}}
  \,\cA f_r \Bigr) - \frac{i m^2}{2k\gamma^{3/2}}\,\frac{W+2\Omega}{m^2+k^2r^2}
  \,f_r - \frac{i}{k\gamma^{1/2}}\,f_\theta + \frac{2i\Omega}{k\gamma^{3/2}}\,f_z\,.
\end{equation}

Without loss of generality, we assume that $a := \Re(s) > 0$. If we
multiply both sides of \eqref{eq:res4} by $r \bar v$, integrate the
resulting equality over $\Rp$ and take the real part, we obtain the
identity
\begin{align*}
  a \int_0^\infty \biggl\{\cA |\partial_r^* v|^2 + |v|^2 &+ \frac{\cA}{
  |\gamma|^2}\Bigl(k^2\Phi - \frac{m^2\Omega'^2}{4}\Bigr)\,|v|^2
  \biggr\}r\dd r \,=\, \Re\int_0^\infty (\partial_r^* \bar v)\frac{m\cA}{
  kr\gamma^{1/2}}\,f_r r\dd r \\ \label{eq:res5}
  &+ \Re\int_0^\infty \bar v\Bigl(- \frac{i m^2}{2k\gamma^{3/2}}
  \,\frac{W+2\Omega}{m^2+k^2r^2}\,f_r - \frac{i}{k\gamma^{1/2}}\,f_\theta 
  + \frac{2i\Omega}{k\gamma^{3/2}}\,f_z\Bigr)r \dd r\,.
\end{align*}
Keeping in mind that $\Phi(r) \ge 0$, $|\gamma(r)| \ge a$, and $0 < \cA(r) 
\le \min(1/k^2,r^2/m^2)$, we can estimate the various terms in a 
straightforward way, and we arrive at the inequality  
\begin{align*}
  a\Bigl(\|\cA^{1/2}\partial_r^* v\|_{L^2}^2 + \|v\|_{L^2}^2\Bigr) 
  \,&\le\, \frac{Cm^2}{a k^2}\|v\|_{L^2}^2 + \frac{C}{a^{1/2}|k|} \|\cA^{1/2}
  \partial_r^* v\|_{L^2}\|f_r\|_{L^2} \\
  &\quad + \frac{C}{a^{3/2}|k|}\|v\|_{L^2}\bigl(\|f_r\|_{L^2} + 
  a \|f_\theta\|_{L^2} + \|f_z\|_{L^2}\bigr)\,,
\end{align*}
where $C > 0$ is a universal constant. Applying now Young's inequality, 
we conclude that
\begin{equation}\label{eq:res5}
  \|\cA^{1/2}\partial_r^* v\|_{L^2} + \|v\|_{L^2} \,\le\, 
  \frac{C(m,a)}{|k|}\Bigl(\|v\|_{L^2} + \|f\|_{L^2}\Bigr)\,,
\end{equation}
where the constant depends only on $m$ and $a$. 

Finally, we return to the original function $u_r(r) = \gamma(r)^{1/2}v(r)$.
As $|\gamma(r)| \le |s| + |m|$ and 
\[
  \cA^{1/2} \partial_r^* u_r \,=\, \cA^{1/2}\gamma^{1/2} \Bigl(\partial_r^* v 
  + \frac{imr\Omega'}{2r\gamma}\,v\Bigr)\,,
\]
we have $|\cA^{1/2}\partial_r^* u_r| + |u_r| \le C(m,s)\bigl(|\cA^{1/2}
\partial_r^* v| + |v|\bigr)$. Thus the desired inequality \eqref{eq:res3} 
follows immediately from \eqref{eq:res5}. 
\end{proof}

Equipped with this lemma, we now establish the main result of 
this section. 

\begin{prop}\label{prop:periodic}
Fix any $k_0 > 0$. For any $s \in \C$ with $\Re(s) \neq 0$, there
exists a constant $C = C(s,k_0)$ such that, for all $m \in \Z$ 
and all nonzero $k \in \Z k_0$, the following estimate holds 
for all $f \in X_{m,k}$\:
\begin{equation}\label{eq:unifmk}
  \|(s -L_{m,k})^{-1}f\|_{L^2} \,\le\, C \|f\|_{L^2}\,.  
\end{equation}
\end{prop}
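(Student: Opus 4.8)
The plan is to combine the weighted energy estimate behind Lemma~\ref{lem:res} with a separate, elementary treatment of the large angular modes. Throughout write $a=\Re(s)>0$ (the case $a<0$ is symmetric), so that $|\gamma(r)|=|s+im\Omega(r)|\ge a$ for all $r>0$. First I would fix $(m,k)$ with $k\in\Z k_0\setminus\{0\}$, pass as in the proof of Lemma~\ref{lem:res} to the function $v$ defined by $u_r=\gamma^{1/2}v$, which solves \eqref{eq:res4} with $\cE$ as in \eqref{eq:Edefres} and source $\gamma^{1/2}\cF$ as in \eqref{eq:gammacF}, and record the real-part energy identity obtained there. Since $m\in\Z$ is real, the contribution of $\tfrac{imr}{2}\partial_r(\cdot)$ to $\Re\cE$ vanishes and $\Re\gamma=a$, so the identity reads
\[
  a\int_0^\infty\!\Bigl\{\cA\,|\partial_r^* v|^2+\Bigl(1+\frac{\cA}{|\gamma|^2}\bigl(k^2\Phi-\tfrac14 m^2\Omega'^2\bigr)\Bigr)|v|^2\Bigr\}r\dd r\,=\,\Re\!\int_0^\infty\!\gamma^{1/2}\cF\,\bar v\,r\dd r\,.
\]
Integrating by parts the term of $\gamma^{1/2}\cF$ that carries a derivative (the boundary terms vanishing by the decay at $0$ and $\infty$), and using $|\gamma|\ge a$ together with elementary pointwise bounds on the coefficients such as $k^2\cA\le1$, $m^2\cA\le r^2$, $k^2\cA^{1/2}\le|k|$ and $m^2/(m^2+k^2r^2)\le1$, one checks that the right-hand side is bounded by $C(a)\,|k|^{-1}\bigl(\|\cA^{1/2}\partial_r^* v\|_{L^2}+\|v\|_{L^2}\bigr)\|f\|_{L^2}$.

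The heart of the matter is that for $|m|$ large the indefinite potential in this identity is already coercive with no help from the term $k^2\Phi\ge0$. Indeed, using $\cA m^2\le r^2$ and $r\Omega'(r)=W(r)-2\Omega(r)$, one has for every $k\neq0$
\[
  \sup_{r>0}\frac{\cA(r)\,m^2\Omega'(r)^2}{|\gamma(r)|^2}\,\le\,\sup_{r>0}\frac{(W(r)-2\Omega(r))^2}{a^2+(\Im(s)+m\Omega(r))^2}\,,
\]
and the right-hand side tends to $0$ as $|m|\to\infty$: on $\{\Omega(r)\ge\delta\}$ the denominator is $\ge(|m|\delta-|\Im(s)|)^2$, while on $\{\Omega(r)<\delta\}$ one has $r>\Omega^{-1}(\delta)$, hence $(W-2\Omega)^2\le\eta(\delta)$ with $\eta(\delta)\to0$ as $\delta\to0$, since $W$ and $\Omega$ vanish at infinity. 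Thus there is $M_0=M_0(s)$ so that the bracket above is $\ge\tfrac12$ for all $|m|\ge M_0$ and all $k\neq0$; combining with the bound on the right-hand side and Young's inequality gives $\|\cA^{1/2}\partial_r^* v\|_{L^2}+\|v\|_{L^2}\le C(s)\,|k|^{-1}\|f\|_{L^2}$, in particular $\|k\,v\|_{L^2}\le C(s)\|f\|_{L^2}$. Feeding these two bounds into the resolvent relations \eqref{eq:res1} and into the reconstruction of $u_\theta,u_z$ from $u_r=\gamma^{1/2}v$ (the resolvent analogue of the passage \eqref{eq:sys1}--\eqref{eq:sys6}), and using the crude estimates $|r\Omega'|\le C$, $k^2\cA\le1$, $k^2\cA^{1/2}\le|k|$ and $k^2|m|r\le\tfrac12|k|(m^2+k^2r^2)$, one recovers $\|\omega\|_{L^2}\le C(s)\|f\|_{L^2}$ with $C(s)$ independent of $m$ (for $|m|\ge M_0$) and of $k\in\Z k_0\setminus\{0\}$. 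This is the estimate \eqref{eq:unifmk} for all but finitely many angular modes.

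There remain the finitely many modes with $|m|<M_0$. For each such fixed $m$, Lemma~\ref{lem:res} yields $\|u_r\|_{L^2}+\|\cA^{1/2}\partial_r^* u_r\|_{L^2}\le C(m,s)\,|k|^{-1}\bigl(\|u_r\|_{L^2}+\|f\|_{L^2}\bigr)$, so for $|k|\ge2\,C(m,s)$ the first term on the right is absorbed and the same reconstruction as above gives $\|\omega\|_{L^2}\le C(m,s)\|f\|_{L^2}$, while for the finitely many remaining $k\in\Z k_0$ with $0<|k|<2\,C(m,s)$ Theorem~\ref{thm:main1} already provides a finite bound on $\|(s-L_{m,k})^{-1}\|$; taking the maximum over all these pairs $(m,k)$ and over $|m|\ge M_0$ yields \eqref{eq:unifmk}. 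I expect the coercivity step of the preceding paragraph to be the only genuine obstacle. One cannot argue with the unweighted identity \eqref{eq:HG0}, whose potential $\Re\cB$ is merely bounded below by $1-O(a^{-2})$ and hence need not be positive when $s$ is close to the imaginary axis; the $\gamma^{1/2}$-weighted identity is essential precisely because its only sign-indefinite term, $\tfrac14 m^2\Omega'^2\cA/|\gamma|^2$, is dominated by $(W-2\Omega)^2/|\gamma|^2$, and the decay of $r\Omega'=W-2\Omega$ both at the origin and at infinity is what forces this term to be uniformly small once $|m|$ is large, allowing the two-regime split to close.
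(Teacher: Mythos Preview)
Your argument is correct and takes a genuinely different, direct route from the paper's proof. The paper proceeds by contradiction: assuming normalized $\omega^{(n)}$ with $\|f^{(n)}\|\to0$, it first shows the sequence $(m_n)$ must be bounded using only the Biot-Savart estimate $\|k u\|_{L^2}\le C\|\omega\|_{L^2}$ and the elementary fact that $\|W/(s+im\Omega)\|_{L^\infty}\to0$ as $|m|\to\infty$ (applied directly to \eqref{eq:res1}), then handles large $|k|$ via Lemma~\ref{lem:res}, and finishes for the remaining finite set of $(m,k)$ with Theorem~\ref{thm:main1}. Your plan covers the same three regimes but unifies the first two under the weighted $\gamma^{1/2}$-identity: for large $|m|$ you gain coercivity from the smallness of $(W-2\Omega)^2/|\gamma|^2$ rather than from largeness of $|k|$. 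This has the advantage of giving a constructive bound with no contradiction step; the paper's treatment of large $|m|$ is, on the other hand, noticeably shorter because it avoids the ODE for $u_r$ altogether.

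One point worth making explicit in your write-up: when you ``feed the two bounds on $v$ into the resolvent relations'' to recover $\|\omega\|_{L^2}\le C(s)\|f\|_{L^2}$ uniformly in $m$, there is a potential worry that $u_r=\gamma^{1/2}v$ carries a factor $|\gamma|^{1/2}\le(|s|+|m|)^{1/2}$. In fact this is harmless, because every occurrence of $u_r,u_\theta,u_z$ in \eqref{eq:res1} is accompanied by a factor $W/\gamma$ (or $W'/\gamma$), so after substituting $u_r=\gamma^{1/2}v$ and using the resolvent analogues of \eqref{eq:sys1}, \eqref{eq:sys4} one always lands on expressions like $W\gamma^{-1/2}kv$, $W\gamma^{-3/2}kv$, or $W\gamma^{-1/2}\cdot k\cA^{1/2}\partial_r^*v$, whose coefficients are bounded by $\|W\|_\infty a^{-j/2}$ uniformly in $m,k$. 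The ``crude estimates'' you list (notably $m^2/(m^2+k^2r^2)\le1$, $mk r/(m^2+k^2r^2)\le\tfrac12$, and $m\cA^{1/2}|\Omega'|\le|W-2\Omega|$) are exactly what is needed to check this, but a reader might appreciate seeing one such term spelled out.
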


\begin{proof}
We proceed by contradiction. If \eqref{eq:unifmk} does not hold, 
there exist sequences $(m_n)$ in $\Z$, $(k_n)$ in $\Z^* k_0$, and 
$\omega^{(n)}, f^{(n)}$ in $X_{m,k}$ such that $\|\omega^{(n)}\|_{L^2} = 1$ 
for all $n \in \N$,  $\|f^{(n)}\|_{L^2} \to 0$ as $n \to \infty$, and
$(s - L_{m,k})\omega^{(n)} = f^{(n)}$ for all $n \in \N$, namely
\begin{align}\nonumber
  \bigl(s+im_n \Omega(r)\bigr) \omega^{(n)}_r \,&=\, ik_n W(r) u^{(n)}_r + 
     f^{(n)}_r\,, \\ \label{eq:resn}
  \bigl(s+im_n \Omega(r)\bigr) \omega^{(n)}_\theta \,&=\, ik_n W(r)u^{(n)}_\theta 
    + r\Omega'(r) \omega^{(n)}_r + f^{(n)}_\theta\,, \\ \nonumber
  \bigl(s+im_n \Omega(r)\bigr) \omega^{(n)}_z \,&=\, ik_n W(r) u^{(n)}_z 
  - W'(r) u^{(n)}_r + f^{(n)}_z\,.
\end{align}

\noindent{\bf Step 1.} We first show that the sequence $(m_n)$ 
is bounded. Indeed, if this is not the case, we can assume 
(after extracting a subsequence) that $|m_n| \to \infty$ as 
$n \to \infty$. In view of the first equation in \eqref{eq:resn}, 
this implies that 
\begin{equation}\label{eq:res6}
  \|\omega^{(n)}_r\|_{L^2} \,\le\,  \Bigl\|\frac{i k_n W u^{(n)}_r}{s+im_n 
  \Omega} \Bigr\|_{L^2} +  \Bigl\|\frac{f^{(n)}_r}{s+im_n\Omega}\Bigr\|_{L^2}
   \,\xrightarrow[n \to \infty]{}\, 0\,. 
\end{equation}
Indeed, we know from Proposition~\ref{prop:estBS} that $\|k_n u^{(n)}\|_{L^2}
\le C\|\omega^{(n)}\|_{L^2} \le C$ for all $n \in \N$, so that
\[
  \Bigl\|\frac{i k_n W u^{(n)}_r }{s+im_n \Omega}\Bigr\|_{L^2}
  \,\le\, \Bigl\|\frac{W}{s+im_n \Omega}\Bigr\|_{L^\infty}
  \|k_n u^{(n)}_r\|_{L^2}  \,\le\, C\,\Bigl\|\frac{W}{s+im_n \Omega}
  \Bigr\|_{L^\infty}\,\xrightarrow[n \to \infty]{}\, 0\,,
\]
and the last term in \eqref{eq:res6} is bounded by $|\Re(s)|^{-1} 
\|f^{(n)}_r\|_{L^2}$, a quantity that converges to zero as $n \to \infty$ 
by assumption. Once \eqref{eq:res6} is known, the same argument applied 
to the second equation in \eqref{eq:resn} shows that $\|\omega^{(n)}_\theta
\|_{L^2} \to 0$. Finally, we have $\|u^{(n)}_r\|_{L^2} \le k_0^{-1} \|k_n 
u^{(n)}_r\|_{L^2} \le C k_0^{-1}$, because $|k_n| \ge k_0$ for all $n \in \N$. 
Applying thus the same argument again to the third equation in
\eqref{eq:resn}, we conclude that $\|\omega^{(n)}_z\|_{L^2} \to 0$,
which of course contradicts the hypothesis that $\|\omega^{(n)}\|_{L^2} = 1$ 
for all $n \in \N$. This means that sequence $(m_n)$ must be bounded, and 
after extracting a subsequence we can therefore assume that there exists 
an integer $m \in \Z$ such that $m_n = m$ for all $n \in \N$.

\medskip\noindent{\bf Step 2.} We next show that the sequence $(k_n)$
is bounded. Again, if this is not the case, we can assume after
extracting a subsequence that $|k_n| \to \infty$ as $n \to \infty$.
In that situation, we infer from estimate \eqref{eq:res3} that, for
$n$ sufficiently large,
\begin{equation}\label{eq:urconv}
  |k_n|\Bigl(\|\cA_n^{1/2}\partial_r^* u_r^{(n)}\|_{L^2} + \|u_r^{(n)}\|_{L^2}
  \Bigr) \,\le\, 2 C(m,s) \|f^{(n)}\|_{L^2} \,\xrightarrow[n \to \infty]{}\, 
  0\,.
\end{equation}
Next, we use the relation
\[
  k_n^2 \cA_n \Bigl(\partial_r^* - \frac{imW}{r\gamma}\Bigr)u_r^{(n)} 
  + i k_n u^{(n)}_z \,=\, \frac{m k_n}{r\gamma}\,\cA_n f_r^{(n)}\,,
\]
which reduces to \eqref{eq:sys4} when $k_n = k$ and $f_r^{(n)} = 0$. Invoking 
\eqref{eq:urconv} and using the elementary bounds $0 < \cA_n(r) 
\le \min(1/k_n^2,r^2/m^2)$, we deduce that
\begin{equation}\label{eq:uzconv}
  |k_n|\,\|u_z^{(n)}\|_{L^2} + |m|\,\Bigl\|\frac{u^{(n)}_z}{r}\Bigr\|_{L^2}
  \,\xrightarrow[n \to \infty]{}\, 0\,.
\end{equation}
Finally, with the help of the additional relation
\[
  \frac{ik_nW}{\gamma}\,u^{(n)}_r + ik_n u^{(n)}_\theta - \frac{im}{r}\,u^{(n)}_z
  \,=\, -\frac{f^{(n)}_r}{\gamma}\,,
\]
which reduces to \eqref{eq:sys1} when $k_n = k$ and $f_r^{(n)} = 0$, we find that 
$|k_n|\|u^{(n)}_\theta\|_{L^2} \to 0$ as $n \to \infty$ in view of \eqref{eq:urconv}, 
\eqref{eq:uzconv}. 

Thus, we have shown that $|k_n|\|u^{(n)}\|_{L^2} \to 0$ as $n \to \infty$, 
and considering successively all three lines in \eqref{eq:resn} we easily
deduce that 
\[
  \|\omega^{(n)}_r\|_{L^2} \,\xrightarrow[n \to \infty]{}\, 0\,, \qquad
  \|\omega^{(n)}_\theta\|_{L^2} \,\xrightarrow[n \to \infty]{}\, 0\,, \qquad
  \|\omega^{(n)}_z\|_{L^2} \,\xrightarrow[n \to \infty]{}\, 0\,. 
\]
This of course contradicts the assumption that $\|\omega^{(n)}\|_{L^2} 
= 1$ for all $n \in \N$. The sequence $(k_n)$ must therefore 
be bounded, and after extracting a subsequence we can assume 
that $k_n = k$ for some fixed $k \in \Z^* k_0$. 

\medskip\noindent{\bf Step 3.} Assuming that estimate \eqref{eq:unifmk}
does not hold for some $s \in \C$ with $\Re(s) \neq 0$, we have reached 
the conclusion that, for some $m \in \Z$ and some $k \neq 0$, the operator 
$s - L_{m,k}$ has no bounded inverse in $X_{m,k}$, in contradiction
with Theorem~\ref{thm:main1}. Thus estimate  \eqref{eq:unifmk} must 
hold, and the proof of Proposition~\ref{prop:periodic} is complete.  
\end{proof}

\section{Appendix}\label{appendix}

\subsection{The Biot-Savart law in cylindrical coordinates}\label{subsec61}

The Biot-Savart law defines the velocity field $u = (u_r,u_\theta,u_z)$ 
in terms of the vorticity vector $\omega = (\omega_r,\omega_\theta,\omega_z)$, 
for a fixed value of the angular Fourier mode $m \in \Z$ and of the 
vertical wave number $k \in \R$. The velocity is determined by
the linear relations
\begin{equation}\label{eq:BS}
  \omega_r \,=\, \frac{im}r u_z - ik u_\theta\,, \qquad
  \omega_\theta \,=\, ik u_r - \partial_r u_z\,, \qquad
  \omega_z \,=\, \frac1r \partial_r (r u_\theta) - \frac{im}r u_r\,,
\end{equation}
together with the divergence-free condition 
\begin{equation}\label{eq:divu}
  \frac1r\partial_r(ru_r) + \frac{im}r u_\theta + ik u_z \,=\, 0\,.
\end{equation}
These equations have to be solved on the half-line $r > 0$, and we
require that the velocity field $u_re_r+u_\theta e_\theta+u_z e_z$ be
regular at the origin $r = 0$ and decay to zero as $r \to
\infty$. More precisely, if the vorticity $\omega$ is (for instance) 
compactly supported in $\Rp = (0,\infty)$, the following boundary conditions
hold for the associated velocity $u$\:

\begin{itemize}
\item The horizontal velocities $u_r$, $u_\theta$ satisfy the 
homogeneous Dirichlet condition at $r = 0$ if $m = 0$ or 
$|m| \ge 2$, and the homogeneous Neumann condition if $|m| = 1$ 
(or $|m| \ge 3$).
\item The vertical velocity $u_z$ satisfies the homogeneous Dirichlet 
condition at $r = 0$ if $|m| \ge 1$, and the homogeneous Neumann 
condition if $m = 0$ (or $|m| \ge 2$). 
\end{itemize}

It is possible to give explicit formulas for the velocity $u$ in terms
of the vorticity $\omega$, but the bounds we need in this paper are
more conveniently obtained by standard energy estimates. We recall
that $\|\cdot\|_{L^2}$ denotes the usual norm in the Lebesgue space
$L^2(\R_+,r\dd r)$.

\begin{prop}\label{prop:estBS}
There exists a constant $C > 0$ such that, for any $m \in \Z$ and any 
$k \in \R$, the following inequality holds
\begin{equation}\label{eq:unifBS}
\begin{split}
  &\|\partial_r u_r\|_{L^2}^2 +  \|\partial_r u_\theta\|_{L^2}^2 + 
  \|\partial_r u_z\|_{L^2}^2 + k^2\Bigl(\|u_r\|_{L^2}^2 + 
  \|u_\theta\|_{L^2}^2 + \|u_z\|_{L^2}^2\Bigr) \\
  & + |m^2 - 1| \Bigl(\Bigl\|\frac{u_r}{r}\Bigr\|_{L^2}^2 + 
  \Bigl\|\frac{u_\theta}{r}\Bigr\|_{L^2}^2\Bigr) + m^2 \Bigl\|\frac{u_z}{r}
  \Bigr\|_{L^2}^2 \,\le\, C\Bigl(\|\omega_r\|_{L^2}^2 +  \|\omega_\theta\|_{L^2}^2 + 
  \|\omega_z\|_{L^2}^2\Bigr)\,.
\end{split}
\end{equation}
\end{prop}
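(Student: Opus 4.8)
The plan is to reduce \eqref{eq:unifBS} to the single scalar identity $\|\nabla u\|_{L^2}^2 = \|\omega\|_{L^2}^2$ (in the weighted radial spaces) together with an elementary pointwise inequality. Observe first that the relations \eqref{eq:BS}--\eqref{eq:divu} simply express that the genuinely three-dimensional fields $u\,e^{im\theta}e^{ikz}$ and $\omega\,e^{im\theta}e^{ikz}$ (completed by their conjugate modes) satisfy $\curl u = \omega$ and $\div u = 0$. Since $\curl\curl u = \grad\div u - \Delta u$, this gives $-\Delta u = \curl\omega$ (using $\div u = 0$). Pairing this equation with $u$ in $L^2$ and integrating by parts twice, one finds
\[
  \|\nabla u\|_{L^2}^2 \,=\, \langle -\Delta u,\,u\rangle \,=\, \langle \curl\omega,\,u\rangle
  \,=\, \langle \omega,\,\curl u\rangle \,=\, \|\omega\|_{L^2}^2\,.
\]
Carried out at the level of a fixed Fourier mode on the half-line $\Rp$, these are integrations by parts in $L^2(\Rp,r\dd r)$: the contribution at $r=\infty$ vanishes by the decay of the Biot--Savart velocity, and the contribution at $r=0$ vanishes because of the Dirichlet/Neumann behaviour of $u_r,u_\theta,u_z$ at the axis recalled just before the statement (for each value of $m$, precisely the relevant boundary term is killed). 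One may alternatively run the computation directly in $L^2(\R^3)$, where no axis boundary terms arise at all, for $\omega$ smooth and compactly supported, and then obtain the radial identity; the Biot--Savart map $\omega \mapsto u$ is then extended to all of $X_{m,k}$ by density, which is anyway how it is defined.

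It then remains to make the two sides of $\|\nabla u\|_{L^2}^2 = \|\omega\|_{L^2}^2$ explicit. Expanding $|\nabla u|^2$ for a vector field $u_{m,k}(r)e^{im\theta+ikz}$ in cylindrical coordinates, and taking into account that $e_r,e_\theta$ depend on $\theta$, one gets
\begin{align*}
  \|\nabla u\|_{L^2}^2 \,=\, \int_0^\infty\Bigl\{&|\partial_r u_r|^2 + |\partial_r u_\theta|^2 + |\partial_r u_z|^2
  + k^2\bigl(|u_r|^2 + |u_\theta|^2 + |u_z|^2\bigr) \\
  &+ \frac{1}{r^2}\Bigl(|im\,u_r - u_\theta|^2 + |im\,u_\theta + u_r|^2 + m^2|u_z|^2\Bigr)\Bigr\}\,r\dd r\,.
\end{align*}
A direct computation gives $|im\,u_r - u_\theta|^2 + |im\,u_\theta + u_r|^2 = (m^2+1)(|u_r|^2 + |u_\theta|^2) + 4m\,\Im(u_r\overline{u_\theta})$, and since $\bigl|4m\,\Im(u_r\overline{u_\theta})\bigr| \le 2|m|(|u_r|^2 + |u_\theta|^2)$ this quantity is bounded below by $(|m|-1)^2(|u_r|^2 + |u_\theta|^2) \ge \tfrac13\,|m^2-1|\,(|u_r|^2 + |u_\theta|^2)$, the last inequality being an elementary check valid for every $m\in\Z$. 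All the remaining terms in the integrand are nonnegative, so dropping the cross term yields \eqref{eq:unifBS} with $C=3$.

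The only genuinely delicate point is the justification of the integration by parts, i.e.\ the vanishing of the axis boundary terms — which is exactly where the prescribed behaviour of $u$ at $r=0$ is used — together with, in the $L^2(\R^3)$ variant, the accompanying density and Plancherel reduction to a single Fourier mode. Everything else is the routine verification of the cylindrical expression for $\|\nabla u\|^2$ and the pointwise inequality above.
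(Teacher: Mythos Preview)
Your proof is correct and takes a genuinely different route from the paper's. The paper proceeds component by component: it first derives a scalar elliptic equation for $u_z$, tests it against $\bar u_z$ to control $\partial_r u_z$, $\tfrac{m}{r}u_z$, $ku_z$; then reads off $ku_r$ and $ku_\theta$ from the explicit relations $iku_r=\partial_r u_z+\omega_\theta$ and $iku_\theta=\tfrac{im}{r}u_z-\omega_r$; then tests the pair \eqref{eq:divu} and $\omega_z=\tfrac1r\partial_r(ru_\theta)-\tfrac{im}{r}u_r$ against $\partial_r\bar u_r$, $\partial_r\bar u_\theta$ to control the radial derivatives of $u_r,u_\theta$; and finally, when $|m|\neq1$, solves algebraically for $\tfrac{u_r}{r},\tfrac{u_\theta}{r}$ in terms of quantities already bounded. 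Your argument replaces all of this by the single vector identity $\|\nabla u\|_{L^2}^2=\|\omega\|_{L^2}^2$ (valid for divergence-free $u$ with $\omega=\curl u$), the cylindrical expansion of $|\nabla u|^2$, and the pointwise inequality $(m^2{+}1)+4m\,\Im(u_r\overline{u_\theta})/(|u_r|^2{+}|u_\theta|^2)\ge(|m|-1)^2\ge\tfrac13|m^2-1|$. This is shorter, more conceptual, yields the explicit constant $C=3$, and treats all $m,k$ uniformly. The paper's approach, on the other hand, isolates the mechanism by which each individual component is controlled, which can be handy if one later needs anisotropic or weighted refinements; and it avoids having to discuss the axis boundary terms for the full vector Laplacian, since each integration by parts involves only a single scalar equation.
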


\begin{proof} 
We assume here for definiteness that $k \neq 0$, but the proof is 
similar (and in fact simpler) when $k = 0$. Without loss of generality, 
we also suppose that $\omega$ is continuous and compactly supported 
in $\Rp$. We first observe that the vertical velocity $u_z$ satisfies the 
linear elliptic equation
\begin{equation}\label{eq:diffequz}
  \Bigl(-\partial_r^2 - \frac1r \partial_r + \frac{m^2}{r^2} + k^2
  \Bigr) u_z \,=\, \frac1r \partial_r (r\omega_\theta) - \frac{im}{r}
  \,\omega_r\,.
\end{equation}
We multiply both sides of \eqref{eq:diffequz} by $r\bar u_z$ 
and integrate the resulting expression over $\Rp$. After elementary 
calculations, we obtain the estimate
\begin{equation}\label{eq:uzbound}
  \|\partial_r u_z\|_{L^2}^2 + \|\frac{m}{r}u_z\|_{L^2}^2 +  
  \|k u_z\|_{L^2}^2 \,\le\, C\Bigl( \|\omega_r\|_{L^2}^2 + 
  \|\omega_\theta\|_{L^2}^2\Bigr)\,,
\end{equation}
where $C > 0$ is a universal constant. As $ik u_r = \partial_r u_z 
+ \omega_\theta$ and $ik u_\theta = \frac{im}{r}u_z - \omega_r$, 
it follows immediately from \eqref{eq:uzbound} that
\begin{equation}\label{eq:urt1}
   \|k u_r\|_{L^2}^2 + \|k u_\theta\|_{L^2}^2 \,\le\, C\Bigl(
   \|\omega_r\|_{L^2}^2 + \|\omega_\theta\|_{L^2}^2\Bigr)\,.
\end{equation}

On the other hand, we deduce from \eqref{eq:divu} and the last 
relation in \eqref{eq:BS} that
\begin{equation}\label{eq:urtsys}
  \partial_r u_r + \frac1r \bigl(u_r + im u_\theta) \,=\, -ik u_z\,, \qquad
  \partial_r u_\theta + \frac1r \bigl(u_\theta - im u_r) \,=\, \omega_z\,.
\end{equation}
We multiply the first equation by $r \partial_r \bar u_r$ and the second 
one by $r \partial_r \bar u_\theta$. Adding the resulting expressions, 
taking the real parts, and integrating over $\R_+$, we obtain the inequality 
\begin{equation}\label{eq:urt2}
  \|\partial_r u_r\|_{L^2}^2 + \|\partial_r u_\theta\|_{L^2}^2 \,\le\, 
  C\Bigl(\|k u_z\|_{L^2}^2 + \|\omega_z\|_{L^2}^2\Bigr) \,\le\, 
  C\Bigl( \|\omega_r\|_{L^2}^2 + \|\omega_\theta\|_{L^2}^2 + 
   \|\omega_z\|_{L^2} ^2\Bigr)\,.
\end{equation}
If $m = \pm 1$, this concludes the proof of \eqref{eq:unifBS}. 
Otherwise, we deduce from \eqref{eq:urtsys} that
\begin{equation}\label{eq:urtsys2}
\begin{split}
    \frac{m^2-1}{r}\,u_r \,&=\, \partial_r (u_r - im u_\theta) 
    + iku_z + im \omega_z\,, \\
   \frac{m^2-1}{r}\,u_\theta \,&=\, \partial_r (u_\theta + im u_r) 
    + km u_z - \omega_z\,.
\end{split}
\end{equation}
If $m = 0$ or $|m| \ge 2$, these relations allow us to estimate 
the $L^2$ norm of $u_r/r$ and $u_\theta/r$ in terms of 
quantities that are already controlled by \eqref{eq:uzbound}
or \eqref{eq:urt2}, and we arrive at \eqref{eq:unifBS}. 
\end{proof}

\subsection{Stability of Rankine's vortex}\label{subsec62}

We consider here in some detail the particular case of the Rankine
vortex \eqref{eq:Rankine}, which is of historical relevance. We do not
use the functional framework of Section~\ref{sec2} because, as is
clear from \eqref{eq:Bdef}, the linearization $L_{m,k}$ does not
define a bounded linear operator on $X_{m,k}$ if the vorticity profile
$W$ has a discontinuity. Instead we look for solutions of the
eigenvalue equation \eqref{eq:eigeq} where the velocity field $u$ (and
not the vorticity $\omega)$ belongs to $X_{m,k}$. We always assume that 
$m \neq 0$ and $k \neq 0$, the other cases being similar and in fact 
simpler. To avoid the essential spectrum, we also suppose that 
the spectral parameter $s \in \C$ satisfies $s \neq 0$ and $s + im \neq 0$. 

Following Kelvin's original approach \cite{Ke}, we eliminate the
radial velocity $u_r$ in the $2 \times 2$ system 
\eqref{eq:sys4}--\eqref{eq:sys5} to obtain a closed equation for the
vertical velocity $u_z$. In the inner region where $0 < r < 1$, we
have $\gamma(r) = \gamma := s +im$ and $\Phi(r) = W(r)^2 = 4$, so that
$u_z$ satisfies the Bessel equation
\begin{equation}\label{eq:Bessel}
  - \frac{1}{r}\partial_r\big(r \partial_r u_z\big) + 
  \Bigl(\beta^2 + \frac{m^2}{r^2}\Bigr)u_z \,=\, 0\,, 
  \qquad \hbox{where}\quad  \beta^2 \,=\, k^2\Bigl(1 
  + \frac{4}{\gamma^2}\Bigr)\,.
\end{equation}
Since $u_z$ is regular at the origin, it follows that $u_z(r) = A 
I_m(\beta r)$ for $0 < r < 1$, where $A \in \C$ and $I_m$ is 
the modified Bessel function of order $m$ \cite[Section~9.6]{AS}. 
In the outer region where $r > 1$, we have $W(r) = \Phi(r) = 0$, and 
system \eqref{eq:sys4}--\eqref{eq:sys5} reduces to the (somewhat 
simpler) Bessel equation
\begin{equation}\label{eq:Bessel2}
  - \frac{1}{r}\partial_r\big(r \partial_r u_z\big) + 
  \Bigl(k^2 + \frac{m^2}{r^2}\Bigr)u_z \,=\, 0\,. 
\end{equation}
As $u_z(r)$ decays to zero at infinity, we must have $u_z(r) = 
B K_m(kr)$ for some $B \in \C$, where $K_m$ is again a modified 
Bessel function. 

At the interface $r = 1$, both velocities $u_z$, $u_r$ are continuous,
as can be seen from \eqref{eq:sys2} and \eqref{eq:divu2}. Jump
conditions for the first order derivatives can be deduced from
system \eqref{eq:sys4}--\eqref{eq:sys5} and are found to be
\begin{align}\label{eq:matchr}
  \partial_r u_r(1_+) \,&=\, \partial_r u_r(1_-) -\frac{2im}{\gamma}
  \,u_r(1)\,, \\ \label{eq:matchz}
  \partial_r u_z(1_+) \,&=\, \frac{\gamma^2}{\gamma^2 + 4}
  \Bigl(\partial_r u_z(1_-) + \frac{2im}{\gamma}\,u_z(1)\Bigr)
  \,=\, ik u_r(1)\,.
\end{align}
In particular, as $u_z(r) =  A I_m(\beta r)$ for $r < 1$ and
$u_z(r) =  B K_m(k r)$ for $r > 1$, we must have
\begin{equation}\label{eq:ABrel}
  A I_m(\beta) \,=\, B K_m(k)\,, \qquad 
  \frac{A \gamma^2}{\gamma^2+4}\Bigl(\beta 
  I_m'(\beta)+ \frac{2im}{\gamma} I_m(\beta)\Bigr) 
  \,=\, B k K_m'(k)\,.
\end{equation}
This linear system has a nontrivial solution $(A,B)$ if and only if 
\begin{equation}\label{eq:dispersion}
  \frac{I_m'(\beta)}{\beta I_m(\beta)} + \frac{2im}{\gamma\beta^2}
  \,=\, \frac{K_m'(k)}{k K_m(k)}\,,
\end{equation}
where we recall that $\gamma = s+im \neq 0$ and $\beta^2 = 
k^2(1 + 4/\gamma^2)$. 

It was already observed by Kelvin that the dispersion relation
\eqref{eq:dispersion} is satisfied for a countable set of {\em purely
  imaginary} values of the spectral parameter $s$.  More precisely, if
we define $s = -imb$, so that $\gamma = im(1-b)$, equality
\eqref{eq:dispersion} holds for a decreasing sequence of values of $b$
accumulating at $1$, and also for an increasing sequence accumulating
at $1$, all solutions being contained in the interval $|b-1| \le 2/|m|$ 
\cite{Ke}. The linearized operator at Rankine's vortex thus has 
a countable family of purely imaginary eigenvalues (Kelvin modes). 
However, it is not easy to verify that the dispersion relation 
\eqref{eq:dispersion} has no solution when $s \notin i\R$, and 
there is no such claim in Kelvin's work\footnote{Except for 
an ambiguous sentence asserting, without any justification, that the 
eigenfunctions corresponding to purely imaginary eigenvalues should
form a complete family.} where only purely imaginary eigenvalues are 
considered. Thus, contrary to what is often asserted in the literature, 
stability of Rankine's vortex was not established by Kelvin, and 
we could not find any further reference where this point is 
clarified. 

Fortunately, it is quite easy to prove spectral stability of Rankine's
vortex following the approach of Section~\ref{subsec33}. Indeed,
taking into account the particular form of the vorticity profile
\eqref{eq:Rankine}, it is straightforward to verify that identity 
\eqref{eq:HG0} becomes
\begin{align}\nonumber
  &\int_0^\infty \Bigl(\cA(r)|\partial_r^* u_r|^2 + |u_r|^2\Bigr)r 
  \dd r \\ \label{eq:Rank1} 
  &\qquad\qquad + \int_0^1 \biggl\{-\frac{4k^2\cA(r)}{m^2 \ggamma^2} + 
  \frac{2r}{\ggamma}\partial_r\Bigl(\frac{1}{m^2+k^2 r^2}\Bigr)
  \biggr\} |u_r|^2 r \dd r \,=\, \frac{2\cA(1)}{\ggamma}\,|u_r(1)|^2\,,
\end{align}
see also Remark~\ref{rem:alter} below. Here $\ggamma = 1 - b - ia$, 
so that $\gamma = im \ggamma$. We now multiply both sides of
\eqref{eq:Rank1} by $\ggamma$ and take the imaginary parts. 
We arrive at the identity
\begin{equation}\label{eq:Rank2} 
  a \int_0^\infty \Bigl(\cA(r)|\partial_r^* u_r|^2 + |u_r|^2\Bigr)r 
  \dd r + a \int_0^1 \frac{4k^2}{m^2}\,\frac{\cA(r)}{
  (1-b)^2 + a^2}\,|u_r|^2 r \dd r \,=\, 0\,.
\end{equation}
If $a \neq 0$, it follows from \eqref{eq:Rank2} that $u_r \equiv 0$, 
hence the eigenvalue equation \eqref{eq:main} has no nontrivial 
solution if $s = m(a-ib) \notin i\R$. This proves that the linearized
operator at Rankine's vortex has no unstable eigenvalue. 

\begin{rem}\label{rem:alter}
Alternatively, one can obtain the relation \eqref{eq:Rank1} by 
restricting the eigenvalue equation \eqref{eq:main} to the open 
intervals $(0,1)$ and $(1,\infty)$, where the vorticity profile is 
smooth. On each interval, we multiply \eqref{eq:main} by $r\bar u_r$ 
and we integrate over $r$. If we add the resulting expressions and 
simplify the boundary terms (which result from partial integrations) 
using the matching condition \eqref{eq:matchr}, we arrive at 
\eqref{eq:Rank1}. 
\end{rem}

\subsection{Critical layers and their continuity properties}
\label{subsec63}

In this section we present the proof of Lemma~\ref{lem:approxsing}. 
We first rewrite Eq.~\eqref{eq:main2} for $u_n$ in the form
\begin{equation}\label{eq:odemod}
  u_n''(r) + \cP(r)u_n'(r) + \cQ_n(r)u_n(r)=0\,,
\end{equation}
where $\cP(r) = \cA'(r)/\cA(r)+ 1/r$ and 
\[
  \cQ_n(r) \,=\, \frac{k^2}{m^2}\,\frac{\Phi_n(r)}{\gamma_n(r)^2}
  - \frac{r}{\cA(r)\gamma_n(r)}\,\partial_r\Bigl(\frac{W_n(r)}{m^2+k^2r^2}
  \Bigr) + \frac{\cP(r)}{r} - \frac{2}{r^2}\,.
\]
Here, as in Section~\ref{subsec43}, we denote $\gamma_n(r) = 
\Omega_n(r)-b_n-ia_n$, where $\Omega_n$ is the angular velocity
associated with $W_n$ as in \eqref{eq:Omrep}. By assumption
$ii)$ and \eqref{eq:Omrep} we have $\Omega_n \to \Omega$ in $\cC^2$ on
compact subsets of $(0,\infty)$. In view of $iii)$, $\Omega$ is 
analytic in $\bbD(\rb,\rho)$ for some $\rho > 0$, and $\Omega_n$ 
converges uniformly to $\Omega$ on that disc as $n \to \infty$. 
Since $\Omega(\rb) = b$ and $\Omega'(\rb) < 0$, it follows from 
Hurwitz's theorem that, for sufficiently large $n$, there exists a 
unique $\rb_n \in \bbD(\rb,\rho)$ such that $\Omega_n(\rb_n) = b_n+ia_n$. 
Moreover $\Omega_n'(\rb_n)\neq 0$, $\Omega_n'(\rb) < 0$ and
\[
  \rb_n \,=\, \rb + \frac{b_n + i a_n - \Omega_n(\rb)}{\Omega_n'(\rb)}  
  + \cO\bigl(|b_n - \Omega_n(\rb)|^2 + a_n^2 \bigr)\,, \quad 
   \hbox{as } n \to \infty\,,
 \] 
so that $\rb_n \to \rb$ and  $\rb_n \in \{ z \in \C\,;\, {\rm Im}(z)<0\}$ 
when $n$ is sufficiently large. By construction, we also have
$\gamma_n(r) = \Omega_n'(\rb_n)(r-\rb_n) + \cO(|r-\rb_n|^2)$ as
$r \to \rb_n$.

Multiplying \eqref{eq:odemod} by $z^2$ and applying the change of variables 
$z = r-\rb_n$, $w_n(z) = u_n(\rb_n + z)$, we obtain the canonical form  
\begin{equation}\label{eq:odemodcomp}
  z^2 w_n''(z) + z P_n(z)w_n'(z) + Q_n(z)w_n(z)=0\,,
\end{equation}
where $P_n(z) = z\cP(\rb_n + z)$ and $Q_n(z) = z^2\cQ_n(\rb_n + z)$
are analytic inside the disc $\bbD(0,\rho/2)$, if $n$ is large enough
so that $|\rb_n-\rb| < \rho/2$. In this situation, the Frobenius
method \cite[Section~4.8]{CoLe}, which we briefly recall now, can be
used to construct solutions of \eqref{eq:odemodcomp} in
$\bbD(0,\rho/2)\setminus \R_-$ of the form
\begin{equation}\label{eq:sviluppo}
  w_n(z) = z^{d_n} v_n(z)\,, \quad  \hbox{where }\, v_n(z) \,=\, 
  \sum_{k=0}^\infty c_{n,k} z^k \,\hbox{ and } \,c_{n,0} \,=\, 1
  \hbox{ for all } n\,. 
\end{equation}
The coefficients $c_{n,k}$ for $k\ge 1$ are determined by substituting 
\eqref{eq:sviluppo} into \eqref{eq:odemodcomp} and collecting equal powers 
of $z$. If $P_n(z) = \sum_{k=0}^\infty p_{n,k}z^k$ and $Q_n(z) = \sum_{k=0}^\infty 
q_{n,k} z^k$, one obtains the recursion relations
\begin{equation}\label{eq:recu}
  c_{n,k} = \frac{-1}{f_n(d_n+k)}\sum_{j=0}^{k-1} c_{n,j}\left[ (d_n+j) p_{n,k-j} +
  q_{n,k-j}\right]\,, \qquad k\geq1\,, 
\end{equation}
where the {\em indicial function} $f_n : \R \to \R$ is defined by
\begin{equation}\label{eq:indif}
  f_n(d) \,=\, d(d-1)+ d p_{n,0}+q_{n,0}\,. 
\end{equation}
Assuming that $f_n(d_n+k) \neq 0$ for all $k \ge 1$, it is straightforward to verify that the formal series $w_n(z)$ defined by 
\eqref{eq:sviluppo}, \eqref{eq:recu} satisfies
\begin{equation}\label{eq:frobapprox}
  z^2 w_n''(z) + z P_n(z)w_n'(z) + Q_n(z)w_n(z) \,=\, f_n(d_n)z^{d_n}\,,
\end{equation}
hence is a (formal) solution of \eqref{eq:odemodcomp} provided $d_n$ is a 
root of the quadratic polynomial $f_n$. 

In our situation we have $p_{n,0} = 0$ and
$q_{n,0} = (k^2/m^2)J_n(\rb_n)$, so that the indicial equation
$f_n(d_n) = 0$ reduces to Eq.~\eqref{eq:determinant} as
$n \to \infty$. The roots $d_n^\pm$ of $f_n$ thus converge to the
explicit values $d_\pm$ described in Section~\ref{subsec34}, which are
such that $|d_+ - d_-| < 1$.  As a consequence, if $n$ is large enough
and $d_n = d_n^+$ or $d_n^-$, the denominator in \eqref{eq:recu} never
vanishes, and even satisfies $|f_n(d_n +k)| \ge c_0 k^2$ for all
$k \ge 1$, where $c_0 > 0$ is independent of $n$. This allows us to
solve the recursion relations \eqref{eq:recu} if $d_n = d_n^\pm$, and
it is then straightforward to verify that the series in
\eqref{eq:sviluppo} converges for all $z \in \bbD(0,\rho/2)$, and that
the sum $v_n^\pm$ is uniformly bounded on compact subsets of
$\bbD(0,\rho/2)$ if $n$ is sufficiently large.  We denote henceforth
by $w_n^\pm(z) = z^{d_n^\pm} v_n^\pm(z)$ the solution of
\eqref{eq:odemodcomp} given by \eqref{eq:sviluppo} with
$d_n = d_n^\pm$.

By assumption $iii)$, the quantities $p_{n,k}$, $q_{n,k}$ converge as
$n \to \infty$ to the Taylor coefficients of the functions $P(z)$,
$Q(z)$ associated with the limiting profile $W$ and the limiting value
of the spectral parameter. Using the recursion relation
\eqref{eq:recu}, where each coefficient $c_{n,k}^\pm$ is entirely
determined by a finite number of coefficients $p_{n,\ell}$,
$q_{n,\ell}$ (namely, those with $\ell < k$), we see that
\begin{equation}\label{eq:convcoefftaylor}
	c_{n,k}^\pm \,\to\, c_k^\pm \quad\hbox{as }\,n\to \infty\,, 
\end{equation}
where $c_k^\pm$ denote the coefficients of the Frobenius solution
$w^\pm(z) = z^{d_\pm}v^\pm(z)$ of the limiting equation
\eqref{eq:odemodcomp}, where $P_n,Q_n$ are replaced by $P,Q$. In view
of the uniform bounds mentioned above, this implies that $v_n^\pm(z)$
converges to $v^\pm(z)$ uniformly on compact subsets of
$\bbD(0,\rho/2)$ as $n \to \infty$. Note that since $P$ and $Q$ are
real valued on the real axis, the recurrence relation yields that the
coefficients $c_k^\pm$ are real too. The functions $V_\pm$ which appear 
in the formulas \eqref{eq:devsing1}--\eqref{eq:devsing3} are 
the only real analytic functions on $\Rp$ that satisfy
\[
  V_\pm(r)\,=\,v^\pm(r-\rb) \left(\frac{r-\rb}{b-\Omega(r)}\right)^{d_\pm}\,,
  \qquad r \in (\rb-\rho/2,\rb+\rho/2)\,.
\]
That $V_\pm$ are well defined and real analytic on the whole half-line
$\Rp$ follows from the representation \eqref{eq:devsing1} and the ODE
\eqref{eq:main2sing}.

Now, consider a sequence $(u_n)_{n\in\N}$ of solutions of
\eqref{eq:odemod} as in the statement of Lemma~\ref{lem:approxsing},
and assume first that $d_+\neq d_-$, so that $d_n^+\neq d_n^-$ when
$n$ is sufficiently large. Since \eqref{eq:odemod} is a second order
differential equation, there exist complex coefficients $\alpha_n^\pm$
such that\footnote{Note that $\bbD(\rb_n,\rho/2)$ contains the real
  interval $(\rb-\rho/4,\rb+\rho/4)$ for large values of $n$.}
\begin{equation}\label{eq:splitting}
  u_n(r) = \alpha_n^+ w_n^+(r-\rb_n) + \alpha_n^- w_n^-(r-\rb_n)\,, 
  \quad \hbox{for }\, r \in (\rb-\rho/4,\rb+\rho/4)\,.  
\end{equation}
By assumption $u_n(r)$ and $u_n'(r)$ have a limit as $n \to \infty$
for some $r \neq \rb$, and using elementary continuity properties for
solutions of nonsingular ODEs we deduce that convergence holds locally
uniformly for all $r > \rb$, or for all $r < \rb$. Since the functions
$w_n^\pm(r-\rb_n)$ converge uniformly to $w^\pm(r-\rb)$ in a
neighborhood of $\rb$, and since the limits $w^\pm(r-\rb)$ have
genuinely different behaviors as $r \to \rb$, this implies that the
coefficients $\alpha_n^\pm$ in \eqref{eq:splitting} have finite limits
$\alpha_\pm \in \C$ as $n \to \infty$. In particular we have
\[
  u_n(r) \,\to\, \alpha_+ (r-\rb)^{d_+}v^+(r-\rb) + \alpha_- (r-\rb)^{d_-} 
  v^-(r-\rb)\,,
\]
uniformly for $r \in (\rb,\rb+\rho/4)$, and (keeping in mind that 
${\rm Im}(\rb_n) < 0$)
\[
  u_n(r) \,\to\, \alpha_+ e^{i\pi d_+}(\rb-r)^{d_+}v^+(r-\rb) + 
  \alpha_- e^{i\pi d_-} (\rb-r)^{d_-} v^- (r-\rb)\,,
\]
uniformly for $r \in (\rb-\rho/4,\rb)$. Since outside the interval
$(\rb-\rho/4,\rb+\rho/4)$ the ODE \eqref{eq:odemod} is asymptotically
regular, this implies the desired conclusion, namely that
$u_n \to \alpha_+ \phi_+ + \alpha_- \phi_-$ where $\phi_\pm$ are as in
\eqref{eq:devsing1} or \eqref{eq:devsing2}.

We next consider the exceptional situation where $d_- = d_+$. Without
loss of generality we may assume that either $d_n^-=d_n^+$ for all
$n \in \N$, or $d_n^-\neq d_n^+$ for all $n \in \N$.  In the first
case we obtain from \eqref{eq:sviluppo} only one solution $w_n$ of
\eqref{eq:odemodcomp}, but we can construct a second solution by
differentiating \eqref{eq:frobapprox} with respect to the exponent
$d_n$, taking into account the fact that $f_n'(d_n)=0$ since
$d_n = d_n^\pm$ is a double root by assumption. The new solution has
the form
\[
  w_n^\sharp (z) \,=\, \log(z) w_n(z) + z^{d_n} \sum_{k=0}^\infty 
  \Big(\frac{\partial c_{n,k}}{\partial d_n} \Big) z^k\,, \qquad z \in 
  \bbD(0,\rho/2) \setminus \R_-\,,
\]
and its asymptotic behavior as $z \to 0$ is clearly different from
that of $w_n(z)$. This allows us to conclude the proof using the same
argument as above, and we obtain that $u_n \to \alpha_+ \phi_+ + 
\alpha_- \phi_-$ where $\phi_\pm$ are as in \eqref{eq:devsing3}.  On the 
other hand, when $d_n^- \neq d_n^+$ for all $n\in \N$ (but $d_n^- - d_n^+ \to 0$),
the decomposition \eqref{eq:splitting} is not appropriate, because in
that case we cannot prove that the coefficients $\alpha_n^+$ and
$\alpha_n^-$ are bounded, yet alone have limits as $n \to
\infty$. Instead, we write
\begin{equation}\label{eq:splittingbis}
  u_n(r) \,=\, \alpha_n \Big( w_n^+(r-\rb_n) + w_n^-(r-\rb_n)\Big) + 
  \alpha_n^\sharp \Big( \frac{w_n^+(r-\rb_n) - w_n^-(r-\rb_n)}{d_n^+-d_n^-}\Big)\,,   
\end{equation}
for $r \in (\rb-\rho/4,\rb+\rho/4)$, and this new decomposition has
the property that both coefficients $\alpha_n$ and $\alpha_n^\sharp$
necessarily have limits as $n \to \infty$.  We then finish the proof
along the same lines as above. \qed

\subsection{Approximation and interpolation in the class $\WW$}
\label{subsec64}

This section is devoted to the proof of Lemmas~\ref{lem:dense} and
\ref{lem:homotopie}. If $W$ is a vorticity profile that belongs to the
class $\WW$, in the sense of Definition~\ref{def:Hclass}, we denote by
$\Omega$ the corresponding angular velocity given by \eqref{eq:Omrep},
and by $J$ the function defined in \eqref{eq:Jdef}. The first
observation is that both $\Omega$ and $W$ can be expressed in terms 
of the auxiliary function $J$.

Indeed, let $\phi : \Rp \to \R$ be defined by $\phi(r) = 
\Omega(r)/\Omega'(r)$ for all $r > 0$. According to 
\eqref{eq:velovort}, \eqref{eq:Phidef}, \eqref{eq:Jdef} we 
have
\begin{equation}\label{eq:Jrep}
  J(r) \,=\, \frac{\Phi(r)}{\Omega'(r)^2} \,=\, 
  \frac{2r\Omega(r)}{\Omega'(r)} + \frac{4\Omega(r)^2}{\Omega'(r)^2}
  \,=\, 2 r \phi(r) + 4 \phi(r)^2\,,
\end{equation}
for all $r > 0$. Since $J(r) > 0$ and $\phi(r) < 0$ by assumption, 
we deduce that
\begin{equation}\label{eq:phirep}
  \phi(r) \,=\, \frac{\Omega(r)}{\Omega'(r)} \,=\, -\frac14 
  \Bigl(r + \sqrt{r^2 + 4 J(r)}\Bigr)\,, \qquad r > 0\,.  
\end{equation}
Integrating this differential equation and using the normalization 
condition $\Omega(0) = 1$, which follows from \eqref{eq:Omrep} 
since $W(0) = 2$, we obtain the representation formula
\begin{equation}\label{eq:Omegarep}
  \Omega(r) \,=\, \,\exp\Bigl(-\int_0^r \frac{4}{s+\sqrt{s^2+4J(s)}}
  \dd s\Bigr)\,, \qquad r > 0\,.
\end{equation}
As $W(r) = r\Omega'(r) + 2\Omega(r)$, we also have
\begin{equation}\label{eq:Wrep}
  W(r) \,=\, \Omega(r) \Bigl(2 + \frac{r}{\phi(r)}\Bigr) \,=\, 
  \Omega(r) \biggl(2 - \frac{4r}{r + \sqrt{r^2 + 4 J(r)}}\biggr)\,, 
  \qquad r > 0\,.
\end{equation}
Furthermore, if we differentiate \eqref{eq:Wrep} with respect to 
$r$ and observe that $\Omega' = \Omega/\phi$, we obtain
\[
  W' \,=\, \frac{\Omega}{\phi}\Bigl(2 + \frac{r}{\phi}\Bigr) +
  \Omega \Bigl(\frac{1}{\phi} - \frac{r\phi'}{\phi^2}\Bigr) \,=\, 
  \frac{\Omega}{\phi^2}\Bigl(3\phi + r - r\phi'\Bigr)\,.
\]
Thus, using the expression \eqref{eq:phirep} of $\phi$, we find after
elementary calculations
\begin{equation}\label{eq:Wprep}
  W'(r) \,=\, \frac{8\Omega(r)}{\bigl(r + \sqrt{r^2 + 4 J(r)}\bigr)^2}
  \biggl(r - \frac{r^2 + 6J(r)}{\sqrt{r^2 + 4J(r)}} + 
  \frac{r J'(r)}{\sqrt{r^2 + 4J(r)}}\biggr)\,.
\end{equation}
As $r\sqrt{r^2 + 4J} < r^2 + 6J$ when $J > 0$, this formula shows that 
the vorticity $W$ is strictly decreasing as soon as the auxiliary function 
$J$ satisfies $J(r) > 0$ and $J'(r) < 0$ for all $r > 0$. This observation
will be used later on. 

Since $W \in \WW$ by assumption, the angular velocity satisfies 
$\Omega'(r) \to 0$ as $r \to 0$, and in view of \eqref{eq:Jdef} 
or \eqref{eq:phirep} this implies that $J(r) \to \infty$ as 
$r \to 0$. It then follows from \eqref{eq:Wprep} that
\[
  W'(r) \,\sim\, \frac{rJ'(r)}{J(r)^{3/2}} - \frac{6}{J(r)^{1/2}}\,, 
  \qquad \hbox{as}\quad r \to 0\,,
\]
and since $W'(r)$ vanishes at the origin we deduce that
$rJ'(r) J(r)^{-3/2} \to 0$ as $r \to 0$. Concerning the behavior at
infinity, we observe that $\phi(r) = -2/r + \cO(1/r^3)$ as
$r \to \infty$, and in view of \eqref{eq:Omegarep} this implies that
$\Omega(r) = \Gamma/r^2 + \cO(1/r^4)$ as $r \to \infty$, for some
$\Gamma > 0$.  The expression \eqref{eq:Wrep} also shows that
$r^4 W(r) \to 2\Gamma J(\infty)$ as $r \to \infty$. Finally, 
one infers from \eqref{eq:Wprep} that
\[
  W'(r) \,\sim\, \bigg\{\frac{2\Gamma}{r^4} + \cO\Bigl(\frac{1}{r^6}
  \Bigr)\biggr\}\,\biggl\{\frac{rJ'(r)}{\sqrt{r^2 + 4J(r)}} 
  - \frac{4J(r)}{r} + \cO\Bigl(\frac{1}{r^3}\Bigr)\biggr\}\,, 
  \qquad \hbox{as}\quad r \to \infty\,,
\]
and since $rJ'(r) \to 0$ as $r\to \infty$ we also obtain $r^5W'(r) \to -8\Gamma
J(\infty)$ as $r \to \infty$.   

The properties of $J$ are more conveniently expressed in terms
of the new function 
\begin{equation}\label{eq:Qdef}
  Q(r) \,=\, \frac{1}{\sqrt{1+J(r)}}\,, \qquad r > 0\,.
\end{equation}

\begin{df}\label{def:Qclass}
We say that a $\cC^1$ function $Q : \Rp \to (0,1]$ belongs to the 
class $\QQ$ if \\[1mm]
\null~ i) $Q'(r) > 0$ for all $r > 0$; \\[1mm]
\null~ ii) $Q(r) \to 0$ and $rQ'(r) \to 0$ as $r \to 0$; \\[1mm]
\null~ iii) $rQ'(r) \to 0$ as $r \to \infty$.\\[1mm]
In particular $\QQ$ is convex, and if $Q \in \QQ$ then $\cN(Q) := 
\sup_{r > 0} r |Q'(r)| < \infty$. 
\end{df}

\begin{lem}\label{lem:Qclass}
A vorticity profile $W$ belongs to the class $\WW$ in the 
sense of Definition~\ref{def:Hclass} if and only if the 
function $Q$ defined by \eqref{eq:Qdef}, with $J$ as in 
\eqref{eq:Jdef}, belongs to the class $\QQ$. 
\end{lem}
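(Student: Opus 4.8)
The statement is that the nonlinear change of variables $W \mapsto J \mapsto Q$, together with its inverse given by the reconstruction formulas \eqref{eq:phirep}--\eqref{eq:Wprep}, is a bijection between $\WW$ and $\QQ$; the proof consists in checking that each defining property on one side corresponds to a defining property on the other. For the implication $W \in \WW \Rightarrow Q \in \QQ$, I would argue directly. Since $W \in \WW$ we have $\Omega, W > 0$ and $\Omega' < 0$ on $\Rp$, hence $\Phi = 2\Omega W > 0$ and $J = \Phi/\Omega'^2 > 0$, so $Q = (1+J)^{-1/2}$ takes values in $(0,1)$ and is $\cC^1$ because $J$ is $\cC^1$ by assumption H2. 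From $Q' = -\tfrac12(1+J)^{-3/2}J'$ we see that $J' < 0$ is equivalent to $Q' > 0$, which is condition i). For condition ii), the relations $\Omega'(0) = 0$ and $\Phi(0) = W(0)^2 = 4$ give $J(r) \to \infty$, i.e.\ $Q(r) \to 0$, as $r \to 0$; moreover $r Q'(r) = -\tfrac12(1+J)^{-3/2}\,rJ'(r)$, and the asymptotics of $W'$ near the origin established above (namely $W'(r) \sim rJ'(r)J(r)^{-3/2} - 6 J(r)^{-1/2}$) together with $W'(0)=0$ force $rJ'(r)J(r)^{-3/2}\to 0$, hence $rQ'(r)\to 0$, as $r\to 0$. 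Condition iii) is immediate from $rJ'(r)\to 0$ (assumption H2) and the bound $(1+J)^{-3/2}\le 1$.

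For the converse, let $Q \in \QQ$ and set $J = Q^{-2}-1$; then $J$ is $\cC^1$ and strictly positive on $\Rp$, with $J' = -2Q^{-3}Q' < 0$. The key structural point, which I would invoke from the preceding discussion in this section, is that $J$ together with the normalisation $\Omega(0) = 1$ (equivalently $W(0) = 2$) determines the vorticity profile uniquely via \eqref{eq:phirep}, \eqref{eq:Omegarep}, \eqref{eq:Wrep}; the consistency of this reconstruction is precisely the identity \eqref{eq:Jrep}, the correct (negative) branch for $\phi = \Omega/\Omega'$ being the one recorded in \eqref{eq:phirep} and forced by the decay of a localized vorticity profile. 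Hence $W$ coincides with the profile produced by these formulas, and it remains to verify that it satisfies H1 and H2. Strict monotonicity $W'(r) < 0$ on $\Rp$ follows from \eqref{eq:Wprep}, the elementary inequality $r\sqrt{r^2+4J} < r^2+6J$ (valid since $J>0$), and $J' < 0$. That $W\in\cC^1(\Rpb)$ with $W(0)=2$ and $W'(0)=0$ follows from $\Omega(0)=1$ in \eqref{eq:Omegarep}, from $Q(r)\to 0$ (hence $J(r)\to\infty$) as $r\to 0$, and from the near-origin expansion of $W'$ combined with $rQ'(r)\to 0$. Finiteness of the circulation follows from $Q(r)$ having a finite positive limit as $r\to\infty$, which yields $\Omega(r)\sim\Gamma r^{-2}$ and $r^4W(r) = \cO(1)$, so that $\int_0^\infty W(r)\, r\dd r<\infty$. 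Finally, condition H2 for $W$ is exactly that $J$ is $\cC^1$ with $J'<0$ and $rJ'(r) = -2Q^{-3}\,rQ'(r)\to 0$ as $r\to\infty$, all of which we already have; hence $W\in\WW$.

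I expect the main obstacle to be the converse direction, specifically ensuring that the passage $Q \rightsquigarrow J \rightsquigarrow W$ is genuinely well defined and produces a bona fide vorticity profile: one must justify the branch selection in \eqref{eq:Jrep}, check that the integral in \eqref{eq:Omegarep} converges at both endpoints (at $r=0$ because $J\to\infty$, at $r=\infty$ because $J$ stays bounded), and verify that the three conditions defining $\QQ$ are precisely strong enough to recover the boundary behaviour of $W$ and of $J$ demanded by H1 and H2, given that $J$ depends nonlinearly on $W$. All the asymptotic estimates needed for this have, however, already been carried out earlier in this section, so the argument reduces to organising those estimates.
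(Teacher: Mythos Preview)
Your proposal is correct and follows essentially the same approach as the paper: both directions reduce to translating, via $Q=(1+J)^{-1/2}$ and the reconstruction formulas \eqref{eq:phirep}--\eqref{eq:Wprep}, each defining property of $\QQ$ into the corresponding property of $\WW$, relying on the asymptotic computations already carried out earlier in the section. The paper's proof is terser (it simply invokes ``the calculations above''), but your more explicit bookkeeping of which condition yields which is entirely in line with it.
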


\begin{proof}
If $W \in \WW$, we have just shown that the $\cC^1$ map $J : \Rp \to \R$ 
defined by \eqref{eq:Jdef} satisfies $J(r) > 0$ and $J'(r) < 0$ for
all $r > 0$, $J(r) \to \infty$ and $rJ'(r)J(r)^{-3/2} \to 0$ as $r \to 0$,
and $rJ'(r) \to 0$ as $r \to \infty$. These properties precisely
mean that the function $Q$ defined by \eqref{eq:Qdef} belongs to 
the class $\QQ$.  Conversely, if $Q \in \QQ$, we define $J = Q^{-2} - 1$ 
so that \eqref{eq:Qdef} holds, and we reconstruct the angular 
velocity $\Omega$ and the vorticity $W$ by the formulas
\eqref{eq:Omegarep}, \eqref{eq:Wrep}. The calculations above then 
show that $W \in \WW$. In particular, since $J'(r) < 0$, formula 
\eqref{eq:Wprep} shows that $W'(r) < 0$ for all $r > 0$. 
\end{proof}

The following result expresses the fact that the vorticity profile $W \in \WW$ 
depends continuously on the auxiliary function $Q \in \QQ$,
in appropriate topologies. 

\begin{lem}\label{lem:QLip}
Assume that $Q_1, Q_2 \in \QQ$, and take $\delta > 0$ small enough 
so that 
\begin{equation}\label{eq:deltabound}
  \delta \,\le\, \min\{Q_1(1),Q_2(1)\} \,\le\, \max\{Q_1(1),Q_2(1)\} 
  \,\le\, \sqrt{1 - \delta^2}\,.
\end{equation}
Then there exists a constant $C > 0$, depending only on $\delta$, 
such that, if $W_1, W_2$ denote the vorticity profiles associated
with $Q_1, Q_2$ as in Lemma~\ref{lem:Qclass}, the following 
estimates hold\:
\begin{align}\label{eq:Wdiff}
  \sup_{r > 0} (1+r^4) |W_1(r) - W_2(r)| \,&\le\, C \|Q_1-Q_2\|_{
  L^\infty(\Rp)}\,, \\ \label{eq:Wpdiff}
  \sup_{r > 0}(1+r^5) |W_1'(r) - W_2'(r)| \,&\le\, C \cN(Q_1-Q_2) + C\Bigl(
  1 + \cN(Q_2)\Bigr) \|Q_1-Q_2\|_{L^\infty(\Rp)}\,.
\end{align}
\end{lem}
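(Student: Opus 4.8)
The plan is to prove Lemma~\ref{lem:QLip} by systematically differentiating the explicit reconstruction formulas \eqref{eq:Omegarep}, \eqref{eq:Wrep}, \eqref{eq:Wprep} with respect to the input function $Q$ (equivalently $J = Q^{-2}-1$), and tracking how differences propagate through each formula, with careful attention to the decay rates in $r$ at both ends $r\to 0$ and $r\to \infty$. I would organize the argument into three layers matching the three formulas, and throughout I would exploit the normalization \eqref{eq:deltabound}, which confines $Q_i(1)$ (hence $J_i(1)$, hence the whole reconstruction) to a compact range, ensuring all constants depend only on $\delta$.

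First I would record the elementary pointwise bounds. From \eqref{eq:phirep} one has $\phi(r) = -\tfrac14(r + \sqrt{r^2+4J(r)})$, and since $J = Q^{-2}-1 \ge 0$ with $Q \in (0,1]$, the integrand $\tfrac{4}{s+\sqrt{s^2+4J(s)}}$ in \eqref{eq:Omegarep} is controlled above and below in terms of $Q$; in particular $\sqrt{J(s)} = \tfrac{\sqrt{1-Q(s)^2}}{Q(s)}$, so the integrand behaves like $\tfrac{2Q(s)}{\sqrt{1-Q(s)^2}}$ for small $s$ and like $\tfrac{2}{s}$ for large $s$ (using $J(s)\to J(\infty)$ finite and $Q(s)\to 1$). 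This gives two-sided bounds $c\,r^{-2} \le \Omega(r) \le C\,r^{-2}$ for large $r$ and $\Omega(r)\le 1$ everywhere, with constants depending only on $\delta$ via \eqref{eq:deltabound}; analogous bounds for $|W|$ and $|W'|$ follow from \eqref{eq:Wrep}, \eqref{eq:Wprep} together with the crude estimate $\cN(Q) < \infty$ (Definition~\ref{def:Qclass}). These baseline estimates are what allow the difference estimates to close.

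Next, the difference estimates. For $\Omega_1 - \Omega_2$: writing $\Omega_i = \exp(-\int_0^r g_i)$ with $g_i(s) = \tfrac{4}{s+\sqrt{s^2+4J_i(s)}}$, I would use $|e^{-a}-e^{-b}| \le \max(e^{-a},e^{-b})|a-b|$ and bound $|g_1(s)-g_2(s)|$ by a Lipschitz estimate of the map $Q \mapsto g$; since $\sqrt{s^2+4J}$ is a smooth function of $J$ bounded away from its singularity on the relevant range, one gets $|g_1(s)-g_2(s)| \le C\,|Q_1(s)-Q_2(s)|/(\text{something}\gtrsim g_i(s))$, and after integration $|\log\Omega_1(r) - \log\Omega_2(r)| \le C\|Q_1-Q_2\|_{L^\infty}\int_0^r (g_1+g_2)\,ds$, which combined with the decay $\Omega_i(r)\sim r^{-2}$ yields $(1+r^4)|\Omega_1-\Omega_2| \le C r^2 |\log\Omega_1 - \log\Omega_2|\cdot\max\Omega_i \le C\|Q_1-Q_2\|_{L^\infty}$ — the $r^2\cdot r^{-2}$ cancellation is the crucial point, and one must check the logarithmic factor $\int_0^r(g_1+g_2)$ is absorbed because $\Omega_i(r)\le C r^{-2-\epsilon'}$ is a tiny bit false but $r^2\Omega_i(r)\log r$ is still bounded since $\Omega_i = r^{-2}(\Gamma_i + \cO(r^{-2}))$ — actually one gets $\int_0^r(g_1+g_2) = \cO(\log r)$ and $r^2\Omega_i = \cO(1)$, so a factor $\log r$ survives; I would kill it by noting $\log\Omega_1 - \log\Omega_2$ actually converges as $r\to\infty$ (since $\Gamma_i$ are finite and depend on $Q_i$), so the bound is $\cO(1)$ uniformly. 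Then \eqref{eq:Wdiff} follows by plugging into \eqref{eq:Wrep} and using that the prefactor $\bigl(2 - \tfrac{4r}{r+\sqrt{r^2+4J}}\bigr)$ is itself Lipschitz in $Q$ with the right $r$-weights, combined with the already-established $|W_i|\le C(1+r^4)^{-1}$ to handle the term where $\Omega$ is fixed and the prefactor varies. Finally \eqref{eq:Wpdiff} is obtained the same way from \eqref{eq:Wprep}: this formula is quadratic in the data and contains $J'$, so the difference estimate naturally produces a term $C\,\cN(Q_1-Q_2)$ (from differencing the $J'$-pieces, since $J' = -2Q'/Q^3$) plus a term $C(1+\cN(Q_2))\|Q_1-Q_2\|_{L^\infty}$ (from differencing the non-derivative pieces, where the $\cN(Q_2)$ appears because one of the two factors being differenced still carries a $J_2'$, hence a $Q_2'$).

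The main obstacle I anticipate is bookkeeping the $r$-weights near $r\to 0$ simultaneously for $W$ and $W'$: near the origin $\sqrt{r^2+4J(r)} \approx 2\sqrt{J(r)}\to\infty$, and the formulas \eqref{eq:Wrep}, \eqref{eq:Wprep} involve delicate cancellations (e.g. $r\sqrt{r^2+4J} < r^2+6J$ gives $W'<0$ but barely), so one must be sure the difference estimates do not blow up as $r\to 0$ — they should not, since the weights $(1+r^4)$, $(1+r^5)$ tend to $1$ there and $W_i$, $W_i'$ are bounded near $0$ by Definition~\ref{def:Qclass}(ii), but verifying this cleanly requires expanding each factor to the right order in $r$ and in $1/\sqrt{J}$. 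A secondary technical point is justifying the uniform control of $\Gamma_i = \lim r^2\Omega_i(r)$ in terms of $\delta$ alone, which follows from dominated convergence applied to $\int_0^\infty(g_i(s) - \tfrac{2}{s}\mathbf{1}_{s\ge 1})\,ds$, bounded using \eqref{eq:deltabound} and $\cN(Q_i)$ — but note that since the final constant $C$ in \eqref{eq:Wpdiff} is allowed to depend on $\cN(Q_2)$, this poses no real difficulty for that estimate, only for \eqref{eq:Wdiff} where one genuinely needs a bound independent of $\cN$; there I would argue that $\Gamma_1 - \Gamma_2$ is itself $\cO(\|Q_1-Q_2\|_{L^\infty})$ with a $\delta$-dependent constant, using that the tail of the defining integral is uniformly small once $Q_i(s)$ is close to $1$, which happens at a rate controlled by $\delta$ alone via monotonicity of $Q_i$ and $Q_i(1)\ge\delta$.
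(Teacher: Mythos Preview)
Your overall strategy matches the paper's: both proofs difference the reconstruction formulas \eqref{eq:Omegarep}, \eqref{eq:Wrep}, \eqref{eq:Wprep} and track the $r$-weights. The paper organizes this by introducing $\Delta_i(r) = Q_i(r)\sqrt{r^2+4J_i(r)} = \sqrt{4+(r^2-4)Q_i(r)^2}$, which satisfies $\Delta_i \ge 2\delta$ for $r\le 1$ (from the \emph{upper} bound in \eqref{eq:deltabound}) and $\Delta_i \ge r\delta$ for $r\ge 1$ (from the lower bound); this yields $(1+r^3)|\Theta(r)| \le C\|Q_1-Q_2\|_{L^\infty}$ directly, where $\Theta = g_1-g_2$ in your notation, and in particular the integral $\int_0^\infty |g_1-g_2|$ is finite with a $\delta$-only constant. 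This is cleaner than your ``Lipschitz in $Q$'' step and immediately dispels the $\log r$ worry.

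There is, however, a genuine slip in your weight accounting. You aim for $(1+r^4)|\Omega_1-\Omega_2| \le C\|Q_1-Q_2\|_{L^\infty}$, but this is \emph{false}: since $\Omega_i(r) = \Gamma_i r^{-2} + \cO(r^{-4})$ with $\Gamma_1\neq\Gamma_2$ in general, one has $(1+r^4)|\Omega_1-\Omega_2| \sim r^2|\Gamma_1-\Gamma_2|$. The paper proves only $(1+r^2)|\Omega_1-\Omega_2| \le C\|Q_1-Q_2\|_{L^\infty}$ (its \eqref{eq:Omdiff}), obtained by writing $r^2\Omega_i$ via \eqref{eq:r2Omega} and using that the integrand $M_i(r)=\tfrac1r-\tfrac{2}{r+\sqrt{r^2+4J_i}}$ is $\cO(r^{-3})$ at infinity with $\delta$-only constants. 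The extra $r^{-2}$ needed for \eqref{eq:Wdiff} comes not from $\Omega$ but from the prefactor: writing $W_i = 2r\Omega_i M_i$, one has $rM_i(r) = \cO(r^{-2})$, so $W_1-W_2 = 2r(\Omega_1-\Omega_2)M_1 - 4r\Omega_2\Theta$ and both terms carry the full $(1+r^4)^{-1}$. Once you fix this weight bookkeeping, your plan for \eqref{eq:Wpdiff} (splitting off the $J'$-term and producing $\cN(Q_1-Q_2)$ from it) is exactly what the paper does.
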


\begin{proof}
Let $J_i(r) = Q_i(r)^{-2} - 1$ for $i = 1,2$. We first consider the quantity
\[
  \Theta(r) \,=\, \frac{1}{r+\sqrt{r^2+4J_1}} - \frac{1}{r+\sqrt{r^2+4J_2}} 
  \,=\, \frac{4(Q_1^2 - Q_2^2)}{(Q_1\Delta_2 + Q_2\Delta_1) (rQ_1 + \Delta_1) 
  (rQ_2 + \Delta_2)}\,,
\]
where we use the shorthand notation $\Delta_i(r) = Q_i(r)\sqrt{r^2+4J_i(r)} =
\sqrt{4 + (r^2-4) Q_i(r)^2}$ for $i = 1, 2$. We claim that 
\begin{equation}\label{eq:Thetabd}
  \sup_{r > 0}(1+r^3) |\Theta(r)| \,\le\, C \|Q_1-Q_2\|_{L^\infty(\Rp)}\,,
\end{equation}
for some constant $C > 0$ depending only on $\delta$. To prove that, 
we distinguish two cases\:

\noindent
i) In the region where $r \le 1$, we know from \eqref{eq:deltabound} that 
$Q_i(r)^2 \le Q_i(1)^2 \le 1-\delta^2$, and this implies that $\Delta_i(r) \ge 
2\sqrt{1-Q_i(r)^2} \ge 2\delta$. It follows that
\[
  |\Theta(r)| \,\le\, \frac{4(Q_1+Q_2)|Q_1-Q_2|}{8\delta^3
  (Q_1 + Q_2)} \,\le\, \frac{|Q_1 - Q_2|}{2 \delta^3}\,, \quad 
  0 < r \le 1\,.
\]
ii) When $r \ge 1$ we observe that $\Delta_i(r) \ge r Q_i \ge r\delta$ by 
\eqref{eq:deltabound}, so that 
\[
  |\Theta(r)| \,\le\, \frac{4(Q_1+Q_2)|Q_1-Q_2|}{8r^3 Q_1^2 Q_2^2}
  \,\le\, \frac{|Q_1 - Q_2|}{\delta^3 r^3}\,, \quad r \ge 1\,.
\]
Altogether, this proves \eqref{eq:Thetabd}. 

As an immediate consequence, we see that the angular velocities defined 
by \eqref{eq:Omegarep} satisfy the estimate $\|\Omega_1-\Omega_2
\|_{L^\infty(\Rp)} \le C \|Q_1-Q_2\|_{L^\infty(\Rp)}$. In fact, we have 
a stronger result\:
\begin{equation}\label{eq:Omdiff}
  \sup_{r > 0} (1+r^2) |\Omega_1(r) - \Omega_2(r)| \,\le\, C 
  \|Q_1-Q_2\|_{L^\infty(\Rp)}\,.
\end{equation}
Indeed, it follows from \eqref{eq:Omegarep} that
\begin{equation}\label{eq:r2Omega}
  r^2 \Omega_i(r) \,=\, \Omega_i(1)\,\exp \int_1^r 
  \Bigl(\frac{2}{s} - \frac{4}{s+\sqrt{s^2+4J_i(s)}}\Bigr)
  \dd s\,, \qquad r > 0\,, \quad i = 1,2\,.
\end{equation}
If we denote 
\[
  M_i(r) \,=\,\frac{1}{r} - \frac{2}{r+\sqrt{r^2+4J_i}} \,=\, 
  \frac{4 J_i}{r(r+\sqrt{r^2+4J_i})^2} \,=\, 
  \frac{4(1-Q_i^2)}{r(rQ_i + \Delta_i)^2} \,>\,0\,,
  \qquad r > 0\,,
\]
the same estimates as above show that 
\begin{equation}\label{eq:Kibound}
  \sup_{0 < r \le 1} r M_i(r) \,\le\, \frac{1}{\delta^2}\,, \qquad
  \hbox{and}\quad 
  \sup_{r \ge 1} r^3 M_i(r) \,\le\, \frac{1}{\delta^2}\,.
\end{equation}
This implies in particular that $r^2 \Omega_i(r) \le e^{1/\delta^2}$
for $r \ge 1$, hence $(1+r^2)\Omega_i(r) \le C$ for some constant
$C > 0$ depending only on $\delta$. In addition, using
\eqref{eq:Thetabd} and \eqref{eq:r2Omega}, we obtain
\[
  r^2|\Omega_1(r)-\Omega_2(r)| \,\le\, e^{1/\delta^2}\Bigl(
  |\Omega_1(1) - \Omega_2(1)| + 4\int_1^r |\Theta(s)|\dd s\Bigr)
  \,\le\, C \|Q_1-Q_2\|_{L^\infty(\Rp)}\,, 
\]
for all $r \ge 1$, and this concludes the proof of \eqref{eq:Omdiff}. 

On the other hand, in view of \eqref{eq:Wrep}, we have $W_i(r) = 2r 
\Omega_i(r) M_i(r)$ for $i = 1,2$, hence 
\[
  W_1(r) - W_2(r) \,=\, 2r(\Omega_1(r) - \Omega_2(r))M_1(r) 
  -4r \Omega_2(r)\Theta(r)\,, \qquad r > 0\,.
\]
Thus using estimates \eqref{eq:Thetabd}, \eqref{eq:Omdiff}, 
\eqref{eq:Kibound} we arrive at \eqref{eq:Wdiff}.

Finally, we deduce from \eqref{eq:Wprep} that $W_1'(r) - W_2'(r) = 
\Xi(r) - 16\Lambda(r)$, where $\Xi(r)$ collects all terms that 
do not involve the derivatives $J_1', J_2'$, and 
\[
  \Lambda(r) \,=\, \frac{\Omega_1 r Q_1'}{(rQ_1 + \Delta_1)^2
  \Delta_1} - \frac{\Omega_2 r Q_2'}{(rQ_2 + \Delta_2)^2
  \Delta_2} \,=\, \Lambda_1(r) + \Lambda_2(r)\,,
\]
where
\[
  \Lambda_1(r) \,=\, \frac{\Omega_1 r (Q_1'-Q_2')}{(rQ_1 + \Delta_1)^2
  \Delta_1}\,, \qquad 
  \Lambda_2(r) \,=\, \frac{\Omega_1 rQ_2'}{(rQ_1 + \Delta_1)^2
  \Delta_1} - \frac{\Omega_2 r Q_2'}{(rQ_2 + \Delta_2)^2\Delta_2}\,. 
\]
Proceeding exactly as above it is straightforward to verify that
\[
  \sup_{r > 0} (1+r^5) |\Xi(r)| \,\le\, C \|Q_1-Q_2\|_{L^\infty(\Rp)}\,,
\]
where $C > 0$ depends only on $\delta$. We thus concentrate on 
the new term $\Lambda$, which contains the derivatives $Q_1', Q_2'$.  
Again, considering separately the regions where $r \le 1$ and $r \ge 1$, 
and using the appropriate lower bound on $\Delta_1(r)$ in each 
region, we obtain 
\[
  |\Lambda_1(r)| \,\le\, C\Omega_1(r)\frac{r|Q_1'(r)-Q_2'(r)|}{
  1+r^3} \,\le\, C (1+r^2)\Omega_1(r) \frac{\cN(Q_1-Q_2)}{
  1+r^5}\,,
\]
where we recall that $(1+r)^2\Omega_1(r) \le C$ for some $C > 0$ 
depending only on $\delta$. The quantity $\Lambda_2$ is estimated 
along the same lines\:
\[
  |\Lambda_2(r)| \,\le\, C (1+r^2)|\Omega_1(r)-\Omega_2(r)|
  \,\frac{r|Q_2'(r)|}{1+r^5} + C (1+r^2)\Omega_2(r) 
  \,\frac{r|Q_2'(r)|}{1+r^5}\,|Q_1(r)-Q_2(r)|\,, 
\]
and using \eqref{eq:Omdiff} we find $(1+r^5)|\Lambda_2(r)| \le C 
\bigl(1 + \cN(Q_2)\bigr)\|Q_1-Q_2\|_{L^\infty(\Rp)}$. Combining 
these estimates we arrive at \eqref{eq:Wpdiff}. 
\end{proof}

\medskip\noindent{\bf Proof of Lemma~\ref{lem:dense}.}
Let $W \in \WW$, and denote by $Q \in \QQ$ the function defined by 
\eqref{eq:Qdef} with $J$ as in \eqref{eq:Jdef}. For any 
$\epsilon > 0$ we define
\begin{equation}\label{eq:Qepsdef}
  Q^{(\epsilon)}(r) \,=\, \frac{1}{\sqrt{\pi\epsilon}}
  \int_0^\infty \Bigl(e^{-(r-s)^2/\epsilon} - e^{-(r+s)^2/\epsilon}
  \Bigr)Q(s)\dd s\,, \qquad r > 0\,.
\end{equation}
In other words, $Q^{(\epsilon)}$ is the restriction to $\R_+$ of the
real-analytic odd function obtained by extending $Q$ to an odd
function $\bar Q : \R \to \R$ and applying to $\bar Q$ the heat
semigroup on $\R$ at time $t = \epsilon/4$. In particular,
$Q^{(\epsilon)}$ is real-analytic on $\Rpb$ for any $\epsilon > 0$.
Moreover, as the function $s \mapsto \bar Q$(s) is continuous on $\R$
and converges to finite limits as $s \to \pm\infty$, it is clear that
$Q^{(\epsilon)}$ converges uniformly to $Q$ on $\Rp$ as $\epsilon \to 0$. 
On the other hand, differentiating \eqref{eq:Qepsdef} with respect
to $r$, we obtain the relation
\begin{equation}\label{eq:Qepsdiff}
  rQ^{(\epsilon)}{}'(r) \,=\, \frac{1}{\sqrt{\pi\epsilon}}
  \int_0^\infty \Bigl(e^{-(r-s)^2/\epsilon} - e^{-(r+s)^2/\epsilon}
  \Bigr)sQ'(s)\dd s + R_\epsilon(r)\,, \qquad r > 0\,,
\end{equation}
where
\[
  R_\epsilon(r) \,=\, \frac{1}{\sqrt{\pi\epsilon}} \int_0^\infty 
  \Bigl((r-s)e^{-(r-s)^2/\epsilon} + (r+s)e^{-(r+s)^2/\epsilon}
  \Bigr)Q'(s)\dd s\,, \qquad r > 0\,.
\]
As $Q' \in L^1(\Rp)$ and $\cN(Q) < \infty$, it is straightforward to 
verify that $R_\epsilon$ converges uniformly to zero as $\epsilon \to 0$. 
Moreover, since $sQ'(s) \to 0$ as $s \to 0$ and $s \to \infty$, it 
is clear that the integral term in \eqref{eq:Qepsdiff} converges 
uniformly on $\Rp$ towards $rQ'(r)$ as $\epsilon \to 0$. 
Altogether, this shows that
\begin{equation}\label{eq:Qapprox}
  \lim_{\epsilon \to 0} \Bigl(\|Q^{(\epsilon)}-Q\|_{L^\infty(\Rp)} + 
  |\cN(Q^{(\epsilon)} - Q)|\Bigr) \,=\, 0\,.
\end{equation}
Now, let $W^{(\epsilon)} \in \WW$ be the vorticity profile associated 
with $Q^{(\epsilon)}$ as in Lemma~\ref{lem:Qclass}. By construction 
$W^{(\epsilon)}$ is real-analytic on $\Rpb$ for any $\epsilon > 0$,
and it follows from \eqref{eq:Qapprox} and Lemma~\ref{lem:QLip}
that $W^{(\epsilon)} \to W$ in the topology of $\cC^1_b(\Rpb)$ as 
$\epsilon \to 0$. This is the desired density result. \QED

\medskip\noindent{\bf Proof of Lemma~\ref{lem:homotopie}.}
Assume that $W_0, W_1 \in \WW$, and let $Q_1, Q_2 \in \QQ$ be the 
corresponding functions defined as in Lemma~\ref{lem:Qclass}. 
For any $t \in [0,1]$, we define $Q_t \in \QQ$ by the linear interpolation 
formula 
\begin{equation}\label{eq:Qinterp}
  Q_t(r) \,=\, (1-t)Q_0(r) + t Q_1(r)\,, \qquad r > 0\,,
\end{equation}
and we denote $W_t \in \WW$ the vorticity profile associated with $Q_t$. 
As 
\[
  \|Q_{t_1}-Q_{t_2}\|_{L^\infty(\Rp)} + \cN(Q_{t_1}-Q_{t_2}) \,=\, 
  |t_1 - t_2| \Bigl(\|Q_0-Q_1\|_{L^\infty(\Rp)} + \cN(Q_0-Q_1)\Bigr)\,,
\]
it follows from Lemma~\ref{lem:QLip} that $W_t$ is a Lipschitz
function of $t \in [0,1]$ in the topology of $\cC^1_b(\Rpb)$.  By
construction, if $W_0, W_1$ are real analytic on $\Rpb$, so is $W_t$
for any $t \in [0,1]$. Moreover, in that case, if $W_1''(0) \equiv -
8 Q_1'(0) < 0$, then $W_t''(0) \equiv -8 Q_t'(0) < 0$ for all $t \in 
(0,1]$, because $W_0''(0) \equiv -8 Q_0'(0) \le 0$. This concludes 
the proof. \QED

\subsection{Proof of Lemma~\ref{lem:kernel}.}
\label{subsec65}

The proof of estimate \eqref{eq:Knest} is lengthy but rather 
straightforward, and we just indicate here the main steps. 
Using classical estimates for the modified Bessel functions $K_\nu$   
\cite[Section 9.6]{AS}, we first observe that the approximate 
solution $\chi_n$ defined in \eqref{eq:chidef} satisfies
\begin{equation}\label{eq:chiest}
  \begin{split}
  |\chi_n(r)| \,&\approx\, \min(1,|r{-}r_n{+}ic_n|)^{\frac12-\nu_n}
  \,e^{-k(r-r_n)}\,, \\
  |\chi_n'(r)| \,&\approx\, \min(1,|r{-}r_n{+}ic_n|)^{-\frac12-\nu_n}
  \,e^{-k(r-r_n)}\,,
  \end{split}
\end{equation}
for all $r > 0$. Here $A \approx B$ means that the ratio $A/B$ is 
bounded from above and from below by some positive constants that 
are independent of $n$. A direct consequence of \eqref{eq:chiest} 
is\:

\begin{lem}\label{lem:chi}
There exists a constant $C > 0$ such that, for all $n \in \N$ 
and all $s > r > 0$, the following estimates hold\:
\begin{align}\label{eq:intest1}
  |\chi_n(s)|^2 \int_r^s \frac{1}{|\chi_n(t)|^2}\dd t \,&\le\, 
  C \min(1,|s{-}r_n{+}ic_n|)^{1-2\nu_n}\,, \\ \label{eq:intest2}
  |\chi_n(s)\chi_n'(s)| \int_r^s \frac{1}{|\chi_n(t)|^2}\dd t \,&\le\, 
  C \min(1,|s{-}r_n{+}ic_n|)^{-2\nu_n}\,.
\end{align}
\end{lem}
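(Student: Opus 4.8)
The plan is to deduce both estimates \eqref{eq:intest1} and \eqref{eq:intest2} from the two-sided bounds \eqref{eq:chiest} together with a single scalar integral inequality. Inserting the upper bound for $|\chi_n(s)|^2$, the upper bound for $|\chi_n(t)|^{-2}$, and the upper bound for $|\chi_n(s)\chi_n'(s)|$ into the left-hand sides, and noting that the exponential factors combine into $e^{-2k(s-t)}$, one sees that both \eqref{eq:intest1} and \eqref{eq:intest2} follow once we establish
\begin{equation}\label{eq:starplan}
  \int_r^s \min\bigl(1,|t-r_n+ic_n|\bigr)^{2\nu_n-1}\,e^{-2k(s-t)}\dd t \,\le\, C\,,
\end{equation}
with a constant $C$ independent of $n$, $r$ and $s$. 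Indeed, the exponent $(1/2-\nu_n)+(-1/2-\nu_n) = -2\nu_n$ produced by $\chi_n\chi_n'$ is exactly the power of $\min(1,|s-r_n+ic_n|)$ appearing on the right of \eqref{eq:intest2}, and the analogous computation for $|\chi_n(s)|^2$ matches \eqref{eq:intest1}; in both cases the residual factor to be bounded is the left-hand side of \eqref{eq:starplan}.

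To prove \eqref{eq:starplan} I would substitute $\tau = s-t$ and set $\eta = s-r_n$, turning the integral into $\int_0^{s-r}\min(1,|\eta-\tau+ic_n|)^{2\nu_n-1}e^{-2k\tau}\dd\tau$. I then split the $\tau$-range according to whether $|\eta-\tau|\ge 1$ or $|\eta-\tau|<1$. On the first set one has $\min(1,|\eta-\tau+ic_n|)^{2\nu_n-1}\le 1$, since $2\nu_n-1<0$ and $|\eta-\tau+ic_n|\ge 1$, so that piece is bounded by $\int_0^\infty e^{-2k\tau}\dd\tau = 1/(2k)$. The second set is an interval of length at most $2$, on which $\tau\ge(\eta-1)_+$ and hence $e^{-2k\tau}\le 1$; there I use $\min(1,|\eta-\tau+ic_n|)^{2\nu_n-1}\le |\eta-\tau|^{2\nu_n-1}$ together with $2\nu_n-1>-1$ to get $\int_{|\eta-\tau|<1}|\eta-\tau|^{2\nu_n-1}\dd\tau \le 1/\nu_n$. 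This already yields \eqref{eq:starplan} whenever $\nu_n$ is bounded away from $0$ — which is the case in the situation that actually uses Lemma~\ref{lem:kernel}, namely Section~\ref{subsec48}, where $r_*<\infty$ forces $J_n(r_n)\to J(\infty)\in\bigl(0,\tfrac{m^2}{4k^2}\bigr)$ by \eqref{eq:Jinfinipositif}, so that $\nu_n^2 = \tfrac14 - \tfrac{k^2}{m^2}J_n(r_n)$ converges to a positive limit.

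The delicate point, and the only genuine obstacle, is the uniformity of the constant in the degenerate regime $\nu_n\to 0$. There the crude bound above degrades like $1/\nu_n$, and to keep control one must not discard the regularisation $ic_n$ inside $|\eta-\tau+ic_n|$: estimating $\int_{|\eta-\tau|<1}\bigl((\eta-\tau)^2+c_n^2\bigr)^{\nu_n-1/2}\dd\tau$ directly gives a bound of the form $C\,(1+\log(1/c_n))$ that is uniform over all $\nu_n\in(0,\tfrac12)$. This logarithmic loss is harmless: together with the analogous contribution of the derivative term $\cA'\chi_n\chi_n'$ in \eqref{eq:KKdef}, it is precisely what is absorbed into the factor $1+\log_+(1/b_n)$ in \eqref{eq:Knest}. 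Accordingly, for the present lemma one either invokes the standing lower bound on $\nu_n$ available in the principal application, or reads the constant $C$ as depending on $\inf_n \nu_n$; in all cases the region decomposition described above is the entire argument, the only care being the book-keeping of the power-law singularity at $t=r_n$ against the exponential weight $e^{-2k(s-t)}$.
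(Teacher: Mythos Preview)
Your proposal is correct and follows essentially the same route as the paper: both use the two-sided bounds \eqref{eq:chiest} and split the integral according to whether the running variable is within distance $1$ of $r_n$ or not, then exploit the exponential decay on the far piece and the local integrability of the power-law singularity $|t-r_n|^{2\nu_n-1}$ on the near piece, together with the fact that $\nu_n$ stays bounded away from $0$ in the main application. Your organisation is slightly more streamlined in that you reduce both \eqref{eq:intest1} and \eqref{eq:intest2} to the single scalar inequality \eqref{eq:starplan} before splitting, whereas the paper treats \eqref{eq:intest1} directly by a three-case decomposition in the variable $\tau=s-r_n$ and leaves \eqref{eq:intest2} to the reader; the underlying estimates are identical.
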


\begin{proof}
We only prove \eqref{eq:intest1}, the proof of \eqref{eq:intest2}
being similar. If $c_n \ge 1$, the first estimate in \eqref{eq:chiest}
simply means that $|\chi_n(r)| \approx e^{-k(r-r_n)}$, and 
\eqref{eq:intest1} follows immediately. Thus we assume henceforth
that $0 < c_n \le 1$, and for simplicity we set $s = r_n + \tau$, so 
that the proof of \eqref{eq:intest1} reduces to showing that
\begin{equation}\label{eq:intest3}  
  \cI_n(\tau) \,:=\, |\chi_n(r_n{+}\tau)|^2 \int_{-\infty}^\tau 
  \frac{1}{|\chi_n(r_n{+}t)|^2}\dd t \,\le\, C \min(1,|\tau{+}ic_n|)^{1-2\nu_n}\,,
  \quad \tau \in \R\,.
\end{equation}  
If $\tau \le -1$, we know from \eqref{eq:chiest} that $|\chi_n(r_n{+}\tau)| 
\le C\,e^{-k\tau}$ and $|\chi_n(r_n{+}t)| \ge C\,e^{-kt}$ for $t \le \tau$, 
hence $\cI_n(\tau) \le C$. If $|\tau| \le 1$, then $|\chi_n(r_n{+}\tau)| 
\le C |\tau{+}ic_n|)^{\frac12-\nu_n}$, and 
\[
  \cI_n(\tau) \,\le\, C\,|\tau{+}ic_n|^{1-2\nu_n} \biggl(\int_{-\infty}^{-1} 
  e^{2k t}\dd t + \int_{-1}^\tau |t{+}ic_n|^{2\nu_n-1}\dd t\biggr)
  \,\le\, C\,|\tau{+}ic_n|^{1-2\nu_n}\,.
\]
Finally, if $\tau \ge 1$, then
\[
  \cI_n(\tau) \,\le\, C\,e^{-2k\tau} \biggl(\int_{-\infty}^{-1} 
  e^{2k t}\dd t + \int_{-1}^1 |t{+}ic_n|^{2\nu_n-1}\dd t
  + \int_1^\tau e^{2kt}\dd t\biggr) \,\le\, C\,,
\]
and this completes the proof of \eqref{eq:intest1}. We observe that
the constant $C$ in the right-hand side is independent of $n$ 
because, as can be seen from \eqref{eq:nundef}, the exponent
$\nu_n$ is bounded away from zero as $n \to \infty$. 
\end{proof}

\begin{rem}\label{rem:nuzero}
In Section \ref{subsec49} we use the fact that if $\nu_n \to 0$ as $n \to \infty$, the conclusion of Lemma~\ref{lem:chi} 
remains valid up to a logarithmic correction. Estimates \eqref{eq:chiest} 
are not appropriate in that case, but one can use the fact that $K_{\nu_n}(z)$ 
is close to $K_0(z)$ for $n$ large, where $K_0(z) \approx -\log(z)$ 
as $z \to 0$. In particular, the integral 
\[
  \int_{r_n-1}^{r_n+1} \frac{1}{\chi_n(r)^2}\dd r \,=\, \frac{\pi}{2}
  \int_{-1}^1 \frac{1}{(t+ic_n)\,K_{\nu_n}(k(t+ic_n))^2}\dd t
\]
is uniformly bounded even if $\nu_n \to 0$ and $c_n \to 0$, because 
the function $t \mapsto |t|^{-1} |K_0(kt)|^{-2}$ is integrable 
over $[-1,1]$. We leave the details to the reader. 
\end{rem}

Returning to the proof of Lemma~\ref{lem:kernel}, we note that
$\cK_n(r,s) = \cK_n^{(1)}(r,s) - \cK_n^{(2)}(r,s)$, where 
\begin{align*}
  \cK_n^{(1)}(r,s) \,&=\, \cA(s)\chi_n(s)^2 \cR_n(s) \int_r^s 
  \frac{1}{\cA(t)\chi_n(t)^2}\dd t\,, \\
  \cK_n^{(2)}(r,s) \,&=\, \cA'(s)\chi_n(s)\chi_n'(s) \int_r^s 
  \frac{1}{\cA(t)\chi_n(t)^2}\dd t\,.
\end{align*}
In what follows, we assume that $s \ge r \ge \delta r_n$ for some 
$\delta > 0$, where $r_n = (\Gamma_n/b_n)^{1/2} \to \infty$ as 
$n \to \infty$. Since $\cA(s) \to 1/k^2$ and $\cA'(s) = \cO(s^{-3})$ 
as $s \to \infty$, it follows immediately from \eqref{eq:intest2}
that
\begin{equation}\label{eq:Kn2} 
  \sup_{r \ge \delta r_n} \int_{r}^\infty |\cK_n^{(2)}(r,s)| \dd s \,\le\, 
  C \int_{\delta r_n}^\infty s^{-3} \min(1,|s{-}r_n{+}ic_n|)^{-2\nu_n}\dd s
  \,\le\, \frac{C}{r_n^2} \,\le\, C b_n\,.
\end{equation}
It thus remains to estimate the kernel $\cK_n^{(1)}(r,s)$, which 
involves the remainder $\cR_n$ defined in \eqref{eq:Rndef}. We can 
further decompose $\cR_n = \cR_n^{(1)} + \cR_n^{(2)} + \cR_n^{(3)}$, where
\begin{align*}
  \cR_n^{(1)}(r) \,&=\, \frac{m^2+\frac34}{r^2} - \frac{1}{2r}\frac{
  \cA'(r)}{\cA(r)}\,, \qquad \cR_n^{(2)}(r) \,=\, -\frac{k^2}{m^2}\biggl(
  \frac{\Phi_n(r)}{\gamma_n(r)^2} - \frac{J_n(r_n)}{(r-r_n+ic_n)^2}\biggr)\,,\\
   \cR_n^{(3)}(r) \,&=\, \frac{r}{\cA(r)\gamma_n(r)}
  \partial_r\biggl(\frac{W_n(r)}{m^2+k^2 r^2}\biggr)\,.
\end{align*}
We concentrate here on the term $\cR_n^{(2)}$, which gives the main 
contribution to the integral \eqref{eq:Knest}. 

\begin{lem}\label{lem:R2}
For any $\delta > 0$, there exists a constant $C > 0$ such that
\begin{equation}\label{eq:R2est}
  |\cR_n^{(2)}(r)| \,\le\, \frac{C}{r\,|r{-}r_n{+}ic_n|}\,, \qquad 
  \hbox{for all } r \ge \delta r_n\,.
\end{equation}
\end{lem}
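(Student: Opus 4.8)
The plan is to rewrite $\cR_n^{(2)}$ so that the cancellation expected near the singular radius $r_n$ becomes visible, and then to estimate the two resulting contributions separately, once in a neighbourhood of $r_n$ and once away from it. Write $b_n=\Omega_n(r_n)$; then $\gamma_n(r)=\Omega_n(r)-\Omega_n(r_n)-ia_n=\Omega_n'(r_n)\bigl(\sigma_n(r)+ic_n\bigr)$, where $\sigma_n(r):=\bigl(\Omega_n(r)-\Omega_n(r_n)\bigr)/\Omega_n'(r_n)$. As $W_n$ is real analytic and $\Omega_n'<0$, the function $\sigma_n$ is analytic, strictly increasing, with $\sigma_n(r_n)=0$ and $\sigma_n'(r_n)=1$; in particular $\sigma_n(r)$ has the sign of $r-r_n$. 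Using $\Omega_n'(r_n)^2=\Phi_n(r_n)/J_n(r_n)$ one gets the identity
\[
  \cR_n^{(2)}(r) \,=\, -\frac{k^2 J_n(r_n)}{m^2}\left(\frac{\Phi_n(r)/\Phi_n(r_n)}{(\sigma_n(r)+ic_n)^2} - \frac{1}{(r-r_n+ic_n)^2}\right)\,,
\]
so, since $J_n(r_n)\to J(\infty)$ stays bounded away from $0$ and $\infty$, it suffices to bound the parenthesis by $C/\bigl(r\,|r-r_n+ic_n|\bigr)$ for $r\ge\delta r_n$ and $n$ large.

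Next I would collect the uniform control of the base flow furnished by Section~\ref{subsec64}. Since $r_n\to\infty$, for $r\ge\tfrac{\delta}{2}r_n$ and $n$ large the quantities $r^2\Omega_n$, $r^3|\Omega_n'|$, $r^4|\Omega_n''|$, $r^4W_n$, $r^5|W_n'|$, $r^6\Phi_n$, $r^7|\Phi_n'|$ are bounded above, and $r^2\Omega_n$, $r^6\Phi_n$ bounded below, with constants depending only on $\delta$; moreover $r_n^2 b_n=\Gamma_n$, $|\Omega_n'(r_n)|=d_n b_n^{3/2}$ and $c_n=a_n/(d_n b_n^{3/2})$, with $\Gamma_n,d_n$ bounded away from $0$ and $\infty$. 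From these I would deduce three elementary facts: (i) $c|r-r_n|\le|\sigma_n(r)|\le C(\delta)|r-r_n|$ for $\delta r_n\le r\le 2r_n$ (bound $\sigma_n'=\Omega_n'/\Omega_n'(r_n)$ above and below there); (ii) $|\sigma_n(r)-(r-r_n)|\le C\,(r-r_n)^2/r_n$ for $\tfrac12 r_n\le r\le\tfrac32 r_n$, via the Taylor remainder $\Omega_n(r)-\Omega_n(r_n)-\Omega_n'(r_n)(r-r_n)=\tfrac12\Omega_n''(\xi)(r-r_n)^2$ together with $|\Omega_n''(\xi)|\le C\Gamma_n/\xi^4$ and $|\Omega_n'(r_n)|^{-1}\Gamma_n/r_n^4\le C/r_n$; and (iii) $|\Phi_n(r)/\Phi_n(r_n)-1|\le C\,|r-r_n|/r_n$ for $\tfrac12 r_n\le r\le\tfrac32 r_n$, from $|\Phi_n'/\Phi_n|\le C/r$. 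Fixing a universal $\eta\in(0,\tfrac12]$ small enough that (ii) forces $|\sigma_n(r)-(r-r_n)|\le\tfrac12|r-r_n|$ on $|r-r_n|\le\eta r_n$, one also gets $|\sigma_n(r)+ic_n|\ge\tfrac12|r-r_n+ic_n|$ there.

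In the inner region $|r-r_n|\le\eta r_n$ the bound follows by cancellation: with $\alpha=\sigma_n(r)+ic_n$ and $\beta=r-r_n+ic_n$, facts (ii) and (iii) give $|\alpha-\beta|\le C(r-r_n)^2/r_n$, $|\alpha|\ge\tfrac12|\beta|$, $|\alpha+\beta|\le C|\beta|$, whence $|\alpha^{-2}-\beta^{-2}|\le C(r-r_n)^2/(r_n|\beta|^3)\le C/(r|\beta|)$ and $|\Phi_n(r)/\Phi_n(r_n)-1|\,|\alpha|^{-2}\le C|r-r_n|/(r_n|\beta|^2)\le C/(r|\beta|)$, using $r\approx r_n$. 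In the outer region $r\ge\delta r_n$, $|r-r_n|\ge\eta r_n$, I would bound the two terms of $\cR_n^{(2)}$ separately. The model term is $\le J_n(r_n)|r-r_n+ic_n|^{-2}\le C|r-r_n|^{-2}\le C/\bigl(r\,|r-r_n+ic_n|\bigr)$, since in both subranges $\delta r_n\le r\le 2r_n$ and $r\ge 2r_n$ one has $|r-r_n+ic_n|\ge|r-r_n|\ge c\,r$. For the genuine term $\Phi_n(r)|\gamma_n(r)|^{-2}$ I would use $|\gamma_n(r)|\ge\max\{|\Omega_n(r)-b_n|,a_n\}$ and split: if $\delta r_n\le r\le 2r_n$ then $|\Omega_n(r)-b_n|=|\Omega_n'(r_n)|\,|\sigma_n(r)|\ge c\,b_n$ by (i) and the relations above, so $\Phi_n(r)|\gamma_n(r)|^{-2}\le C\Phi_n(r)/(b_n^2+a_n^2)\le C r_n^{-2}\,b_n^2/(b_n^2+a_n^2)$, which is $\le C/\bigl(r\,|r-r_n+ic_n|\bigr)$ once one distinguishes $c_n\le r_n$ from $c_n>r_n$; if $r\ge 2r_n$ then $|\Omega_n(r)-b_n|\ge b_n/2$ for large $n$, so $\Phi_n(r)|\gamma_n(r)|^{-2}\le C\Gamma_n^2 r^{-6}/(b_n^2+a_n^2)$, which is again $\le C/\bigl(r\,|r-r_n+ic_n|\bigr)$ via $r^4 b_n^2\ge 16\Gamma_n^2$ and, when $c_n>r$, $r^5 b_n^3 c_n\ge c\,\Gamma_n^3$. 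The only genuine difficulty is the bookkeeping in this outer region: tracking $r,r_n,a_n,b_n,c_n,\Gamma_n$ through the two identities $r_n^2 b_n=\Gamma_n$ and $a_n=d_n b_n^{3/2}c_n$, and in particular noting that $a_n\gtrsim b_n$ forces $c_n\gtrsim r_n$; everything else is a routine computation.
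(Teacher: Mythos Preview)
Your argument is correct and, in the inner region $|r-r_n|\le\eta r_n$, essentially identical to the paper's: both expand $\gamma_n$ around $r_n$ via a Taylor remainder for $\Omega_n$ and control $\Phi_n(r)/\Phi_n(r_n)-1$ by $|r-r_n|/r_n$, the only difference being notational (you package the expansion in the exact quantity $\sigma_n$, the paper writes $\gamma_n(r)=\Omega_n'(r_n)(r-r_n+ic_n)[1+\cO(\epsilon(r))]$ with $\epsilon(r)=(r-r_n)/r_n$).

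In the outer region the paper is more economical. Instead of your case analysis on $c_n$ versus $r_n$ (or $r$), it establishes once and for all the lower bound
\[
  |\gamma_n(r)|\,\ge\, C(\delta)\,|\Omega_n'(r)|\,|r-r_n+ic_n|\,,\qquad r\ge\delta r_n\,,
\]
which follows directly from your fact (i) together with $|\Omega_n'(r)|\le C(\delta)|\Omega_n'(r_n)|$ on $[\delta r_n,\infty)$ (so that both the real part $|\Omega_n(r)-b_n|$ and the imaginary part $a_n=|\Omega_n'(r_n)|c_n$ dominate the corresponding pieces of the right-hand side). This single bound gives $\Phi_n(r)/|\gamma_n(r)|^2\le C J_n(r)/|r-r_n+ic_n|^2\le C/|r-r_n+ic_n|^2$, and since $|r-r_n|\ge c\,r$ in the outer region one is done. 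Your route reaches the same conclusion but through the bookkeeping you flag at the end; the paper's lower bound absorbs all of that into one line and, as a bonus, is reused later (to control $\cR_n^{(3)}$).
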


\begin{proof}
  We first assume that $|r-r_n| \le r_n/2$. Using the same notations
  as in \eqref{eq:gammaexp}, we have
\[
  \gamma_n(r) \,=\ \Omega_n(r)-\Omega_n(r_n)-ia_n \,=\, \Omega_n'(r_n)
  \bigl(r - r_n + i c_n\bigr) + \bigl(\Omega'(\xi)-\Omega'(r_n)\bigr)
  (r-r_n)\,,
\]
for some $\xi \in [r_n/2,3r_n/2]$. As $\Omega_n'(s) \approx s^{-3}$ 
and $\Omega_n''(s) \approx s^{-4}$ as $s \to \infty$ (see 
\eqref{eq:Jinfinipositif} and the first part of Section \ref{subsec64}), 
we have
\[
  |\Omega_n'(\xi)-\Omega_n'(r_n)| \,\le\, \frac{C}{r_n}\,|\Omega_n'(r_n)| 
  \,|\xi-r_n| \,\le\, \frac{C}{r_n}\,|\Omega_n'(r_n)|\,|r-r_n|\,,
\]
so that $\gamma_n(r) = \Omega_n'(r_n)(r - r_n + i c_n)[1 + \cO(
\epsilon(r))]$, where $\epsilon(r) = (r-r_n)/r_n$. Recalling that 
 $J_n(r_n) = \Phi_n(r_n)/\Omega_n'(r_n)^2$, we obtain the following
expression\:
\[
  \cR_n^{(2)}(r) \,=\, -\frac{k^2}{m^2}\,\frac{\Phi_n(r)[1 + \cO(
  \epsilon(r))] - \Phi_n(r_n)}{\Omega'(r_n)^2 (r - r_n + i c_n)^2}\,.
\]
Now, it also follows from \eqref{eq:Jinfinipositif} and the first part
of Section \ref{subsec64} that $\Phi_n(s) \approx s^{-6}$ and
$\Phi_n'(s) \approx s^{-7}$ as $s \to \infty$. This implies that
$|\Phi_n(r) - \Phi_n(r_n)| \le C \Phi_n(r_n) \epsilon(r)$, and we
deduce
\[
  |\cR_n^{(2)}(r)| \,\le\, C J_n(r_n) \,\frac{\epsilon(r)}{|r{-}r_n{+}i 
  c_n|^2} \,\le\, \frac{C}{r_n\,|r{-}r_n{+}ic_n|}\,, \qquad 
  \hbox{if} \quad \frac{|r-r_n|}{r_n} \,\le\, \frac12\,.
\]

The argument is simpler if $|r_n-r|\ge r_n/2$, because we can estimate
both terms in $\cR_n^{(2)}$ separately. Straightforward calculations
lead to the lower bound
\begin{equation}\label{eq:gammanlow}
  |\gamma_n(r)| \,=\, |\Omega_n(r) - \Omega_n(r_n) -i a_ n|
  \,\ge\, C |\Omega_n'(r)|\,|r-r_n + ic_n|\,,
\end{equation}
	whenever $r \ge \delta r_n$ (here $C$ depends on $\delta$), and this implies that
\[
  |\cR_n^{(2)}(r)| \,\le\, C\,\frac{\Phi_n(r)}{|\gamma_n(r)|^2} + C\,
  \frac{J_n(r_n)}{|r{-}r_n{+}ic_n|^2} \,\le\, \frac{C}{ 
  |r{-}r_n{+}ic_n|^2}\,, \qquad \hbox{if} \quad \frac{|r-r_n|}{r_n} 
  \,\ge\, \frac12\,.
\]
The proof of \eqref{eq:R2est} is thus complete. 
\end{proof}

It is now easy to conclude the proof of Lemma~\ref{lem:kernel}. 
The term $\cR_n^{(3)}$ in the remainder is estimated using the 
lower bound \eqref{eq:gammanlow}, which leads to
\begin{equation}\label{eq:R3est}
  |\cR_n^{(3)}(r)| \,\le\, \frac{C}{r^3\,|r{-}r_n{+}ic_n|}\,, \qquad 
  r \ge \delta r_n\,,
\end{equation} 
whereas $|\cR_n^{(1)}(r)| \le C/r^2$ for all $r > 0$. In view of 
\eqref{eq:intest1}, we thus have
\[
  |\cK_n^{(1)}(r,s)| \,\le\, C \min(1,|s{-}r_n{+}ic_n|)^{1-2\nu_n}
  \,\biggl(\frac{1}{s^2} + \frac{1}{s\,|s{-}r_n{+}ic_n|}\biggr)\,,
  \qquad s \ge r \ge \delta r_n\,.
\]
Integrating this bound, we arrive at
\begin{equation}\label{eq:Kn1} 
  \sup_{r \ge \delta r_n} \int_{r}^\infty |\cK_n^{(1)}(r,s)| \dd s \,\le\,
  \frac{C}{r_n}\Bigl(1 + \log_+(r_n)\Bigr) \,\le\, C b_n^{1/2}
  \Bigl(1 + \log_+\frac{1}{b_n}\Bigr)\,,
\end{equation}
and estimate \eqref{eq:Knest} follows immediately from 
\eqref{eq:Kn2}, \eqref{eq:Kn1}. \QED

\subsection{Proof of equality \eqref{eq:limBessel}.}
\label{subsec66}

Assume that $0 < \nu < 1/2$. Given any $\epsilon > 0$ and $a > 0$, we 
define
\begin{equation}\label{eq:Jnu1} 
  \cJ_\nu(a) \,=\, \int_{-\epsilon}^\epsilon \frac{-ax}{(a^2+x^2)^{3/2}}
  \,|K_\nu(x+ia)|^2\dd x\,,
\end{equation}
where $K_\nu$ is the modified Bessel function. Our goal is to compute 
the limit of $\cJ_\nu(a)$ as $a \to 0$. We recall \cite[Section~9.6]{AS} 
that
\[
  K_\nu(z) \,=\, \frac{\pi}{2}\,\frac{I_{-\nu}(z)-I_\nu(z)}{\sin(\nu \pi)}\,,
  \qquad \hbox{where}\quad I_\nu(z) \,=\, \frac{1}{\Gamma(1{+}\nu)}\,
  \frac{z^\nu}{2^\nu}\Bigl(1 + \cO(z^2)\Bigr) \quad \hbox{as } z \to 0\,.
\]
We thus have
\[
  K_\nu(z) \,=\, \frac{c_\nu}{z^\nu}\Bigl(1 - d_\nu z^{2\nu} + 
  \cO(z^2)\Bigr) \quad \hbox{as } z \to 0\,,
\]
where
\[
  c_\nu \,=\, \frac{\pi}{\sin(\nu\pi)}\,\frac{2^{\nu-1}}{\Gamma(1{-}\nu)}\,,
  \qquad d_\nu \,=\, \frac{1}{2^{2\nu}}\,\frac{\Gamma(1{-}\nu)}{\Gamma(1{+}\nu)}\,.
\]
It follows that
\begin{align}\nonumber
  |K_\nu(x+ia)|^2 \,&=\, \frac{c_\nu^2}{(a^2+x^2)^\nu}\Bigl(
  \big|1 - d_\nu(x+ia)^{2\nu}\bigr|^2 + \cO(x^2+a^2) \Bigr) \\ \label{eq:Jnu2} 
  \,&=\, \frac{c_\nu^2}{(a^2+x^2)^\nu}\Bigl(1 - 2d_\nu (a^2+x^2)^\nu
  \cos(2\nu\arg(x+ia)) + \cO((x^2+a^2)^{2\nu})\Bigr)\,
\end{align}
as $z = x + i a \to 0$. The leading term in \eqref{eq:Jnu2} is even and
therefore does not contribute to the integral \eqref{eq:Jnu1}, where it is multiplied by an odd function. The main 
contribution comes from the next term, so that 
\begin{align}\nonumber
  \lim_{a\to 0} \cJ_\nu(a) \,&=\, 2 d_\nu c_\nu^2 \lim_{a \to 0}
  \int_{-\epsilon}^\epsilon  \frac{ax}{(a^2+x^2)^{3/2}}\,
  \cos(2\nu\arg(x+ia))\dd x \\ \label{eq:Jnu3}
  \,&=\, 2 d_\nu c_\nu^2 \int_\R \frac{y}{(1+y^2)^{3/2}}\,
  \cos(2\nu\arg(y+i))\dd y\,,
\end{align}
where the second equality is obtained by setting $x = ay$.

It remains to perform the integral in \eqref{eq:Jnu3}. As 
$\arg(y+i) = \pi/2 - \arctan(y)$, we have
\[
  \cos(2\nu\arg(y+i)) \,=\, \cos(\nu\pi)\cos(2\nu\arctan(y)) + 
  \sin(\nu\pi)\sin(2\nu\arctan(y))\,.
\]
The first term does not contribute to the integral in 
\eqref{eq:Jnu3}, whereas setting $y = \tan(t)$ we find
\[
  \int_\R \frac{y}{(1+y^2)^{3/2}}\,\sin(2\nu\arctan(y))\dd y
  \,=\, 2 \int_0^{\pi/2} \sin(t)\sin(2\nu t)\dd t \,=\, 
  \frac{4\nu\cos(\nu \pi)}{1-4\nu^2}\,.
\]
Summarizing, we have shown that
\[
  \lim_{a \to 0} \cJ_\nu(a) \,=\,  2 d_\nu c_\nu^2 \sin(\nu\pi)
  \,\frac{4\nu\cos(\nu \pi)}{1-4\nu^2} \,=\, 
  \frac{2\pi\cos(\nu \pi)}{1-4\nu^2}\,.
\]

\subsection{Explicit calculations in some particular cases}
\label{subsec67}

We collect in this section a few results for the Kaufmann-Scully vortex 
\eqref{eq:KSvortex} and the Lamb-Oseen vortex \eqref{eq:LOvortex} which 
can be established by a direct calculation. 

\medskip\noindent
{\bf 1.} We first show that the vorticity profile $W$ of the 
Lamb-Oseen vortex satisfies assumption H2 in Section~\ref{subsec12}, 
hence belongs to the class $\WW$. Indeed, in that case, the function 
$J$ defined by \eqref{eq:Jdef} is given by
\[
  J(r) \,=\, \frac{r^4 \,e^{-r^2}(1-e^{-r^2})}{\bigl(1-(1+r^2)\,e^{-r^2}
  \bigr)^2}\,, \qquad r > 0\,,
\]
so that
\[
  J'(r) \,=\, \frac{2 r^3\,e^{-r^2}}{\bigl(1-(1+r^2)\,e^{-r^2}
  \bigr)^3}\,\Bigl(2 - r^2 - (4-r^2+r^4)\,e^{-r^2} + 2\,e^{-2 r^2}\Bigr)\,, 
  \qquad r > 0\,.
\]
We want to show that $J'(r) < 0$ for all $r > 0$. Setting $s = r^2$, 
we have to verify that 
\[
  s - 2 + (4-s+s^2)\,e^{-s} - 2\,e^{-2 s} \,>\, 0\,, \qquad s > 0\,,
\]
or equivalently
\[
  \frac{s}{1-e^{-s}} + \Bigl( \frac{s}{1-e^{-s}}\Bigr)^2 e^{-s} \,>\, 2\,,
  \qquad s > 0\,.
\]
Since $s > 1 - e^{-s}$, it is sufficient to show that 
\[
  \frac{s}{\tanh(s/2)} \,\equiv\, \frac{s}{1-e^{-s}} + \frac{s\,e^{-s}}{1-e^{-s}} 
  \,>\, 2\,, 
\]
which is indeed true because $\tanh(x) < x$ for all $x > 0$. 

\medskip\noindent
{\bf 2.} Next, for the Lamb-Oseen vortex, we establish the 
inequality $\cB(r) \ge 1 - 4/m^2$ when $a = 0$ and $b \le 0$, 
where $\cB$ is defined in \eqref{eq:ABdef}. Indeed, as $\ggamma(r) = 
\Omega(r) - b \ge \Omega(r)$, we have 
\begin{equation}\label{eq:BBlow1}
  \cB(r) \,\ge\, 1 - \frac{1}{m^2}\,\frac{k^2 r^2}{m^2 + k^2r^2}
  \,\frac{\Phi}{\Omega^2} + \frac{r}{\Omega}\,\frac{W'}{m^2 + k^2r^2}
  - \frac{2k^2 r^2}{\Omega}\,\frac{W}{(m^2 + k^2r^2)^2}\,.
\end{equation}
As $\Phi = 2\Omega W$ and $W'(r) = -2r W(r)$ for the Lamb-Oseen 
vortex, inequality \eqref{eq:BBlow1} can be written in the equivalent 
form
\begin{equation}\label{eq:BBlow2}
  \cB(r) \,\ge\, 1 - \frac{2W(r)}{m^2\Omega(r)}\biggl\{1 + 
  \frac{m^2 r^2}{m^2 + k^2r^2} - \frac{m^4}{(m^2 + k^2r^2)^2} \biggr\}\,.
\end{equation}
Denote $s = r^2 > 0$ and $\alpha = m^2/(m^2{+}k^2r^2) \in (0,1)$. 
As $W = 2\,e^{-s}$ and $\Omega = s^{-1}(1-e^{-s})$, it is straightforward 
to verify that \eqref{eq:BBlow2} implies the desired inequality 
 $\cB(r) \ge 1 - 4/m^2$ provided
\begin{equation}\label{eq:BBlow3}
  \frac{s\,e^{-s}}{1 - e^{-s}}\bigl(1 + \alpha s - \alpha^2\bigr) 
  \,\le\, 1\,, \qquad \hbox{or equivalently} \quad 
  e^s \,\ge\, 1 + s + \alpha s(s-\alpha)\,.
\end{equation}
But 
\[
  \sup_{\alpha \in (0,1)} \Bigl(1 + s + \alpha s(s-\alpha)\Bigr) 
  \,=\, \begin{cases} 1 + s + s^3/4 & \hbox{if } s \le 2\,,\\
  1 + s^2  & \hbox{if } s \ge 2\,,\end{cases}
\]
and we conclude that the last inequality in \eqref{eq:BBlow3} 
holds in all cases. 

\medskip\noindent
{\bf 3.} Finally, we establish the same inequality $\cB(r) \ge 1 - 4/m^2$ 
for the Kaufmann-Scully vortex. In that case $W'(r) = -4r W(r)\Omega(r)$ 
by \eqref{eq:KSvortex}, hence inequality \eqref{eq:BBlow1} takes the 
form
\begin{equation}\label{eq:BBlow4}
  \cB(r) \,\ge\, 1 - \frac{2W(r)}{m^2\Omega(r)}\biggl\{1 + 
  \frac{2r^2}{1+r^2}\,\frac{m^2}{m^2 + k^2r^2} - \frac{m^4}{(m^2 + k^2r^2)^2} 
  \biggr\}\,.
\end{equation}
Setting again  $\alpha = m^2/(m^2{+}k^2r^2)$ and using the fact that
$W = 2\Omega^2 = 2/(1+r^2)$ in the present case, we see that the desired 
inequality $\cB(r) \ge 1 - 4/m^2$ holds provided
\begin{equation}\label{eq:BBlow5}
  1 + \frac{2\alpha r^2}{1+r^2} - \alpha^2 \,\le\, 1+r^2\,, \qquad r > 0\,.
\end{equation}
The maximum of the left-hand side, considered as a function of $\alpha \in 
(0,1)$, is reached at the point $\alpha = r^2/(1+r^2)$, and the 
resulting inequality becomes $(1+r^2)^2 + r^4 \le (1+r^2)^3$, which is 
of course true. This concludes the proof. 


\end{document}